\def\cO{{\mathcal{O}}}
\def\tan{\mbox{tan}}
\def\e{\varepsilon}
\def\t{\langle t\rangle }
\def\s{\langle s\rangle }
\def\curl{\text{curl}\,}
\def\f{\frac}
\def\p{\partial}
\def\Om{\Omega}
\def\na{\nabla}
\def\D{\Delta}
\def\la{\lambda}
\def\bn{{\bf n}}
\def\no{\noindent}
\def\CN{\mathcal{N}}
\numberwithin{equation}{section}
\newtheorem{theorem}{Theorem}[section]
\newtheorem{proposition}[theorem]{Proposition}
\newtheorem{remark}[theorem]{Remark}
\newtheorem{definition}[theorem]{Definition}
\newtheorem{corollary}[theorem]{Corollary}
\newtheorem{lemma}[theorem]{Lemma}
\newcommand{\andf}{\quad\hbox{and}\quad}
\newcommand{\with}{\quad\hbox{with}\quad}
\newcommand{\beq}{\begin{equation}}
\newcommand{\eeq}{\end{equation}}
\newcommand{\ben}{\begin{eqnarray}}
\newcommand{\een}{\end{eqnarray}}
\newcommand{\beno}{\begin{eqnarray*}}
\newcommand{\eeno}{\end{eqnarray*}}
\providecommand{\R}{\mathbb{R}}
\providecommand{\N}{\mathbb{N}}
\providecommand{\dive}{\mathrm{div} \,}
\date{\today}
\title[ ]{Smooth  controllability  of the Navier-Stokes equation with Navier  conditions. Application to  Lagrangian controllability.}
\author[J. Liao]{Jiajiang Liao}%
\address[J. Liao]
 {Academy of Mathematics $\&$ Systems Science, The Chinese Academy of
Sciences, Beijing 100190, CHINA. } \email{liaojiajiang15@mails.ucas.ac.cn}
\author[F. Sueur]{Franck Sueur}
\address [F. Sueur]%
{Universit\'e  Bordeaux \\
 Institut de Math\'ematiques de Bordeaux  \\
F-33405 Talence Cedex, France, and  Institut Universitaire de France.} \email{Franck.Sueur@math.u-bordeaux.fr}
\author[P. Zhang]{Ping Zhang}%
\address[P. Zhang]
 {Academy of Mathematics $\&$ Systems Science
and  Hua Loo-Keng Key Laboratory of Mathematics, The Chinese Academy of
Sciences, Beijing 100190, CHINA, and School of Mathematical Sciences, University of Chinese Academy of Sciences, Beijing 100049, CHINA. } \email{zp@amss.ac.cn}
\subjclass{Primary 93B05; Secondary 35Q30.}
\keywords{Navier-Stokes equations, Controllability, Navier slip with friction
    boundary condition, boundary layers, return method, multi-scales asymptotic expansion, well-prepared dissipation method.}
\begin{document}

\begin{abstract}
We deal with the 3D Navier-Stokes equation in a smooth simply connected
bounded domain, with controls  on a non-empty open
part of the boundary and a Navier slip-with-friction boundary condition  on the remaining, uncontrolled,
part of the boundary.
 We extend the small-time global exact controllability
 result in \cite{CMS} from Leray weak solutions to  the case of smooth solutions.
 Our strategy relies on a refinement of the method of  well-prepared dissipation  of the
  viscous boundary layers which appear near the uncontrolled part of the boundary, which allows to
  handle the multi-scale features in a finer topology.
   As a byproduct of our analysis we also obtain a
  small-time global  approximate Lagrangian controllability result, extending to the case of the Navier-Stokes equations  the recent results   \cite{GH1,GH2,HK}  in the case of the Euler equations and the result \cite{GH3} in the case of the steady Stokes equations.
\end{abstract}

\maketitle
\tableofcontents

\section{Introduction and main results}

\subsection{Setting}
We consider an incompressible viscous fluid  in
a smooth bounded simply connected domain $\Omega$ in $\mathbb{R}^3$.
We denote by $u$ and $p$ its velocity and its pressure respectively and we assume that they evolve according to the Navier-Stokes equations.
We assume that we can act on a non-empty open part $\Sigma$ of the boundary $\partial \Omega$. On the remaining part of the boundary, we assume the fluid satisfies a Navier-slip-with-friction boundary condition. To formalize this boundary condition we introduce the  normal $\mathbf{n}$ pointing outward  the domain, and  for a vector field $f$, we define its tangential part $f_{\tan}$, the strain tensor $D(f)$ and the tangential Navier boundary operator $\CN(f)$ respectively as
\begin{equation}
\label{def-dep}
f_{\tan}:=f-(f\cdot \bn)\bn,\quad
D_{ij}(f):=\frac{1}{2}(\partial_if_j+\partial_jf_i) \quad \mbox{ and }   \quad
\CN(f):=(D(f)\mathbf{n}+Mf)_{\tan},
\end{equation}
where $M$ is a given smooth symmetric matrix-valued function, describing the friction near the boundary.
The Navier condition then reads $\CN(u)=0$; it dates back to \cite{navier1823memoire}.
Finally we prescribe an initial data $u_0$ for the fluid velocity $u$ at time $t=0$.
Then  the system at stake for the unknowns $u$ and $p$ is:
\begin{gather}\label{NS}
\begin{cases}
\partial_t u+u\cdot \nabla u- \Delta u +\nabla p=0  \quad \mbox{ and }   \quad \dive  u=0 \quad \mbox{ in }\Omega,\\
u\cdot \mathbf{n}=0 \quad \mbox{ and }   \quad\CN(u)=0\quad\mbox{ on }\partial\Omega\backslash\Sigma , \\
u=u_0\quad\mbox{ at } t=0 .
\end{cases}
\end{gather}
Let us highlight that, in \eqref{NS}, there is no boundary condition on the part $\Sigma$ of the boundary $\partial \Omega$.
This is typical of the controllability issue, when one chooses not to mention explicitly the controls.
Indeed the controls which will be used in this paper are quite intricate, in particular because of their multi-scale feature.
Let us only point out right now that this freedom of choice on $\Sigma$ allows in particular some fluid to go into and out the domain.
Let us also mention here that we are not going to really use a control all the time in the sense that it will be relevant on some time intervals to choose as boundary condition on
  $\Sigma$ the same Navier condition as on $\partial \Omega \setminus \Sigma$  so that the system then coincides with its uncontrolled counterpart for which $\Sigma = \emptyset$.

\subsection{First main result: smooth small-time global exact controllability}

Our first main result is the following small-time global exact controllability by solutions for which the velocity vector field $u$ is in the class
\begin{equation}\label{classe}
 C([0,T];H^1(\Omega))\cap L^2((0,T); H^2(\Omega)) .
 \end{equation}
\begin{theorem}\label{th}
{\sl Let $T>0$, and $u_0 $ in $H^1(\Omega)$  satisfying $\dive u_{0}=0 $ in $ \Omega$ and $u_{0} \cdot n =0 $ on $ \partial \Omega$.
Then there exists $u$ in the space \eqref{classe} satisfying (\ref{NS}) and $u(T,\cdot)=0$.}
\end{theorem}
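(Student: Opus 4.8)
The plan is to follow the three-scale strategy of \cite{CMS}, where one writes the solution as an inviscid Euler-type profile plus viscous boundary-layer correctors plus a small remainder, but to upgrade the whole construction so that it lives in the class \eqref{classe} rather than merely among Leray weak solutions. First I would perform the standard rescaling $u^\e(t,x)=\e\, u(\e t, x)$, $p^\e = \e^2 p(\e t,x)$, which turns \eqref{NS} into a Navier-Stokes system with a small viscosity $\e$ on the long time interval $[0,T/\e]$, and whose uncontrolled part of the boundary carries the Navier condition $\CN^\e$ with a rescaled friction matrix $M^\e$. On this system one runs the \emph{return method}: one chooses a reference trajectory that starts and ends at zero but makes a large excursion, so that the flushing effect of the transported Euler field drives the state close to rest. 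Concretely, the ansatz is $u^\e = u^0 + \sqrt\e\, \{v + \text{b.l. terms}\} + \e\, r^\e$, where $u^0$ solves an Euler-type equation and the boundary-layer profiles solve Prandtl-type equations in the fast variable $z = \varphi(x)/\sqrt\e$ near $\partial\Omega\setminus\Sigma$.

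The core of the argument, and the point where this paper departs from \cite{CMS}, is the \emph{well-prepared dissipation} estimate for the boundary layers, carried out now in a higher-Sobolev topology so as to control the $H^1_x \cap L^2_t H^2_x$ norm of the full velocity. The key mechanism is that the Navier friction condition makes the boundary layer of amplitude $O(\sqrt\e)$ rather than $O(1)$, and, crucially, that by preparing the trajectory so that the tangential Euler flow vanishes on $\partial\Omega\setminus\Sigma$ during a suitable final time window, one forces the boundary layer to obey an essentially heat-type equation for which dissipation can be quantified: over the long time $T/\e$ the layer decays enough that its contribution in the fine topology is $o(1)$ as $\e\to 0$. I would establish this via weighted energy estimates in the fast variable, tracking Gaussian-in-$z$ decay and enough $z$-moments and $x$-derivatives to close the estimates in $H^2$; this requires the reference Euler trajectory, and in particular the function $\varphi$ and the time-localisation, to be chosen with care so that no resonant forcing of the layer occurs.

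The remaining steps are: (i) solve the Euler-type profile equation and check that the return trajectory can be built with the flushing property using the simple-connectedness of $\Omega$ and the control available on $\Sigma$ (here one may invoke the known small-time global \emph{approximate} controllability of the Navier-Stokes–Navier system toward states close to rest, reducing to a neighbourhood of $0$); (ii) close a fixed-point/energy estimate for the remainder $r^\e$ in the space \eqref{classe}, using the smallness of the boundary-layer residuals in the fine topology together with parabolic smoothing to absorb the quadratic term $u\cdot\nabla u$; (iii) finally, once the rescaled state has been brought $H^1$-close to $0$ at a time of order $T/(2\e)$, invoke small-time local exact controllability to rest — which holds in the class \eqref{classe} for data small in $H^1$ — to reach exactly $0$ on the remaining time, and undo the scaling. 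The main obstacle is step (ii) combined with the boundary-layer analysis: one must propagate $H^2$-in-space regularity for the remainder uniformly in $\e$ despite the $O(1/\sqrt\e)$ derivatives hitting the layer profiles, and it is precisely the refined well-prepared dissipation — choosing the cutoffs and the phase so that these large factors are compensated by smallness in time-integrated norms — that makes this possible. A secondary technical point is the compatibility of the boundary-layer correctors with the exact (nonlinear, non-flat-boundary) Navier condition, which forces the introduction of lower-order correctors and curvature terms that must themselves be estimated in the fine topology.
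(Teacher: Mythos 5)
Your core construction (rescaling $u^\e(t,x)=\e u(\e t,x)$, return method around a flushing Euler profile $u^0$, boundary layers of amplitude $\sqrt\e$ in the fast variable $z=\varphi(x)/\sqrt\e$, and a well-prepared-dissipation argument carried out in a finer topology so as to close conormal/weighted estimates for the remainder) is essentially the paper's route to the \emph{approximate} controllability stage. But your reduction scaffolding has two genuine gaps. First, you run the multi-scale expansion directly from the $H^1$ datum. The construction of the profiles (in particular the linearized Euler flow carrying $u_*$ as initial data, and the cascade of boundary-layer correctors whose source terms involve many tangential and normal derivatives of the lower-order profiles) requires $u_*$ in a very high Sobolev space — the paper needs roughly $H^{177}$, hence states the approximate result for $H^{200}$ data — and the remainder estimate cannot be closed in the conormal norms otherwise. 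The missing ingredient is an initial regularization step: one must first let the \emph{uncontrolled} system (Navier condition on all of $\partial\Omega$) evolve for a short time and exploit the parabolic smoothing of strong solutions with Navier boundary conditions (the paper's Theorem \ref{S}/\ref{thS}) to replace the $H^1$ datum by a smooth one before starting the expansion.

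Second, your endgame asserts that small-time local exact null controllability ``holds in the class \eqref{classe} for data small in $H^1$''. This is not the known result and you give no argument for it: Guerrero's local exact controllability (the paper's Theorem \ref{th-guerrero}) requires the state to be small in $H^3$. The gap between ``$H^1$-small at the end of the approximate stage'' and ``$H^3$-small as required for local exact control'' must be bridged, and the paper does so by inserting a second application of the regularization theorem, crucially using that the constant $C_{T_1,3}(\cdot)$ is continuous and vanishes at $0$, so that $H^1$-smallness propagates to $H^3$-smallness after a short uncontrolled evolution; only then is the local result applied and the solution extended by zero. Without these two regularization steps (before the expansion and before the local exact control), the argument as you outline it does not prove the theorem for $H^1$ data. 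A more minor point: to get $\|u^\e(T/\e)\|_{H^1}=o(\e)$ one needs the expansion pushed to order $\e^2$ (several layers of velocity, pressure, normal-velocity and backflow correctors), not just a remainder of size $\e$; and the enhanced decay of the layer after $u^0$ is switched off comes from imposing finitely many vanishing $z$-moments on the layer at time $T$ via the control (the natural heat decay alone gives only $O(\e)$, which is insufficient) — you gesture at moments, but this is the precise mechanism that must be quantified.
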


Theorem \ref{th} extends the  result in \cite{CMS} where the existence of  $u$ in the weaker class
\begin{equation}\label{w-classe}
  C_w ([0,T];L^2(\Omega))\cap L^2((0,T); H^1(\Omega)) ,
  \end{equation}
 is obtained.  Indeed the result in \cite{CMS} deals with the case where the initial data  $u_0 $  has only a  $L^2(\Omega)$   regularity but the proof developed there fails to guarantee that the constructed solution  propagates higher regularity.  One underlying reason is the multi-scale feature of the constructed  solution which makes small scales  more singular  in a finer topology.
 Indeed the question of whether or not a result such as Theorem \ref{th}  holds true was explicitly raised in \cite[Remark 2]{CMS} and in  \cite[Perspective 1]{CMS-proc}.

 \begin{remark}
 \label{impli}
 Theorem~\ref{th} is stated as an existence result. The lack of
	uniqueness comes from the fact that multiple controls can drive the
	initial state to zero, that is from the fact that there is no boundary condition
on $\Sigma$ for the initial boundary value problem \eqref{NS}. 	
However, with some bookkeeping, it is possible to exhibit (though in a quite non-explicit way) from the proof of Theorem \ref{th} below
a boundary condition to be prescribed on $\Sigma$ (which is inhomogeneous and depends on $u_0$) that generates a unique solution $u$ in the space \eqref{classe} to the corresponding initial boundary value problem, that is satisfying  \eqref{NS} and this boundary condition on  $\Sigma$, and this unique solution $u$ satisfies $u(T,\cdot)=0$.
\end{remark}

\begin{remark}
Controllability results such as the one obtained in  \cite{CMS} or in Theorem \ref{th} should not be confused with results on the existence of wild solutions vanishing after a finite time, such as the ones obtained in \cite{BCV,BV1,BV2}.
The latter rely on the lack of regularity, in particular these solutions do not belong to $L^2((0,T); H^1(\Omega)).$ On the other hand the setting of these papers does not allow any freedom of action, neither through a part of the boundary nor through an interior  part of the domain.
On the contrary, the controllability results of  \cite{CMS} and of Theorem \ref{th}  take advantage of the possibility to choose some appropriate boundary conditions on the permeable  part $\Sigma$ of the boundary to drive the fluid  to rest in finite time. Since the  controllability result of  \cite{CMS} holds for Leray's class of solutions \eqref{w-classe}, it  concerns  solutions which are more regular than in \cite{BCV,BV1,BV2}.  However, perhaps, one may think that the gap is narrow and perhaps only due to temporary technical limitations. The result of Theorem \ref{th} shows that it is not the case and that the possibility of a localized action allows to drive a fluid to rest in finite time  in a smooth setting as well. Indeed Theorem \ref{th} is stated for $H^1$ initial data and for solutions in the regularity class \eqref{classe},
 but it could be easily extended to higher regularity, as the $H^1$ norm is super-critical for the blow-up issue of the 3D Navier-Stokes equations.
\end{remark}

\begin{remark}
Indeed, as in \cite{CMS}
for the case of weak solutions,  the proof of Theorem~\ref{th} can be easily adapted to prove that one may intercept at any given positive time $T$ any  smooth uncontrolled solution to the Navier-Stokes system, that is any solution to the Navier-Stokes system
with Navier condition on the whole boundary $\partial \Omega$, by the mean of a smooth controlled solution starting from any given initial data.
\end{remark}

\begin{remark}
We deal here with the case of a simply connected domain just for simplicity. The multiply-connected domain could be covered by some simple modifications of our method  in the case where   $\Sigma$ intersects all the connected components of $\partial\Omega$.
\end{remark}

\begin{remark}
To simplify the exposition,  Theorem~\ref{th} is stated  in the case of an initial data  which is tangent to the whole boundary.
The result also holds in the case where the initial data is only tangent to the uncontrolled part $\partial \Omega \setminus \Sigma$ of the boundary.
Indeed, to deduce this slightly more general statement from the one considered in  Theorem~\ref{th},
 it is sufficient to  evolve the system on a short time interval with an appropriate control on  $\Sigma$, smooth in time, initially compatible with the initial data and vanishing  after some small positive time.
\end{remark}

\subsection{Second main result: Lagrangian small-time global approximate controllability}

The question that we now address is the possibility of prescribing the motion of a set of particles, following the Lagrangian description of  fluids consisting in following fluid particles along the flow map associated with a velocity field satisfying the system (\ref{NS}).
This type of Lagrangian controllability notion was raised in \cite{GH1}, where the authors showed that for the $2$-D incompressible Euler equations, one can indeed prescribe approximately the motion of some specific sets of fluids, and extended in \cite{GH2} to the case of the dimension $3$. Let us also mention  the paper  \cite{HK} where an alternative approach was considered, the result \cite{GH3} in the case of the steady Stokes equations  and the result in  \cite{Gagnon} about the Lagrangian controllability of the 1-D Korteweg-de Vries equation.

  Our second main result establishes the   small-time global  approximate Lagrangian controllability of (\ref{NS}) meaning that for  two smooth contractible sets of fluid particles, surrounding the same volume, for any given smooth initial velocity field and any positive time interval, one can find a boundary control such that the corresponding solution of (\ref{NS}) makes the first of the two sets approximately reach the second one, while staying in the domain in the meantime.
\begin{theorem}\label{t-lag}
{\sl
Let $T_0>0,$
$\alpha \in (0,1)$ and $k \in \N \setminus \{0\}$.
Let $u_0\in C^{k,\alpha}({\Omega};\R^{3})$ satisfy $\dive u_{0}=0 $ in $ \Omega$ and $u_{0} \cdot n =0 $ on $ \partial \Omega$.
 Let  $\gamma_0$ and $\gamma_1$ be two Jordan surfaces  included in $\Omega$ such that
$\gamma_{0} $ and $ \gamma_{1} $ are isotopic in $ \Omega$ and surrounding the same volume.
Then for any $\eta>0$, there are a time $T \in (0,T_0)$ and a solution $(u,p)$ in $L^{\infty}(0,T;C^{k,\alpha}(\Omega;\R^{4}))$ to (\ref{NS}) on $[0,T]$ such that
\begin{gather}
\label{eq:exact2}
\forall t\in [0,T],\ \phi^u(t,0,\gamma_0)\subset \Omega, \\
\label{eq:approx}
\| \phi^u(T,0,\gamma_0)-\gamma_1 \|_{C^k}<\eta,
\end{gather}
 hold (up to reparameterization), where  $\phi^u$ is the flow map associated with $u$ by
$ \partial_t\phi^u(t,s,x)=u(t,\phi^u(t,s,x)) $ for any $t,s$ in $[0,T]$ and for any $x$ in $\Omega$,
and  $ \phi^u (s,s,x)=x$ for any $s$ in $[0,T]$ and for any $x$ in $\Omega$.

Moreover the same result holds true
in the case where  $u_0$ is only in $H^1({\Omega};\R^{3})$ with $\dive u_{0}=0 $ in $ \Omega$ and $u_{0} \cdot n =0 $ on $ \partial \Omega$, with the two following modifications:
 one only guarantees the existence of a   solution $u$   in  the class
\eqref{classe} and that \eqref{eq:approx}  holds true with $k=0$. }
\end{theorem}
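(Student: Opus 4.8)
The strategy is to observe that, unlike Theorem~\ref{th}, the conclusion \eqref{eq:exact2}--\eqref{eq:approx} does not require driving the velocity to rest: it is enough to exhibit \emph{one} Navier--Stokes trajectory issued from $u_0$ whose flow carries $\gamma_0$ into a $C^k$-neighbourhood of $\gamma_1$ while keeping it inside $\Om$. We obtain it from the first phase of the construction proving Theorem~\ref{th}, in which, after a time/amplitude rescaling, the Navier--Stokes velocity is a perturbation of a solution of the Euler equations controlled on $\Sigma$; the new ingredient is to choose that inviscid trajectory so that its own flow performs the Lagrangian transport, following the Euler-level arguments of \cite{GH1,GH2}. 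Concretely, since $\gamma_0$ and $\gamma_1$ are isotopic in $\Om$ and enclose the same volume, \cite{GH2} (relying on the Dacorogna--Moser theorem together with the global controllability of the Euler equation) provides a time $T_1>0$ and a smooth solution $(\bar u,\bar p)$ of the incompressible Euler equations on $[0,T_1]\times\Om$ with $\bar u(0,\cdot)=0$, $\bar u\cdot\bn=0$ on $\p\Om\setminus\Sigma$, whose flow satisfies $\phi^{\bar u}(t,0,\gamma_0)\Subset\Om$ for every $t\in[0,T_1]$ and $\|\phi^{\bar u}(T_1,0,\gamma_0)-\gamma_1\|_{C^k}<\eta/2$ up to reparameterisation. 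By compactness there is $d>0$ with $\mathrm{dist}(\phi^{\bar u}(t,0,\gamma_0),\p\Om)\ge 2d$ for all $t\in[0,T_1]$.

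Next, for a small parameter $\e>0$, set $u^\e(t,x)=\tfrac1\e\, w^\e(t/\e,x)$ and $p^\e(t,x)=\tfrac1{\e^2}\,q^\e(t/\e,x)$ on $[0,\e T_1]$, where $(w^\e,q^\e)$ solves the Navier--Stokes system with viscosity $\e$, the Navier condition $\CN(w^\e)=0$ on $\p\Om\setminus\Sigma$, the initial datum $\e u_0$ (a lower-order perturbation, handled as the non-tangent data in the remarks following Theorem~\ref{th}), and the control on $\Sigma$ dictated by the previous paragraph. Since the divergence-free constraint and the operator $\CN$ are scale-invariant, $u^\e$ solves \eqref{NS} with $u^\e(0,\cdot)=u_0$. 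Running the multi-scale asymptotic expansion developed for Theorem~\ref{th} — $w^\e=\bar u+\sqrt{\e}\,v^{\mathrm{bl}}+\e\, r^\e$, with $v^{\mathrm{bl}}$ a boundary layer of width $\sqrt{\e}$ concentrated near $\p\Om\setminus\Sigma$ and $r^\e$ a remainder controlled in the bulk — yields, for every compact $K\Subset\Om$, $\|w^\e-\bar u\|_{L^\infty(0,T_1;C^k(K))}\to 0$ as $\e\to0$, where the expansion is pushed to enough orders and the data are taken $C^{k,\alpha}$, as is licit by the remarks after Theorem~\ref{th}.

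Now the change of time variable gives $\phi^{u^\e}(\e\tau,0,x)=\phi^{w^\e}(\tau,0,x)$. Since $\bigcup_{\tau\in[0,T_1]}\phi^{\bar u}(\tau,0,\gamma_0)$ stays at distance $\ge 2d$ from $\p\Om$ while $v^{\mathrm{bl}}$ lives in a $\sqrt\e$-layer, for $\e$ small enough the flow $\phi^{w^\e}(\cdot,0,\cdot)$ remains in $\{\mathrm{dist}(\cdot,\p\Om)\ge d\}$ along $\gamma_0$; and differentiating the flow ODE up to order $k$ and using the $C^k(K)$-closeness together with the $C^k$ regularity of $\gamma_0$ gives $\|\phi^{w^\e}(T_1,0,\gamma_0)-\phi^{\bar u}(T_1,0,\gamma_0)\|_{C^k}\to0$. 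Hence, for $\e$ small, $T:=\e T_1<T_0$, $\phi^{u^\e}(t,0,\gamma_0)\subset\Om$ for all $t\in[0,T]$, $\|\phi^{u^\e}(T,0,\gamma_0)-\gamma_1\|_{C^k}<\eta$, and $u^\e\in L^\infty(0,T;C^{k,\alpha}(\Om;\R^4))$ for this fixed $\e$ (the boundary layer only makes the norm $\e$-degenerate, not infinite); this proves the smooth statement. When $u_0$ is merely in $H^1(\Om)$, the same scheme applies but $w^\e$, hence $u^\e$, is only guaranteed to lie in the class \eqref{classe}; by parabolic smoothing it is nonetheless $C^\infty$ in $(0,T]\times\Om$, so $\phi^{u^\e}$ is a well-defined flow, and one then tracks only the Hausdorff ($k=0$) convergence of the transported surface, which is all that \eqref{eq:approx} requires in that case.

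The main difficulty is not the Eulerian transport, which is imported from \cite{GH1,GH2}, but its interaction with the viscous boundary layer: one must follow the \emph{flow map}, not merely the velocity, through a construction that creates an $O(1)$-amplitude correction inside a $\sqrt\e$-thin zone where $w^\e-\bar u$ is \emph{not} small, so the flow analysis has to be localised to the bulk region $\{\mathrm{dist}(\cdot,\p\Om)\ge d\}$ — which is legitimate precisely because the first step keeps $\phi^{\bar u}(\cdot,0,\gamma_0)$ uniformly inside $\Om$ — while the controllability construction itself is global and the conclusion is demanded in the strong $C^k$ topology. This is exactly where the \emph{smooth} controllability underlying Theorem~\ref{th}, rather than the weak-solution result of \cite{CMS}, is indispensable: only the former supplies the uniform bulk $C^k$ control of $w^\e-\bar u$ needed to close both \eqref{eq:exact2} and \eqref{eq:approx}.
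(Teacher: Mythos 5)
For smooth initial data your scheme is essentially the paper's: rescale by \eqref{S2scaling}, replace the return-method reference flow by the inviscid flow performing the Lagrangian transport, reuse the expansion of Theorem \ref{uj} and the remainder estimates (Propositions \ref{S5prop1}--\ref{S5prop12}) over a time interval of order one, so that the well-prepared dissipation is indeed unnecessary, and compare flow maps by Gronwall away from the $\sqrt{\e}$-thin layer, upgrading to $C^k$ through higher-order flow estimates on a compact subset of $\Omega$ (the paper invokes Koch's estimates there). One caveat: the machinery of Sections \ref{sec-aux}--\ref{sec-Remainder} is built around a reference field enjoying the structural properties of Lemma \ref{lmu0}, in particular the curl-free condition \eqref{curlu0}, which is used e.g.\ so that the backflow obeys the linearized Euler equation \eqref{BF->E}; you therefore cannot take an arbitrary controlled Euler trajectory $\bar u$ from \cite{GH2}. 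The paper takes precisely the potential flow $u^0=\frak{e}(\nabla\theta^0)$ with $\theta^0$ as in \eqref{S6eq2}, extended to $\cO$, which satisfies all of Lemma \ref{lmu0} except the (here unneeded) flushing property \eqref{flush}. Since \cite{GH2} does produce potential flows, this is an imprecision rather than an obstruction, but it must be said, because it is what makes the whole Theorem \ref{uj} apparatus reusable verbatim.

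The genuine gap is in your treatment of the $H^1$ case, where you diverge from the paper. First, ``the same scheme applies'' is not true as stated: the construction of the approximate solution requires the (extended) initial datum $u_*$ to be very regular --- see \eqref{u*} and Subsection \ref{enddp}, where $u_*\in H^{177}(\cO)$ is used --- so with $u_0\in H^1$ you cannot even define the profile $u^2$ in \eqref{u21}, nor obtain the initial bounds \eqref{R0} and the conormal estimates \eqref{N} on which the flow comparison rests. Second, ``parabolic smoothing makes the flow well-defined'' is insufficient: to define $\phi^u(t,0,\cdot)$ down to $t=0$ and to keep \eqref{eq:exact2}, one needs quantitative control near $t=0$, namely $\nabla u\in L^1((0,\nu);L^\infty(\Omega))$ and a bound like $\|u\|_{L^1((0,\nu);L^\infty(\Omega))}\lesssim\sqrt{\nu}$, ensuring that $\gamma_0$ is barely displaced during the smoothing phase; mere $C^\infty$ smoothness for positive times does not give this. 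The paper resolves both points with a two-phase argument: it first runs the \emph{uncontrolled} system on a short interval $[0,\nu]$ and uses the weighted-in-time regularization estimates of Theorem \ref{thS} to show that the flow exists, that $\gamma_*=\phi^u(\nu,0,\gamma_0)$ satisfies \eqref{S6eq444} (displacement $O(\sqrt\nu)$), and that $u(\nu,\cdot)\in H^\infty(\Omega)$; only then does it apply the smooth-data argument on $[\nu,\nu+\e]$ and concatenate the flows, using that $\phi^u(\nu+\e,\nu,\cdot)$ is Lipschitz. Without this preliminary regularization step and its quantitative flow estimate, your $H^1$ case does not close.
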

%
Theorem \ref{t-lag} therefore extends to the case of the  Navier-Stokes equations the results mentioned above for the case of the Euler equations and of the steady Stokes system.
It answers, in the case of the Navier conditions,  to  an open problem mentioned
 at the end of the introduction of  \cite{GH3}, in  \cite[Section 3.3.3]{G}   and in  \cite[Perspective 2]{CMS-proc}.

\begin{remark}  \label{assert}
In Theorem \ref{t-lag} we only succeed to assert that there exists a time $T \in (0,T_0)$ for which the conclusion holds, and we are not able to guarantee that $T=T_0$ is convenient.   The difficulty is to prevent a possible blowup due to the vorticity associated with the surface. This difficulty  is typical of the 3D case and was already observed in the case of the Euler  equations, see \cite{GH2}.
\end{remark}
\begin{remark}
The condition that $\gamma_{0} $ and $ \gamma_{1} $  surround the same volume is well defined since by the Jordan-Brouwer separation theorem the set $\R^{3} \setminus \gamma$ has two connected components, only one of which being bounded.
\end{remark}
\begin{remark}
The conditions that
$\gamma_0$ and $\gamma_1$ are isotopic and surround the same volume are necessary for the existence of a smooth volume-preserving flow driving  $\gamma_{0}$ exactly to $\gamma_{1}$.
\end{remark}
\begin{remark}
As in the previous result, see Remark \ref{impli},  the boundary control is implicit in the statement of Theorem \ref{t-lag} as it is given as  traces on $(0,T)\times \Sigma$ of the solution.
\end{remark}

\subsection{Organization of the rest of the paper}

In Section \ref{sec-scheme} we give a scheme of the proof of Theorem \ref{th}.
It will rely on two main intermediate results: Theorem \ref{uj} where an approximate solution is built thanks to a multi-scale asymptotic expansion involving some boundary layers correctors, and the \textit{a priori} estimate
\eqref{APRIORI-R} for the remainder term associated with this approximate solution.
An auxiliary problem associated with the boundary layer is investigated in Section \ref{sec-aux}.
Then the proof of Theorem \ref{uj} is given in
Section \ref{sec-approx}. The proof of the  \textit{a priori} estimate
\eqref{APRIORI-R} is given in Section \ref{sec-Remainder}.
Finally Section \ref{sec-proof-lagp}
is devoted to the proof of   Theorem \ref{t-lag}.

\section{Scheme of proof of Theorem \ref{th}}
\label{sec-scheme}

This section is devoted to a scheme of proof of Theorem \ref{th}.
We only highlight here the key steps of the proof, postponing to the next sections the proofs of several important intermediate results.
As in \cite{Marbach,CMS,CMSZ} we will use the ``well-prepared dissipation" method which consists in a rapid and violent stage where one makes use of the inviscid part of the system and of a second stage devoted to the dissipation of the boundary layers due to the discrepancy between the inviscid and the viscous case.
As in  \cite{CMS,CMSZ} this method is implemented by the means of multi-scale asymptotic expansions.
The extension of this strategy to solutions of the Navier-Stokes equations in the space \eqref{classe}, rather than in the weaker class \eqref{w-classe},   requires much attention, in particular due to the fast scale associated with the boundary layer which leads to a more accurate asymptotic expansion and to a more involved preparation of the dissipation of  various terms describing the fluid behaviour in the boundary layer.

\subsection{Reduction to approximate controllability problem  from a smooth data}
\label{serre-chevalier}

In this section we reduce the proof of Theorem \ref{th} to a combination of a regularisation result on the uncontrolled Navier-Stokes system, that is on the Navier-Stokes system
with Navier condition on the whole boundary $\partial \Omega$, of a small-time
local exact null controllability result and of a global approximate null controllability result.

\smallskip

\begin{itemize}
 \item[(1)] Let us first state the regularization result.
\end{itemize}

\begin{theorem}\label{S}
{\sl Let $T>0$. For any $p\geq 1$,
there exists a continuous function $C_{T,p}$ with $C_{T,p}(0)=0$, such that, if $u_0$ is  in $H^1(\Omega)$, divergence free and tangent to $\partial \Omega$, then there  are $T_1$ in $(0,T)$ and a unique strong solution $u\in C([0,T_1];H^1(\Omega))\cap L^2([0,T_1];H^2(\Omega))$ to
\eqref{NS}
 with  $\CN(u)=0$ on $\partial\Omega$
 and
\begin{eqnarray}\label{S1eqS}
\|u(T_1,\cdot)\|_{H^p(\Omega)}\leq C_{T_1,p}(\|u_0\|_{H^1(\Omega)}).
\end{eqnarray}}
\end{theorem}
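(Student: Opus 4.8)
The plan is to prove Theorem~\ref{S} as a local-in-time smoothing result for the Navier-Stokes system with the full Navier-slip-with-friction boundary condition $\CN(u)=0$ on $\partial\Omega$. The natural route is a classical well-posedness-plus-parabolic-smoothing argument, organized in three stages: (i) local existence and uniqueness of a strong $H^1$ solution on a time interval whose length is controlled from below by $\|u_0\|_{H^1}$; (ii) instantaneous smoothing, i.e. the solution lies in $H^p$ for every $p$ at positive times, with a quantitative bound; (iii) choosing a good time slice $T_1\in(0,T)$ so that \eqref{S1eqS} holds with a modulus of continuity $C_{T_1,p}$ vanishing at $0$.

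\medskip

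For step (i) I would work with the Stokes operator $A$ associated with the boundary conditions $u\cdot\bn=0$, $\CN(u)=0$ on $\partial\Omega$. The point is that the Navier condition is, up to lower-order terms encoded by the smooth matrix $M$, a homogeneous Neumann-type condition for the symmetric gradient, hence the corresponding Stokes operator is self-adjoint, nonnegative, with compact resolvent, and generates an analytic semigroup on the space of divergence-free $L^2$ vector fields tangent to $\partial\Omega$; its domain is contained in $H^2(\Omega)$. (The required elliptic regularity for the stationary Stokes problem with Navier conditions in a smooth bounded domain is standard; see the references invoked in the paper.) One then solves \eqref{NS} by a fixed-point argument in $C([0,T_1];H^1)\cap L^2([0,T_1];H^2)$ using the variation-of-constants formula $u(t)=e^{-tA}u_0-\int_0^t e^{-(t-s)A}\mathbb{P}(u\cdot\nabla u)(s)\,ds$, where $\mathbb{P}$ is the Leray projector; the nonlinear term $u\cdot\nabla u$ is estimated in $L^2$ by $\|u\|_{H^1}\|u\|_{H^2}$ via Sobolev embedding and interpolation in 3D, and the smoothing estimates $\|e^{-tA}\|_{\mathcal{L}(L^2,H^1)}\lesssim t^{-1/2}$, $\|e^{-tA}\|_{\mathcal{L}(L^2,H^2)}\lesssim t^{-1}$ close the contraction on a time interval of length $T_1\sim\min(T,c\,\|u_0\|_{H^1}^{-4})$. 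Uniqueness in this class follows from a Gr\"onwall argument on the difference of two solutions. Here one must be a little careful because $\mathbb{P}$ does not commute with $A$ under Navier conditions, so it is cleaner to keep the pressure and use the full (Stokes) resolvent estimates rather than projecting; alternatively one runs the energy/$H^1$ a priori estimates directly on \eqref{NS}, using that the boundary terms produced by integration by parts are controlled by $\CN(u)=0$ up to $\|M\|$-dependent lower-order contributions, and then obtains existence by Galerkin approximation in the eigenbasis of $A$.

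\medskip

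For step (ii), the analyticity of the semigroup gives instantaneous $H^\infty$ regularity: a standard bootstrap shows that a strong solution in $C([0,T_1];H^1)\cap L^2([0,T_1];H^2)$ is actually $C^\infty$ in space (and time) for $t>0$, with quantitative bounds $\sup_{t\in[\tau,T_1]}\|u(t)\|_{H^p}\le \Phi_{p,\tau}(\|u_0\|_{H^1})$ for any $\tau>0$ and any $p$, where $\Phi_{p,\tau}$ is continuous; the $\tau^{-(p-1)/2}$-type weights coming from the semigroup are absorbed once we fix a sublevel of $T_1$. Then for step (iii) I pick $T_1$ inside $(0,T)$ comparable to $\min(T,\|u_0\|_{H^1}^{-4})$ — so that $T_1\to0$ as $\|u_0\|_{H^1}\to0$ — and, to force $C_{T_1,p}(0)=0$, I note that for small $u_0$ one has the linear-in-$u_0$ bound $\|u(T_1)\|_{H^p}\lesssim T_1^{-(p-1)/2}\|u_0\|_{H^1}+(\text{quadratic in }\|u_0\|_{H^1})$; combined with the specific choice $T_1\sim\|u_0\|_{H^1}^{-4}$ on small data and $T_1$ fixed on large data, the resulting bound defines a continuous function $C_{T_1,p}$ of $\|u_0\|_{H^1}$ that vanishes at the origin. (One may also first regularize trivially: since for $u_0=0$ the solution is identically $0$, continuity of the solution map near $0$ in the strong topology already yields $C_{T,p}(0)=0$; a clean way is to prove the solution map $u_0\mapsto u(T_1,\cdot)$ is continuous from a ball of $H^1$ into $H^p$ and uses $u\equiv 0$ at $u_0=0$.)

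\medskip

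I expect the main obstacle to be the boundary condition bookkeeping: establishing the elliptic/semigroup theory for the Stokes operator with the Navier-slip-with-friction condition $\CN(u)=(D(u)\bn+Mu)_{\tan}=0$ — in particular handling the non-constant, curvature- and $M$-dependent lower-order boundary terms so that the $H^2$ (and higher $H^p$) a priori estimates genuinely close, and checking that the boundary condition is preserved by the flow and compatible through the bootstrap (compatibility conditions at $t=0$ are not assumed, which is exactly why the smoothing is only for $t>0$). The interior nonlinear estimates and the choice of $T_1$ are routine once the linear Navier-Stokes-Stokes machinery with Navier conditions is in place; much of this is already available in the literature cited, so the proof is mostly a matter of assembling it and tracking the dependence of $T_1$ and of the $H^p$ bound on $\|u_0\|_{H^1}$.
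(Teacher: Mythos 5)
Your outline is essentially sound, but it takes a genuinely different route from the paper. You build the result on the analytic-semigroup/mild-solution machinery for the Stokes operator with Navier-slip-with-friction conditions (fixed point via Duhamel, then smoothing from analyticity and a bootstrap), whereas the paper's proof (Appendix~\ref{appa}, Theorem~\ref{thS}) never invokes semigroup theory: it runs time-weighted energy estimates by induction on $p$, applying $\partial_t^j$ to the equation, using the boundary identity of Lemma~\ref{SAlem1} to convert the Navier condition into a controllable boundary term, Korn's inequality, and a single Cattabriga--Solonnikov-type elliptic estimate for the Stokes problem with Navier conditions (Lemma~\ref{SAlem2}) to trade time derivatives for space derivatives; this yields the explicit weights $t^{(p-1)/2}$ in \eqref{SAeq-2}, and \eqref{S1eqS} follows by evaluating at $t=T_1$. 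What your approach buys is a shorter nonlinear argument once the linear theory is in place; what it costs is precisely that linear theory, which is the heavy part here (analyticity of the Navier-slip Stokes semigroup, and for the higher $H^p$ bounds either fractional powers with their boundary-compatibility characterizations or an iterated Duhamel/bootstrap that you would have to carry out quantitatively). The paper's energy route needs only one elliptic lemma and, importantly, produces the sharp singular behavior in $t$ near $t=0$, which is not merely cosmetic: the weighted estimates \eqref{SAeq-2} are reused as such in the proof of Theorem~\ref{t-lag} (Section~\ref{sec-proof-lagp}). Two small corrections to your write-up: the local existence time $T_1\sim\min(T,c\|u_0\|_{H^1}^{-4})$ does \emph{not} tend to $0$ as $\|u_0\|_{H^1}\to0$ (it saturates at a fixed fraction of $T$, which is in fact what you want, since then the smoothing weight $T_1^{-(p-1)/2}$ stays bounded and the linear-plus-quadratic bound gives $C_{T_1,p}(0)=0$); and the Stokes form with friction matrix $M$ need not be nonnegative (the boundary term $\int_{\partial\Omega}(M-M_{\rm w})u\cdot u$ has no sign), only bounded below via Korn and trace interpolation, so the operator is self-adjoint and bounded below rather than nonnegative --- harmless, but it should be stated that way.
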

In the case where the no-slip conditions is imposed on the boundary $\partial \Omega$,  rather than the Navier conditions  $\CN(u)=0$, such a result dates back to the pioneering work of Leray and Hopf, see \cite{Leray, hopf}.
In the case of the Navier conditions  the part  of Theorem \ref{S} regarding the existence and uniqueness of local-in-time strong solutions with $H^1$ initial data  is also very classical; we refer to the introduction of  \cite{CMS} for an overview of the literature on the subject.
The  part  of Theorem \ref{S}  regarding the regularization, that is the bounds \eqref{S1eqS} for $p>1$, is also  part of the folklore on the Navier-Stokes equations with Navier boundary conditions, see for instance \cite[Lemma 9]{CMS}. As we will need a slight generalization of the result in \cite{CMS} we present a detailed proof of Theorem \ref{S}
 in  the Appendix
 \ref{appa}. In fact, Theorem \ref{thS} in the Appendix \ref{appa} will exhibit the exact singular behavior of the solution near the time zero.

\begin{itemize}
 \item[(2)]
The second ingredient is the  following small-time
local exact null controllability result when the initial data is small in $H^3$ established in~\cite{Guerrero} by Guerrero.
\end{itemize}

\begin{theorem}\label{th-guerrero}
{\sl Let $T>0$. There exists $\eta >0$ such that for any
 $u_0\in H^{3} (\Omega)$  divergence free, tangent to $\partial \Omega$ and satisfying $\|u(T,\cdot)\|_{H^3(\Omega)}<\eta$, there exists $u\in C([0,T];H^3(\Omega))\cap L^2((0,T); H^4(\Omega))$ satisfying (\ref{NS}) and $u(T,\cdot)=0.$}
\end{theorem}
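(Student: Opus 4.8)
The plan is to reduce the statement to the linearized Stokes control problem around zero and then recover the nonlinear statement by a fixed point. First I would linearize \eqref{NS} around the trivial state: writing the solution as a small perturbation, the leading order system is the Stokes system $\partial_t v - \Delta v + \nabla \pi = f$, $\dive v = 0$, with the Navier condition $\CN(v)=0$ on $\partial\Omega\setminus\Sigma$ and a control acting on $\Sigma$, supplemented by a source term $f$ that will eventually absorb the quadratic term $v\cdot\nabla v$. The heart of the matter is to prove a null controllability result for this linear system with a source term in an appropriate weighted space, together with a quantitative observability estimate. Concretely, one establishes a global Carleman inequality for the adjoint Stokes system (the backward heat-type system with the dual Navier boundary conditions), with weights of the form $e^{-s\varphi(t,x)/t^k(T-t)^k}$ blowing up at $t=0$ and $t=T$ and a spatial weight adapted to $\Sigma$; duality then yields the existence of a control driving $v$ to zero at time $T$ with norms controlled by the data and the source, and with the solution living in spaces weighted so as to vanish rapidly near $t=T$.

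The second step is the fixed point: set up the map $\Lambda$ sending a given small state $\bar v$ (in a suitable weighted $C([0,T];H^3)\cap L^2((0,T);H^4)$-type ball) to the controlled solution $v$ of the linear problem with source $f = -\bar v\cdot\nabla\bar v$ provided by step one. One must check that $\Lambda$ maps a small ball into itself — here the $H^3$ smallness hypothesis on the initial data, together with the smoothing of the Stokes semigroup and the weighted estimates, is used to control $\|\bar v\cdot\nabla\bar v\|$ by $\|\bar v\|^2$ with a small constant — and that $\Lambda$ is continuous and compact (or a contraction), so that Schauder's (or Banach's) fixed point theorem applies. The fixed point is then the desired solution $u$, which lies in $C([0,T];H^3(\Omega))\cap L^2((0,T);H^4(\Omega))$ by the regularity theory for Stokes with the Navier condition and satisfies $u(T,\cdot)=0$; propagating the $H^3$ regularity requires checking that the weighted spaces used in the linear theory are compatible with the parabolic regularity estimates up to $t=T$.

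The main obstacle is the linear Carleman estimate with boundary control and the Navier slip-with-friction condition: unlike the no-slip case, the boundary term $\CN(u)=(D(u)\bn + Mu)_{\tan}=0$ does not kill all boundary contributions arising in the integrations by parts, and one must carefully choose the Carleman weight and handle the friction matrix $M$ and the tangential derivatives on $\partial\Omega$, as well as arrange that the weight function has no critical points in $\overline\Omega\setminus\Sigma$ (which is possible precisely because $\Sigma$ is a non-empty open subset of the boundary). Since, however, this is exactly the content of the result of \cite{Guerrero} invoked in the statement, I would simply cite that reference for this step rather than redo the Carleman analysis; the role of the present discussion is only to indicate how Theorem \ref{th-guerrero} fits into the overall strategy and which norms it delivers.
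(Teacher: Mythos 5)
Your proposal ends by citing \cite{Guerrero} for the key Carleman/fixed-point machinery, which is exactly what the paper does: Theorem \ref{th-guerrero} is stated there as a recalled result of Guerrero and no proof is given in the paper itself. So your treatment matches the paper's, with your sketch of the Carleman estimate for the adjoint Stokes system and the fixed point argument serving only as a (reasonable) outline of the cited reference's internal strategy.
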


\begin{itemize}
 \item[(3)]
 The third ingredient will be  the following global approximate result.
 \end{itemize}

\begin{theorem}\label{th-app}
{\sl Let $T>0$, and $u_0\in H^{200} (\Omega)$  divergence free and tangent to $\partial \Omega$. For any $\delta >0$, there exists $u\in C([0,T];H^1(\Omega))\cap L^2((0,T); H^2(\Omega))$ satisfying (\ref{NS}) and $\|u(T,\cdot)\|_{H^1(\Omega)}<\delta.$}
\end{theorem}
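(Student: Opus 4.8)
The plan is to implement the ``well-prepared dissipation'' strategy via a multi-scale asymptotic expansion, following the scheme of \cite{CMS,CMSZ} but carrying it out in the stronger $H^1$-in-space topology required by \eqref{classe}. The starting point is to rescale time so that the small-time problem becomes a large-viscosity problem: setting $u^\e(t,x) = \e\, u(\e t, x)$ and $p^\e(t,x) = \e^2 p(\e t,x)$ transforms \eqref{NS} on $[0,T]$ into a Navier-Stokes system on $[0,T/\e]$ with viscosity coefficient $\e$ and initial data $\e u_0$ of size $\mathcal{O}(\e)$. After a further rescaling, the point is that on a fixed time interval $[0,T]$ we must approximately control the Navier-Stokes system $\partial_t u^\e + u^\e\cdot\nabla u^\e - \e \Delta u^\e + \nabla p^\e = 0$ with a control on $\Sigma$, driving a small initial datum to something $\delta$-small in $H^1$; the gain is that the viscosity is now a small parameter, so the leading-order dynamics is the Euler equation.

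The key steps, in order, are as follows. First, one invokes the known small-time global approximate (in fact, null) controllability of the \emph{Euler} equation with controls on $\Sigma$ in a simply connected domain (the return-method construction of Coron), to produce a smooth reference trajectory $(\bar u^0, \bar p^0)$ of the Euler system that starts from $u_0$ and ends (essentially) at rest, and which is, moreover, supported away from $\partial\Omega\setminus\Sigma$ for most of the time interval so that no genuine boundary layer is generated except during a controlled window; this is the ``inviscid, rapid and violent stage'' mentioned in the scheme. Second, one builds the asymptotic expansion $u^\e \approx \bar u^0 + \sqrt{\e}\, v(t, x, \varphi(x)/\sqrt{\e}) + \e u^1 + \cdots$, where $v$ is the boundary layer profile solving a linear parabolic (Prandtl-type) equation in the fast normal variable $z = \varphi(x)/\sqrt{\e}$, driven by the trace of $\bar u^0$ on $\partial\Omega\setminus\Sigma$ through the Navier condition $\CN(u^\e)=0$; because the friction operator $\CN$ only involves $D(u)\bn$ rather than the full gradient, this boundary layer is of amplitude $\sqrt\e$ (not $\mathcal{O}(1)$ as in the no-slip case), which is exactly what makes the Navier case tractable — but in the finer $H^1$ topology one must push the expansion one order further and track the fast-scale derivatives carefully, since $\|\sqrt{\e}\, v(\cdot, \varphi/\sqrt\e)\|_{H^1} = \mathcal{O}(1)$ rather than $\mathcal{O}(\sqrt\e)$. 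Third — this is the ``well-prepared dissipation'' — one arranges the control, and hence $\bar u^0$, so that the boundary layer is \emph{switched off} well before the final time, after which the profile $v$ decays (it solves a heat-type equation on a half-line with no forcing), and one quantifies this decay carefully enough, in the topology of \eqref{classe}, to guarantee that the layer contribution to $\|u^\e(T/\e,\cdot)\|_{H^1}$ is $\mathcal{O}(\delta)$. Fourth, one closes an energy estimate for the remainder $R^\e = u^\e - (\text{expansion})$, in $C([0,T];H^1)\cap L^2((0,T);H^2)$, absorbing the commutator and boundary-layer source terms; this is precisely the \textit{a priori} estimate \eqref{APRIORI-R} announced in the scheme. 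Finally, undoing the rescaling produces, for $\e$ small enough, a solution in the class \eqref{classe} with $\|u(T,\cdot)\|_{H^1(\Omega)} < \delta$; the high regularity $H^{200}$ assumed on $u_0$ is used freely to give all the room needed for the expansion, the Euler trajectory, and the boundary layer profiles to be smooth.

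The main obstacle is the third and fourth steps together: controlling the boundary layer and its remainder in the $H^1$ topology rather than in $L^2$. In the finer topology the fast-scale weight $1/\sqrt\e$ costs a derivative, so naive estimates lose the smallness; one must genuinely exploit the well-prepared dissipation — i.e. the fact that the layer has had time to dissipate, together with the parabolic smoothing and the exponential-in-$z$ decay of the profiles — to recover $\delta$-smallness of quantities that are only $\mathcal{O}(1)$ before dissipation. Getting the bookkeeping of scales right so that each term in the expansion is controlled in $H^1$ (and $H^2$ in $L^2_t$), and so that the remainder equation is genuinely closed, is where all the work lies; the return-method/Euler-controllability input is, by contrast, available off the shelf.
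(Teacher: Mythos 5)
Your proposal reproduces the broad skeleton of the paper (time scaling, Euler reference flow, boundary layer expansion, remainder estimate), but it misses the mechanism that actually makes the argument work, and it gets the smallness bookkeeping wrong at exactly that point. After the scaling \eqref{S2scaling}, the requirement at the rescaled final time is $\|u^{\e}(T/\e,\cdot)\|_{H^1(\cO)}=o(\e)$ (see \eqref{DGS}), because undoing the scaling multiplies by $1/\e$; your stated targets ("$\delta$-small in $H^1$", "the layer contribution is $\mathcal{O}(\delta)$") are off by a factor $\e$ and yield nothing for the original problem. For this $o(\e)$ smallness, what you propose --- switch the layer off before the final time and use the natural decay of the heat equation in the fast variable --- is precisely the naive argument the paper shows to be insufficient: the free decay gives only $\mathcal{O}(\e)$ at $t=T/\e$, cf. \eqref{eq.relax.v.naive}. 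The actual content of the well-prepared dissipation method is to force, by means of an additional control in the layer equation, finitely many vanishing moment conditions \eqref{eq.moments.k} on the layer profile at time $T$; this is possible only thanks to the flushing property \eqref{flush} of the reference flow, and it is what produces the enhanced decay $\t^{-\gamma}$ with $\gamma>1$ (Lemma \ref{lmfc}, Proposition \ref{lmu}), hence \eqref{uja} and \eqref{DGS}. Nothing in your proposal generates these moment conditions, so the final $H^1$ smallness cannot be reached along your route.

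A second, related misstep is your description of the return method. In the paper the $O(1)$ reference Euler flow $u^0$ of Lemma \ref{lmu0} starts and ends at rest and is curl-free with the flushing property; the initial data, which after scaling is $\e u_*$ and hence of size $\e$, is handled at the next order of the expansion through the controlled \emph{linearized} Euler equation (Lemma \ref{np}), whose null controllability again rests on \eqref{flush}. A reference Euler trajectory "starting from $u_0$ and ending at rest" conflates the $O(1)$ and $O(\e)$ levels of the expansion. Moreover one cannot arrange the reference flow to be "supported away from $\partial\Omega\setminus\Sigma$ for most of the time interval so that no genuine boundary layer is generated": the flushing property forces $u^0$ to move along the whole uncontrolled boundary during $(0,T)$, so $\CN(u^0)\neq 0$ there and the layer is created throughout the control phase; it is only tamed afterwards by the prepared dissipation. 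Smaller issues --- the size of the layer in $H^1$ is $\mathcal{O}(\e^{1/4})$, not $\mathcal{O}(1)$; the domain extension with distributed controls supported in $\overline{\cO}\setminus\overline{\Omega}$, the higher-order correctors $w^j$, $\nabla\phi^j$ needed for \eqref{NSA2}--\eqref{NSA3}, and the conormal-energy machinery with the auxiliary quantity \eqref{defeta} used to close \eqref{APRIORI-R} are absent --- are secondary compared with the missing moment conditions, which are the heart of the proof.
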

This last result requires some hard work which will be done below.

On the other hand, with these three ingredients, the proof of Theorem \ref{th} is plain sailing.
\begin{proposition}\label{implie1}
{\it A combination of Theorem \ref{S}, Theorem \ref{th-guerrero} and Theorem \ref{th-app} implies Theorem \ref{th}.}
\end{proposition}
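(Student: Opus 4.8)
The plan is the classical three–stage concatenation, with one auxiliary regularisation inserted to bridge the smoothness gap between the stages. Stage one uses the uncontrolled Navier--Stokes flow to turn the $H^1$ datum into a very smooth state; stage two drives this state close to rest via the global approximate controllability of Theorem \ref{th-app}; stage three, after a short additional uncontrolled evolution upgrading $H^1$–smallness to $H^3$–smallness, kills the remainder exactly via Guerrero's result, Theorem \ref{th-guerrero}. The only genuine care needed is to fix the time lengths and smallness thresholds in a non-circular order and to verify that the concatenated velocity field lies in the class \eqref{classe}.

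So let $T>0$ and let $u_0\in H^1(\Omega)$ be divergence free and tangent to $\partial\Omega$. I would first fix the downstream parameters. Applying Theorem \ref{th-guerrero} with time length $T/4$ gives a threshold $\eta>0$. Then, using the regularising effect of the uncontrolled Navier--Stokes system (Theorem \ref{S} with $p=3$, or equivalently the small-data global well-posedness theory for \eqref{NS} with Navier condition on the whole of $\partial\Omega$ together with parabolic smoothing), one obtains $\delta_1\in(0,\eta]$ such that any divergence-free $v_0\in H^1(\Omega)$ tangent to $\partial\Omega$ with $\|v_0\|_{H^1(\Omega)}<\delta_1$ generates a strong solution $v$ of \eqref{NS} with Navier condition on all of $\partial\Omega$, on $[0,T/4]$, belonging to the class \eqref{classe} there, remaining divergence free and tangent to $\partial\Omega$, and with $\|v(T/4,\cdot)\|_{H^3(\Omega)}<\eta$; the constant relating $\|v(T/4,\cdot)\|_{H^3(\Omega)}$ to $\|v_0\|_{H^1(\Omega)}$ depends only on the fixed length $T/4$, so $\delta_1$ is well defined once $\eta$ is, and no circularity occurs.

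Next I would run the four pieces. First, Theorem \ref{S} with $p=200$ and time budget $T/4$ provides $T_1\in(0,T/4)$ and a strong solution $u$ on $[0,T_1]$ in the class \eqref{classe}, solving \eqref{NS} with Navier condition on all of $\partial\Omega$ (a legitimate instance of \eqref{NS} in which the freedom on $\Sigma$ is simply not used), with $u(0,\cdot)=u_0$ and $u(T_1,\cdot)\in H^{200}(\Omega)$ divergence free and tangent to $\partial\Omega$. Second, Theorem \ref{th-app} applied on $[T_1,T_1+T/4]$ with datum $u(T_1,\cdot)$ and $\delta=\delta_1$ yields a solution of \eqref{NS} in the class \eqref{classe} reaching, at time $T_1+T/4$, a state $w$ with $\|w\|_{H^1(\Omega)}<\delta_1$; one may take $w$ divergence free and tangent to $\partial\Omega$ (automatic if the approximate-controllability construction, as in \cite{CMS}, ends with a short genuinely uncontrolled sub-interval). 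Third, feeding $w$ into the regularisation fixed above gives, on $[T_1+T/4,T_1+T/2]$, a solution in the class \eqref{classe} reaching a state $\tilde v$ with $\tilde v$ divergence free, tangent to $\partial\Omega$ and $\|\tilde v\|_{H^3(\Omega)}<\eta$. Fourth, Theorem \ref{th-guerrero} applied on $[T_1+T/2,T_1+3T/4]$ with datum $\tilde v$ gives a solution in $C([T_1+T/2,T_1+3T/4];H^3(\Omega))\cap L^2((T_1+T/2,T_1+3T/4);H^4(\Omega))$, hence in the class \eqref{classe}, with value $0$ at time $T_1+3T/4$.

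Finally, I would concatenate the four pieces and, since $T_1+3T/4<T$, extend by $(u,p)\equiv(0,0)$ on $[T_1+3T/4,T]$, which still solves \eqref{NS}; alternatively the slack $T-T_1-3T/4>0$ can be absorbed into the length of the second stage, on which Theorem \ref{th-app} holds for any positive time. The concatenation is continuous with values in $H^1(\Omega)$ because each piece is and consecutive pieces agree at the junctions; its $L^2((0,T);H^2(\Omega))$ norm is bounded by the sum of the subinterval norms; it is divergence free and satisfies $u\cdot\mathbf{n}=0$ and $\CN(u)=0$ on $\partial\Omega\setminus\Sigma$ piecewise, hence on $(0,T)$; and $u(0,\cdot)=u_0$, $u(T,\cdot)=0$. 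This is precisely Theorem \ref{th}. Within this reduction there is essentially no obstacle --- this is the ``plain sailing'' referred to above --- the entire difficulty being concentrated in Theorem \ref{th-app}; the only points requiring a little attention are the forward (non-circular) ordering of the parameter choices, namely $T$, then the time budgets, then $\eta$, then $\delta_1$, and only then the application of Theorem \ref{th-app}, together with the elementary $H^1\to H^3$ smoothing bridge that matches the conclusion of Theorem \ref{th-app} to the hypothesis of Theorem \ref{th-guerrero}.
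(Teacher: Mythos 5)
Your proposal is correct and follows essentially the same route as the paper: the same four-stage chaining (Theorem \ref{S} with $p=200$, then Theorem \ref{th-app}, then Theorem \ref{S} with $p=3$ to bridge $H^1$-smallness to the $H^3$-threshold of Guerrero, then Theorem \ref{th-guerrero}, then extension by zero), with the same non-circular ordering of parameters ($\eta$ first, then the smallness $\delta$ via the regularisation constant). The only cosmetic difference is where you place the short regularisation time $T_1$ in the bookkeeping, which does not change the argument.
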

\begin{proof}
The proof will make use of Theorem \ref{th-guerrero}, of Theorem \ref{th-app} and of Theorem \ref{S} twice. We therefore cut the time interval
 $[0,T]$ in four parts and consider $T/4$ as a basic time to which applies each of the three theorems mentioned above. We also need to care about the choice of the small parameters in the right order.
Let  $\eta >0$ be associated with  $T/4$ by Theorem \ref{th-guerrero}.
Let
 $T_1$ in $(0,T/4)$ and
  $\delta >0$  such that $C_{T_1,3}(\delta ) < \eta$, where $C_{\cdot,3}$ is the function mentioned in Theorem \ref{S} in the case where $p=3$.
With these preliminaries at hand we can now proceed to the proof of Proposition \ref{implie1} by chaining
 some appropriate applications of the three theorems:
 we apply first Theorem \ref{S} with $T/4$ instead of $T$ and $p = 200$, then Theorem \ref{th-app} with
$\delta > 0$ as previously chosen, then Theorem \ref{S} again, with  $T/4$ instead of $T$ and $p = 3$, so that
$\|u(T/2 + T_1,\cdot)\|_{H^3(\Omega)}< C_{T_1,3}(\delta ) < \eta$. Then we apply Theorem \ref{th-guerrero}   with  $T/4$ instead of $T$
and therefore obtain the existence of
$u$ in  $C([0,T'];H^1(\Omega))\cap L^2([0,T']; H^2(\Omega)) $
satisfying (\ref{NS}) on $[0,T']$ and $u(T',\cdot)=0$, where $T' = 3T/4 + T_1 < T$.
Then extending $u$ by $0$ for $t$ in $(T',T)$ provides the existence of
$u$ in   $C([0,T];H^1(\Omega))\cap L^2((0,T); H^2(\Omega)) $
satisfying (\ref{NS}) on $[0,T]$  and $u(T,\cdot)=0$.
\end{proof}

\subsection{Domain extension}
\label{sec-ext}

Let $\mathcal{O}$ be a  smooth extension of the initial domain $\Omega$ such that $\Sigma \subset \mathcal{O}$ and $\partial\Omega \backslash\Sigma\subset \partial\mathcal{O}.$ We denote $\mathbf{n}$ to be the outward pointing normal to the extended domain $\mathcal{O}$, which coincides with the outward pointing normal to $\Omega$ on the uncontrolled boundary $\partial\Omega\backslash\Sigma$. We also need to introduce a smooth function $\varphi:\mathbb{R}^3\rightarrow\mathbb{R}$ such that $\varphi=0$ on $\partial \mathcal{O}$, $\varphi>0$ in $\mathcal{O}$ and $\varphi<0$ outside of $\overline{\mathcal{O}}.$ Moreover, we assume that $|\varphi(x)|=\mbox{dist}(x,\partial\mathcal{O})$ in a small neighborhood of $\partial\mathcal{O}.$ Hence we can extend the normal $\mathbf{n}$ smoothly by $-\nabla\varphi$ to the full domain $\cO.$ We define
$\mathcal{V}_{\delta}:=\{x\in \cO:0\leq \varphi(x)<\delta\}$.
Thus there exists a $\delta_0>0$, such that $\varphi=0$ on $\partial \mathcal{O}$ and $|\mathbf{n}|=1$ in $\mathcal{V}_{\delta_0}$.

Theorem \ref{th-app} follows  from the following result.
\begin{theorem}\label{th-extended}
{\sl Let $T>0$ and $u_*\in H^{200}  (\cO)$ divergence free and tangent to $\partial\cO$. Then  for any $\delta >0$, there are
$u$ in $ C([0,T];H^1(\cO))\cap L^2((0,T);H^2(\cO))$, $\xi\in C([0,T];H^1(\cO))$, supported in $\overline{\cO}\backslash\overline{\Omega}$ and $\sigma  $  a smooth scalar function  supported in $(0,T) \times \overline{\cO}\backslash\overline{\Omega}$,
such that
\begin{gather}\label{NSC}
\begin{cases}
\partial_t u+u\cdot\nabla u-\Delta u+\nabla p=\xi \quad \mbox{ and } \quad
\dive u =\sigma  \quad \mbox{ in } \cO,\\
u\cdot \mathbf{n}=0 \quad \mbox{ and } \quad\CN(u)=0
\mbox{ on } \partial\cO,\\
u(0,\cdot)=u_* \quad \mbox{ in } \cO,
\end{cases}
\end{gather}
and $\|u(T,\cdot)\|_{H^1(\cO)} <\delta.$}
\end{theorem}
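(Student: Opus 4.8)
The plan is to prove Theorem~\ref{th-extended} by implementing the ``well-prepared dissipation'' method on the extended domain $\cO$, where the uncontrolled boundary $\partial\cO$ now carries the Navier condition and the original controlled part $\Sigma$ has been absorbed into the interior, so that the control is realized through the source terms $\xi$ and $\sigma$ supported in $\overline{\cO}\setminus\overline{\Omega}$. First I would rescale time and amplitude: replacing $u$ by $\e^{-1}u(\e^{-1}t,\cdot)$-type ansatz turns the small-time, large-data problem into a large-time problem for data of size $\e$ with a viscosity $\e$ in front of the Laplacian, so that on the rescaled time interval the leading-order dynamics is governed by the Euler equation with a flushing control. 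The backbone is then a classical fact for the Euler equation on $\cO$ with the impermeability condition on $\partial\cO$: by the return method (following the constructions of Coron and of Glass, adapted to the Navier-slip geometry exactly as in \cite{CMS}), one can choose a smooth reference flow with a nontrivial ``flushing'' phase that carries every fluid particle out through the phantom boundary $\partial\cO\setminus\partial\Omega$ in finite time, so that the inviscid solution from data $u_*$ reaches a state that is $O(\e^\infty)$-small, while the control terms $\xi,\sigma$ remain supported away from $\overline{\Omega}$.

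The heart of the argument, and the place where the refinement over \cite{CMS} is needed, is the construction of a sufficiently accurate multi-scale asymptotic expansion $u^{\mathrm{app}}=\sum_{j} \e^{j}\big(u^j(t,x)+\e\, v^j(t,x,\varphi(x)/\e)\big)$ — this is what Theorem~\ref{uj} referenced in the excerpt will provide — in which the $v^j$ are boundary-layer profiles, exponentially localized in the fast variable $z=\varphi(x)/\e$, correcting the mismatch between the Navier condition $\CN(u)=0$ and the tangency-only condition satisfied by the Euler flow. The novelty is that, to close estimates in the $H^1\cap L^2H^2$ topology of \eqref{classe} rather than in Leray's class \eqref{w-classe}, one must expand to a higher order $j$ and, crucially, ``well-prepare'' the dissipation of the boundary layer: one arranges, by a careful choice of the reference Euler flow near $\partial\cO$ (in particular making the tangential velocity at the boundary vanish to high order in time at the end of the flushing phase), that the residual boundary layers at the start of the dissipation phase are small enough that the natural parabolic decay $e^{-ct/\e}$ of the layer, integrated against the $\e$-dependent weights coming from differentiating the fast variable, still yields a remainder going to $0$ in the strong norm. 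Then I would define the remainder $R=u-u^{\mathrm{app}}$, write its equation, and establish the \textit{a priori} estimate \eqref{APRIORI-R} by an energy method: multiply by $R$ and by $-\Delta R$ (or rather the appropriate Navier-compatible second-order operator), use the Navier boundary condition to control the boundary terms (the $M$-term and the curvature terms produce lower-order contributions absorbed by Gronwall), and track the powers of $\e$ carefully so that all source terms — the consistency error of the expansion, the commutators between $u\cdot\nabla$ and the fast scale, and the quadratic terms $R\cdot\nabla R$ treated via the smallness of $R$ and a bootstrap — are controlled on the whole rescaled time interval.

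Once the remainder estimate is in force, I would conclude: for $\e$ small enough the remainder $R$ stays bounded in $C([0,T];H^1)\cap L^2(0,T;H^2)$ with norm $\to 0$, and since $u^{\mathrm{app}}(T,\cdot)$ is $O(\e^\infty)$ plus an exponentially small boundary layer, $\|u(T,\cdot)\|_{H^1(\cO)}<\delta$ follows. Undoing the time/amplitude rescaling brings this back to the time interval $[0,T]$ of the statement; the source terms $\xi,\sigma$ inherit their support property from the construction of the reference flow, and $u$, $\xi$ lie in the claimed classes because the profiles $u^j$ are smooth and the boundary-layer correctors, being Schwartz in the fast variable, contribute $H^1\cap L^2H^2$ bounds uniformly in $\e$ (this is precisely the gain over \cite{CMS}, where only the $L^2\cap L^2H^1$ bound survived). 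The regularity hypothesis $u_*\in H^{200}(\cO)$ is what feeds the high-order expansion: each successive profile $u^j$ loses a fixed number of derivatives, so a large but finite Sobolev regularity on the data suffices to run the scheme to the order dictated by the topology of \eqref{classe}. The main obstacle I anticipate is exactly the well-prepared dissipation estimate in the finer topology: controlling the boundary-layer remainder in $H^2$ requires differentiating twice in $x$, which brings down two factors of $\e^{-1}$ from the fast variable, and these must be beaten by the combination of the extra order in the expansion and the exponential-in-$t/\e$ decay — a delicate bookkeeping of competing powers of $\e$ that is the technical core of the paper.
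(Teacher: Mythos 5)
Your overall architecture (time/amplitude rescaling to a slow-viscosity, long-time problem; a return-method Euler flow with the flushing property; a multi-scale expansion with boundary-layer correctors; an energy estimate for the remainder) is indeed the paper's strategy, but two of your key mechanisms are wrong in a way that breaks the argument. First, the scaling of the layer: for Navier slip-with-friction conditions the corrector lives at thickness $\sqrt{\e}$ and amplitude $\sqrt{\e}$ (fast variable $z=\varphi(x)/\sqrt{\e}$, expansion in powers of $\e^{1/2}$ as in \eqref{previou}), not at thickness $\e$ with amplitude $\e^{j+1}$ as you write; the whole construction, including the normal-velocity layer $w$ and the backflow correction $\nabla\phi$ needed to restore the divergence and impermeability conditions, is organized around this $\sqrt{\e}$ scaling. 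Second, and more seriously, your closing mechanism relies on a ``natural parabolic decay $e^{-ct/\e}$'' of the layer after the control phase. No such exponential decay exists: after time $T$ the profile solves a heat equation in the half-line $z\in\R_+$ with a Neumann condition, which has no spectral gap, so the free decay is only algebraic and yields exactly the insufficient bound \eqref{eq.relax.v.naive}, of order $O(\e)$ where $o(\e)$ is needed. This is precisely why the paper does not ``make the tangential trace small at the end of the flushing phase'' but instead imposes finitely many vanishing moment conditions \eqref{eq.moments.k} in the fast variable on the layer at time $T$, generated by the control outside $\Omega$ and convected inside by the flushing flow (Proposition \ref{lmu}, Lemmas \ref{lmfc}--\ref{lmU}); without this moment preparation your scheme cannot reach $\|u^{\e}(T/\e,\cdot)\|_{H^1(\cO)}=o(\e)$, and the final smallness $\|u(T,\cdot)\|_{H^1(\cO)}<\delta$ after undoing the scaling is lost.

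The remainder estimate also cannot be closed the way you sketch. Multiplying by $-\Delta R$ (or a Navier-compatible second-order operator) to get uniform $H^2$-type control costs normal derivatives that are genuinely singular in $\e$ inside the layer, and your proposed compensation again invokes the nonexistent exponential decay. The paper instead defines $u^{\e}=u^{\e}_a+\e^{2}R$ and proves only the anisotropic estimate \eqref{APRIORI-R}, working in conormal Sobolev spaces (tangential fields $Z^\alpha$), recovering one normal derivative through the auxiliary quantity $\eta=\sqrt{\e}\,(\CN(R)+G)\chi$ in the spirit of Masmoudi--Rousset, and adding $L^\infty$ bounds and a delicate pressure estimate; uniform-in-$\e$ $L^2H^2$ bounds for the remainder are neither proved nor needed, since the $C([0,T];H^1)\cap L^2((0,T);H^2)$ regularity of the final solution comes from the fixed-$\e$ strong well-posedness theory once the $H^1$ norm is kept under control up to $T/\e$. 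So while your outline points in the right direction, the two load-bearing steps — the well-prepared dissipation via moment conditions and the conormal/auxiliary-vorticity remainder estimate — are missing or replaced by arguments that would fail.
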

We will see in the next section how the proof  of Theorem \ref{th-extended} can be reduced to the proof of an asymptotic result, see Theorem \ref{th-extended-eps} below.
For the moment let us see how it allows to conclude to the proof of Theorem \ref{th-app}.
\begin{proposition}\label{implie2}
{\sl Theorem \ref{th-extended} implies Theorem \ref{th-app}.}
\end{proposition}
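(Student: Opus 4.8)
The plan is to deduce Theorem \ref{th-app} from Theorem \ref{th-extended} by a standard extension-restriction argument, the only subtlety being the regularity bookkeeping (going from the $H^{200}$ data in $\Omega$ to $H^{200}$ data on the extended domain $\cO$, and checking that the forcing terms $\xi,\sigma$ are indeed supported away from $\Omega$ so that the restriction solves the genuine controlled system \eqref{NS} on $\Omega$). First I would fix $u_0\in H^{200}(\Omega)$ divergence free and tangent to $\partial\Omega$, and invoke a suitable Sobolev extension operator adapted to the divergence-free/tangency constraints to produce $u_*\in H^{200}(\cO)$ that is divergence free in $\cO$, tangent to $\partial\cO$, and coincides with $u_0$ on $\Omega$. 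Such an extension is classical: one first extends $u_0$ as an $H^{200}$ vector field on $\cO$ by a bounded linear extension operator, then corrects its divergence by solving a div-curl type (Bogovskii) problem supported in $\cO\setminus\overline\Omega$, and finally adjusts near $\partial\cO$ so the normal trace vanishes; since $\partial\Omega\setminus\Sigma\subset\partial\cO$ and $u_0\cdot\mathbf{n}=0$ on $\partial\Omega$, all these corrections can be localized in the collar $\overline\cO\setminus\overline\Omega$ and no compatibility issue arises. (In fact, by the discussion after Theorem~\ref{th}, one may even first run a short smooth-in-time control on $\Sigma$ to arrange compatibility, but this is not needed here since the data is already tangent to all of $\partial\Omega$.)

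Next I would apply Theorem \ref{th-extended} to this $u_*$ with the same final time $T$ and the same $\delta>0$, obtaining $u\in C([0,T];H^1(\cO))\cap L^2((0,T);H^2(\cO))$, together with $\xi$ and $\sigma$ supported in $\overline\cO\setminus\overline\Omega$, solving \eqref{NSC} on $\cO$ with $u(0,\cdot)=u_*$ and $\|u(T,\cdot)\|_{H^1(\cO)}<\delta$. Then I would simply restrict $u$ (and the associated pressure $p$) to $\Omega$. On $\Omega$ the source terms $\xi$ and $\sigma$ vanish identically because their supports lie in $\overline\cO\setminus\overline\Omega$, so the restricted $u$ satisfies $\partial_t u+u\cdot\nabla u-\Delta u+\nabla p=0$ and $\dive u=0$ in $\Omega$. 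On the uncontrolled part $\partial\Omega\setminus\Sigma\subset\partial\cO$ the boundary conditions $u\cdot\mathbf{n}=0$ and $\CN(u)=0$ are inherited from those on $\partial\cO$ (here one uses that $\mathbf{n}$ and $M$ agree on $\partial\Omega\setminus\Sigma$ with their counterparts for $\cO$, as arranged in Section \ref{sec-ext}); on $\Sigma$ no boundary condition is imposed, which is exactly the setting of \eqref{NS}. Since $u(0,\cdot)=u_*=u_0$ on $\Omega$ and $\|u(T,\cdot)\|_{H^1(\Omega)}\le\|u(T,\cdot)\|_{H^1(\cO)}<\delta$, the restriction is the desired solution and Theorem \ref{th-app} follows. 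The regularity class is preserved under restriction since $\|u\|_{H^s(\Omega)}\le\|u\|_{H^s(\cO)}$.

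I do not expect a genuine obstacle here — this is a soft reduction. The one point requiring a little care is the construction of the constrained extension operator $u_0\mapsto u_*$: one must ensure simultaneously that $u_*$ is divergence free on all of $\cO$, tangent to $\partial\cO$, of class $H^{200}$, and equal to $u_0$ on $\Omega$. This is handled by combining a standard high-order Sobolev extension with a Bogovskii-type right inverse of the divergence supported in the collar $\cO\setminus\overline\Omega$ (whose regularity gain is more than enough to stay in $H^{200}$), followed by a localized normal-trace correction near $\partial\cO$. None of these steps interacts with the controllability mechanism; they only use the elliptic theory on the fixed smooth domain $\cO$ and the inclusion $\partial\Omega\setminus\Sigma\subset\partial\cO$. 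Hence the proof of Proposition \ref{implie2} reduces to assembling these classical ingredients.
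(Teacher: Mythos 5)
Your proposal is correct and follows essentially the same route as the paper: extend $u_0$ to a divergence-free field $u_*\in H^{200}(\cO)$ tangent to $\partial\cO$, apply Theorem \ref{th-extended}, and restrict to $\Omega$, using that $\xi$ and $\sigma$ are supported in $\overline{\cO}\setminus\overline{\Omega}$ so the restriction solves \eqref{NS}. The paper simply asserts the existence of the extension, while you spell out a standard construction of it; nothing further is needed.
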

\begin{proof}
Let $T>0$, and $u_0\in H^{200} (\Omega)$  divergence free and tangent to $\partial \Omega$.  Then there is an extension $u_*$ in $H^{200}  (\cO)$ of $u_0$  into a divergence free vector field  on $\cO$
 tangent to $\partial\cO$. Then applying Theorem \ref{th-extended} we are left with  considering the restrictions of $u$ to $\Omega$ to obtain a vector field in $ C([0,T];H^1(\Omega))\cap L^2((0,T); H^2(\Omega))$ satisfying (\ref{NS}) and $\|u(T,\cdot)\|_{H^1(\Omega)}<\delta.$
\end{proof}

\subsection{Time scaling and small viscosity}

As mentioned above  we will use the ``well-prepared dissipation" method which consists in a rapid and violent stage  followed by a longer one for which no control is applied,  see \cite{Marbach,CMS,CMSZ} for earlier uses of this method.
To implement this two-scales strategy,
 we introduce a positive small scale $\e\ll1$ as in \cite{CMS} and we perform the time scaling
\beq \label{S2scaling} u^{\e}(t,x):=\e u(\e t,x)   \andf p^{\e}(t,x) :=\e^2p(\e t,x). \eeq
 Thus, we consider $(u^{\e},p^{\e})$ the solution to the following large time and slightly viscous  problem:
\begin{subequations} \label{NSA}
\begin{gather}
 \label{NSA1} \partial_t u^{\e}+u^{\e}\cdot\nabla u^{\e}-\e\Delta u^{\e}+\nabla p^{\e}= \xi^{\e} \quad \mbox{ in }(0,T/\e)\times \cO,\\
 \label{NSA2} \dive u^{\e} =\sigma^{\e} \quad \mbox{ in } (0,T/\e)\times\cO,\\
 \label{NSA3} u^{\e}\cdot \mathbf{n}=0 \quad \mbox{ on } (0,T/\e)\times\partial\cO,\\
 \label{NSA4}\CN(u^{\e})=0 \quad \mbox{ on } (0,T/\e)\times\partial\cO,\\
 \label{NSA5} u^{\e}(0,\cdot)=\e u_* \quad \mbox{ in } \cO.
\end{gather}
\end{subequations}
Observing  the amplitude factor $\e$ in the right hand side of   \eqref{NSA5},
we  can  deduce Theorem \ref{th-app}  from the following result:
\begin{theorem}\label{th-extended-eps}
{\sl Let $T>0$ and $u_*\in H^{200}  (\cO)$ divergence free and tangent to $\partial\cO$. Then  there are some sequences,
$\left\{u^{\e}\right\}_\e,$ $\left\{\xi^{\e}\right\}_\e $ with
$$u^{\e} \in  C([0,{T}/{\e}]; H^1(\cO))\cap L^2((0,{T}/{\e}); H^2(\cO)) \andf
 \xi^{\e} \in C([0,{T}/{\e}]; H^1(\cO)),$$
 and $\left\{\sigma^{\e}\right\}_\e  $  a sequence of smooth scalar functions, for $\e$ in $(0,1)$,
 such that the mappings $\xi^{\e}$  and $\sigma^{\e} $   are supported in $\overline{\cO}\backslash\overline{\Omega}$
 as a function of $x$ and compactly supported in $(0,{T}/{\e})$ as a function of $t.$
Furthermore, \eqref{NSA} holds true
and
\begin{equation}\label{DGS}
\|u^{\e}({T}/{\e},\cdot)\|_{H^1(\cO)}=o(\e).
\end{equation}}
\end{theorem}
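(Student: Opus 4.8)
The plan is to construct $(u^\e, \xi^\e, \sigma^\e)$ as the sum of an explicitly built approximate solution, obtained through a multi-scale asymptotic expansion in powers of $\sqrt\e$, and a remainder which is shown to be small in the class \eqref{classe}. More precisely, I would first exploit the return method: one chooses a reference flow $\bar u$ (a smooth, compactly-in-time supported solution of the Euler equations on $\cO$, essentially a strong gradient flow $\bar u = \nabla\theta$ driven by the control region $\cO\setminus\overline\Omega$) whose associated flow map expels all fluid particles from $\overline\Omega$ before the final time, and such that $\bar u$ vanishes at $t=0$ and near $t = T/\e$. Superposing the smallness factor $\e$ coming from \eqref{NSA5}, the leading profile is $\e(\bar u + v)$, where $v$ solves a linear transport(-type) equation along the flow of $\bar u$ and carries the initial datum $u_*$; the point of the return method is that, because $\bar u$ sweeps $u_*$ out of $\Omega$ and the system is then run for a long time $T/\e$, the ``physical'' part of $v$ is eventually supported in the control zone, and the genuinely inviscid evolution contributes nothing at $t = T/\e$ inside $\Omega$. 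This is the ``rapid and violent stage''.

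The second and technically central step is the boundary layer analysis. Since $\CN(u^\e)=0$ on $\partial\cO$ is not compatible with the inviscid profile $\e(\bar u+v)$ (whose strain tensor does not satisfy the Navier condition), one must add a boundary layer corrector $\e\, V\!\left(t, x, \varphi(x)/\sqrt\e\right)$, fast-varying in the normal variable $z=\varphi(x)/\sqrt\e$, governed by a Prandtl-type equation; this is the object of the auxiliary problem of Section~\ref{sec-aux} and of Theorem~\ref{uj}. The essential ingredient is the \emph{well-prepared dissipation} mechanism: because the control is switched off after a short time, the layer is left to dissipate on the long interval $(0,T/\e)$, and one must check that by the final time its $H^1(\cO)$ norm — note this is more demanding than the $L^2$ bound used in \cite{CMS}, and forces one to push the asymptotic expansion to higher order and to prepare the dissipation of several subprincipal layer profiles — has decayed to $o(\e)/\e = o(1)$ after the rescaling. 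I would organize the expansion as $u^\e = \e u^{\mathrm{app},\e} + \e\, r^\e$ with $u^{\mathrm{app},\e} = \sum_{j} \e^{j/2}\bigl(u^j(t,x) + U^j(t,x,\varphi(x)/\sqrt\e)\bigr)$ up to a sufficiently high order $N$, the profiles $U^j$ decaying exponentially in $z$ and chosen (using the many derivatives available from $u_*\in H^{200}$) so that $u^{\mathrm{app},\e}$ satisfies \eqref{NSA} up to a source $O(\e^{N})$ in the relevant norm and has the required small trace at $t=T/\e$; then $\xi^\e$ and $\sigma^\e$ are defined by localizing the construction to $\overline\cO\setminus\overline\Omega$ via the cutoff $\varphi$, and the time support is arranged by the return-method choice of $\bar u$.

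The final step is the \emph{a priori} estimate \eqref{APRIORI-R} for the remainder $r^\e$: one writes the equation satisfied by $r^\e$ (a Navier-Stokes type system with viscosity $\e$, a transport term linearized around $u^{\mathrm{app},\e}$, and a small forcing), performs an energy estimate in $H^1(\cO)$ uniformly on $(0,T/\e)$, and closes it by absorbing the large-time interval against the smallness of the forcing and the dissipative structure, using in particular the Navier boundary condition to control the boundary terms in the $H^1$ estimate (a Korn-type inequality with the friction matrix $M$). Conclusion \eqref{DGS} then follows by combining the smallness of the approximate solution's trace with $\|r^\e(T/\e,\cdot)\|_{H^1(\cO)} = o(1)$. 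The main obstacle, as flagged in the excerpt, is precisely making the well-prepared dissipation argument work in the finer $H^1$ (rather than $L^2$) topology: the fast scale $\varphi/\sqrt\e$ makes each normal derivative cost a factor $\e^{-1/2}$, so controlling the boundary layer and its dissipation in $H^1$ requires a genuinely more accurate expansion and a more delicate preparation of the subprincipal profiles, and it is here that the bulk of the work in Sections~\ref{sec-aux}--\ref{sec-Remainder} will go.
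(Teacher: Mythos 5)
Your overall strategy is the paper's: Coron's return method for a reference Euler flow with the flushing property, a multi-scale boundary-layer expansion whose dissipation is ``well prepared'' so that the approximate solution is $o(\e)$ in $H^1$ at time $T/\e$, and an a priori estimate for a remainder. However, two points in your write-up would fail as stated. First, the amplitude bookkeeping of the return method is wrong: you take the leading profile to be $\e(\bar u+v)$ and the ansatz $u^{\e}=\e u^{\mathrm{app},\e}+\e r^{\e}$. With a reference flow of size $\e$, the convective term $(u^{\e}\cdot\nabla)u^{\e}$ is of order $\e^{2}$, so at leading order $v$ is \emph{not} transported along $\bar u$ and the linearization is effectively around the null state, which is exactly the non-controllable situation the return method is designed to avoid (see Section \ref{Premium}); the flushing property \eqref{flush} then never acts on the part carrying $u_*$. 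In the paper the reference flow $u^{0}$ is of order one in the rescaled variables (hence of size $1/\e$ before rescaling), the initial datum enters at order $\e$ (see \eqref{previou} and \eqref{u21}), and the tangential boundary layer enters at relative amplitude $\sqrt{\e}$ with respect to $u^{0}$, as recalled in Section \ref{BL} --- not at amplitude $\e$, and not at the same order as the Euler flow as your corrector $\e V$ suggests. Your verbal description of the mechanism presupposes the correct scaling, so this is reparable, but the formulas you wrote contradict it.

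Second, the remainder step cannot be closed by ``an energy estimate in $H^{1}(\cO)$ uniformly on $(0,T/\e)$'' with Korn's inequality and Gronwall. Differentiating the remainder equation once produces terms carrying $\nabla^{2}u^{\e}_{a}$, whose sup-norm is only $O(\e^{-1/2}\t^{-\gamma})$ because of the fast variable (cf. \eqref{Ra}), while the dissipation is only $\e\Delta$; a naive Gronwall over the interval of length $T/\e$ then produces a factor of order $\exp(C\e^{-1/2})$ and the estimate does not close. This is precisely why the paper measures the remainder $R$ of \eqref{DEF-R} in conormal spaces, using only the tangential fields $Z^{\alpha}$ together with the weighted normal derivative $\sqrt{\e}\,\partial_{\bn}$, recovers the missing normal derivative through the substitute vorticity $\eta=\sqrt{\e}(\CN(R)+G)\chi$ of \eqref{defeta} (which, unlike $\curl R$, vanishes on $\partial\cO$, so its equation can be estimated with homogeneous boundary data), and needs dedicated pressure and $L^{\infty}$ estimates before arriving at \eqref{APRIORI-R}, i.e. $\|R\|_{H^{1}}\lesssim\e^{-3/4}$ so that $\e^{2}R$ is $o(\e)$ in $H^{1}$. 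You correctly identify that the $H^{1}$ topology is the source of the difficulty, but you locate the extra work almost entirely in the construction of the expansion; the anisotropic/conormal functional framework for the remainder (Section \ref{sec-Remainder}) is a genuinely necessary missing ingredient, not a routine energy estimate.
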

The proof of Theorem \ref{th-extended-eps} is actually the core of the analysis and its proof will be detailed in the subsequent sections.
Let us start to see here how it entails Theorem  \ref{th-extended}.

\begin{proposition}\label{implie3}
{\sl Theorem \ref{th-extended-eps} implies Theorem  \ref{th-extended}.}
\end{proposition}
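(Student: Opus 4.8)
The plan is to recover Theorem~\ref{th-extended} from Theorem~\ref{th-extended-eps} by simply undoing the time scaling \eqref{S2scaling}. Given the sequences $\{u^{\e}\}_\e$, $\{\xi^{\e}\}_\e$, $\{\sigma^{\e}\}_\e$ provided by Theorem~\ref{th-extended-eps} (and $p^{\e}$ the pressure implicitly associated with $u^{\e}$ through \eqref{NSA1}), I would fix a parameter $\e\in(0,1)$, to be selected only at the end, and set, for $(s,x)\in(0,T)\times\cO$,
\[
u(s,x):=\e^{-1}u^{\e}(s/\e,x),\qquad p(s,x):=\e^{-2}p^{\e}(s/\e,x),
\]
together with $\xi(s,x):=\e^{-2}\xi^{\e}(s/\e,x)$ and $\sigma(s,x):=\e^{-1}\sigma^{\e}(s/\e,x)$.

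First I would check that $(u,p,\xi,\sigma)$ solves \eqref{NSC} on $(0,T)\times\cO$. This is a direct substitution: evaluating \eqref{NSA1} at $t=s/\e$ and multiplying through by $\e^{-2}$, each term rewrites in terms of $u$ and $p$ — notably the viscous term, since $\e^{-2}(-\e\Delta u^{\e})(s/\e,x)=-\Delta u(s,x)$ — so that \eqref{NSA1} becomes $\partial_s u+u\cdot\nabla u-\Delta u+\nabla p=\xi$; likewise \eqref{NSA2} becomes $\dive u=\sigma$. Since $f\mapsto f\cdot\bn$ and $\CN$ are homogeneous of degree one, the conditions \eqref{NSA3}--\eqref{NSA4} pass to $u\cdot\bn=0$ and $\CN(u)=0$ on $\partial\cO$; and \eqref{NSA5}, which carries precisely the amplitude factor $\e$, gives $u(0,\cdot)=u_*$.

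Next I would verify the remaining requirements. The dilation $s\mapsto s/\e$ preserves continuity in time and the spatial $H^1$ and $H^2$ regularity (it only multiplies the $L^2$-in-time norms by $\e^{1/2}$), whence $u\in C([0,T];H^1(\cO))\cap L^2((0,T);H^2(\cO))$ and $\xi\in C([0,T];H^1(\cO))$; the spatial supports of $\xi^{\e},\sigma^{\e}$ in $\overline{\cO}\backslash\overline{\Omega}$ and the smoothness of $\sigma^{\e}$ are untouched, while the compact support of $\xi^{\e},\sigma^{\e}$ in $(0,T/\e)$ becomes compact support of $\xi,\sigma$ in $(0,T)$. Finally, by construction,
\[
\|u(T,\cdot)\|_{H^1(\cO)}=\e^{-1}\,\|u^{\e}(T/\e,\cdot)\|_{H^1(\cO)}=\e^{-1}\,o(\e),
\]
which tends to $0$ as $\e\to0^{+}$ by \eqref{DGS}; hence for $\e$ small enough the left-hand side is $<\delta$, and fixing such an $\e$ produces the triple $(u,\xi,\sigma)$ asserted in Theorem~\ref{th-extended}.

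I do not expect any genuine obstacle: all the substance lies in Theorem~\ref{th-extended-eps}, and Proposition~\ref{implie3} is pure bookkeeping of the scaling. The only point worth a moment's care is that the decay rate $o(\e)$ in \eqref{DGS} — strictly stronger than an $\e$-uniform bound — is exactly what absorbs the $\e^{-1}$ loss incurred when inverting the amplitude normalization $u^{\e}(0,\cdot)=\e u_*$ of \eqref{NSA5}.
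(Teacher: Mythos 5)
Your proposal is correct and follows essentially the same route as the paper: one inverts the scaling of \eqref{S2scaling} exactly as in \eqref{DFD}, checks by direct substitution that \eqref{NSC}, the regularity, and the support conditions are inherited, and uses the $o(\e)$ decay of \eqref{DGS} to absorb the factor $\e^{-1}$ and get $\|u(T,\cdot)\|_{H^1(\cO)}<\delta$ for $\e$ small. No gaps.
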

\begin{proof}
Let $T>0$ and $u_*\in H^{200}  (\cO)$ divergence free. Then  for any $\delta >0$, according to Theorem \ref{th-extended-eps},
there is $\e >0$ and there exist $u^{\e}$ belongs to $ C([0,{T}/{\e}]; H^1(\cO))\cap L^2((0, {T}/{\e}); H^2(\cO))$, $\xi^{\e}$ belongs to $C([0,{T}/{\e}];H^1(\cO))$ and supported in $\overline{\cO}\backslash\overline{\Omega},$ $\sigma^{\e}  $  is a  smooth scalar function supported in $\overline{\cO}\backslash\overline{\Omega}$
such that \eqref{NSA} holds true  and $\|u^{\e}({T}/{\e},\cdot)\|_{H^1(\cO)} <\delta \e.$
Let us set
\begin{equation}\label{DFD}
( u,\sigma) ( t,x) :=  \frac{1}{ \e}  (u^{\e},\sigma^{\e}) \bigl(\f{t}{ \e},x\bigr) \,  \text{ and  }  \, ( p,  \xi) (t,x)) :=  \frac{ 1 }{ \e^2}  (p^{\e},    \xi^{\e}) \bigl(\f{t}{ \e},x\bigr)  .
 \end{equation}
Then $u$ belongs to $ C([0,T];H^1(\cO))\cap L^2((0,T);H^2(\cO))$, $\xi$ and $\sigma $ are
 compactly supported in $(0,T) \times \overline{\cO}\backslash\overline{\Omega}$ so that
 \eqref{NSC} holds true and $\|u(T,\cdot)\|_{H^1(\cO)} <\delta.$

\end{proof}

\subsection{An auxiliary Euler  solution due to the return method}
 \label{Premium}

When $\varepsilon$ is small,  it is  expected that the analysis of the system   \eqref{NSA}  may be built on
the small-time  global exact controllability of Euler equations.
We therefore consider the  counterpart of the system   \eqref{NSA}
 where the viscosity term has been dropped out. This involves  the incompressible Euler equations.
   For these equations  it is natural to prescribe the condition $ u^{\e} \cdot \mathbf{n}  = 0$ on   an impermeable wall, and only this one.
The natural inviscid counterpart of  \eqref{NSA}  is therefore:
\begin{subequations} \label{NSA-inv}
\begin{gather}
 \label{NSA1-inv} \partial_t u^{\e}+u^{\e}\cdot\nabla u^{\e}+\nabla p^{\e}=\xi^{\e} \quad \mbox{ in }(0,T/\e)\times \cO,\\
 \label{NSA2-inv} \dive u^{\e} =\sigma^{\e} \quad \mbox{ in } (0,T/\e)\times\cO,\\
 \label{NSA3-inv} u^{\e}\cdot \mathbf{n}=0 \quad \mbox{ on } (0,T/\e)\times\partial\cO,\\
 \label{NSA4-inv}  u^{\e}(0,\cdot)=\e u_* \quad \mbox{ in } \cO.
\end{gather}
\end{subequations}

Considering an asymptotic expansion of the form  $u^\e  = \varepsilon u^1 +  o( \varepsilon)  $
would amount to considering the linearized Euler equations around the null state, an equation which is not controllable, unless the initial data $u_*$  is the gradient of a harmonic function, which is not the case in general.
 In order to overcome this difficulty, we are going to use Coron's return method
  to take profit of the nonlinearity by forcing the amplitude of the solution thanks to the control.
  Indeed next result asserts that it is possible to guarantee the existence of a controlled solution to the Euler system with
variations of order $O(1)$ on time interval of order $O(1)$, say $(0,T)$ (but observe that the allotted time in \eqref{DGS} is $T/\varepsilon$), vanishing at both ends of the time interval.
\begin{lemma}\label{lmu0}
{\sl There exists a solution $(u^0,p^0,\nu^0,\sigma^0 )\in C^{\infty}([0,T]\times\overline{\cO}; \mathbb{R}^3\times\mathbb{R}\times\mathbb{R}^3\times\mathbb{R})$ to the system:
\begin{subequations}
\label{euler0}
\begin{gather}
\label{u0}\partial_t u^0+u^0\cdot\nabla u^0+\nabla p^0=\nu^0 \quad \mbox{ in }(0,T)\times\cO,\\
\label{divu0}\dive u^0=\sigma^0  \quad \mbox{ in }(0,T)\times\cO,\\
\label{u0n}u^0\cdot \mathbf{n}=0 \quad \mbox{ on }(0,T)\times\partial\cO,\\
\label{u00}u^0(0,\cdot)=0 \quad \mbox{ in }\cO,\\
u^0(T,\cdot)=0 \quad \mbox{ in }\cO,
\end{gather}
\end{subequations}
such that the flow $\Phi^0$ defined by $\partial_s \Phi^0(t,s,x)=u^0(s,\Phi^0(t,s,x))$ and $\Phi^0(t,t,x)=x$ satisfies
\begin{equation} \label{flush}
\forall\, x\,\in \overline{\cO},\,\exists\, t_x\in (0,T),\quad \Phi^0(0,t_x,x)\in \overline{\cO}\setminus\overline{\Omega}.
\end{equation}
Moreover, $u^0$ can be chosen such that:
  \begin{equation}\label{curlu0}
\nabla\times u^0=0\quad\mbox{ in }[0,T]\times \overline{\cO}.
\end{equation}
In addition, $\nu^0$ and $\sigma^0 $ are supported in $\overline{\cO}\backslash\overline{\Omega}$, $(u^0,p^0,\nu^0,\sigma^0 )$ are compactly supported in $(0,T)$. In the sequel, when we need it, we will implicitly extend them by zero after T.}
\end{lemma}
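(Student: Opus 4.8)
The plan is to construct $(u^0, p^0, \nu^0, \sigma^0)$ by the return method of Coron, following the strategy used for the Euler equations in simply connected domains. First I would look for $u^0$ in gradient form: write $u^0(t,x) = \nabla \theta(t,x)$ for a time-dependent potential $\theta$, which automatically gives \eqref{curlu0}. Then \eqref{u0} is satisfied by absorbing $\partial_t u^0 + u^0 \cdot \nabla u^0$ into a pressure gradient plus the forcing $\nu^0$, since $u^0 \cdot \nabla u^0 = \frac{1}{2}\nabla(|u^0|^2)$; concretely one sets $\nabla p^0 = -\partial_t \nabla\theta - \frac{1}{2}\nabla|\nabla\theta|^2$ on $\Omega$ (where $\nu^0$ must vanish) and lets $\nu^0$ pick up the slack on $\overline{\cO}\setminus\overline{\Omega}$. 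Similarly $\sigma^0 = \Delta\theta$, which we will arrange to be supported away from $\Omega$ by choosing $\theta$ harmonic on $\Omega$. So the whole construction reduces to choosing, for each time $t$, a harmonic function $x \mapsto \theta(t,x)$ on $\Omega$ (extended smoothly to $\cO$) with the right boundary behavior and the right flushing property.

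Next I would choose the profile in time. Fix a smooth scalar ``amplitude'' function $a \in C^\infty([0,T])$ that is compactly supported in $(0,T)$, nonnegative, and identically equal to a large constant $A$ on some subinterval $[t_1, t_2] \subset (0,T)$. Let $\Theta$ be a fixed harmonic function on $\Omega$ (smoothly extended to $\cO$) whose gradient $v := \nabla\Theta$ is a steady potential flow tangent to $\partial\Omega \setminus \Sigma$ (so $v \cdot \mathbf{n} = 0$ there) but which is ``flushing'': every point of $\overline{\cO}$ is carried by the flow of $v$ out through $\Sigma$ into $\overline{\cO}\setminus\overline{\Omega}$ in finite time. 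The existence of such a $\Theta$ is exactly the classical ingredient behind the Euler controllability result (it uses simple connectedness of $\Omega$ and the fact that $\Sigma$ meets $\partial\Omega$); I would cite this. Then set $\theta(t,x) := a(t)\,\Theta(x)$, so that $u^0(t,x) = a(t)\,v(x)$, which vanishes at $t=0$ and $t=T$, satisfies $u^0 \cdot \mathbf{n} = 0$ on $\partial\cO$, is curl-free, and has $\sigma^0 = a(t)\Delta\Theta$ supported in $\overline{\cO}\setminus\overline{\Omega}$ after suitably modifying $\Theta$ outside $\Omega$. The flow $\Phi^0$ associated with $a(t) v(x)$ is, up to the time reparameterization $\tau(t) = \int_0^t a$, the flow of $v$; by taking $A$ large enough the reparameterized time $\int_{t_1}^{t_2} a = A(t_2 - t_1)$ exceeds the (finite, uniformly bounded over $\overline{\cO}$) flushing time of $v$, which gives \eqref{flush}.

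The main obstacle, and the step deserving the most care, is making all the support and boundary conditions simultaneously consistent: we need $\nu^0$ and $\sigma^0$ to vanish on $\Omega$ while $u^0 \cdot \mathbf{n} = 0$ holds on all of $\partial\cO$ (not just on $\partial\Omega\setminus\Sigma$), and we need the flushing property to survive the extension of $v$ from $\Omega$ to $\cO$. This is handled by first solving the potential problem on $\Omega$ — choosing $\Theta$ harmonic on $\Omega$ with $\partial_{\mathbf n}\Theta$ prescribed on $\partial\Omega$: zero on $\partial\Omega\setminus\Sigma$ and a suitable nonzero flux through $\Sigma$ with zero total flux — and then extending $\Theta$ to a function on $\cO$ that is no longer harmonic (so $\sigma^0 = \Delta\Theta \neq 0$) but whose extension region lies in $\overline{\cO}\setminus\overline{\Omega}$, while keeping $\nabla\Theta \cdot \mathbf{n} = 0$ on $\partial\cO$; the freedom in the extension is used to make the flow lines of $v$ exit $\Omega$ through $\Sigma$ and then reach $\overline{\cO}\setminus\overline{\Omega}$. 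Finally I would record that the regularity is $C^\infty([0,T]\times\overline{\cO})$ by elliptic regularity and smoothness of the data, and that extending everything by zero past $T$ is legitimate because $a$, hence $u^0, p^0, \nu^0, \sigma^0$, vanish near $t = T$.
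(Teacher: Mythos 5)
The paper itself does not prove Lemma \ref{lmu0}: right after the statement it invokes the constructions from the small-time controllability literature for the Euler equations, namely \cite{MR1233425,MR1380673,MR1485616,MR1745685} and \cite[Lemma 2]{CMS}, so the benchmark for your proposal is the construction carried out in those references. Your general framework is the right one and matches theirs in spirit: the potential ansatz $u^0=\nabla\theta$ gives \eqref{curlu0} for free, $\partial_t u^0+\tfrac12\nabla|u^0|^2$ is absorbed into $\nabla p^0$ inside $\Omega$ with $\nu^0$ and $\sigma^0=\Delta\theta$ confined to $\overline{\cO}\setminus\overline{\Omega}$, the amplitude is a compactly supported function of time, and a large amplitude is used to beat a finite flushing time.

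There is, however, a genuine gap at the key step. You reduce everything to the existence of a \emph{single} harmonic function $\Theta$ on $\Omega$, with $\partial_{\mathbf{n}}\Theta=0$ on $\partial\Omega\setminus\Sigma$, whose gradient flow expels every point of $\overline{\Omega}$, and you assert that this is ``exactly the classical ingredient''. In the three-dimensional setting of this paper it is not. Such a $\Theta$ must in particular have no critical point in $\overline{\Omega}$ (a critical point is a rest point of the flow and is never flushed, and points on its stable set are not flushed either); the existence of a critical-point-free harmonic function with zero Neumann data on $\partial\Omega\setminus\Sigma$ is what Coron obtains in 2D via the Riemann mapping theorem \cite{MR1233425}, but it is not provided by the 3D references, and Glass \cite{MR1485616,MR1745685} deliberately avoids it. The classical 3D construction is weaker and structurally different: for each $a\in\overline{\Omega}$ one finds a harmonic $\theta^a$ whose gradient flow expels a neighborhood of $a$, one extracts a finite subcover by compactness, and one concatenates in time finitely many potential flows $\beta_i(t)\nabla\theta^{a_i}$, the amplitudes $\beta_i$ being chosen so that each pulse's flow map is undone by the end of the pulse (zero time-integral), so that the $i$-th pulse acts on points still sitting at their initial positions; this is also why \eqref{flush} is stated with a time $t_x$ depending on $x$. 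Your single-profile ansatz $u^0(t,x)=a(t)\nabla\Theta(x)$ cannot implement this concatenation, so as written the flushing property \eqref{flush} is not established: you would either have to prove the critical-point-free claim for general simply connected 3D domains (a delicate statement you cannot simply cite) or switch to the finitely-many-potentials construction of the cited works.
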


Lemma \ref{lmu0} is the key argument of many papers concerning the small-time
global exact controllability of Euler equations, cf. ~\cite{MR1233425} for 2D
simply connected domains, ~\cite{MR1380673} for general 2D domains
 when $\Sigma$ intersects all connected components of $\partial\Omega$,
\cite{MR1485616}  for 3D simply connected domains, in~\cite{MR1745685}
 for general domains  when $\Sigma$ intersects all connected components of $\partial\Omega$.
 Let us also refer to \cite{MR1860818} and to
  \cite[Lemma 2]{CMS}.

With this particular auxiliary Euler solution in hands, Coron's return method  consists in
looking for solutions to  \eqref{NSA-inv} admitting  asymptotic expansions of the form:
$ u^\e  = u^0 + \varepsilon u^1 +  o( \varepsilon) $ and
$ p^\e  = p^0 + \varepsilon p^1 +  o( \varepsilon) $,
with some controls $\xi^{\e}$ and $\sigma^{\e}$ also admitting  asymptotic expansions of the same form:
$ \xi^{\e} = \nu^0 + \varepsilon \nu^1 +  o( \varepsilon)   $ and
$ \sigma^\e  = \sigma^0 + \varepsilon \sigma^1 +  o( \varepsilon) $.
Indeed by gathering the terms of order $O(\varepsilon)$, we are led to the following equations for $(u^1 , p^1 )$:
\begin{equation*}
\label{linu}
    \begin{cases}
\partial_t u^1 +  u^0 \cdot \nabla u^1 +   u^1 \cdot \nabla  u^0 + \nabla p^1  = \nu^1 \quad  \mbox{ in } (0,T)\times\cO,\
\\  \dive u^1   =  \sigma^1    \quad  \textrm{in }  (0,T)\times\cO,\
\\  u^1 \cdot \mathbf{n}   = 0  \quad \textrm{on }  (0,T)\times\partial\cO,
\\  u^1 \rvert_{t = 0}  = u_0  \quad  \textrm{in } \cO.
   \end{cases}
\end{equation*}
This is  the linearisation of the Euler equations around $u^0$, and the fact that the vector field $u^0$ satisfies
\eqref{flush} is a crucial gain with respect to the null state.

In the sequel we will use such equations only with zero control on the divergence (corresponding to setting $ \sigma^1 =0$)
 but also with a  source term $f$ supported in the whole domain $\cO$ in the first equation.
We therefore consider  the following type linearized Euler system:
\begin{equation}\label{Au1}
\begin{cases}
\partial_t u+u^0\cdot\nabla u+u\cdot\nabla u^0+\nabla p=\nu +f \quad t\geq 0,x\in\cO,\\
\dive u=0 \quad t\geq 0,x\in\cO,\\
u\cdot \mathbf{n}=0 \quad t\geq 0,x\in\partial\cO,\\
u(0,\cdot)=u_0 \quad t=0,x\in \cO,
\end{cases}
\end{equation}
where $f$ is a given source term whereas $\nu$ is a control force to be chosen  supported in $\overline{\cO}\backslash\overline{\Omega}$.

\begin{lemma}\label{np}
{\sl Let  $k,p\in \mathbb{N_+}.$
Let $u_0 \in H^p(\cO)$ with $\dive u_0=0$ and $u_0\cdot \mathbf{n}=0$ on $\partial\cO$. Let $f \in C^k_{\gamma} (\mathbb{R}_+;H^p(\cO))$ (see Definition \ref{DC}) and $\nabla\times f$ is supported in $[0,T]$ as a function of time $t$.
Then there are  $\nu(t,x)$ in $C^k(\mathbb{R}_+;H^{p-1}(\cO))$, supported in $\overline{\cO}\backslash\overline{\Omega}$ as a function of  $x$ and supported in $[0,T]$ as a function of  time $t$, and $u$ in $C^k(\mathbb{R}_+;H^p(\cO))$, supported in $[0,T]$, such that
 \eqref{Au1} holds true.
Moreover the unique pressure $p,$ for which the integral condition  $\int_{\cO}p \ dx=0$ is satisfied at any time, is in $C^{k-1}_{\gamma}(\mathbb{R}_+;H^p(\cO))$.}
\end{lemma}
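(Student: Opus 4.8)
The plan is to reduce the controlled linearized Euler system \eqref{Au1} to an equation on the vorticity $\omega := \nabla\times u$, which propagates along the flow of $u^0$, exploiting the flushing property \eqref{flush} of $u^0$ to localize the control. First I would note that, since $u^0$ is irrotational by \eqref{curlu0} and divergence-free up to $\sigma^0$ supported away from $\Omega$, taking the curl of the first equation of \eqref{Au1} yields a transport equation of the form $\partial_t\omega + (u^0\cdot\nabla)\omega + (\text{lower order terms in }\nabla u^0)\,\omega = \nabla\times(\nu+f)$, where the zeroth-order coefficient matrix is built from $\nabla u^0$ and hence is $C^\infty$ and compactly supported in $(0,T)$. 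The control $\nu$ is to be chosen so that $\nabla\times\nu$ kills the contributions of $\nabla\times f$ carried by the flow $\Phi^0$ into $\overline{\cO}\setminus\overline{\Omega}$: concretely, one transports the vorticity backward along $\Phi^0$, uses a smooth partition of unity in time subordinate to the open cover of $\overline{\cO}$ given by the times $t_x$ in \eqref{flush} (compactness of $\overline{\cO}$ gives a finite subcover, hence a uniform intermediate time), and then picks $\nabla\times\nu$ supported in $\overline{\cO}\setminus\overline{\Omega}$ to absorb the vorticity exactly when each characteristic passes through that region. This forces $\omega(T,\cdot)=0$, and in fact $\omega\equiv 0$ for $t\geq T$, giving the support-in-$[0,T]$ property of $u$.

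The second step is to reconstruct $u$ from $\omega$: since $\dive u=0$, $u\cdot\mathbf n=0$ on $\partial\cO$, and $\cO$ can be taken simply connected (or one adds the finitely many flux/harmonic components, which are themselves controllable here because $\nu$ is free on $\overline{\cO}\setminus\overline{\Omega}$ and, by \eqref{flush}, that set meets every connected component of $\partial\cO$ along the flow), the div-curl elliptic system determines $u$ from $\omega$ with a gain of one derivative: $\|u(t)\|_{H^p}\lesssim \|\omega(t)\|_{H^{p-1}} + (\text{finite-dimensional data})$. Standard transport estimates in $H^{p-1}$ for the vorticity equation, together with the smoothness of $u^0$ and the hypothesis $f\in C^k_\gamma(\mathbb R_+;H^p(\cO))$, then give $\omega\in C^k(\mathbb R_+;H^{p-1}(\cO))$, hence $u\in C^k(\mathbb R_+;H^p(\cO))$ and $\nu\in C^k(\mathbb R_+;H^{p-1}(\cO))$; differentiating the control construction in time preserves the $C^k$ regularity since all the time cutoffs and the flow $\Phi^0$ are smooth. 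Finally, the pressure $p$ is recovered by solving the Neumann problem obtained by taking the divergence of the first equation of \eqref{Au1}, namely $\Delta p = \dive(\nu + f - u^0\cdot\nabla u - u\cdot\nabla u^0)$ with the boundary condition coming from testing against $\mathbf n$; elliptic regularity with the normalization $\int_\cO p\,dx=0$ yields $p\in C^{k-1}_\gamma(\mathbb R_+;H^p(\cO))$, the loss of one time derivative coming from the appearance of $\partial_t u$ — equivalently of the already-constructed fields — in the data, and the $\gamma$-type decay near $t=0$ being inherited from that of $f$.

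The main obstacle is the control construction on the vorticity, i.e. arranging $\nabla\times\nu$ supported in $\overline{\cO}\setminus\overline{\Omega}$ (as a function of $x$) and in $[0,T]$ (as a function of $t$) so that the solution vanishes after $T$ while keeping the full $C^k$-in-time, $H^{p-1}$-in-space regularity; this is exactly the step where the return-method trajectory $u^0$ and its flushing property \eqref{flush} are used, and it requires care with the compactness argument producing a common intermediate time, with the smoothness of the cutoffs, and with checking that the reconstructed $\nu$ (not just its curl) can be taken supported in $\overline{\cO}\setminus\overline{\Omega}$ — which again uses that one is free to add to $\nu$ any gradient supported there. I expect the elliptic reconstruction of $u$ and $p$ and the transport estimates to be routine given the smoothness of $u^0$; the bookkeeping of the finitely many harmonic/flux components (absent here because $\cO$ is simply connected, but worth a remark) and the propagation of time-regularity through the construction are the points needing the most attention.
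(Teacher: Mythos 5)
Your proposal follows essentially the same route as the paper: pass to the vorticity equation, use the flushing property \eqref{flush} of $u^0$ to construct a control whose curl is supported in $\overline{\cO}\setminus\overline{\Omega}$ and kills $\omega$ (hence $u$) after time $T$, recover $u$ from the div-curl system with $u\cdot\mathbf{n}=0$, and read the pressure off the momentum equation with the zero-mean normalization (the paper invokes the Poincar\'e inequality rather than a Neumann problem, with the same outcome and the same loss of one time derivative). The only organizational difference is that the paper treats the source term by a Duhamel superposition, writing $\omega=\omega_1+\int_0^t\omega_2(s,t,\cdot)\,ds$ and applying the initial-data control argument of \cite[Lemma 3]{CMS} to each piece, which is the precise implementation of your ``absorb the contributions of $\nabla\times f$ carried by the flow'' step.
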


\begin{remark}
Though we do not require $f$ to be supported in $[0,T]$, when $t\geq T$, since $f$ is curl-free, $f$ can be represented as a part of the
 pressure term and has decay. In this case, it will be used to solve $u^4$ below.
\end{remark}

\begin{proof}
The existence and uniqueness of a solution in  $C^k(\mathbb{R}_+;H^p(\cO))$
 to the system (\ref{Au1}) makes no debate, the  point is here to choose an appropriate control function $\nu$, supported in $\overline{\cO}\backslash\overline{\Omega}$ as a function  of $x$, such that the solution $u$ of (\ref{Au1}) vanishes when $t\geq T.$ We can prove the Lemma by the argument in Lemma 3 of  \cite{CMS} and Duhamel formula. For sake of completeness let us quickly recall the key observation that  $\omega :=\nabla\times u$  satisfies
\begin{equation}
\begin{cases}\label{omega}
\partial_t \omega +u^0\cdot\nabla\omega-\omega\cdot\nabla u^0+(\dive u^0) \omega=\nabla\times \nu+\nabla\times f,\\
\omega(0,\cdot)=\nabla\times u_0.
\end{cases}
\end{equation}
By Duhamel formula, we wish to find a solution
\begin{equation}\label{omg}
\omega(t,x)=\omega_1(t,x)+\int_0^t\omega_2(s,t,x)ds,
\end{equation}
where $\omega_1$ and $\omega_2$ satisfy
\begin{equation}\label{omg1}
\begin{cases}
\partial_t \omega_1 +u^0\cdot\nabla\omega_1-\omega_1\cdot\nabla u^0+(\dive u^0) \omega_1=\nabla\times \nu_1,\qquad t\geq0,\\
\omega_1(0,\cdot)=\nabla\times u_0,\qquad t=0,
\end{cases}
\end{equation}
and
\begin{equation}\label{omg2}
\begin{cases}
\partial_s \omega_2 +u^0\cdot\nabla\omega_2-\omega_2\cdot\nabla u^0+(\dive u^0) \omega_2=\nabla\times \nu_2,\qquad s\geq t,\\
\omega_2(t,\cdot)=\nabla\times f(t,\cdot), \qquad s=t.
\end{cases}
\end{equation}
By the argument in  Lemma 3 of  \cite{CMS} we can find control functions $\nu_1$, $\nu_2$ and solutions $\omega_1, \omega_2$ of \eqref{omg1} and \eqref{omg2}. We take $\nu=\nu_1+\int_0^t \gamma_2(s,t,x)ds,$ and define $\omega$ by \eqref{omg}. Then $\omega$ is a solution of \eqref{omega}.
Since $u^0\in H^p(\cO)$, $f\in C^k_{\gamma}(\mathbb{R}_+;H^p(\cO))$ and $\nabla\times f$ is supported in $[0,T]$, we can check from the proof of Lemma 3 of  \cite{CMS} that $\nu \in C^{k}(\mathbb{R}_+;H^{p-1}(\cO))$ and is supported in $\overline{\cO}\backslash\overline{\Omega}$ as a function  of $x$ and is supported in $[0,T]$ as a function of time $t$, $\omega\in C^{k}(\mathbb{R}_+;H^{p-1}(\cO))$ and is supported in $[0,T]$. Since $u$ satisfies
$\nabla\times u=\omega$ in $\cO$,
$\dive u=0$, in  $\cO$ and
 $u\cdot \mathbf{n}=0$ on $\partial\cO$,
it is in $ C^k(\mathbb{R}_+;H^p(\cO))$ and supported in $[0,T]$.
By the first equation of (\ref{Au1}) and the Poincar\'e inequality we obtain the part of Lemma \ref{np} concerning the pressure.
\end{proof}

\subsection{Boundary layer and multi-scale asymptotic expansion}
\label{BL}

Since only the impermeability condition is considered in   \eqref{NSA-inv}, a corrector
has to be added to  the Euler equation
to guarantee  the  Navier slip-with-friction boundary condition  \eqref{NSA4}.
  The role of this corrector is  to accurately describe the  behaviour of the fluid close to the boundary in a layer which vanishes as $\e$ goes to $0$.
For the Navier conditions,  in the uncontrolled setting,  it was highlighted
in  \cite{iftimie} that the thickness of this boundary layer is
$\mathcal{O}(\sqrt{\varepsilon})$ and the
the amplitude of the corrector term is also $\mathcal{O}(\sqrt{\varepsilon})$.
 Moreover, a  multiscale  asymptotic expansion of the solutions to the uncontrolled Navier-Stokes equations in the small viscosity limit
  involving a boundary layer term $v,$
which involves an extra variable describing  the fast variations of the fluid velocity in the normal  direction near the boundary, is given. This corrector $v$ is given as a solution to an initial boundary value problem with a boundary condition with respect to this extra variable, that is, in a informal way, an  asymptotic expansion of the form
\begin{equation}
 \label{eq.eq}
 u^\e \sim u^0(t,x)
 + \sqrt{\varepsilon} v\left(t,x, \varphi(x)/ \sqrt{\varepsilon}\right) .
    \end{equation}
Indeed the boundary layer corrector is described by a smooth vector field $v$
 expressed in terms both
of the slow space variable $x \in \cO$ and a fast scalar variable
$z = \varphi(x)/\sqrt{\varepsilon}$, where $v(t,x,z)$
satisfies an equation of the form:
\begin{equation}
 \label{eq.v}
  \partial_t v + (u^0 \cdot \nabla) v  - \partial_{zz} v  =  0,
\end{equation}
for $x$ in $\bar{\cO}$ and $z$ in $ \R_+$, with the following boundary condition at $z=0$:
\begin{equation}
 \label{eq.bv}
   \partial_z v(t, x, 0)  =  2\CN(u^0)(t,x)  .
   \end{equation}
The interest to prescribe  \eqref{eq.bv} is that  the velocity vector field given by  \eqref{eq.eq}
satisfies the Navier condition  \eqref{NSA4},  up to an error term  of order $o(1)$, due to the slow derivatives of $v$.
Indeed it is more convenient to consider an evolution equation for $v$ which is slightly more complicated than  \eqref{eq.v}, and which in particular contains some extra-terms which are of  lower order but allow to propagate the pointwise orthogonality condition
\begin{equation}
  \label{ortoto}
  v(t,x,z) \cdot \mathbf{n}(x) = 0 ,
\end{equation}
 including  the inside domain, not only on the boundary,   from the initial and boundary data to positive times.
  For this type of linear hyperbolic-parabolic (focusing on $t,x$ or $t,z$) equation,
the Cauchy theory is now well-understood, see \cite{f0,f00,g}.

The analysis in \cite{iftimie} was performed for times of order $O(1)$, and
in general  this type of  multiscale  asymptotic expansions
   fails to describe the vanishing viscosity limit of the  Navier-Stokes equation
for large times of order $O(1/\varepsilon)$, even in the case  where the Euler solution stays smooth for all times.
However since the Euler solution $ u^0$ at stake here vanishes after the time $T$,
 the equations \eqref{eq.v} and \eqref{eq.bv}, for $t \geq T$, reduce to
\begin{equation} \label{eq.v.after}
    \partial_t v - \partial_{zz} v   = 0,
\textrm{ for } z \in \R_+ , \textrm{ and  }
    \partial_z v(t, x, 0)  = 0 ,
\end{equation}
where the dependence in the slow variable $x$ only appears through the  ``initial''  data $ \overline{v}(x,z) :=  v(T,x,z)$.
This heat system  dissipates
towards the null state for large times.
However  the decay
at the final time $t = T/\varepsilon$ is only given by
\begin{equation} \label{eq.relax.v.naive}
 \left\| \sqrt{\varepsilon}
 v\left(\frac{T}{\varepsilon},\cdot,\frac{\varphi(\cdot)}
 {\sqrt{\varepsilon}}\right) \right\|_{L^2(\cO)}
 = \mathcal{O}\left(\varepsilon \right) ,
\end{equation}
which is, unfortunately,  not sufficient in view of  the wished estimate \eqref{DGS} and of the tentative expansion  \eqref{eq.eq}.

\subsection{Well-prepared dissipation method}
\label{wpdm}

This difficulty was already presented in \cite{CMS,CMS-proc}, and there to overcome this difficulty, the authors make use of the  well-prepared dissipation method, which was first
 introduced
in~\cite{Marbach} in the case of the 1D  Burgers equation.
The idea is  to enhance the natural dissipation of the boundary layer   after the time $T$ by an appropriate control before, that is in
 guaranteeing that $\overline{v}$ satisfies a finite number of
vanishing moment conditions for $k \in \N$ of the form:
\begin{equation} \label{eq.moments.k}
 \forall x \in \cO, \quad \int_{\R_+} z^k \overline{v}(x,z) \, dz = 0,
\end{equation}
so that  the estimate \eqref{eq.relax.v.naive} holds true but with $o\left(\varepsilon \right)$ in the right hand side.
By linearity the  moments of $\overline{v}$  in  left hand side of  \eqref{eq.moments.k} can be decomposed as the sum of an addend due to the free evolution of $v$ and of an addend due to the control. Indeed due to the properties of the vector field  $u^0$, see \eqref{flush}, it is possible to generate some moments outside, and to convect inside the physical original domain in the time interval $[0,T]$. This allows  to ensure the  condition  \eqref{eq.moments.k} for all $x$ in $\cO$.

\subsection{Backflow}
\label{sec-Backflow}

Thanks to the orthogonality condition  \eqref{ortoto}, the divergence of
the vector field $(t,x)\mapsto v\left(t,x, \varphi(x)/ \sqrt{\varepsilon}\right)$ is not singular in $\e$.
Still it is not zero, there is an error term  of order $O(1)$, due to the slow derivatives of $v$.
To compensate this part,
we set
\begin{equation}
 \label{def-norm}
w(t,x,z) := -\int_{z}^{\infty}\dive v(t,x,z^{\prime})dz^{\prime} ,
    \end{equation}
and consider instead of the expansion  \eqref{eq.eq}
 the refined asymptotic expansion
\begin{equation}
 \label{eq.eq.eq}
 u^\e \sim u^0(t,x)
 + \sqrt{\varepsilon} v\left(t,x, \varphi(x)/ \sqrt{\varepsilon}\right)
 + \e w\left(t,x, \varphi(x)/ \sqrt{\varepsilon}\right) \mathbf n.
    \end{equation}
This expansion has the advantage over \eqref{eq.eq}  to satisfy
\eqref{NSA2-inv} (observe that the right-hand-side has to be zero in $\Omega$ because of the support condition on $\sigma^\e$) up to an error of order
$O(\e)$.
The new term, the last one in  \eqref{eq.eq.eq},   corresponds to a boundary layer on the normal velocity.
The choice to integrate from infinity in  \eqref{def-norm} is precisely to guarantee that $w$ vanishes as $z$ goes to infinity.
Then the new issue is that $w(t,x,0)$ is not zero
so that the right-hand-side of \eqref{eq.eq.eq} cannot satisfy the impermeability condtition
\eqref{NSA3}.
Then a new correction is considered by the mean of  what we call a backflow velocity.
As $w$ will be constructed with the integral condition
\begin{equation*}
\int_{\partial\cO} w(t,x,0)dx=0.
\end{equation*}
there is a solution $\phi$ to the following Neumann problem:
\begin{equation*}
\begin{cases}
\Delta \phi=0\quad \mbox{ in } \cO,\\
\partial_\mathbf{n}\phi= -w(\cdot,\cdot,0) \quad \mbox{ on } \partial\cO.
\end{cases}
\end{equation*}

Thanks to (\ref{curlu0}), we observe that the so-called backflow velocity $\nabla \phi$ satisfies
\begin{equation}\label{BF->E}
\begin{cases}
\partial_t \nabla \phi+u^0\cdot\nabla \nabla \phi+\nabla \phi\cdot\nabla u^0+\nabla \Big(- \partial_t  \phi  - u^0\cdot\nabla \phi \Big)= 0, \quad t\geq 0,x\in\cO,\\
\dive \nabla \phi =0, \quad t\geq 0,x\in\cO,\\
(\nabla \phi) \cdot \mathbf{n}= -w(\cdot,\cdot,0) , \quad t\geq 0,x\in\partial\cO,
\end{cases}
\end{equation}
that is  $\nabla \phi$ satisfies the Euler equations linearized around $u^0$.
Then the
 asymptotic expansion
\begin{equation}
 \label{eq.eq.eq.eq}
 u^\e \sim u^0 (t,x)
 + \sqrt{\varepsilon} v\left(t,x, \varphi(x)/ \sqrt{\varepsilon}\right)
 + \e \Big( w\left(t,x, \varphi(x)/ \sqrt{\varepsilon}\right) \mathbf n + \nabla \phi (t,x) \Big),
    \end{equation}
 is better than the  asymptotic expansion  \eqref{eq.eq.eq} in the sense that
 the impermeability condition
\eqref{NSA3} is now satisfied up to error term $o(\e)$.

\subsection{Approximate solutions}
\label{sec-appr}

Indeed by expanding further the  asymptotic expansion, in particular expanding the velocity into an expansion of the form
 \beq
\label{previou}
\begin{split}
&u^{\e}_a (t,x) :=
u^0  (t,x)
+ \sqrt{\e}    v^1   (t,x,\varphi(x)/ \sqrt{\varepsilon})\\
 +\sum_{j=2}^4\e^{\frac{j}{2}}  \big( u^j (t,x)+&  v^j \left(t,x, \varphi(x)/ \sqrt{\varepsilon}\right)  + \nabla \phi^j (t,x) +   w^j  \left(t,x, \varphi(x)/ \sqrt{\varepsilon}\right)  \mathbf{n}(x) \big),
\end{split}
\eeq
 with some profiles satisfying some PDEs of the previous types but with extra forcing terms due to error terms associated with the profiles which are already determined,
we will be able to  construct some approximate solutions
$u^{\e}_a$,   $p^{\e}_a$ to the system \eqref{NSA}
associated with some control forces $ \xi^{\e} $ and $\sigma^0$ (on the divergence the control given by Lemma \ref{lmu0}  will be sufficient).

These solutions are approximate in the sense that
\begin{subequations}
\label{eqa}
\begin{gather}
\label{eqa2}
\partial_t u^{\e}_a-\e\Delta u^{\e}_a+u^{\e}_a\cdot\nabla u^{\e}_a+\na p^{\e}_a=\xi^\e + \e^2F \quad \mbox{ in }\cO, \\
\label{eqa2div}
\dive u^{\e}_a= \sigma^0  + \e^2H \quad \mbox{ in }\cO,\\
\label{eqa3}
u^{\e}_a\cdot \mathbf{n}=0 \quad \mbox{ on }\partial\cO,\\
\label{eqa4}
\CN(u^{\e}_a)=  \e^2G\quad \mbox{ on }\partial\cO,\\
\label{eqa5}
u^{\e}_a|_{t=0}=\e u_*-\e^2R_0 \quad \mbox{ in }\cO,
\end{gather}
\end{subequations}
where $H$ , $G$, $F $  and $R_0$ are error terms which satisfy some uniform bounds in some appropriate spaces which we now define.
Let us introduce a cut-off function $\chi\in C^{\infty}_0(\R^3)$ such that $\chi=0$ when $|\varphi|\geq \delta_0$ and $\chi=1$ when $|\varphi|<\frac{\delta_0}{2}$, where $\delta_0$ is selected in Section \ref{sec-ext}, and the vector fields set
\begin{align*}
\mathfrak{W} :=  \Bigl\{
&w^0:=\varphi \mathbf{n},w^1:=\bigl(0,-\partial_3\varphi,\partial_2\varphi\bigr)^\top,\
w^2:=\bigl(\partial_3\varphi,0,-\partial_1\varphi\bigr)^\top,\\
&w^3:=\bigl(-\partial_2\varphi,\partial_1\varphi,0\bigr)^\top,\
w^4:=\bigl(\partial_3(x_3(1-\chi)),0,-\partial_1(x_3(1-\chi))\bigr)^\top,\\
&\hspace{3.95cm}
w_0^5:=\bigl(\partial_2(x_1(1-\chi)),-\partial_1(x_1(1-\chi)),0\bigr)^\top \Bigr\}.
\end{align*}
It is easy to observe that $w^j$ are tangential to $\partial\cO$, $0\leq j\leq 5$. Moreover, for $1\leq j\leq 5$, $w^j\cdot \mathbf{n}=0$ in $\mathcal{V}_{\delta_0/2}$ and $\dive w^j=0$ in $\cO$. Now we define the tangential derivatives
\beq\label{defZ}
Z_j :=  w^j\cdot\nabla\ \ \mbox{for}\ 0\leq j\leq 5 \andf Z^{\alpha} :=  Z_0^{\alpha_0}\cdots Z_5^{\alpha_5} \ \mbox{for}\ \alpha=(\alpha_0,\cdots,\alpha_5).
\eeq

Let us observe that
\begin{eqnarray}
\label{Z1}&\nabla Z_j=Z_j\nabla+ \nabla w^j\cdot \nabla,\\
\label{Z2}&\Delta Z_j=Z_j\Delta +2\nabla w^j:\nabla^2+\Delta w^j \cdot \nabla .
\end{eqnarray}
Generally, for $|\alpha|=m\in \mathbb{N}_+,$ we can use Leibniz formula to find that
\begin{eqnarray}
\label{Z3}[\Delta, Z^{\alpha}]=\sum_{|\beta|,|\gamma|\leq m-1}(c_{\beta}\nabla^2Z^{\beta}+c_{\gamma}\nabla Z^{\gamma}),
\end{eqnarray}
for some smooth functions $c_{\beta}$ and $c_{\gamma}$ depended only on the vector field $\mathfrak{W}.$

Let us also observe that, for $1\leq i,j\leq 5,$
\begin{equation}\label{cm}
\text{ the commutators }[\partial_\mathbf{n},Z_{i}],[Z_0,Z_i],[Z_i,Z_j]\text{ are tangential derivatives}.
\end{equation}
Indeed,
$[\partial_\mathbf{n},Z_{i}]=(\mathbf{n}\cdot\nabla) w^{i}\cdot\nabla -(w^{i}\cdot\nabla) \mathbf{n}\cdot\nabla$, and, on one hand
 $(w^{i}\cdot\nabla \mathbf{n})\cdot\nabla$ is a tangential derivative since $w^{i}\cdot\nabla \mathbf{n}\cdot \mathbf{n}=0$ in $\mathcal{V}_{\delta_0}$, while on the other hand,
  due to $w^{i}\cdot \mathbf{n}=0$ in $\mathcal{V}_{\delta_0/2} $ and $\nabla \mathbf{n}$ is symmetric, we have
   $$\bn\cdot\nabla w^{i}\cdot \mathbf{n}=-\mathbf{n}\cdot\nabla \mathbf{n}\cdot w^{i}=-w^{i}\cdot\nabla \mathbf{n}\cdot \mathbf{n}=0$$ in $\mathcal{V}_{\delta_0/2}$, so that $\mathbf{n}\cdot\nabla w^{i}\cdot\nabla$ is also a tangential derivative. Whereas notice that
   for $1\leq i\leq 5,$  $w^i\cdot \nabla\varphi=w^i\cdot \mathbf{n}=0,$ we find that $[Z_0,Z_i]=\varphi[\partial_{\mathbf{n}},Z_i]$ is also
   a tangential derivative. Finally, for $1\leq i,j\leq 5,$ there holds
    $$[Z_i,Z_j]=(w^i\cdot\nabla w^j-w^j\cdot\nabla w^i)\cdot \nabla.$$
   Since $w^i\cdot \mathbf{n}=w^j\cdot\mathbf{n}=0$ and $\nabla \mathbf{n}$ is symmetric, we have
   $$w^i\cdot\nabla w^j\cdot \bn-w^j\cdot\nabla w^i\cdot \bn=-w^i\cdot \nabla\mathbf{n}\cdot w^j+w^j\cdot \nabla\mathbf{n}\cdot w^i=0.$$
 Thus $[Z_i,Z_j]$ is a tangential derivative and \eqref{cm} holds true.

We define
the Sobolev conormal spaces
  \begin{equation*}
H^m_{\rm co}(\cO) :=\left\{ u\in L^2(\cO):Z^{\alpha}u\in L^2(\cO),|\alpha|\leq m \right\}
\end{equation*}
with norm
  \begin{equation}\label{scn}
\|u\|_m :=\Bigl(\sum_{|\alpha|\leq m}\|Z^{\alpha}u\|^2_{L^2}\Bigr)^{\frac{1}{2}}.
\end{equation}
In the same way, we set
  \begin{equation*}
\|u\|_{k,\infty} :=\sum_{|\alpha|\leq k}\|Z^{\alpha}u\|_{L^{\infty}}
\end{equation*}
and we say $u\in W_{co}^{k,\infty}$ if $\|u\|_{k,\infty}$ is finite.

\begin{theorem}\label{uj}
{\sl Let $\gamma>1,$ $k,p,s,q\in \mathbb{N}_+$ with $k\geq 2, p\geq 8,s,q\geq 4$. Assume $u_*$ is smooth enough, say it satisfies \eqref{u*} in Section \ref{cp}. Then there exist
$u^{\e}_a$,   $p^{\e}_a$ and $\xi^{\e}$ satisfy \eqref{eqa2}-\eqref{eqa5} with $F, G , H$ and $R_0$ satisfying, for $0\leq j\leq k ,$
$p_1+p_2\leq p-3,p_2\leq s-2,m\leq p-3,$
\begin{gather}
\label{FH1} \bigl\|\partial_t^jZ^{p_1}(\sqrt{\e}\partial_\mathbf{n})^{p_2}
\begin{pmatrix} F \\ H \end{pmatrix}
\bigr\|_{L^2(\cO)}\lesssim\e^{\frac{1}{4}}\t^{-\gamma},\\
\label{FH2}\bigl\|\partial_t^j Z^{p_1}(\sqrt{\e}\partial_\mathbf{n})^{p_2}
\begin{pmatrix} F \\ H \end{pmatrix}
\bigr\|_{L^{\infty}(\cO)}\lesssim \t^{-\gamma},\\
\label{HG}\|H\|_{H^{m}(\partial\cO)}+\|\partial_t^jG\|_{H^{p-1}(\cO)} \lesssim\t^{-\gamma},\\
\label{R0}\e^{-\frac{1}{4}}\|Z^{p_1}(\sqrt{\e}\partial_{\mathbf{n}})^{p_2}R_0\|_{L^2(\cO)}+\|Z^{p_1}(\sqrt{\e}\partial_{\mathbf{n}})^{p_2}R_0\|_{L^{\infty}(\cO)}\lesssim \e^{-\frac{1}{2}},
\end{gather}

Moreover $u^{\e}_a$ satisfies,
\begin{gather}
\label{Ra}\| u^{\e}_a\|_{W^{1,\infty}(\cO)}+\|\nabla u^{\e}_a\|_{m,\infty}+ \sqrt{\e}\|\nabla^2 u^{\e}_a\|_{m-1,\infty}\lesssim \t^{-\gamma},\\
\label{ua0}\|u^{\e}_a-u^0\|_{m,\infty}+ \sqrt{\e}\|\nabla(u^0_a-u^0)\|_{m,\infty}   \lesssim\sqrt{\e}\t^{-\gamma},\\
\label{uja}\|u^{\e}_a({T}/{\e},\cdot)\|_{H^1(\cO)}=o(\e).
\end{gather}}
\end{theorem}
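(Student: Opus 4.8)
The plan is to construct the approximate solution $u^\e_a$ via the multi-scale expansion \eqref{previou} by determining the profiles $u^0, v^1, \dots, u^4, v^4, \phi^2, \dots, \phi^4, w^2, \dots, w^4$ order by order in powers of $\sqrt{\e}$, and then to verify that the residual terms $F, G, H, R_0$ obey the stated bounds. I would proceed as follows. \textbf{Step 1 (inviscid leading order).} Take $u^0$ to be the return-method Euler solution of Lemma \ref{lmu0}, which is curl-free, vanishes outside the time interval $(0,T)$, satisfies the flushing property \eqref{flush}, and comes with controls $\nu^0, \sigma^0$ supported in $\overline{\cO}\setminus\overline{\Omega}$. \textbf{Step 2 (first boundary layer).} Introduce $v^1(t,x,z)$ solving the hyperbolic-parabolic problem of the form \eqref{eq.v}--\eqref{eq.bv}, modified by the lower-order terms that propagate the orthogonality $v^1\cdot\bn = 0$ (cf.\ \eqref{ortoto}), with initial data and a control (acting only for $x\notin\overline{\Omega}$, exploiting \eqref{flush}) chosen so that the well-prepared dissipation moment conditions \eqref{eq.moments.k} hold for $\overline{v^1}(x,\cdot) = v^1(T,x,\cdot)$ up to the order $k$ fixed in the statement; this is what upgrades the naive decay \eqref{eq.relax.v.naive} to $o(\e)$ and is the mechanism behind \eqref{uja}. \textbf{Step 3 (divergence and impermeability correctors).} Define $w^1$ by the integral formula \eqref{def-norm} (so $w^1\to 0$ as $z\to\infty$), which kills the $O(1)$ divergence error; then correct the resulting nonzero trace $w^1(t,x,0)$ by a backflow potential $\nabla\phi^2$ solving the Neumann problem, which by \eqref{curlu0} still solves the Euler equations linearized around $u^0$, as in \eqref{BF->E}. \textbf{Step 4 (higher orders).} Iterate: at each order $j = 2,3,4$ the equations for $(u^j, v^j, w^j, \nabla\phi^j)$ are of the same linearized-Euler / convection-heat / integral / Neumann types but with explicit forcing built from the already-constructed lower-order profiles; use Lemma \ref{np} for the $u^j$ pieces (absorbing curl-free forcing into the pressure when $t\geq T$, as in the Remark after Lemma \ref{np}) and the Cauchy theory of \cite{f0,f00,g} for the $v^j$ pieces, again imposing the moment conditions on $\overline{v^j}$. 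Stopping at order $j=4$ leaves residuals of size $\e^2$, which is the source of the $\e^2$ factors in \eqref{eqa}.

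For the estimates I would organize the bookkeeping around the scaled conormal derivatives $Z^{\alpha}$ and $(\sqrt\e\,\p_\bn)$ introduced before the theorem: the point is that each slow derivative of a profile evaluated at the fast variable $z=\varphi(x)/\sqrt\e$ costs at most one power of $1/\sqrt\e$, but a conormal derivative $Z_j$ (tangent to $\p\cO$, with $Z_j\varphi = w^j\cdot\bn = 0$ near the boundary) costs nothing, while the weighted normal derivative $\sqrt\e\,\p_\bn$ is designed to also cost nothing when it hits a fast profile since $\sqrt\e\,\p_\bn[V(x,\varphi/\sqrt\e)] = \sqrt\e(\p_\bn V) + (\p_z V)$. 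This is exactly why the bounds \eqref{FH1}, \eqref{R0} survive with only an $\e^{1/4}$ (resp.\ $\e^{-1/2}$) loss rather than negative powers of $\e$. The temporal decay $\t^{-\gamma}$ comes from the heat dissipation of the boundary-layer profiles after time $T$: once $u^0\equiv 0$ and the moment conditions hold, each $v^j(t,x,\cdot)$ and $w^j(t,x,\cdot)$ decays like a high negative power of $t$ in the relevant $L^2_z$ and $L^\infty_z$ norms, uniformly in $x$ and in $\e$, and this is inherited by $F,H,G$; meanwhile \eqref{Ra}--\eqref{ua0} follow because $u^\e_a - u^0$ is $O(\sqrt\e)$ by construction and its conormal derivatives are controlled by the profile norms. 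Finally \eqref{uja} is obtained by evaluating the expansion at $t = T/\e$: the $u^j$ pieces vanish for $t\geq T$ (having been driven to zero), $\nabla\phi^j$ likewise, and the $\sqrt\e\, v^j$ and $\e w^j$ boundary-layer pieces contribute $o(\e)$ in $H^1(\cO)$ precisely because of the moment conditions \eqref{eq.moments.k}, by the improved heat decay.

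I expect the main obstacle to be controlling the higher conormal and weighted-normal derivatives of the boundary-layer residuals in the finer $L^2$ and $L^\infty$ topologies of \eqref{FH1}--\eqref{FH2} — this is the ``more accurate asymptotic expansion and more involved preparation of the dissipation'' flagged in Section \ref{sec-scheme}. Concretely, the difficulty is that the equation for each $v^j$ is of mixed type (parabolic in $(t,z)$, hyperbolic in $(t,x)$ along the flow of $u^0$), so commuting $Z^{\alpha}$ and $(\sqrt\e\,\p_\bn)^{p_2}$ through it generates, via \eqref{Z1}--\eqref{Z3} and the commutator structure \eqref{cm}, a proliferation of lower-order terms whose forcing must be re-expanded and whose moments must themselves be prepared to get the final $o(\e)$ decay; keeping the $\e$-powers and the $\t^{-\gamma}$ weights matched through all four orders, and simultaneously ensuring the supports of all controls stay inside $\overline{\cO}\setminus\overline{\Omega}$ in $x$ and inside $[0,T]$ in $t$, is the delicate part. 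The argument that the naive rate \eqref{eq.relax.v.naive} can be improved to $o(\e)$ — i.e.\ that finitely many vanishing moments suffice and that they can actually be realized by a control supported away from $\Omega$ using the flushing \eqref{flush} — is the conceptual heart, but it is by now a known mechanism from \cite{Marbach,CMS}; the new work is carrying it through in the conormal $H^m$ framework with the scaled normal derivative, which is what makes the eventual remainder estimate \eqref{APRIORI-R} (proved in Section \ref{sec-Remainder}) go through.
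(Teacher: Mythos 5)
Your overall strategy coincides with the paper's: the return-method Euler flow $u^0$ of Lemma \ref{lmu0}, tangential boundary layers whose dissipation is well prepared through vanishing-moment conditions realized by controls supported in $\overline{\cO}\setminus\overline{\Omega}$, normal-velocity correctors defined by integration from $z=\infty$, harmonic backflows, iteration up to order $\e^{2}$, and estimates expressed in the scaled conormal calculus. There is, however, one concrete missing ingredient: your expansion contains no pressure boundary layers. In the paper the approximate pressure \eqref{wc2} carries fast correctors $\{\pi^j\}_\e$, $2\le j\le 5$, defined by $z$-integrals such as \eqref{npi2}; their role is to absorb the normal components of the boundary-layer errors that the modified operator $B^0$ in \eqref{defB} deliberately leaves aside (the profile equation governs only the tangential dynamics, so terms like $(v^1\cdot\nabla u^0\cdot \bn)\bn-(u^0\cdot\nabla\bn\cdot v^1)\bn$ survive with amplitude $\sqrt{\e}$). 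Since $\nabla(\e\{\pi^2\}_\e)$ has the singular normal part $\sqrt{\e}\,\{\partial_z\pi^2\}_\e\,\bn$, these $O(\sqrt{\e})$ normal residuals are cancelled; without the $\pi^j$ the momentum residual in your construction is $O(\sqrt{\e})$ rather than $\e^2F$, and \eqref{eqa2} fails. The same mechanism is needed again at orders $\e$, $\e^{3/2}$ and $\e^{2}$ ($\pi^3,\pi^4,\pi^5$).

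Second, the difficulty you single out --- commuting $Z^{\alpha}$ and $(\sqrt{\e}\partial_{\bn})^{p_2}$ through the mixed hyperbolic--parabolic profile equations --- is not how the bounds \eqref{FH1}--\eqref{R0} are obtained, and attacking them that way would be needlessly hard. The paper never differentiates the profile equations at this stage: each profile is constructed in a space $C^{k_i}_{\gamma_i}(\mathbb{R}_+;H^{p_i}(\cO;H^{s_i}_{q_i}(\mathbb{R}_+)))$, the regularity loss of Proposition \ref{lmu} is quantified through the cascade of indices $(\gamma_i,k_i,p_i,s_i,q_i)$ (this is exactly what the hypothesis \eqref{u*}, i.e. $u_*\in H^{p_1-1}(\cO)$, encodes by ``smooth enough''), and the derivative bounds on $F,H,G,R_0$ then follow directly from the elementary scaling Lemma \ref{V} combined with the product Lemma \ref{UV}; commutator arguments only enter later, in the remainder estimate of Section \ref{sec-Remainder}. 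Relatedly, your sketch does not say where the initial datum $u_*$ enters the expansion (it is the initial condition of the order-$\e$ linearized Euler profile $u^2$ in \eqref{u21}), nor where $R_0$ comes from (the nonzero initial traces $v^3_0$, $v^4_0$, $\phi^4_0$, $w^4_0$ produced by the compatibility lifting in Proposition \ref{lmu}, see \eqref{defua0}); both points are needed to verify \eqref{eqa5} and \eqref{R0}.
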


The proof of Theorem \ref{uj} will be presented  in Section \ref{sec-approx}.

\subsection{Remainder estimate}
\label{Sec-RE}

It follows from the well-posedness of he Navier-Stokes equations with Navier boundary conditions (for fixed $\e$)
that
 for every $\e$ in $(0,1)$, there is $T^\e \in (0, { T }/{ \e} ]$ and a solution $(u^{\e},p^{\e})$ to
\eqref{NSA} with  $\xi^{\e}$ given by Theorem \ref{uj} and  $\sigma^{\e} := \sigma^0$, for each $\e$, where $\sigma^0$ is given by Lemma \ref{lmu0}.

We define a family of vector fields $R$, neglecting an index for the dependence on $\e$ for sake of levity, by
\begin{equation}
\label{DEF-R}
u^{\e}=u^{\e}_a+\e^2 R .
\end{equation}
The latter $R$ stands for ``remainder " as we hope to be able to find such a vector field with a nice behaviour  in $\varepsilon$.
Indeed we will prove in Section \ref{sec-Remainder} the following   \textit{a priori } estimate:
\begin{equation}
\label{APRIORI-R}
\e^2 \sup_{t \in (0,T^\e)} \|R(t,\cdot)\|_{H^1(\cO)}\lesssim \e^{\frac{5}{4}}.
\end{equation}
This entails that  $T^\e =  \frac{T}{\e}$ and, with \eqref{uja},
that \eqref{DGS} holds true. This concludes
  the scheme of proof  of Theorem \ref{th-extended-eps}, and then according to Proposition \ref{implie3}, Proposition \ref{implie2} and Proposition \ref{implie1}, this also  concludes
the scheme of proof of Theorem \ref{th}.
To complete the proof of Theorem  \ref{th-extended-eps}
it remains to prove the
two main intermediate results which are Theorem \ref{uj}  and the \textit{a priori} estimate
\eqref{APRIORI-R}. In Section \ref{sec-aux}, we will study
 an auxiliary problem associated with the boundary layer on the tangential velocity.
It will be instrumental in  the proof of Theorem \ref{uj} which will be given in
Section \ref{sec-approx}.


\section{Well-prepared dissipation of tangential boundary layers with forcing}
\label{sec-aux}

We set
\begin{eqnarray}
u^0_{\flat}(t,x) :=\frac{u^0(t,x)\cdot \mathbf{n}(x)}{\varphi(x)}\qquad\mbox{ in }\mathbb{R}_+\times\cO,
\end{eqnarray}
where  $u^0$  is given by Lemma \ref{lmu0}
and we observe  that $u^0_{\flat}$ is smooth in $\overline{\cO}$.
Let $B^0 =B^0 (t,x)$  be a smooth field of $3 \times 3$ matrices such that for any $v$ in $\mathbb{R}^3$,
 \begin{equation} \label{defB}
 B^0 v:=v\cdot\nabla u^0+(u^0\cdot \nabla \mathbf{n}\cdot v) \mathbf{n}-(v\cdot\nabla u^0\cdot \mathbf{n})\mathbf{n} .
\end{equation}
The key property associated with $B^0$ is that for a smooth vector field $v(t,x)$,
 \begin{equation} \label{defB-k}
(u^0\cdot\nabla v+B^0 v )\cdot \mathbf{n} = u^0\cdot\nabla  ( v \cdot \mathbf{n}) \quad \mbox{ in }\mathcal{V}_{\delta_0}.
\end{equation}
We are interested in this section by the following type of constrained initial-boundary value problem:
\begin{equation}\label{u}
\begin{cases}
\partial_t v+u^0\cdot\nabla v+B^0 v-u^0_{\flat}z\partial_z v-\partial_z^2 v=\xi+f, \quad t\geq 0,x\in \cO,z\in \mathbb{R}_+,\\
\partial_z v|_{z=0}=g,\quad t\geq 0,x\in \cO,\\
v\cdot \mathbf{n}=0,\quad t\geq 0,x\in \cO,z\in \mathbb{R}_+,\\
v|_{t=0}=v_0,\quad x\in \cO,z\in \mathbb{R}_+,
\end{cases}
\end{equation}
where $f$ and $g$ are given source terms whereas $\xi$ is a control force to be chosen.
Problem like \eqref{u} will be useful to construct such boundary layer correctors of the tangential velocity  as that described in Section \ref{BL}.
As already mentioned, the Cauchy theory for  this type of linear hyperbolic-parabolic (respectively in $t,x$ and in $t,z$) equation
 is now well-understood, see \cite{f0,f00,g},
 and our concern will rather be the large time asymptotics and in particular the implementation of the
 well-prepared dissipation method alluded in
Section \ref{wpdm} in the presence of source terms. This will be useful in the next section  in the course of constructing the
higher order terms
$v^j$ for $j \geq 2$ alluded in \eqref{previou}.

Let us introduce the following weighted Sobolev spaces.
\begin{definition}
For $z\in \mathbb{R}$, we denote $\langle z\rangle:=\sqrt{1+z^2}$
and for $s$ and $q\in \mathbb{N},$ we set
\begin{equation*}
H^{s}_{q}(\mathbb{R_+}):=\Bigl\{\ f\in H^s(\mathbb{R_+}): \ \sum_{j=0}^s\int_{\mathbb{R_+}}\langle z\rangle^{2q}|\partial_z^jf(z)|^2dz<+\infty\ \Bigr\},
\end{equation*}
endowed with it natural associated norm.
 In the same way we define $H^{s}_{q}(\mathbb{R})$ and the norm
\begin{equation*}
 \|f\|_{H^{s}_{q}(\mathbb{R})} := \Bigl(\sum_{j=0}^s \int_{\mathbb{R}}\langle z\rangle^{2q}|\partial_z^jf(z)|^2dz \Bigr)^\frac12 .
\end{equation*}
 \end{definition}
Observe that by  the Plancherel theorem, we have the following  equivalence of norms:
\begin{equation}
  \label{equinorm}
\|f\|_{H^{s}_{q}(\mathbb{R})} \sim  \sum_{j=0}^{q} \Bigl(\int_{\mathbb{R}} \langle \zeta\rangle^{2s} | \partial_{\zeta}^j \hat{f}(\zeta)|^2 d\zeta\Bigr)^\frac12 ,
\end{equation}
where $\hat{f}$ denotes the Fourier transform  of $f$.
\begin{definition}\label{DC}
Let $k\in\mathbb{N},\gamma>0$ and $X$  a Banach space with norm $\|\cdot\|_X$.
We define the space $C^k_{\gamma}(\mathbb{R}_+;X)$ of the functions  $f\in C^k(\mathbb{R}_+;X)$ such that
$$\|f\|_{C^k_{\gamma}(\mathbb{R}_+;X)} :=\sup_{t\geq 0,0\leq j\leq k}\bigl(\|\partial_t^jf(t)\|_X\t^{\gamma}\bigr)<+\infty,$$ where
$$C^k(\mathbb{R}_+;X) :=\bigl\{\ f:\partial_t^jf\in C(\mathbb{R}_+;X), \ 0\leq j\leq k\ \bigr\}.$$
\end{definition}

Let $\mathcal{S}(\mathbb{R})$  the Schwartz space of smooth functions on $\mathbb{R}$ whose derivatives are rapidly decreasing.
Let us denote by $\mathcal{S}(\mathbb{R}_+)$ the set of the  restrictions to $\mathbb{R}_+$
of the  functions of $\mathcal{S}(\mathbb{R})$.

The goal of this section is to prove the following result, where the notation $[x]$ designates the floor integer part  of a real number $x$.
\begin{proposition}\label{lmu}
{\sl Let $\gamma>0$ and $s,q,k,p \in\mathbb{N}$  with $k\geq 1$. Set  $n :=[\frac{q}{2}+\gamma]$,
\begin{gather}
  \label{G1}
\tilde{\gamma}:= 2n+3, \quad \tilde{s}:=   s+2k+2n, \quad    \tilde{q}:=   2n+3,
 \\   \label{G2}  k' :=[\frac{s+1}{2}]+k+n ,  \quad     \tilde{k}:= k+k'-1,
 \quad     \tilde{p}:= p+k'+1    .
\end{gather}
Let
$$f\in C^{\tilde{k}}_{\tilde{\gamma}}(\mathbb{R}_+; H^{\tilde{p} } (\cO ; H^{\tilde{s}}_{\tilde{q}}  (\mathbb{R}_+) ))
   \text{ and }
g\in C^{\tilde{k}}_{\tilde{\gamma}}(\mathbb{R}_+;H^{\tilde{p}}(\cO)) ,$$ such that
$f(t,x,z)$ and $ g(t,x) $ are supported in $ \mathcal{V}_{\delta} $ as a function of  $ x $ and such that
$f(t,x,z)\cdot \mathbf{n}(x)=g(t,x)\cdot \mathbf{n}(x)=0$, for any $ t\geq 0,x\in \cO$ and $z\in \mathbb{R}_+$.
Let
\begin{equation}
v_0(x,z)=A(0,x,z)\in H^{p+2}(\cO;C_0^{\infty}(\overline{\mathbb{R}_+})),
\end{equation}
where $A(t,x,z)$ will be defined in (\ref{DefA}) soon.

Then there are
$$\xi\in C^{k-1}(\mathbb{R}_+;H^p (\cO;\mathcal{S}(\mathbb{R}_+)))
 \text{ and }
v \in  C^{k}_{\gamma}(\mathbb{R}_+;H^p (\cO; H^s_q (\mathbb{R}_+))),$$
 such that (\ref{u})  holds true.
Moreover  there is a continuous function $\tilde{S}:\mathbb{R}_+\rightarrow\mathbb{R}_+$, such that for any positive $\delta,$  $\delta\leq \tilde{S}(\delta)$, and
 $\xi$
  is supported in $(\overline{\cO}\backslash\overline{\Omega})\cap\mathcal{V}_{\tilde{S}(\delta)}$ as a function of  $x$ and is compactly supported in $(0,T)$ as a function of  time $t$, and satisfies
 $\xi(t,x,z)\cdot \mathbf{n}(x)=0$, for all $ t \in (0,T)$, $x\in (\overline{\cO}\backslash\overline{\Omega})\cap\mathcal{V}_{\tilde{S}(\delta)} $ and $z\in \mathbb{R}_+$,
  and  $v$ is supported in $\mathcal{V}_{\tilde{S}(\delta)}$ as a function of  $x$ .
  Moreover, if $f$ and g are both supported away from $t=0$ as a function of time $t$, then so does $v$.
 }
\end{proposition}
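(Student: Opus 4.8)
The strategy is to decompose the solution into a free evolution part that we can analyze explicitly and a controlled correction, following the well-prepared dissipation scheme of Section \ref{wpdm}. The first step is to build an explicit particular solution on the time interval $[0,T]$ that absorbs the source terms $f$ and $g$ and provides the prescribed initial data $v_0 = A(0,\cdot,\cdot)$: this is the role of the function $A(t,x,z)$ to be introduced in \eqref{DefA}. Concretely, $A$ should solve the inhomogeneous equation $\partial_t A + u^0\cdot\nabla A + B^0 A - u^0_\flat z\partial_z A - \partial_z^2 A = f$ with the Neumann condition $\partial_z A|_{z=0} = g$, obtained for instance by a lifting of the boundary data plus a Duhamel/parabolic-regularity argument in the $(t,z)$ variables with $x$ as a parameter; here the weighted spaces $H^s_q(\mathbb{R}_+)$ are used to track the decay in $z$, and one checks using \eqref{equinorm} that the stated regularity indices $\tilde s, \tilde q, \tilde\gamma, \tilde p, \tilde k$ are exactly what is needed so that $A$ lands in $C^k_\gamma(\mathbb{R}_+; H^p(\cO; H^s_q(\mathbb{R}_+)))$ after the loss incurred by these manipulations. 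The orthogonality $A\cdot\mathbf n = 0$ is propagated from the data and source via \eqref{defB-k}: applying $\cdot\,\mathbf n$ to the equation, the terms combine so that $A\cdot\mathbf n$ solves a transport equation with zero source in $\mathcal V_{\delta_0}$ and zero data, hence stays zero.

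The second step handles the tail $t \geq T$, where $u^0 \equiv 0$ and the equation reduces to the heat equation $\partial_t w - \partial_{zz} w = 0$ on $\mathbb{R}_+$ with Neumann condition $\partial_z w|_{z=0} = 0$. After reflecting evenly in $z$, this is the free heat equation on $\mathbb{R}$, and the decay as $t \to \infty$ of the relevant weighted norms is governed by the vanishing moments of the ``initial'' data $\overline v(x,\cdot) = v(T,x,\cdot)$: if $\int_{\mathbb{R}_+} z^j \overline v(x,z)\,dz = 0$ for $0 \leq j \leq 2n$ (equivalently, $\partial_\zeta^j \widehat{\overline v}(x,0) = 0$), then $\|w(t,x,\cdot)\|_{H^s_q(\mathbb{R}_+)} \lesssim \langle t\rangle^{-n-?}$, and the choice $n = [\frac q2 + \gamma]$ is precisely calibrated so that this beats $\langle t\rangle^{-\gamma}$ with the correct power of $\langle z\rangle^q$ in the weight; the Plancherel identity \eqref{equinorm} is the clean way to see this (a vanishing moment is a vanishing derivative of $\widehat{\overline v}$ at $\zeta = 0$, and the heat semigroup symbol $e^{-t\zeta^2}$ concentrates near $\zeta = 0$).

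The third step is the control construction: we must choose $\xi$, supported in $(\overline\cO\setminus\overline\Omega)$ and compactly in time in $(0,T)$, so that the actual solution $v$ of \eqref{u} satisfies the $2n+1$ moment conditions at time $T$. This is where the flushing property \eqref{flush} of the Euler flow $\Phi^0$ enters, exactly as in Section \ref{wpdm}: by linearity, the moments $\int z^j v(T,x,z)\,dz$ split into a free part (coming from the evolution without control, i.e.\ from $A$ evolved — which is known) and a controlled part; because every point $x$ of $\overline\cO$ is carried, at some time $t_x \in (0,T)$, by $\Phi^0$ into the control zone $\overline\cO\setminus\overline\Omega$, one can first generate the required moment defect \emph{outside} $\overline\Omega$ by an explicit source $\xi$ localized there, and then let the transport operator $\partial_t + u^0\cdot\nabla + B^0 - u^0_\flat z\partial_z$ convect it to the right place; since the transport and the $z$-dilation $u^0_\flat z\partial_z$ and the matrix term $B^0$ act on the moments $\int z^j v\,dz$ through a closed linear ODE system in $x$ along characteristics, one can solve backwards for the needed $\xi$. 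One keeps track of the support: $v$ and $\xi$ remain supported in $\mathcal V_{\tilde S(\delta)}$ for a suitable continuous $\tilde S$ with $\delta \leq \tilde S(\delta)$, because the characteristics of $u^0$ starting in $\mathcal V_\delta$ stay within an enlarged tubular neighborhood on $[0,T]$, and $\xi$'s spatial support is contained in the image of $\mathcal V_\delta$ under $\Phi^0$ intersected with the control region. The orthogonality $\xi\cdot\mathbf n = 0$ is imposed in the construction and is consistent by \eqref{defB-k}. Finally, if $f$ and $g$ vanish near $t = 0$, then so does $A$ near $t=0$, hence so does $v$, since the control $\xi$ is active only on $(0,T)$ and can be taken to vanish near $0$ as well.

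\textbf{Main obstacle.} The delicate point is the bookkeeping of regularity: one must verify that the numerology in \eqref{G1}--\eqref{G2} is consistent through all three steps simultaneously — the parabolic smoothing and moment-generating corrections each cost derivatives in $z$ (hence the inflation $\tilde s = s + 2k + 2n$) and the weighted decay estimate for the heat tail costs both decay rate and weight (hence $\tilde q = \tilde\gamma = 2n+3$), while the control construction via the Euler flow costs $x$-derivatives (hence $\tilde p = p + k' + 1$ with $k' = [\frac{s+1}2] + k + n$) and time derivatives (hence $\tilde k = k + k' - 1$). Making the heat-tail decay estimate quantitatively match $\langle t\rangle^{-\gamma}$ in the $H^s_q$ norm, uniformly in the slow variable $x$, via \eqref{equinorm} and the vanishing of $j \leq 2n$ Fourier-derivatives at the origin, is the analytic heart of the argument; the rest is a careful but essentially routine linear transport-with-control construction modeled on Lemma \ref{np} and \cite[Lemma 3]{CMS}.
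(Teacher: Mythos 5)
Your overall strategy (moments as Fourier derivatives at $\zeta=0$, decay of the heat tail calibrated by $n=[\frac q2+\gamma]$, control of the moments through a closed transport system driven to zero via the flushing property) is indeed the skeleton of the paper's argument. But there is a genuine gap in how you set up the decomposition, concentrated in the role you assign to $A$. In the paper, $A$ is \emph{not} a particular solution of the inhomogeneous problem: it is the explicit finite Taylor-type lifting \eqref{DefA}, $A=\sum_{j=1}^{k'} g_j\,z^{2j-1}/(2j-1)!\,\chi_1(z)$, built from the recursion \eqref{defgj}, whose only purpose is to match the Neumann datum and to flatten the odd normal derivatives of the residual source $F=f-(\partial_t A+u^0\cdot\nabla A+B^0A-u^0_\flat z\partial_zA-\partial_z^2A)$ at $z=0$, so that $F$ can be extended evenly to $z\in\mathbb{R}$ with $\tilde s$ derivatives (this recursion is what dictates $k'$, $\tilde k$, $\tilde p$, and it also makes $v_0=A(0,\cdot,\cdot)$ an explicit element of $H^{p+2}(\cO;C_0^\infty(\overline{\mathbb{R}_+}))$, as the statement requires). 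The source is therefore \emph{not} eliminated; it is carried into the whole-line problem and its future action enters the decay criterion.

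Your version, in which $A$ solves the full problem with source $f$ and datum $g$ and is claimed to land in $C^k_\gamma(\mathbb{R}_+;H^p(\cO;H^s_q(\mathbb{R}_+)))$, cannot work as stated: decay at rate $\t^{-\gamma}$ in the weighted space $H^s_q$ is exactly what fails for an uncontrolled Duhamel solution, because after $\partial_\zeta^j$ hits $e^{-(t-\tau)\zeta^2}$ one pays $\langle t\rangle^{j/2}$ and the only way to recover decay at low frequencies is the cancellation of the first $2n+1$ $\zeta$-derivatives at $\zeta=0$ of the accumulated data-plus-source — precisely hypothesis \eqref{Au0} of Lemma \ref{lmfc}, which $f$ and $g$ alone have no reason to satisfy and which you cannot impose on $A$ since $A$ is not controlled. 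The alternative reading, where your $A$ lives only on $[0,T]$ and the tail $t\geq T$ is treated as the \emph{free} heat equation with homogeneous Neumann condition, is also incorrect: $f$ and $g$ persist for $t\geq T$ (they only decay), so the decay is governed not by the moments of $v(T)$ alone but by the combined condition involving the $s_n(\tau\zeta^2)$-weighted integral of the future source — this is exactly what the quantities $\gamma_j(x)$ in the proof of Lemma \ref{lmU} and condition \eqref{UT} encode, and it is absent from your step 2. Your step 3 can be repaired to cover this (impose on the controlled part at time $T$ the cancellation of the free part's moments \emph{plus} the future-source correction $\gamma_j$), but as written the proposal both asserts an unjustified decay for $A$ and drops the source correction in the tail, which is the analytic heart of the well-prepared dissipation here.
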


The first key observation towards the proof of Proposition \ref{lmu}
 is that  for $t\geq T$, we have $u^0=0, u^0_{\flat}=0,$ $B^0 =0$ and we look for
a control $\xi$ which is  compactly supported in $(0,T)$, so the equations for $v$ reduces to
\begin{equation*}
\begin{cases}
\partial_t v  - \partial_z^2 v= f, \quad t\geq T,x\in \cO,z\in \mathbb{R}_+,\\
\partial_z v|_{z=0}=g,\quad t\geq T,x\in \cO,\\
v\cdot \mathbf{n}=0,\quad t\geq T,x\in \cO,z\in \mathbb{R}_+,
\end{cases}
\end{equation*}
with an ``initial" data at $t=T$ which has no reason to be zero.
To prepare the part of the proof of Proposition \ref{lmu} regarding the decay in time,
we first single out some well-prepared dissipation conditions for the heat equation on the full line (in space) with non-zero ``initial" data at $t=T$
and non-zero source term:
\begin{equation*}
\begin{cases}
\partial_t v-\partial_z^2 v=f, \quad t\geq T,x\in \cO,z\in \mathbb{R},\\
v|_{t=T}=v(T,\cdot,\cdot),\quad x\in \cO,z\in \mathbb{R}.
\end{cases}
\end{equation*}
For $n\in \mathbb{N}$ and  $x\in \mathbb{R},$  we set
\begin{equation}
\label{defsn}
s_n(x):=\sum_{k=0}^n \frac{x^k}{k!}.
\end{equation}
\begin{lemma}\label{lmfc}
{\sl Let $\gamma>0$ and $k,s,q,n\in \mathbb{N}$  and
\begin{equation}
\label{flam}
 n\geq \frac{q}{2}+\gamma-1.
 \end{equation}
Let $\tilde{\gamma}$, $\tilde{s}$, and $\tilde{q}$ be as in   \eqref{G1}.
Let $v_0 \in H^{s+2k}_{\tilde{q}  }     (\mathbb{R})$ and $f\in C^0_{\tilde{\gamma}}(\mathbb{R}_+;H^{\tilde{s}}_{\tilde{q}}(\mathbb{R}))$ when $k=0$ and $   f \in C^{k-1}_{\tilde{\gamma}}(\mathbb{R}_+;H^{\tilde{s}}_{\tilde{q}  }     (\mathbb{R}))$ when $k\geq 1$, such that
\begin{eqnarray}\label{Au0}
\left.\Big(\partial^j_{\zeta}\big(\hat{v}_0(\zeta)+\int_0^{\infty}s_n(\tau\zeta^2)\hat{f}(\tau,\zeta)d\tau\big)\Big)\right|_{\zeta=0}=0,\quad    \text{  for } 0\leq j\leq 2n+1,
\end{eqnarray}
Then the following Cauchy problem
\begin{equation*}
\begin{cases}
\partial_t v-\partial_z^2 v=f, \quad t\geq 0, z\in \mathbb{R},\\
v|_{t=0}=v_0,\quad z\in \mathbb{R}.
\end{cases}
\end{equation*}
has a unique solution $v\in
 C^{k}_{\gamma}(\mathbb{R}_+;H^{s}_{q}(\mathbb{R})).$}
\end{lemma}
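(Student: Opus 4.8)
The plan is to solve the heat equation on $\mathbb{R}$ explicitly via the Fourier transform in $z$ and then to read off the decay from the moment/vanishing conditions \eqref{Au0}. First I would pass to the Fourier side: writing $\hat v(t,\zeta)$ for the Fourier transform of $v(t,\cdot)$ in the $z$ variable, the equation becomes the ODE $\partial_t \hat v + \zeta^2 \hat v = \hat f$, whose solution with data $\hat v_0$ at $t=0$ is, by Duhamel,
\begin{equation*}
\hat v(t,\zeta) = e^{-t\zeta^2}\hat v_0(\zeta) + \int_0^t e^{-(t-\tau)\zeta^2}\hat f(\tau,\zeta)\,d\tau .
\end{equation*}
Existence and uniqueness in $C^k(\mathbb{R}_+;H^s_q(\mathbb{R}))$ for $f$ in the stated regularity class are routine (the weight $\langle z\rangle^{2q}$ corresponds, via \eqref{equinorm}, to $q$ derivatives in $\zeta$, and the Gaussian multiplier $e^{-t\zeta^2}$ and its $\zeta$-derivatives are bounded on every bounded time interval, uniformly), so the real content is the weighted-in-time bound $\|v(t)\|_{H^s_q(\mathbb{R})}\lesssim \langle t\rangle^{-\gamma}$, and the analogous bounds for the $k$ time-derivatives.

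The key step is to understand why condition \eqref{Au0} produces decay. The function $G(t,\zeta) := \hat v(t,\zeta)$ can be rewritten, by adding and subtracting the "missing tail" of the Duhamel integral, as
\begin{equation*}
G(t,\zeta) = e^{-t\zeta^2}\Bigl(\hat v_0(\zeta) + \int_0^\infty e^{\tau\zeta^2}\hat f(\tau,\zeta)\,d\tau\Bigr) - \int_t^\infty e^{-(t-\tau)\zeta^2}\hat f(\tau,\zeta)\,d\tau ,
\end{equation*}
but this is not quite the right bookkeeping because $e^{\tau\zeta^2}$ grows; instead I would keep the integral over $[0,t]$ and estimate directly, Taylor-expanding $e^{-(t-\tau)\zeta^2}$. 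The cleaner route, which matches the appearance of $s_n$ in \eqref{Au0}, is: the decay rate $\langle t\rangle^{-\gamma}$ of $\|v(t)\|_{H^s_q}$, which by \eqref{equinorm} controls $q$ derivatives of $G(t,\cdot)$ in $\zeta$ weighted by $\langle\zeta\rangle^s$, is governed by the behaviour of the "profile" $P(\zeta) := \hat v_0(\zeta)+\int_0^\infty s_n(\tau\zeta^2)\hat f(\tau,\zeta)\,d\tau$ near $\zeta=0$: each vanishing derivative $\partial_\zeta^j P(0)=0$, $0\le j\le 2n+1$, gains one power of $|\zeta|$, hence (after integrating $e^{-t\zeta^2}$ times a power of $|\zeta|$ over $\zeta\in\mathbb{R}$) one half-power of $\langle t\rangle^{-1}$; with $2n+2$ vanishing conditions one gets $\langle t\rangle^{-(n+1)}$, and the choice $n\ge \frac q2+\gamma-1$ from \eqref{flam} is exactly what makes $n+1-\frac q2 \ge \gamma$, where the $-\frac q2$ accounts for the $q$ $\zeta$-derivatives the weighted norm costs (each such derivative can hit $e^{-t\zeta^2}$ and produce a factor $t|\zeta|$, i.e. loses half a power of decay). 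The role of the polynomial $s_n(\tau\zeta^2)$ is that it is precisely the degree-$n$ Taylor polynomial in $\zeta^2$ of $e^{\tau\zeta^2}$, so that $\int_0^\infty s_n(\tau\zeta^2)\hat f(\tau,\zeta)d\tau$ captures the part of the source contribution that does \emph{not} decay fast on its own, and requiring \eqref{Au0} kills it up to the needed order.

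Concretely I would split $G = e^{-t\zeta^2}\hat v_0 + \int_0^t e^{-(t-\tau)\zeta^2}\hat f\,d\tau$ into a low-frequency piece ($|\zeta|\le 1$) and a high-frequency piece ($|\zeta|\ge 1$). On $|\zeta|\ge 1$ the Gaussian $e^{-t\zeta^2}\le e^{-t}$ gives exponential decay and the source term is handled by the decay of $\|f(\tau)\|_{H^{\tilde s}_{\tilde q}}$ in $\tau$ (here $\tilde\gamma = 2n+3 > \gamma$ and $\tilde s,\tilde q$ are chosen with enough room for the $\zeta$-derivatives); this is where the inflated indices $\tilde s = s+2k+2n$ and $\tilde q = 2n+3$ are consumed, the extra $2k$ for the time-derivatives and the extra $2n$ for the repeated $\zeta$-differentiations. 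On $|\zeta|\le 1$ I would use the vanishing conditions: write $\hat v_0(\zeta) + \int_0^t s_n(\tau\zeta^2)\hat f(\tau,\zeta)\,d\tau = P(\zeta) - \int_t^\infty s_n(\tau\zeta^2)\hat f(\tau,\zeta)\,d\tau$, note the remaining part of the Duhamel term, namely $\int_0^t\bigl(e^{-(t-\tau)\zeta^2}-s_n(\tau\zeta^2)e^{-t\zeta^2}\bigr)\hat f\,d\tau$ — no, more carefully: substitute $e^{-(t-\tau)\zeta^2}= e^{-t\zeta^2}e^{\tau\zeta^2}$, expand $e^{\tau\zeta^2}$ and isolate the degree-$\le n$ part $s_n(\tau\zeta^2)$ plus a remainder that is $O((\tau\zeta^2)^{n+1})$ and hence already carries $|\zeta|^{2n+2}$; the degree-$\le n$ part combines with $\hat v_0$ into $e^{-t\zeta^2}P(\zeta)$ up to a tail $\int_t^\infty$, and $P$ vanishes to order $2n+2$ at $0$ by \eqref{Au0}, so $|P(\zeta)|\lesssim|\zeta|^{2n+2}$ on $|\zeta|\le 1$. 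Then $\int_{|\zeta|\le 1}\langle\zeta\rangle^{2s}|\partial_\zeta^{\le q}(e^{-t\zeta^2}|\zeta|^{2n+2})|^2\,d\zeta \lesssim \langle t\rangle^{-(2n+2)+q}$ after the substitution $\sqrt t\,\zeta\mapsto\zeta$, which is $\lesssim\langle t\rangle^{-2\gamma}$ by \eqref{flam}; the tail $\int_t^\infty$ and the remainder term are estimated similarly using $\tilde\gamma$ large. The time-derivatives $\partial_t^j v$ satisfy the same equation with source $\partial_t^j f$ and bring down extra factors of $\zeta^2$ from differentiating $e^{-t\zeta^2}$, which only \emph{help} the decay but cost regularity, whence the $+2k$ in $\tilde s$ and the $k-1$ (or $k$) in the regularity of $f$. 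I expect the main obstacle to be purely organisational: cleanly matching the combinatorics of "each vanishing $\zeta$-derivative buys half a power of $t$-decay, each weight $\langle z\rangle$ costs half a power" so that the arithmetic $n\ge\frac q2+\gamma-1 \Rightarrow$ decay $\gamma$ comes out, and simultaneously keeping track that the inflated indices $(\tilde\gamma,\tilde s,\tilde q)$ in \eqref{G1} are sufficient — there is no conceptual difficulty beyond careful Fourier-side bookkeeping.
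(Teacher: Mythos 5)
Your proposal follows essentially the same route as the paper: solve on the Fourier side by Duhamel, recognise $s_n(\tau\zeta^2)$ as the Taylor polynomial of $e^{\tau\zeta^2}$ so that \eqref{Au0} makes the profile $\hat v_0(\zeta)+\int_0^\infty s_n(\tau\zeta^2)\hat f(\tau,\zeta)\,d\tau$ vanish to order $2n+1$ at $\zeta=0$, convert each gained power of $|\zeta|$ into half a power of $\langle t\rangle^{-1}$ against the Gaussian (with \eqref{flam} closing the arithmetic against the $q$ weighted $\zeta$-derivatives), and reduce the time derivatives to the $k=0$ case via the equation. The only real bookkeeping difference is that the paper splits the Duhamel integral in time (treating $0\le t\le 1$ separately and cutting at $\tau=t/4$ for $t\ge 1$) rather than in frequency: this is what allows the Taylor-remainder bound $|\partial_\zeta^j(e^{\tau\zeta^2}-s_n(\tau\zeta^2))|\lesssim\tau^{n+1}|\zeta|^{2n+2-j}e^{(2-\frac{1}{j+1})\tau\zeta^2}$ to be used only where the leftover Gaussian still decays, while the contribution from $\tau\in[t/4,t]$ is estimated directly from the time decay of $f$ — the step you label ``estimated similarly'' is precisely this point and needs that time split.
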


\begin{proof}
We first observe that it is sufficient to deal with the case where $k=0$, since the general case follows by using that  for $0\leq i\leq k$,   for $z$ in $\mathbb{R}$ and $t\geq 0$,
$\partial_t^{i}v=\partial_z^2\partial_t^{i-1}v+\partial_t^{i-1}f$.

The Fourier transform $\hat{v}(t,\cdot)$ of $v(t,\cdot)$ is given,
for $ t\geq 0$ and $\zeta$ in $\R$, by
\begin{eqnarray} \label{dej}
\hat{v}(t,\zeta)=e^{-t\zeta^2}\Big( \hat{v}_0(\zeta)+\int_0^t e^{\tau\zeta^2}\hat{f}(\tau,\zeta)d\tau  \Big).
\end{eqnarray}

Let us  observe that
\begin{equation}
\label{le1}
\forall j\in \mathbb{N}, \,   \exists C_j > 0   \text{ such that } \,   \forall t>0,\,  \forall \zeta\in \mathbb{R}, \,  |\partial_{\zeta}^{j}(e^{-t\zeta^2})|\leq C_j \langle t\rangle^{\frac{j}{2}}e^{-\frac{3}{4}t\zeta^2} .
\end{equation}

Now we decompose the proof of   Lemma \ref{lmfc}
 into the following  two steps:
\smallskip

\noindent \textbf{Step 1:} we first prove that, for $0\leq t\leq 1$, $\|u(t,\cdot)\|_{H^{s}_{q}(\mathbb{R})}$ is bounded.
Indeed, for $0\leq t \leq 1$ and $s<\tilde{s},$ $q<\tilde{q},  $  it follows from  \eqref{equinorm}, \eqref{dej},  the Leibniz formula and \eqref{le1} that
  \begin{eqnarray*}
&&\|v(t,\cdot)\|_{H^{s}_{q}(\mathbb{R})}\lesssim\sum_{j=0}^{q}\|\langle \zeta\rangle^s\partial_{\zeta}^j \hat{v}(t,\zeta)\|_{L^2_{\zeta}}\\
&&\lesssim \sum_{j=0}^{q}\sum_{j_1+j_2=j}\Big(\|\langle\zeta\rangle^s\partial_{\zeta}^{j_1}(e^{-t\zeta^2})\partial_{\zeta}^{j_2}\hat{v}_0(\zeta)\|_{L^2_{\zeta}}
+\|\langle\zeta\rangle^s\int_0^{t}\partial_{\zeta}^{j_1}(e^{-(t-\tau)\zeta^2})\partial_{\zeta}^{j_2}\hat{f}(\tau,\zeta)d\tau\|_{L^2_{\zeta}}\Big)\\
&&\lesssim  \|v_0\|_{H^{s}_{\tilde{q}}     (\mathbb{R})} +\|f\|_{C^0_{\tilde{\gamma}}(\mathbb{R}_+;H^{\tilde{s}}_{\tilde{q}}(\mathbb{R}))}.
\end{eqnarray*}

Thus for $0\leq t\leq 1$, $\|v(t,\cdot)\|_{H^{s}_{q}(\mathbb{R})}$ is bounded.
\smallskip

\noindent \textbf{Step 2:} It remains to prove that there exists $C>0$ such that  for $t\geq 1$,  $\| v(t,\cdot)\|_{H^{s}_{q}}  \leq C\t^{-\gamma}$.
Indeed, for $t\geq 1$,  by  \eqref{dej}, we write
\begin{equation}
\label{daistyle}
\hat{v}(t,\zeta)=\sum_{i=1}^4I_i(t,\zeta),
\end{equation}
 where
\begin{eqnarray*}
&I_1(t,\zeta):=e^{-t\zeta^2}\Big( \hat{v}_0(\zeta)+\int_0^{+\infty} s_n(\tau\zeta^2)\hat{f}(\tau,\zeta)d\tau  \Big),\quad &I_2(t,\zeta):=-e^{-t\zeta^2}\int_{\frac{t}{4}}^{+\infty} s_n(\tau\zeta^2)\hat{f}(\tau,\zeta)d\tau ,\\
&I_3(t,\zeta):=e^{-t\zeta^2}\int_0^{\frac{t}{4}}\bigl(e^{\tau\zeta^2}-s_n(\tau\zeta^2)\bigr)\hat{f}(\tau,\zeta)d\tau,\quad &I_4(t,\zeta):=\int_{\frac{t}{4}}^{t} e^{-(t-\tau)\zeta^2}\hat{f}(\tau,\zeta)d\tau.
\end{eqnarray*}
Thanks to  \eqref{le1},
 to conclude this second step, it is sufficient to show that, for $1 \leqslant i \leqslant 4$,  $0\leq j\leq q$ and $t\geq 1,$
 $$\| \langle  \zeta\rangle^s\partial_{\zeta}^j  I_i(t,\zeta)\|_{L^{2}_{\zeta}}\lesssim \langle t\rangle^{-\gamma}.$$
We observe that for $t\geq 1$,
\begin{equation}\label{t>1}
t\leq \t\leq \sqrt{2}t.
\end{equation}

\no $\bullet$ \underline{Estimate of $I_1$}\vspace{0.2cm}

Since $u_0$ and $f$ satisfies (\ref{Au0}), we have, for $0\leq j_2\leq q$,  by the Taylor formula,
\begin{eqnarray*}
&&|\partial_{\zeta}^{j_2}(\hat{v_0}(\zeta)+\int_0^{+\infty}s_n(\tau\zeta^2)\hat{f}(\tau,\zeta)d\tau)|\\
&&\lesssim |\zeta|^{2n+2-j_2}\|\partial_{\zeta}^{2n+2}\big(\hat{v_0}(\zeta)+\int_0^{+\infty}s_n(\tau\zeta^2)\hat{f}(\tau,\zeta)d\tau\big)\|_{L^{\infty}_{\zeta}}\\
&&\lesssim|\zeta|^{2n+2-j_2}\|\partial_{\zeta}^{2n+2}\big(\hat{v_0}(\zeta)+\int_0^{+\infty}s_n(\tau\zeta^2)\hat{f}(\tau,\zeta)d\tau\big)\|_{H^1_{\zeta}}\\
&&\lesssim|\zeta|^{2n+2-j_2}\bigl(\|v_0\|_{H^{0}_{\tilde{q}}   (\mathbb{R})}    +\|f\|_{C^0_{\tilde{\gamma}}(\mathbb{R}_+;H^{\tilde{s}}_{\tilde{q}}     (\mathbb{R}))}\bigr)\\
&&\leq C|\zeta|^{2n+2-j_2} .
\end{eqnarray*}
This together with the Leibniz formula, \eqref{le1} and (\ref{t>1}) implies that
 for $0\leq j\leq q$ and $t\geq 1,$
\begin{equation*}
\|\langle\zeta\rangle^s\partial_{\zeta}^jI_1(t,\zeta)\|_{L^2_{\zeta}}
\lesssim\sum_{j_1+j_2=j} \|\langle \zeta\rangle^s e^{-\frac{3}{4}t\zeta^2}\langle t\rangle^{\frac{j_1}{2}}|\zeta|^{2n+2-j_2}\|_{L^2_{\zeta}}\lesssim t^{-(n+\frac{5}{4}-\frac{j}{2})}.
\end{equation*}
Thus, thanks to  \eqref{flam}, we achieve
\begin{equation}
\label{daistyle1} \|\langle\zeta\rangle^s\partial_{\zeta}^jI_1(t,\zeta)\|_{L^2_{\zeta}}
\leq C\t^{-\gamma} .
\end{equation}

\no $\bullet$ \underline{Estimate of $I_2$}\vspace{0.2cm}

 By the Leibniz formula and \eqref{le1}, for $0\leq j\leq q,$ we find
\begin{eqnarray*}
\|\langle \zeta\rangle^s\partial_{\zeta}^jI_2(t,\zeta) \|_{L_{\zeta}^2}
&\lesssim& \sum_{j_1+j_2+j_3=j}\|\langle \zeta\rangle^s\int_{\frac{t}{4}}^{\infty} \partial_{\zeta}^{j_1}(e^{-t\zeta^2})\partial_{\zeta}^{j_2}(s_n(\tau\zeta^2))\partial_{\zeta}^{j_3}\hat{f}(\tau,\zeta)d\tau\|_{L^2_{\zeta}}\\
&\lesssim& \sum_{j_1+j_2+j_3=j}\|\int_{\frac{t}{4}}^{\infty} \t^{\frac{j_1}{2}}e^{-\frac{3}{4}t\zeta^2}\langle\tau\rangle^n\langle \zeta\rangle^{s+2n}|\partial_{\zeta}^{j_2}\hat{f}(\tau,\zeta)|d\tau\|_{L^2_{\zeta}}\\
&\lesssim& \int_{\frac{t}{4}}^{\infty} \langle t\rangle^{\frac{q}{2}}\langle \tau\rangle^{n}\|f(\tau,\cdot)\|_{H^{s+2n}_q}d\tau .
\end{eqnarray*}
Since $\|f(\tau,\cdot)\|_{H^{s+2n}_{q}}\lesssim\langle \tau\rangle^{-(2n+3)}$, by using \eqref{flam}, we deduce that
\begin{equation}
\label{daistyle2}
\|\langle \zeta\rangle^s\partial_{\zeta}^jI_2(t,\zeta) \|_{L_{\zeta}^2}
\leq C\t^{-\gamma}.
\end{equation}

\no $\bullet$ \underline{Estimate of $I_3$}\vspace{0.2cm}

 By Taylor's expansion and by induction on $j$, we prove that
for all $j\in \mathbb{N}$, there exists $C_{j,n} > 0 $ such that  for all $\tau>0$, for all $\zeta\in \mathbb{R}$,
 $$|\partial_{\zeta}^j(e^{\tau\zeta^2}-s_n(\tau\zeta^2))|\leq C_{j,n}\tau^{n+1}|\zeta|^{2n+2-j} e^{(2-\frac{1}{j+1})\tau\zeta^2} .$$
Then,  for $0\leq j\leq q,$ by the Leibniz formula, one has
\begin{eqnarray*}
&&\|\langle \zeta\rangle^s\partial_{\zeta}^jI_3(t,\zeta) \|_{L_{\zeta}^2}\\
&&\lesssim \sum_{j_1+j_2+j_3=j}\|\langle \zeta\rangle^s\int_0^{\frac{t}{4}}\partial_{\zeta}^{j_1}(e^{-t\zeta^2})\partial_{\zeta}^{j_2}(e^{\tau\zeta^2}-s_n(\tau\zeta^2))
\partial_{\zeta}^{j_3}\hat{f}(\tau,\zeta)d\tau\|_{L_{\zeta}^2}\\
&&\lesssim \sum_{j_1+j_2+j_3=j}\|\langle \zeta\rangle^s\int_0^{\frac{t}{4}}\langle t\rangle^{\frac{j_1}{2}}e^{-\frac{3}{4}t\zeta^2}\tau^{n+1}|\zeta|^{2n+2-j_2}e^{2\tau\zeta^2}|\partial_{\zeta}^{j_3}\hat{f}(\tau,\zeta)|d\tau\|_{L^2_{\zeta}}\\
&&\lesssim \sum_{j_1+j_2+j_3=j}t^{\frac{j_1+j_2}{2}-n-1}\|\int_0^{\frac{t}{4}}\langle \zeta\rangle^{s}\tau^{n+1}|\partial_{\zeta}^{j_3}\hat{f}(\tau,\zeta)|d\tau\|_{L^2_{\zeta}}\\
&&\lesssim t^{\frac{q}{2}-n-1}\int_0^{\frac{t}{4}}\tau^{n+1}\|f(\tau,\cdot)\|_{H^{s}_{q}}d\tau.
\end{eqnarray*}
Since $\|f(\tau,\cdot)\|_{H^s_q}\lesssim \langle\tau\rangle^{-\tilde{\gamma}}$ and (\ref{t>1}), we obtain
\begin{equation}
\label{daistyle3}
\|\langle \zeta\rangle^s\partial_{\zeta}^jI_3(t,\zeta) \|_{L_{\zeta}^2}\leq C\langle t\rangle^{\frac{q}{2}-n-1}\leq C\t^{-\gamma}.
\end{equation}

\no $\bullet$ \underline{Estimate of $I_4$}\vspace{0.2cm}

 By  \eqref{le1}, we find, for $0\leq j\leq q$,
\begin{eqnarray*}
\|\langle \zeta\rangle^s\partial_{\zeta}^jI_4(t,\zeta) \|_{L_{\zeta}^2}&\lesssim& \sum_{j_1+j_2=j}\|\int_{\frac{t}{4}}^t \partial_{\zeta}^{j_1}(e^{-(t-\tau)\zeta^2})\langle \zeta\rangle^s\partial_{\zeta}^{j_2}\hat{f}(\tau,\zeta)d\tau\|_{L^2_{\zeta}}\\
&\lesssim& \sum_{j_1+j_2=j}\|\int_{\frac{t}{4}}^t \langle t-\tau\rangle^{\frac{j_1}{2}}e^{-\frac{3(t-\tau)}{4}\zeta^2}\langle \zeta\rangle^s|\partial_{\zeta}^{j_2}\hat{f}(\tau,\zeta)|d\tau\|_{L^2_{\zeta}}\\
&\lesssim& \int_{\frac{t}{4}}^t \langle \tau\rangle^{\frac{q}{2}}\|f(\tau,\cdot)\|_{H^{s}_{q}}d\tau .
\end{eqnarray*}
Since $\|f(t,\cdot)\|_{H^{s}_{q}}\lesssim\langle \tau\rangle^{-(2n+3)}$, we infer
\begin{equation}
\label{daistyle4}
\|\langle \zeta\rangle^s\partial_{\zeta}^jI_4(\tau,\zeta) \|_{L_{\zeta}^2}\lesssim \langle t\rangle^{\frac{q}{2}-2n-2}\leq \t^{-\gamma}.
\end{equation}

By combining  the estimates, \eqref{daistyle},  \eqref{daistyle1}, \eqref{daistyle2},  \eqref{daistyle3} and \eqref{daistyle4},  we deduce that there exists $C>0$ such that  for $t\geq 1$,  $\| v(t,\cdot)\|_{H^{s}_{q}}  \leq C\t^{-\gamma}$.

Finally by combining step 1 with step 2, we conclude  that $v$ belongs to $ C^0_{\gamma}(\mathbb{R}_+;H^{s}_{q}(\mathbb{R})).$
\end{proof}

We  now turn  to the following counterpart for the whole line $z \in \mathbb{R}$ of the  initial-boundary value problem \eqref{u}:
\begin{equation}\label{Vz}
\begin{cases}
\partial_t V+u^0\cdot\nabla V+B^0 V-u^0_{\flat}z\partial_z V-\partial_z^2 V= \Xi+F, \quad t\geq 0,x\in \cO,z\in \mathbb{R},\\
V|_{t=0}=0,\quad x\in \cO,z\in \mathbb{R} .
\end{cases}
\end{equation}
We recall that $B^0 $ is defined in  \eqref{defB}.

\begin{lemma}\label{lmU}
{\sl Let $\gamma>0,$ $k,p,s,q,n\in\mathbb{N},k\geq 1$ satisfying $n\geq \frac{q}{2}+\gamma-1.$ Let $\tilde{\gamma},\tilde{s}, \tilde{q}$ be as in (\ref{G1})  and $\delta>0$ be a small constant.
Let
\begin{equation}
\label{Fdata}
F\in C^{k-1}_{\tilde{\gamma}}(\mathbb{R}_+;H^{p+1} (\cO;   H^{\tilde{s}}_{\tilde{q}}     (\mathbb{R})),
\end{equation}
with $F(t,x,z) $ being supported in $ \mathcal{V}_{\delta}$ as a function of  $ x$ and $F(t,x,z)\cdot \mathbf{n}(x)=0$, for all $ t\geq 0$, $x\in \cO$ and $z\in \mathbb{R}$.

Then there are
$$\Xi(t,x,z)\in C^{k-1}(\mathbb{R}_+;H^p(\cO;\mathcal{S}(\mathbb{R})))
 \text{ and }   V \in C^{k}_{\gamma}(\mathbb{R}_+;H^p  (\cO;   H^s_q  (\mathbb{R}))) ,$$
 such that (\ref{Vz})  holds true, and there is a continuous function $\tilde{S}:\mathbb{R}_+\rightarrow\mathbb{R}_+$, such that for any positive $\delta,$  $\delta\leq \tilde{S}(\delta)$, and
 $\Xi$
  is supported in $(\overline{\cO}\backslash\overline{\Omega})\cap\mathcal{V}_{\tilde{S}(\delta)}$ as a function of  $x$ and is compactly supported in $(0,T)$ as a function of  time $t$, and satisfies
 $\Xi(t,x,z)\cdot \mathbf{n}(x)=0$, for all $ t \in (0,T)$, $x\in (\overline{\cO}\backslash\overline{\Omega})\cap\mathcal{V}_{\tilde{S}(\delta)} $ and $z\in \mathbb{R}$,
  and  $V$ is supported in $\mathcal{V}_{\tilde{S}(\delta)}$ as a function of  $x$ and satisfies  $V(t,x,z)\cdot \mathbf{n}(x)=0$, for all $ t\geq 0$, $x\in \cO$ and $z\in \mathbb{R}$.}

  Moreover, if $F$ is supported away from $t=0$ as a function of time $t$, then so does $V$.

\end{lemma}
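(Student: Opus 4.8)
\textbf{Proof proposal for Lemma \ref{lmU}.}

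The plan is to reduce the whole-line problem \eqref{Vz} to the heat equation on $\mathbb{R}$ treated in Lemma \ref{lmfc}, via a change of unknown that straightens out the transport and zero-order terms, and then to produce the control $\Xi$ by a moment-correction argument that exploits the flushing property \eqref{flush} of the return-method trajectory $u^0$. First I would dispose of the geometric constraint $V\cdot\mathbf n=0$: because $F\cdot\mathbf n=0$ and $u^0\cdot\mathbf n=0$ on $\partial\cO$, the identity \eqref{defB-k} shows that $(\partial_t + u^0\cdot\nabla + B^0 - u^0_\flat z\partial_z - \partial_z^2)(V\cdot\mathbf n) = (\Xi+F)\cdot\mathbf n$ holds in $\mathcal V_{\delta_0}$, so if we choose $\Xi\cdot\mathbf n=0$ as well then $V\cdot\mathbf n$ solves a linear homogeneous equation with zero initial data and hence vanishes in the layer; away from the layer one arranges the supports so this is vacuous. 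Thus I would split $V$ (and $F$, $\Xi$) into the component along $\mathbf n$, which is forced to be zero, and the tangential components, for which the condition $V\cdot\mathbf n=0$ can be ignored and only the PDE matters. This is the reason the hypotheses are stated with $F\cdot\mathbf n=0$.

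Next, for the tangential part, I would freeze the fast variable and view \eqref{Vz} as a transport-diffusion equation along the characteristics of $u^0$ in the $x$-variable coupled to a heat operator in $z$ (with the extra focusing drift $-u^0_\flat z\partial_z$, which is harmless: it can be absorbed by an exponential substitution $z\mapsto$ a rescaled variable, or simply noted that for $t\ge T$ it vanishes, which is all we need for the decay). The key point is that for $t\ge T$ one has $u^0\equiv0$, $u^0_\flat\equiv0$, $B^0\equiv0$, so $V$ solves the pure heat equation $\partial_t V-\partial_z^2 V=F$ in $z\in\mathbb{R}$ with the ``initial'' datum $V(T,x,\cdot)$; pointwise in $x$ this is exactly the setting of Lemma \ref{lmfc}. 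By Lemma \ref{lmfc}, $V$ will decay like $\langle t\rangle^{-\gamma}$ in $H^s_q(\mathbb{R})$ \emph{provided} the moment conditions \eqref{Au0} hold at $t=T$, i.e.\ the first $2n+1$ $\zeta$-derivatives at $\zeta=0$ of $\widehat V(T,x,\zeta)+\int_T^\infty s_n(\tau\zeta^2)\widehat F(\tau,x,\zeta)\,d\tau$ vanish for every $x$. These are finitely many scalar functions of $x$ that must be annihilated; the role of $\Xi$ is precisely to achieve this.

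The heart of the argument is therefore the construction of $\Xi$, supported in $(\overline\cO\setminus\overline\Omega)\cap\mathcal V_{\tilde S(\delta)}$ and compactly supported in $(0,T)$, enforcing these moment conditions, and this is the step I expect to be the main obstacle. Here one argues exactly as in the well-prepared dissipation method of \cite{CMS,CMSZ} and Section \ref{wpdm}: by Duhamel, the moments of $V(T,x,\cdot)$ decompose as the free-evolution contribution plus the contribution of $\Xi$; the moments generated by a source concentrated in $\overline\cO\setminus\overline\Omega$ during $(0,T)$ are then transported by the flow $\Phi^0$ of $u^0$, and the flushing property \eqref{flush} guarantees that every $x\in\overline\cO$ is, at some time $t_x\in(0,T)$, the image under $\Phi^0(0,t_x,\cdot)$ of a point lying in $\overline\cO\setminus\overline\Omega$, so one can ``write in'' the required moment values outside $\Omega$ and let the transport carry them everywhere. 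Implementing this requires choosing $\Xi$ of the form $\sum$ (profile in $z$ with prescribed first $2n+1$ moments) $\times$ (cutoff in $x$ supported in the flushing region, depending on $t_x$) $\times$ (smooth cutoff in $t$), and then inverting a linear (triangular, invertible) system relating the prescribed profile moments to the target moments; continuity/compactness of $x\mapsto t_x$ (as in \cite{CMS}) gives a finite subcover and hence the continuous function $\tilde S$ controlling the support. Once $\Xi$ is fixed, the regularity claims $\Xi\in C^{k-1}(\mathbb{R}_+;H^p(\cO;\mathcal S(\mathbb{R})))$ and $V\in C^k_\gamma(\mathbb{R}_+;H^p(\cO;H^s_q(\mathbb{R})))$ follow by differentiating the equation in $x$ and $t$ (the commutators with the conormal fields and with $u^0\cdot\nabla$ producing lower-order terms of the same structure, as in \eqref{Z3}), applying Lemma \ref{lmfc} with the shifted indices $\tilde s,\tilde q,\tilde\gamma,\tilde k,\tilde p$ of \eqref{G1}--\eqref{G2} to absorb the loss of derivatives and of weight incurred by these manipulations, and using the Cauchy theory of \cite{f0,f00,g} for $t\in[0,T]$. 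The final assertion (support away from $t=0$ is preserved) is immediate from uniqueness, since if $F$ vanishes near $t=0$ one may take $\Xi$ vanishing there too and then $V\equiv0$ near $t=0$ solves the equation.
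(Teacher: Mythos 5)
Your proposal is correct and follows essentially the same route as the paper: reduce the decay to the vanishing-moment conditions of Lemma \ref{lmfc} at time $T$ for $\hat V(T,x,\cdot)$ plus the tail of $\hat F$, enforce them by a control supported in $\overline{\cO}\setminus\overline{\Omega}$ using the flushing property \eqref{flush} (well-prepared dissipation), obtain the support function $\tilde S$ from the continuity of the flushing times, and get $V\cdot\mathbf n=0$ from the structure of $B^0$ (the paper cites \cite[Proposition 5]{iftimie} for this, while your direct argument via \eqref{defB-k} is the same mechanism). The only cosmetic difference is that the paper organizes the moment control on the Fourier side, as a triangular family of controlled transport equations for $Q_j=\partial_\zeta^j\hat V|_{\zeta=0}+\gamma_j$ handled by \cite[Lemma 7]{CMS}, whereas you phrase it via a Duhamel decomposition of the moments, which amounts to the same construction.
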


\begin{proof}
For $0\leq j\leq 2n+1$ and $x$ in $\cO$, let
$$\gamma_j(x):=\partial_{\zeta}^j\int_0^{\infty}s_n(\tau\zeta^2)\hat{F}(T+\tau,x,\zeta)d\tau|_{\zeta=0},$$
where $\hat{F}(t,x,\cdot)$ is the  partial Fourier transform
 of $F(t,x,z)$ with respect to the $z$ variable.  We use $\zeta$ as dual variable of $z$ by the partial  Fourier transform.
 We also recall that $s_n$ is defined in  \eqref{defsn}.
By \eqref{Fdata}, for $0\leq j\leq 2n+1$,
 $\gamma_j\in H^{p+1}(\cO).$
 We look for  a control profile $\Xi$, with the properties mentioned in the statement of Lemma \ref{lmU},  such that there is a solution  $V$ in  $C^{k} (\mathbb{R}_+;H^p  (\cO;   H^s_q  (\mathbb{R})))$  to (\ref{Vz})  satisfying
\begin{equation}\label{UT}
(\partial_{\zeta}^j\hat{V}(T,x,\zeta)+\gamma_j(x))|_{\zeta=0}=0,\quad \text{ for } 0\leq j\leq 2n+1  \text{ and } x \in \overline{\cO},
\end{equation}
where $\hat{V}(t,x,\cdot)$ is the partial  Fourier transform of $V(t,x,\cdot)$.
Then, for  $t\geq T$,  as $u^0=0$, $u^0_{\flat}=0$ and $B^0 =0$,
the first equation in \eqref{Vz} reduces to
\begin{equation} \label{ }
\partial_t V-\partial_z^2 V=F, \quad x\in \cO,z\in \mathbb{R} .
\end{equation}
Therefore it would follow from Lemma \ref{lmfc} that $V \in C^{k}_{\gamma}(\mathbb{R}_+;H^p  (\cO;   H^s_q  (\mathbb{R})))$.

Indeed for a given control profile $\Xi$, with the properties mentioned in the statement of Lemma \ref{lmU},
     the existence of a solution  $V$ in $C^{k} (\mathbb{R}_+;H^p  (\cO;   H^s_q  (\mathbb{R})))$ to (\ref{Vz}),  supported in a neighborhood of the boundary as a function of $x$ and satisfying  $V(t,x,z)\cdot \mathbf{n}(x)=0$, for all $ t\geq 0$, $x\in \cO$ and $z\in \mathbb{R}$,   can be proved along the same lines as  \cite[Proposition $5$] {iftimie}. We therefore focus on the existence of a control profile $\Xi$ for which the corresponding solution $V$ to (\ref{Vz}) satisfies the
      conditions (\ref{UT}).
In this perspective we first observe  that the Cauchy problem \eqref{Vz} for $V$
translates into the following one for $\hat{V}$:
\begin{equation}  \label{Uhat}
\begin{cases}
\partial_t\hat{V}+u^0\cdot\nabla\hat{V}+(B^0 +\zeta^2-u^0_{\flat})\hat{V}-u^0_{\flat}\zeta\partial_{\zeta}  \hat{V}=\hat{\Xi}+\hat{F},\\
\hat{V}|_{t=0}=0 .
\end{cases}
\end{equation}
Let
\begin{equation*}
H(x,\zeta) :=\sum_{j=0}^{2n+1}\gamma_j(x)\frac{\zeta^j}{j!}\chi_1(\zeta),
\end{equation*}
where $\chi_1$ in $C^{\infty}_0(\mathbb{R})$ is a cut-off function satisfying $\chi_1(\zeta)=1$ when $|\zeta|\leq 1$ and $\chi_1(\zeta)=0$ when $|\zeta|\geq 2$,
 so that $H\in H^{p+1}(\cO;C^{\infty}_0(\mathbb{R}))$ and
 \begin{equation} \label{ConD}
\partial_{\zeta}^j H(x,\zeta)|_{\zeta=0}=\gamma_j(x)    \text{ for } 0\leq j\leq 2n+1  \text{ and } x \in \overline{\cO}.
\end{equation}

Let
\begin{equation}
\label{defff}
\tilde{F}:=\hat{F}+u^0\cdot\nabla H+(B^0 +\zeta^2-u^0_{\flat})H-u^0_{\flat}\zeta\partial_{\zeta} H .
\end{equation}

By \eqref{Fdata}, for $0\leq j\leq 2n+1$,
$\partial_{\zeta}^j\tilde{F}|_{\zeta=0}\in C^{k-1}_{\tilde{\gamma}}(\mathbb{R}_+; H^{p}(\cO)) $.

Using \eqref{flush},  we can prove the existence of  $\Xi$ with the properties mentioned in the statement of Lemma \ref{lmU},
such that for $0\leq j\leq 2n+1$, the unique solution $Q_j$ to
\begin{equation} \label{CauchY}
\begin{cases}
\partial_t Q_j+u^0\cdot\nabla Q_j+B^0 Q_j-(j+1)u^0_{\flat}Q_j=-j(j-1)Q_{j-2}+\partial_{\zeta}^j \hat{\Xi}|_{\zeta=0}+\partial_{\zeta}^j\tilde{F}|_{\zeta=0},\\
Q_j|_{t=0}=\gamma_j(x) ,
\end{cases}
\end{equation}
where $\hat{\Xi}(t,x,\cdot)$ is the Fourier transform
 of $\Xi(t,x,\cdot)$,
satisfies
 \begin{equation} \label{ConDQQ}
Q_j(T,x)=0   ,   \text{ for }    0\leq j\leq 2n+1  \text{ and }   x\in \overline{\cO}.
\end{equation}
We refer here to \cite[Lemma 7]{CMS}, see also the discussion in Section \ref{wpdm}.
By differentiating \eqref{Uhat}, by  \eqref{ConD}
and by using the uniqueness of the Cauchy problem \eqref{CauchY}, we observe that
  the solution  $V$ to (\ref{Vz}), for the  control profile $\Xi$ mentioned above,  satisfies
 \begin{equation} \label{ConDQ}
  Q_j(t,x)=\partial_{\zeta}^j \hat{V}(t,x,\zeta)|_{\zeta=0} +  \gamma_j(x),   \text{ for } 0\leq j\leq 2n+1 , \ t \in \R_+   \text{ and } x \in \overline{\cO}.
\end{equation}
  By  combining  \eqref{ConDQQ} and \eqref{ConDQ}, we conclude that
 (\ref{UT}) is  satisfied. From the construction of $\Xi$ and $Q_j$ we can see that, if $F$ vanishes near $t=0$, so does $V.$

 Finally, thanks to the argument in  \cite[Section 3.4]{CMS}, there is  a continuous function $\tilde{S}:\mathbb{R}_+\rightarrow\mathbb{R}_+$, such that for any positive $\delta,$  $\delta\leq \tilde{S}(\delta)$,  and  $V$ is supported in $\mathcal{V}_{\tilde{S}(\delta)}$.
 We can choose $\delta$ small enough such that $\tilde{S}(\delta)<\delta_0$. (Recall that $\delta_0$ is defined  in Section \ref{sec-ext}).
 \end{proof}

Now we are in a position to complete the proof of Proposition \ref{lmu}.

\begin{proof}[Proof of Proposition \ref{lmu}]
Let
\begin{equation}
  \label{defgj}
g_1:=g, \quad g_{j+1}:=\partial_tg_j+u^0\cdot\nabla g_j+B^0 g_j-(2j-1)u^0_{\flat}g_j-(\partial_z^{2j-1}f)|_{z=0^+}   \text{ for } 1\leq j< k'.
\end{equation}
 It is clear that $g_j$ is supported in $\mathcal{V}_{\delta}$ as a function of  $x$,  $g_j\cdot \mathbf{n}=0$ and $g_j\in C^{\tilde{k}+1-j}_{\tilde{\gamma}}(\mathbb{R}_+;H^{\tilde{p}+1-j}(\cO))$ for $1\leq j\leq k'$.

For $z\geq 0$, we denote
\begin{equation}\label{DefA}
A(t,x,z) :=\sum_{j=1}^{k'}g_j(t,x)\frac{z^{2j-1}}{(2j-1)!}\chi_1(z),
\end{equation}
where $\chi_1\in C_0^{\infty}(\mathbb{R})$ is an even cut-off function as in the proof of Lemma \ref{lmU}.
 One can check that
 $$A\in C^{k}_{\tilde{\gamma}}(\mathbb{R}_+;H^{p+2}(\cO;C_0^{\infty}(\overline{\mathbb{R}_+}))),$$
 and satisfies
\begin{equation}
  \label{legend}
 \partial_z^{2j-1}A|_{z=0^+}=g_j  \quad \text{ for } 1\leq j\leq k'.
 \end{equation}

Let
\begin{equation}
 \label{bureau}
F:=f-(\partial_t A+u^0\cdot\nabla A+B^0 A-u^0_{\flat}z\partial_zA-\partial_z^2 A).
  \end{equation}
 It is easy to check that
 $$F\in C^{k-1}_{\tilde{\gamma}}(\mathbb{R}_+;H^{p+1}     (\cO ;  H^{\tilde{s}}_{\tilde{q}}     (\mathbb{R}_+))).$$
By combining \eqref{defgj},   \eqref{legend} and  \eqref{bureau}, we observe  that $\partial_z^{2j-1} F|_{z=0}=0$ for $1\leq j< k'.$
Thus, extending $F$ by $F(t,x,z):=F(t,x,|z|)$, and by the definition of $k'$, we have
 $$F\in C^{k-1}_{\tilde{\gamma}}(\mathbb{R}_+;H^{p+1}    (\cO ;    H^{\tilde{s}}_{\tilde{q}}     (\mathbb{R}))),$$
  which is supported in $\mathcal{V}_{\delta}$ as a function of $x$. Thus we can use Lemma \ref{lmU} to find $\Xi$ and  $V$, such that, in particular,  (\ref{Vz})  holds true.
  Let
\begin{equation} \label{deuf}
v(t,x,z) :=V(t,x,z) + A(t,x,z), \quad t\geq 0,x\in \cO,z\in \mathbb{R}_+.
\end{equation}

  Then
  $v$ satisfies all the properties listed in Proposition \ref{lmu}. In particular it follows from (\ref{Vz}), \eqref{defgj},   \eqref{legend} and  \eqref{deuf}
  that  (\ref{u})  holds true, with $v_0=A(0,x,z)\in H^{p+2}(\cO;C_0^{\infty}(\overline{\mathbb{R}_+}))$. In particular, if f and g are both supported away from $t=0$ as a function of time $t$, the so do $A,V$ and $v$, and $v_0=0$.
\end{proof}

\section{Proof of Theorem \ref{uj}}
\label{sec-approx}

let us first  introduce a Lemma which handles multiplication in space $C^{k}_{\gamma}(\mathbb{R}_+;H^p(\cO;H^s_q(\mathbb{R}_+)))$.

\begin{lemma}\label{UV} Let $\gamma>0, k,p,s,q \in \mathbb{N}_+$ with $p\geq 4$ and $s\geq 2.$
Let $U\in C^{k}_{\gamma}(\mathbb{R}_+;H^p(\cO))$ and $V,\tilde{V}\in C^{k}_{\gamma}(\mathbb{R}_+;H^p(\mathcal{O};H^s_q(\mathbb{R}_+)))$ be scalar functions,
 then, one has
\begin{gather}\label{eq-UV}
UV,\tilde{V}V\in C^{k}_{\gamma}(\mathbb{R}_+;H^p(\mathcal{O};H^s_q(\mathbb{R}_+))).
\end{gather}
\end{lemma}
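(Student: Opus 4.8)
The plan is to reduce the statement to a product estimate in the scale $H^p(\cO; H^s_q(\mathbb{R}_+))$ at fixed time, together with a control of time derivatives via the Leibniz rule. First I would observe that the space $C^k_\gamma(\mathbb{R}_+;X)$ with $X$ a Banach algebra is itself stable under products with the weight $\langle t\rangle^\gamma$ gaining nothing but also losing nothing critical: if $f\in C^k_\gamma(\mathbb{R}_+;X)$ and $h\in C^k_\gamma(\mathbb{R}_+;X)$ then $\partial_t^j(fh)=\sum_{i=0}^j\binom{j}{i}\partial_t^i f\,\partial_t^{j-i}h$, and since $\langle t\rangle^{2\gamma}\gtrsim\langle t\rangle^\gamma$ one gets $\|\partial_t^j(fh)(t)\|_X\langle t\rangle^\gamma\lesssim \sum_i \big(\|\partial_t^i f(t)\|_X\langle t\rangle^\gamma\big)\big(\|\partial_t^{j-i}h(t)\|_X\langle t\rangle^\gamma\big)\langle t\rangle^{-\gamma}\lesssim \|f\|_{C^k_\gamma}\|h\|_{C^k_\gamma}$. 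So the whole matter reduces to showing that $X:=H^p(\cO;H^s_q(\mathbb{R}_+))$ is stable under multiplication by elements of $H^p(\cO)$ (embedded as $z$-independent functions) and under products of two of its own elements, under the hypotheses $p\geq 4$, $s\geq 2$.

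The core is therefore the bilinear estimate $\|VW\|_{H^p(\cO;H^s_q(\mathbb{R}_+))}\lesssim \|V\|_{H^p(\cO;H^s_q(\mathbb{R}_+))}\|W\|_{H^p(\cO;H^s_q(\mathbb{R}_+))}$ (and similarly with one factor $z$-independent in $H^p(\cO)$). The strategy is to expand $\partial_x^\alpha\partial_z^\beta(VW)$, $|\alpha|\leq p$, $\beta\leq s$, by Leibniz, and for each term $\partial_x^{\alpha_1}\partial_z^{\beta_1}V\cdot\partial_x^{\alpha_2}\partial_z^{\beta_2}W$ to place the factor with more derivatives in $L^2$ and the other in $L^\infty$, using the weighted one-dimensional bound: $\langle z\rangle^q$ distributes as $\langle z\rangle^{q}\lesssim \langle z\rangle^{q_1}\langle z\rangle^{q_2}$ with $q_1+q_2=q$, so we actually only need the $q=0$ statement plus the elementary sub-additivity of the weight. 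For the $L^\infty$ factor I would invoke the one-dimensional Sobolev embedding $H^1(\mathbb{R}_+)\hookrightarrow L^\infty(\mathbb{R}_+)$ in $z$ (needs $s\geq 2$ so that the low-derivative factor still has one $z$-derivative to spare) and the three-dimensional embedding $H^2(\cO)\hookrightarrow L^\infty(\cO)$ in $x$ (needs $p\geq 4$ so that the low-$x$-derivative factor retains at least two $x$-derivatives). This is exactly a Moser/Gagliardo-Nirenberg-type tame estimate, done componentwise: in any monomial $\partial_x^{\alpha_1}\partial_z^{\beta_1}V\cdot\partial_x^{\alpha_2}\partial_z^{\beta_2}W$ with $|\alpha_1|+|\alpha_2|\leq p$ and $\beta_1+\beta_2\leq s$, at least one of the two factors has $|\alpha_i|\leq p-2$ and $\beta_i\leq s-1$, hence embeds into $L^\infty_xL^\infty_z$ with norm controlled by $\|\cdot\|_{H^p(\cO;H^s_q(\mathbb{R}_+))}$, and the complementary factor keeps all remaining derivatives in $L^2_xL^2_z$.

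After assembling these pointwise-in-$t$ product bounds, I would combine them with the Leibniz-in-$t$ observation from the first paragraph to conclude $UV,\tilde V V\in C^k_\gamma(\mathbb{R}_+;H^p(\cO;H^s_q(\mathbb{R}_+)))$, which is \eqref{eq-UV}. The case $UV$ is slightly simpler since $U=U(t,x)$ carries no $z$-dependence, so only $x$-derivatives land on it and the $\langle z\rangle^q$ weight is entirely absorbed by $V$; the case $\tilde V V$ is the genuine bilinear one and dictates the thresholds $p\geq 4$, $s\geq 2$.

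The main obstacle I anticipate is purely bookkeeping: tracking the splitting of both the $\langle z\rangle^q$ weight and the two multi-indices $(\alpha,\beta)$ across the Leibniz sum so that in every single term exactly one factor can be estimated in $L^\infty$ while the other retains enough regularity, and verifying that the thresholds $p\geq 4$, $s\geq 2$ are exactly what makes this dichotomy always available. There is no analytic difficulty beyond standard Sobolev embeddings and the algebra property of $H^p$ for $p>3/2$; the proof is a tame-estimate argument, and I would present it as such, citing the standard Moser-type inequality rather than rederiving it.
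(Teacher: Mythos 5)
Your overall strategy is exactly the paper's: Leibniz in $t$ (with the weight handled trivially since $\langle t\rangle^{-2\gamma}\leq\langle t\rangle^{-\gamma}$), then a Moser-type tame estimate at fixed time, putting the low-derivative factor in $L^\infty$ via $H^2(\cO)\hookrightarrow L^\infty(\cO)$ and $H^1(\mathbb{R}_+)\hookrightarrow L^\infty(\mathbb{R}_+)$, with the thresholds $p\geq 4$, $s\geq 2$ guaranteeing $[\tfrac p2]\leq p-2$ and $[\tfrac s2]\leq s-1$; this is precisely how the paper argues (its proof is essentially the two embeddings plus ``check by definition'').

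However, one step is false as you state it: it is not true that in every monomial $\partial_x^{\alpha_1}\partial_z^{\beta_1}\tilde V\cdot\partial_x^{\alpha_2}\partial_z^{\beta_2}V$ with $|\alpha_1|+|\alpha_2|\leq p$, $\beta_1+\beta_2\leq s$ at least one factor satisfies \emph{both} $|\alpha_i|\leq p-2$ and $\beta_i\leq s-1$. Take $|\alpha_1|=p$, $\beta_1=0$, $|\alpha_2|=0$, $\beta_2=s$: the first factor fails the $x$-condition and the second fails the $z$-condition, so neither embeds into $L^\infty_xL^\infty_z$, and your dichotomy (one factor in $L^\infty$ of both variables, the other in $L^2$ of both) breaks down. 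The repair is standard and stays within your framework: treat the $x$- and $z$-derivative counts independently. Since at most one factor can carry more than half the $x$-derivatives and at most one can carry more than half the $z$-derivatives, assign $L^2_x$ versus $L^\infty_x$ and (weighted) $L^2_z$ versus $L^\infty_z$ separately to the two factors; in the crossed case above this gives $\partial_x^{\alpha_1}\tilde V\in L^2_xL^\infty_z$ (using $s\geq 1$ spare $z$-derivatives) times $\partial_z^{\beta_2}V\in L^\infty_xL^2_{z,q}$ (using $p\geq 2$ spare $x$-derivatives), whose product lies in $L^2_xL^2_{z,q}$ by H\"older. With this mixed-norm bookkeeping, and the weight placed entirely on whichever factor is measured in $L^2_z$, your argument closes and coincides in substance with the paper's proof.
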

\begin{proof}
By Definition \ref{DC} and Sobolev imbedding, for $0\leq j\leq k,0\leq |\alpha|\leq p-2$ and $0\leq \beta\leq s-1$,
\begin{equation*}
\partial_t^j\partial_x^{\alpha}U\in L^{\infty}(\mathbb{R}_+\times\cO)\quad \text{ and }\quad\partial_t^j\partial_x^{\alpha}\partial_z^{\beta}V\in L^{\infty}(\mathbb{R}_+\times\cO\times\mathbb{R}_+).
\end{equation*}
Note that when $p\geq 4$ and $s\geq 2,$
\begin{equation*}
[\frac{p}{2}]\leq p-2\ \ \text{ and }\ \  [\frac{s}{2}]\leq s-1.
\end{equation*}
Then we can easily check \eqref{eq-UV} by definition.
\end{proof}

\subsection{Construction of profiles}\label{cp}

Recall that $u^0$ is given by Lemma \ref{lmu0} which is smooth, curl-free and compactly supported in $(0,T)$ as a function of time $t$. Now we construct an approximate solution of form \eqref{previou}. Plug \eqref{previou} into \eqref{eqa}, and we can find the equation for $u^i$ and $v^i$. For the equation of $v^i$, profiles $v^j,u^j$ with $j<i$ will play roles as source terms. We use Proposition \ref{lmu} to find profile $v^j$. But there will be some regularity loss. Thanks to Lemma \ref{lmfc}, we need more regularity of the source term to gain decay of the solution.

Let $\gamma>1, k,p,s,q\in\mathbb{N}_+$ and set $n:=[\frac{q}{2}+\gamma]$. We define the mapping $\frak{\Gamma}$ by setting $\frak{\Gamma}(\gamma,k,p,s,q) :=(\tilde{\gamma},\tilde k,\tilde p,\tilde s,\tilde q)$, where  $\tilde{\gamma},\tilde k,\tilde p,\tilde s,\tilde q$ are given by (\ref{G1}) and (\ref{G2}).

From now on, we fix $\gamma>1, k,p,s,q\in \mathbb{N}_+$ with $k\geq 2,p\geq 8, s,q \geq 4,$ we denote
\begin{eqnarray*}
(\gamma_4,k_4,p_4,s_4,q_4)&:=&(\gamma,k,p,s,q),\\
(\gamma_i,k_i,p_i-1,s_i-1,q_i-2)&:=&\frak{\Gamma}(\gamma_{i+1},k_{i+1},p_{i+1},s_{i+1},q_{i+1})\quad\mbox{ for }1\leq i\leq 3.
\end{eqnarray*}
 We observe that, for $0\leq i\leq 3$,
\begin{eqnarray*}
&&n_{i+1}=[\frac{q_{i+1}}{2}+\gamma_{i+1}]\geq 3,\quad k^{\prime}_{i+1}=[\frac{s_{i+1}+1}{2}]+k_{i+1}+n_{i+1}\geq 7,\\
&&\gamma_i=2n_{i+1}+3\geq q_{i+1}+2\gamma_{i+1}+1\geq \gamma_{i+1}+6,\\
&&k_{i}=k_{i+1}+k_{i+1}^{\prime}-1\geq k_{i+1}+6,\\
&&p_{i}=p_{i+1}+k^{\prime}_{i+1}+1\geq p_{i+1}+8,\\
&&s_{i}=s_{i+1}+2k_{i+1}+2n_{i+1}\geq s_{i+1}+10,\\
&&q_{i}=2n_{i+1}+3\geq q_{i+1}+2\gamma_{i+1}+1\geq q_{i+1}+3.
\end{eqnarray*}

Let
\begin{eqnarray*}
\delta_1:=\tilde{S}(\delta)\quad\text{ and }\quad \delta_i:=\tilde{S}(\delta_{i-1})\  \text{ for }2\leq i\leq 4.
\end{eqnarray*}
Recall that $\tilde{S}:\mathbb{R}_+\rightarrow\mathbb{R}_+$ is a continuous function satisfying $\tilde{S}(0)=0$ and $\tilde{S}(\delta)\geq \delta$ for any $\delta>0,$ we can choose and fix a small $\delta>0$ such that $\delta_4<\delta_0,$ where $\delta_0$ is defined in Section \ref{sec-ext}.

We assume that the initial data $u_*$ satisfies
\begin{equation}\label{u*}
u_*\in H^{p_1-1}(\mathcal{O}).
\end{equation}

\no $\bullet$ \underline{Main  velocity boundary layer}\vspace{0.2cm}

Let  $\chi_2 $ a cut-off function such that $\chi_2(x)=1$ when $x\in \mathcal{V}_{\delta/2}$, and $\chi_2(x)=0$ when $x\in \cO\backslash\mathcal{V}_{\delta}.$
Set
\begin{equation}\label{g1}
g^1 :=2\CN(u^0)\chi_2(x).
\end{equation}
Then $g^1$ is
 in $C^{\infty}(\mathbb{R}_+\times\overline{\cO})$, is  supported in $\mathcal{V}_{\delta}$ as a function of $x$, is compactly supported in $(0,T)$ as a function of time, and $g^1\cdot \mathbf{n}=0$.
 By  Proposition \ref{lmu}, there exist $\xi^1\in C^{k_1-1}(\mathbb{R}_+;H^{p_1}(\cO;\mathcal{S}(\mathbb{R_+}))$ and  $v^1\in C^{k_1}_{\gamma_1}(\mathbb{R}_+;H^{p_1}(\mathcal{O};(H^{s_1}_{q_1}(\mathbb{R}_+)))$ such that
\begin{equation}\label{u11}
\begin{cases}
\partial_t v^1+u^0\cdot\nabla v^1+B^0 v^1-u^0_{\flat}z\partial_z v^1-\partial_z^2 v^1=\xi^1,\quad t\geq 0,x\in \cO,z\in \mathbb{R}_+,\\
\partial_z v^1|_{z=0}=g^1,\quad t\geq 0,x\in \cO\\
v^1|_{t=0}=0,\quad x\in \cO,z\in \mathbb{R}_+.
\end{cases}
\end{equation}
Moreover, $\xi^1$ is supported in $(\overline{\mathcal{O}}\backslash\overline{\Omega})\cap\mathcal{V}_{\delta_1}$ as a function of $x$ and is compactly supported in $(0,T)$ as a function of time $t$, and $v^1$ is supported in $\mathcal{V}_{\delta_1}$ as a function of $x$ and is supported away from $t=0$ as a function of time $t$, and $\xi^1\cdot \mathbf{n}=v^1\cdot \mathbf{n}=0,$ for any $t\geq 0, x\in \cO$ and $z\geq 0.$

\smallskip

\no $\bullet$ \underline{Main pressure boundary layer}\vspace{0.2cm}

We set
\begin{equation*}
\pi^2(t,x,z):=-\int_z^{+\infty}\bigl(-u^0\cdot\nabla \mathbf{n}\cdot v^1+v^1\cdot\nabla u^0\cdot \mathbf{n}\bigr)\,dz' .
\end{equation*}
Then $\pi^2\in C^{k_1}_{\gamma_1}(\mathbb{R}_+;H^{p_1}(\mathcal{O};H^{s_1}_{q_1-2}(\mathbb{R}_+)))$ and
\begin{equation}
\label{npi2}
\partial_z \pi^2=-u^0\cdot\nabla \mathbf{n}\cdot v^1+v^1\cdot\nabla u^0\cdot \mathbf{n}.
\end{equation}
 Moreover, $\pi^2$ is supported in $\mathcal{V}_{\delta_1}$ as a function of  $x$, and is supported away from $t=0$ as a function of time $t$.

\smallskip

\no $\bullet$ \underline{Main normal velocity boundary layer}\vspace{0.2cm}

We set
\beq
\label{w2}w^2(t,x,z) := -\int_{z}^{\infty}\dive v^1(t,x,z^{\prime})dz^{\prime}.
\eeq
Then $\partial_z w^2=\dive v^1$ and $w^2\in C^{k_1}_{\gamma_1}(\mathbb{R}_+;H^{p_1-1}(\mathcal{O};H^{s_1}_{q_1-2}(\mathbb{R}_+)))$ is supported in $\mathcal{V}_{\delta_1}$ as a function of  $x$ and its $t$ support is away from $t=0$. Similar to the proof in Section 6.1 of \cite{sueur}, we find that
\begin{equation}\label{iw2}
\int_{\partial\cO} w^2(t,x,0)dx=0.
\end{equation}

\no $\bullet$ \underline{Main  backflow  velocity}\vspace{0.2cm}

Let $\phi^2$ be a solution of the following Neumann problem:
\begin{equation}\label{phi2}
\begin{cases}
\Delta \phi^2=0\quad \text{ in }\mathcal{O},\\
\partial_{\mathbf{n}}\phi^2=-w^2(t,x,0)\quad\text{ on } \partial\mathcal{O}.
\end{cases}
\end{equation}
Thanks to (\ref{iw2}), there exists a unique solution $\phi^2\in C^{k_1}_{\gamma_1}(\mathbb{R}_+;H^{p_1}(\mathcal{O}))$ up to a constant and $\phi^2$ is supported away from $t=0$ as a function of time $t$.

\smallskip

\no $\bullet$ \underline{Linearized Euler flow}\vspace{0.2cm}

It follows from  Lemma \ref{lmu0} that  $\Delta u^0$ is supported in $\overline{\cO}\backslash\Omega$ and is smooth.
Thus, by Lemma \ref{np} and (\ref{u*}), there are $\nu^2\in C^{k_1}(\mathbb{R}_+;H^{p_1-2}(\cO))$, supported in $\overline{\cO}\setminus\overline{\Omega}$ as a function of $x$,  $u^2 \in C^{k_1}(\mathbb{R}_+;H^{p_1-1}(\cO)) $ and $p^2\in C_{\gamma_1}^{k_1-1}(\mathbb{R}_+;H^{p_1-1}(\cO))$ such that
\begin{equation}\label{u21}
\begin{cases}
\partial_t u^2+u^0\cdot\nabla u^2+ u^2\cdot\nabla u^0+\nabla p^2=\nu^2+\Delta u^0,\quad t\geq t,x\in \cO,\\
 u^2\cdot \mathbf{n}= 0 ,\quad t\geq0,x\in \partial\cO,\\
\dive  u^2=0,\quad t\geq0,x\in \cO,\\
 u^2=u_*,\quad t=0,x\in\cO.
\end{cases}
\end{equation}
Moreover,  $\nu^2,u^2$ and $p^2$ are supported in $[0,T]$ as functions of time $t$.

\smallskip

\no $\bullet$ \underline{Subprincipal tangential boundary layer}\vspace{0.2cm}

Let
\ben
\label{f2} f^2 &:=&-\bigl[v^1\cdot\nabla v^1+2n\cdot\nabla\partial_z v^1-\Delta\varphi \partial_z v^1-w^2\partial_z v^1+\nabla\pi^2\bigr]_{\tan}\\
&&-(n\cdot\nabla u^0)_{\tan}w^2-(u^0\cdot\nabla \mathbf{n})w^2, \nonumber\\
\label{g2} g^2 &:=&2\CN(v^1)|_{z=0}\chi_2(x).
\een
By Lemma \ref{UV}, we find that $f^2\in C^{k_1}_{\gamma_1}(\mathbb{R}_+;H^{p_1-1}(\mathcal{O};H^{s_1-1}_{q_1-2}(\mathbb{R}_+)))$ and $g^2\in C^{k_1}_{\gamma_1}(\mathbb{R}_+;H^{p_1-1}(\cO))$ satisfy the conditions of Proposition \ref{lmu}, that is, $f^2$ and $g^2$ are supported in $\mathcal{V}_{\delta_1}$ as functions of $x$ and are supported away from $t=0$ as functions of time $t$, and satisfy $f^2(t,x,z)\cdot \mathbf{n}(x)=g^2(t,x)\cdot \mathbf{n}(x)=0$ for any $t\geq 0,x\in \mathcal{O} $ and $z\geq 0.$
Therefore there exist $\xi^2\in C^{k_2-1}_{\gamma_2}(\mathbb{R}_+;H^{p_2}(\cO;\mathcal{S}(\mathbb{R}_+)))$ and a solution $v^2\in C^{k_2}_{\gamma_2}(\mathbb{R}_+;H^{p_2}(\mathcal{O};H^{s_2}_{q_2}(\mathbb{R}_+)) )$ to
\begin{equation}\label{u22}
\begin{cases}
\partial_t v^2+u^0\cdot\nabla v^2+B^0 v^2-u^0_{\flat}z\partial_z v^2
-\partial_z^2 v^2=\xi^2+f^2\quad \mbox{ in }\mathbb{R}_+\times\cO\times\mathbb{R}_+,\\
\partial_z v^2|_{z=0}=g^2\quad \mbox{ on }\mathbb{R}_+\times\cO\times\{z=0\},\\
v^2|_{t=0}=0\quad \mbox{ on }\cO\times\mathbb{R}_+ .
\end{cases}
\end{equation}
Furthermore, $\xi^2$ is supported in $(\overline{\mathcal{O}}\backslash\overline{\Omega})\cap\mathcal{V}_{\delta_2}$ as a function of $x$ and is compactly supported in $(0,T)$ as a function of time $t$, and $v^2$ is supported in $\mathcal{V}_{\delta_2}$ as a function of $x$ and is supported away from $t=0$ as a function of time $t$, and $\xi^2\cdot \mathbf{n}=v^2\cdot\mathbf{n}=0$.

\smallskip

\no $\bullet$ \underline{Subprincipal pressure boundary layer}\vspace{0.2cm}

We set
\begin{eqnarray*}
\pi^3(t,x,z) :=-\int_z^{+\infty}\Big(\partial_t w^2+u^0\cdot \nabla w^2-u^0\cdot\nabla \mathbf{n}\cdot v^2+ (v^2+w^2\mathbf{n})\cdot\nabla u^0\cdot \mathbf{n} \\
\nonumber -u^0_{\flat}z'\partial_z w^2+v^1\cdot\nabla v^1\cdot \mathbf{n}-\partial_z^2 w^2+\partial_\mathbf{n} \pi^2\Big)(t,x,z')dz'.
\end{eqnarray*}
Then it follows from  Lemma \ref{UV} that $\pi^3\in C^{k_2}_{\gamma_2}(\mathbb{R}_+;H^{p_2}(\mathcal{O};H^{s_2}_{q_2-2}(\mathbb{R}_+))$ and
\begin{equation}
\label{pi3}
\begin{split}
\partial_z\pi^3=&\partial_t w^2+u^0\cdot \nabla w^2-u^0\cdot\nabla \mathbf{n}\cdot v^2+ (v^2+w^2\mathbf{n})\cdot\nabla u^0\cdot \mathbf{n}\\
&-u^0_{\flat}z\partial_z w^2+v^1\cdot\nabla v^1\cdot \mathbf{n}-\partial_z^2 w^2+\partial_\mathbf{n} \pi^2.
\end{split}
\end{equation}
Moreover, $\pi^3 $ is supported in $\mathcal{V}_{\delta_2}$ as a function of $x$ and is supported away from $t=0$ as a function of time $t$.

\smallskip

\no $\bullet$ \underline{Subprincipal normal velocity boundary layer}\vspace{0.2cm}

Let
\begin{equation}
\label{w3}
w^3 (t,x,z):=-\int_z^{\infty}\dive  (v^2+w^2\mathbf{n})(t,x,z')dz'.
\end{equation}
Then $\partial_z w^3=\dive (v^2+w^2\mathbf{n})$ and $w^3\in C^{k_2}_{\gamma_2}(\mathbb{R}_+;H^{p_2-1}(\mathcal{O};H^{s_2}_{q_2-2}(\mathbb{R}_+))$ is supported in $\mathcal{V}_{\delta_2}$ as a function of  $x$ and is supported away from $t=0$ as a function of time $t$, furthermore
\begin{equation}\label{iw3}
\int_{\partial\cO} w^3(t,x,0)dx=0.
\end{equation}

\no $\bullet$ \underline{Lower order backflow velocity}\vspace{0.2cm}

Let $\phi^3$ be a solution of the following Neumann problem:
\begin{equation}\label{phi3}
\begin{cases}
\Delta \phi^3=0\quad \text{ in }\mathcal{O},\\
\partial_{\mathbf{n}}\phi^3=-w^3(t,x,0)\quad\text{ on } \partial\mathcal{O}.
\end{cases}
\end{equation}
Thanks to (\ref{iw3}), there exists a unique solution $\phi^3\in C^{k_2}_{\gamma_2}(\mathbb{R}_+;H^{p_2}(\mathcal{O}))$ up to a constant and $\phi^3$ is supported away from $t=0$ as a function of time $t$.

\smallskip

\no $\bullet$ \underline{Lower order interior flow}\vspace{0.2cm}

We take
\begin{equation}\label{u31}
u^3(t,x)=\nu^3(t,x)=0,\,\,\, p^3(t,x)=0\quad \text{ for }t\in\mathbb{R}_+,x\in \cO.
\end{equation}

\smallskip

\no $\bullet$ \underline{Lower order tangential  velocity boundary layer}\vspace{0.2cm}

Let
\beq\label{tildef3}
\begin{split}
\tilde{f}^3:=&\nabla \pi^3+v^1\cdot\nabla (u^2+\nabla \phi^2+v^2+w^2\mathbf{n})+(u^2+\nabla \phi^2+v^2+w^2\mathbf{n})\cdot\nabla v^1
\\
&-w^2\partial_z (v^2+w^2\mathbf{n})- w^3\partial_z v^1-\Delta v^1+2n\cdot\nabla \partial_z(v^2+w^2\mathbf{n})-\Delta\varphi\partial_z (v^2+w^2\mathbf{n}),
\end{split}
\eeq
and
\ben
\label{f3} f^3 &:=&-(\tilde{f}^3)_{\tan}-(n\cdot\nabla u^0)_{\tan}w^3-(u^0\cdot\nabla \mathbf{n} )w^3,\\
\label{g3} g^3 &:=&2\CN(u^2+v^2+\nabla \phi^2+w^2\mathbf{n})|_{z=0}\chi_2(x).
\een
Thanks to Lemma \ref{UV}, $f^3\in C^{k_2}_{\gamma_2}(\mathbb{R}_+;H^{p_2-1}(\mathcal{O};,H^{s_2-1}_{q_2-2}(\mathbb{R}_+))$ and $g^3\in C^{k_2}_{\gamma_2}(\mathbb{R}_+;H^{p_2-1}(\cO))$
and satisfy $f^3(t,x,z)\cdot \mathbf{n}(x)=g^3(t,x)\cdot \mathbf{n}(x)=0$ for any $t\geq 0,x\in \mathcal{O} $ and $z\geq 0.$
Moreover $f^3$ and $g^3$ are supported in $\mathcal{V}_{\delta_2}$ as functions of $x$. Then, by using Proposition \ref{lmu}, there exist
 $\xi^3\in C^{k_3-1}_{\gamma_3}(\mathbb{R}_+;H^{p_3}(\cO;\mathcal{S}(\mathbb{R}_+)))$ , $ v^3\in C^{k_3}_{\gamma_3}(\mathbb{R}_+;H^{p_3}(\mathcal{O};H^{s_3}_{q_3}(\mathbb{R}_+))$ and $v^3_0\in H^{p_3+2}(\cO;C_0^{\infty}(\overline{\mathbb{R}_+}))$ such that
\begin{equation}\label{u3}
\begin{cases}
\partial_t v^3+u^0\cdot\nabla v^3+B^0 v^3-u^0_{\flat}z\partial_z v^3
-\partial_z^2 v^3=\xi^3+f^3\quad\mbox{ in }\ \mathbb{R}_+\times\cO\times\mathbb{R}_+,\\
\partial_z v^3|_{z=0}=g^3\quad\mbox{ on }\mathbb{R}_+\times\cO\times\{z=0\},\\
 v^3|_{t=0}=v^3_0\quad\mbox{ on }\ \cO\times\mathbb{R}_+.
\end{cases}
\end{equation}
Moreover, $\xi^3$ is supported in $(\overline{\mathcal{O}}\backslash\overline{\Omega})\cap\mathcal{V}_{\delta_3}$ as a function of $x$, $v^3$ is supported in $\mathcal{V}_{\delta_3}$ as a function of $x$ and $\xi^3\cdot \mathbf{n}=v^3\cdot\mathbf{n}=0$.

\smallskip

\no $\bullet$ \underline{A lower order pressure boundary layer}\vspace{0.2cm}

We set
\beno
\begin{split}
\pi^4(t,x,z) :=-\int_z^{+\infty}\Big(& \partial_t w^3+u^0\cdot \nabla w^3-u^0\cdot \nabla \mathbf{n}\cdot v^3-u^0_{\flat}z'\partial_z w^3
\\
&+  (v^3+w^3\mathbf{n})\cdot\nabla u^0\cdot \mathbf{n}-\partial_z^2 w^3+\tilde{f}^3\cdot \mathbf{n}\Big)(t,x,z')dz'.
\end{split} \eeno
Hence $\partial_z\pi^4\in C^{k_3}_{\gamma_3}(\mathbb{R}_+;H^{p_3}(\mathcal{O};H^{s_3}_{q_3-2}\mathbb{R}_+))$ and \begin{equation}
\label{p4}
\partial_z \pi^4 := \partial_t w^3+u^0\cdot \nabla w^3-u^0\cdot \nabla \mathbf{n}\cdot v^3+  (v^3+w^3\mathbf{n})\cdot\nabla u^0\cdot \mathbf{n}-u^0_{\flat}z\partial_z w^3-\partial_z^2 w^3+\tilde{f}^3\cdot \mathbf{n}.
\end{equation}
Furthermore, $\pi^4 $ is supported in $\mathcal{V}_{\delta_3}$ as a function of $x$.

\smallskip

\no $\bullet$ \underline{A lower order normal velocity boundary layer}\vspace{0.2cm}

Set
\begin{equation}
\label{w4}
w^4(t,x,z):=-\int_z^{\infty}\dive( v^3+w^3\mathbf{n})(t,x,z')dz'.
\end{equation}
Then $\partial_z w^4=\dive(v^3+w^3\mathbf{n})$ and  $w^4$ belongs to $C^{k_3}_{\gamma_3}(\mathbb{R}_+;H^{p_3-1}(\mathcal{O};H^{s_3}_{q_3-2}(\mathbb{R}_+))$
and is supported in $\mathcal{V}_{\delta_3}$ as a function of  $x$, with $w^4|_{t=0}=w_0^4\in H^{p_3+1}(\cO;C_0^{\infty}(\overline{\mathbb{R}_+}))$.
Moreover $w^4$ satisfies
\begin{equation}\label{iw4}
\int_{\partial\cO} w^4(t,x,0)dx=0.
\end{equation}

\no $\bullet$ \underline{A lower order backflow velocity}\vspace{0.2cm}

Let $\phi^4$ be a solution of the following Neumann problem:
\begin{equation}\label{phi4}
\begin{cases}
\Delta \phi^4=0\quad \text{ in }\mathcal{O},\\
\partial_{\mathbf{n}}\phi^4=-w^4(t,x,0)\quad\text{ on } \partial\mathcal{O}.
\end{cases}
\end{equation}
Thanks to (\ref{iw4}), there exists a unique solution $\phi^4\in C^{k_3}_{\gamma_3}(\mathbb{R}_+;H^{p_3}(\mathcal{O}))$ up to a constant, with $\phi^4|_{t=0}=\phi^4_0\in H^{p_3+2}(\cO)$.

\smallskip

\no $\bullet$ \underline{A lower order interior flow}\vspace{0.2cm}

Let
\begin{equation}
\label{l4}
l^4  :=- u^2\cdot\nabla u^2+\Delta u^2\in C^{k_2}_{\gamma_2}(\mathbb{R}_+;H^{p_2-2}(\cO)) ,
\end{equation}
 and obeserve that $\curl l^4$ is supported in $[0,T]$ as a function of  time.
By Lemma \ref{np}, there are $\nu^4\in C^{k_3}(\mathbb{R}_+;H^{p_3-2}(\cO))$, supported in $\overline{\cO}\setminus\overline{\Omega}$ as a function of $x$, ${u}^4 $ in $ C^{k_3}_{\gamma_3}(\mathbb{R}_+;H^{p_3-1}(\cO))$ and $p^4\in C_{\gamma_3}^{k_3-1}(\mathbb{R}_+; H^{p_3-1}(\cO))$ such that
\begin{equation} \label{u41}
\begin{cases}
\partial_t u^4+u^0\cdot\nabla u^4+ u^4\cdot\nabla u^0+\nabla p^4=\nu^4+l^4\quad\mbox{ in }\ R_+\times\cO,\\
\dive  u^4=0\quad\mbox{ in }\ R_+\times\cO,\\
 u^4\cdot \mathbf{n}=0 \quad\mbox{ on }\ R_+\times\partial\cO,\\
 u^4|_{t=0}=0\quad\mbox{ in }\ \cO .
\end{cases}
\end{equation}
Moreover, $\xi^4,u^4$ and $p^4$ are supported in $[0,T]$ as functions of time $t$.

\smallskip

\no $\bullet$ \underline{a lower order tangential  velocity boundary layer}\vspace{0.2cm}

Set
\beq\label{def-f4-tilde}
\begin{split}
\tilde{f}^4 :=&v^1\cdot\nabla (u^3+\nabla \phi^3+v^3+w^3\mathbf{n})+(u^2+\phi^2+v^2+w^2\mathbf{n})\cdot\nabla (v^2+w^2\mathbf{n}) \\
 &+ (v^2+w^2\mathbf{n})\cdot\nabla (u^2+\phi^2+v^2+w^2\mathbf{n})+(u^3+\nabla \phi^3+v^3+w^3\mathbf{n})\cdot\nabla v^1\\
 &-w^2\partial_z (v^3+w^3\mathbf{n})-w^3\partial_z (v^2+w^2\mathbf{n})- w^4\partial_z v^1-\Delta (v^2+w^2\mathbf{n})\\
 & +2n\cdot\nabla\partial_z (v^3+w^3\mathbf{n})-\Delta\varphi\partial_z (v^3+w^3\mathbf{n})+\nabla\pi^4,
\end{split}
\eeq
and
\ben
\label{def-f4}
f^4 &:=&-\tilde{f}^4_{\tan}-(n\cdot\nabla u^0)_{\tan}w^4-(u^0\cdot\nabla \mathbf{n}) w^4 ,\\
\label{g4}
 g^4 &:=&2\CN(u^3+\nabla\phi^3+v^3+w^3\mathbf{n})|_{z=0}\chi_2(x).
 \een
 Thanks to Lemma \ref{UV}, one can check that $f^4\in C^{k_3}_{\gamma_3}(\mathbb{R}_+;H^{p_3-1}(\mathcal{O};H^{s_3-1}_{q_3-2}(\mathbb{R}_+))$ and $g^4\in C^{k_3}_{\gamma_3}(\mathbb{R}_+;H^{p_3-1}(\cO))$
and satisfy $f^4(t,x,z)\cdot \mathbf{n}(x)=g^4(t,x)\cdot \mathbf{n}(x)=0$ for any $t\geq 0,x\in \mathcal{O} $ and $z\geq 0.$
Moreover $f^4$ and $g^4$ are supported in $\mathcal{V}_{\delta_3}$ as functions of $x$. Then by using Proposition \ref{lmu},
there exist  $\xi^4\in C^{k_4-1}_{\gamma_4}(\mathbb{R}_+;H^{p_4}(\cO;\mathcal{S}(\mathbb{R}_+)))$, $ v^4 \in C^{k_4}_{\gamma_4}(\mathbb{R}_+;H^{p_4}(\mathcal{O};H^{s_4}_{q_4}(\mathbb{R}_+))$ and $v^4_0\in H^{p_4+2}(\cO;C_0^{\infty}(\overline{\mathbb{R}_+}))$ such that
\begin{equation}\label{u4}
\begin{cases}
\partial_t  v^4 +u^0\cdot\nabla  v^4 +B^0   v^4 -u^0_{\flat}z\partial_z  v^4
-\partial_z^2  v^4 =\xi^4+f^4\quad\mbox{ in }\cO,\\
\partial_z  v^4 |_{z=0}=g^4\quad\mbox{ in }\cO,\\
  v^4 |_{t=0}=v^4_0\quad\mbox{ in }\cO.
\end{cases}
\end{equation}
Moreover $\xi^4$ is supported in $(\overline{\mathcal{O}}\backslash\overline{\Omega})\cap\mathcal{V}_{\delta_4}$ as a function of $x$ and is compactly supported in $(0,T)$ as a function of time $t$, and $v^4$ is supported in $\mathcal{V}_{\delta_4}$ as a function of $x$ and $\xi^4\cdot\mathbf{n}=v^4\cdot\mathbf{n}=0$.

\smallskip

\no $\bullet$ \underline{A last pressure boundary layer}\vspace{0.2cm}

We set
\beno
\begin{split}
\pi^5(t,x,z) :=-\int_z^{+\infty}\Big(&\partial_t w^4+u^0\cdot \nabla w^4+
 (v^4+w^4\mathbf{n})\cdot\nabla u^0\cdot \mathbf{n}\\
&-u^0\cdot\nabla \mathbf{n}\cdot  v^4-u^0_{\flat}z'\partial_z w^4-\partial_z^2w^4+\tilde{f}^4\cdot \mathbf{n}\Big)(t,x,z')dz' ,
\end{split} \eeno

Then $\pi^5\in C^{k_4}_{\gamma_4}(\mathbb{R}_+;H^{p_4}(\mathcal{O};H^{s_4}_{q_4-2}(\mathbb{R}_+))$ and
\begin{equation}\label{pi5}
\partial_z \pi^5 := \partial_t w^4+u^0\cdot \nabla w^4-u^0\cdot\nabla \mathbf{n}\cdot  v^4+  (v^4+w^4\mathbf{n})\cdot\nabla u^0\cdot \mathbf{n}-u^0_{\flat}z\partial_z w^4-\partial_z^2w^4+\tilde{f}^4\cdot \mathbf{n}.
\end{equation}
Moreover, $\pi^5 $ is supported in $\mathcal{V}_{\delta_4}$ as a function of $x$.

In summary, we have now constructed
\beno
\begin{split}
u^{j}\in & C^{k_{j-1}}(\mathbb{R}_+;H^{p_{j-1}-1}(\cO)),p^j\in C^{k_{j-1}-1}(\mathbb{R}_+;H^{p_{j-1}-1}(\cO)),  \quad 2\leq j\leq 4,\\
\nu^j\in & C^{k_{j-1}}(\mathbb{R}_+;H^{p_{j-1}-2}(\cO)),\phi^{j}\in  C^{k_{j-1}}_{\gamma_{j-1}}(\mathbb{R}_+;H^{p_{j-1}}(\cO)),\quad 2\leq j\leq 4,\\
v^{j}\in & C^{k_j}_{\gamma_j}(\mathbb{R}_+;H^{p_j}(\cO;H^{s_j}_{q_j}(\mathbb{R}_+))), \pi^{j+1}\in C^{k_j}_{\gamma_j}(\mathbb{R}_+;H^{p_j}(\cO;H^{s_j}_{q_j-2}(\mathbb{R}_+))),\quad 1\leq j\leq 4,\\
w^{j}\in& C^{k_{j-1}}_{\gamma_{j-1}}(\mathbb{R}_+;H^{p_{j-1}-1}(\cO;H^{s_{j-1}}_{q_{j-1}-2}(\mathbb{R}_+))),\quad 2\leq j\leq 4,\\
\xi^{j}\in & C^{k_j-1}_{\gamma_j}(\mathbb{R}_+;H^{p_j}(\cO;\mathcal{S}(\mathbb{R}_+))),\quad 1\leq j\leq 4.
\end{split} \eeno
Moreover, $u^j,p^j,\nu^j$ and $\xi^j$ are supported in $[0,T]$ as functions of time $t$, $\nu^j$ and $\xi^j$ are supported in $\overline{\cO}\backslash\overline{\Omega}$ as functions of $x$,
$v^{j},w^{j+1},\pi^{j+1}$ are supported in $\mathcal{V}_{\delta_{j}}$ as functions of $x$ and $v^j\cdot \mathbf{n}=\xi^j\cdot\mathbf{n}=0.$
Furthermore, $v^1,v^2,\phi^2,\phi^3,w^2$ and $w^3$ are supported away from $t=0$ as a function of time $t$.

\subsection{Construction of the family of approximate solutions}
\label{cfas}

Let us start with a notation: for a profile $f(t,x,z)$, we define
\begin{equation*}
\{f\}_\e:=f\Bigl(t,x,\frac{\varphi(x)}{\sqrt{\e}}\Bigr).
\end{equation*}
We define the approximate solutions via
 \ben
\label{wc1}
u^{\e}_a &:=& u^0
+ \sqrt{\e}  \{  v^1  \}_\e
+ \sum_{j=2}^4\e^{\frac{j}{2}}  \big( u^j + \nabla \phi^j  +  \{  v^j  \}_\e+  \{  w^j  \}_\e \mathbf{n} \big),\\
\label{wc2} p^{\e}_a &:=& p^0
+ \sum_{j=2}^4 \e^{\frac{j}{2}}  \big( p^j   -  \partial_t \phi^j - u^0\cdot \nabla \phi^j  +  \{  \pi^j  \}_\e   \big),
\\
\label{wc3}  \xi^{\e} &:=& \nu^0 + \sqrt{\e}  \{  \xi^1  \}_\e
+  \sum_{j=2}^4\e^{\frac{j}{2}}  \big(   \nu^j + \{  \xi^j  \}_\e  \big) .
 \een

\subsection{Consistency estimates of the approximate solutions}

\begin{lemma}\label{V}
{\sl Let $\gamma>0, k,p,s,q \in \mathbb{N}$ with $p\geq 3$ and $s\geq 1.$
Let the  profile $V\in C^{k}_{\gamma}(\mathbb{R}_+;H^p(\mathcal{O};$ $H^s_q(\mathbb{R}_+)))$ and is supported in $\mathcal{V}_{\delta_0}.$
 Then one has
 \begin{itemize}

\item[(1)] for $0\leq j\leq k$, $p_1+p_2\leq p-1$ and $p_2\leq s$,
\begin{eqnarray}\label{V2}
\|\partial_t^jZ^{p_1}(\sqrt{\e}\partial_{\mathbf{n}})^{p_2}\{V\}_{\e}\|_{L^2(\cO)}\lesssim \e^{\frac{1}{4}}\t^{-\gamma},
\end{eqnarray}

 \item[(2)] for $0\leq j\leq k$, $p_1+p_2\leq p-2$ and $p_2\leq s-1,$
\begin{eqnarray}\label{Vif}
\|\partial_t^jZ^{p_1}(\sqrt{\e}\partial_{\mathbf{n}})^{p_2}\{V\}_{\e}\|_{L^{\infty}(\cO)}\lesssim \t^{-\gamma},
\end{eqnarray}

 \item[(3)] for $  m\leq p-1,$
\begin{eqnarray}\label{Vb}
\|\{V\}_{\e}\|_{H^m(\partial\cO)}\lesssim \t^{-\gamma}.
\end{eqnarray}
\end{itemize}
}
\end{lemma}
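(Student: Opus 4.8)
The plan is to reduce Lemma \ref{V} to two ingredients: a \emph{composition calculus} describing how the operators $\partial_t$, $Z_0,\dots,Z_5$ and $\sqrt{\e}\,\partial_{\mathbf n}$ act on a profile evaluated at the fast variable $z=\varphi(x)/\sqrt{\e}$, and a \emph{change-of-variables} estimate turning $L^2(\cO)$ norms of such evaluated profiles into $L^2(\partial\cO\times\mathbb{R}_+)$ norms at the price of a factor $\sqrt{\e}$. Throughout, $V$, and hence $\{V\}_\e$, is supported in $\mathcal{V}_{\delta_0}$ in the $x$-variable, where $\mathbf n=-\nabla\varphi$ with $|\nabla\varphi|=1$; moreover, since after rescaling the relevant mass sits in a collar $\{0\le\varphi\lesssim\sqrt{\e}\}\subset\mathcal{V}_{\delta_0/2}$ for $\e$ small and $V$ decays in $z$, we may work in $\mathcal{V}_{\delta_0/2}$, where in addition $w^j\cdot\mathbf n=0$ for $1\le j\le5$.

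First I would record the composition rules. The chain rule gives $\partial_t\{V\}_\e=\{\partial_t V\}_\e$, and, since $\partial_{\mathbf n}\varphi=\mathbf n\cdot\nabla\varphi=-|\nabla\varphi|^2=-1$ there,
\begin{equation*}
\sqrt{\e}\,\partial_{\mathbf n}\{V\}_\e=\sqrt{\e}\,\{\partial_{\mathbf n}V\}_\e-\{\partial_z V\}_\e ,
\end{equation*}
so the $\sqrt{\e}$ carried by the operator \emph{exactly} cancels the $\e^{-1/2}$ produced by differentiating $\varphi/\sqrt{\e}$: no negative power of $\e$ survives. For the conormal fields, $Z_j\{V\}_\e=\{Z_j V\}_\e+\e^{-1/2}(w^j\cdot\nabla\varphi)\{\partial_z V\}_\e$; for $1\le j\le5$ one has $w^j\cdot\nabla\varphi=-w^j\cdot\mathbf n=0$, hence $Z_j\{V\}_\e=\{Z_j V\}_\e$, whereas for $j=0$, $w^0\cdot\nabla\varphi=\varphi\,\mathbf n\cdot\nabla\varphi=-\varphi$ gives $Z_0\{V\}_\e=\{(Z_0-z\partial_z)V\}_\e$, the extra factor $z$ being exactly what the weight $\langle z\rangle^{q}$ in $H^s_q(\mathbb{R}_+)$ is designed to absorb. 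Iterating these identities and using the commutation relations \eqref{cm} together with \eqref{Z1}--\eqref{Z3} (which keep all commutators among the $Z_i$'s and between $\partial_{\mathbf n}$ and the $Z_i$'s tangential), an induction on $j+p_1+p_2$ yields an expansion
\begin{equation*}
\partial_t^j Z^{p_1}(\sqrt{\e}\,\partial_{\mathbf n})^{p_2}\{V\}_\e=\sum_{\beta}\e^{m_\beta/2}\,c_\beta(x)\,\{W_\beta\}_\e ,
\end{equation*}
with $m_\beta\ge0$, the $c_\beta$ smooth and bounded, and each $W_\beta$ obtained from $V$ by at most $j$ factors $\partial_t$, at most $p_1$ slow conormal derivatives, at most $p_2$ factors $\partial_z$ and at most $p_1$ factors $z\partial_z$; consequently $W_\beta\in C^{k-j}_\gamma$ valued in the Sobolev space permitted by the index budget in each case ($p_1+p_2\le p-1$, $p_2\le s$ for (1); $p_1+p_2\le p-2$, $p_2\le s-1$ for (2); $m\le p-1$ for (3)), the weight $\langle z\rangle^q$ absorbing the polynomial-in-$z$ factors, and $W_\beta(t,\cdot)$ keeps the decay $\langle t\rangle^{-\gamma}$.

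Second, I would prove the three bounds for a single evaluated profile $\{W\}_\e$ supported in $\mathcal{V}_{\delta_0}$. For (1), pass to boundary-normal coordinates $(y,\varphi)\in\partial\cO\times[0,\delta_0)$ with Jacobian bounded above and below and substitute $\varphi=\sqrt{\e}\,z$:
\begin{equation*}
\|\{W\}_\e\|_{L^2(\cO)}^2\lesssim\sqrt{\e}\int_{\partial\cO}\int_0^{\infty}\sup_{0\le\varphi<\delta_0}|W(t,(y,\varphi),z)|^2\,dz\,dy\lesssim\sqrt{\e}\,\|W(t)\|_{H^1(\cO;L^2(\mathbb{R}_+))}^2 ,
\end{equation*}
the last step controlling $\sup_\varphi$ by a one-dimensional Sobolev estimate in $\varphi$; taking square roots and summing over $\beta$ (with $\e^{m_\beta/2}\le1$) gives \eqref{V2}. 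For (2), no rescaling is needed: $\|\{W\}_\e\|_{L^\infty(\cO)}\le\|W(t)\|_{L^\infty(\cO\times\mathbb{R}_+)}\lesssim\|W(t)\|_{H^p(\cO;H^s_q(\mathbb{R}_+))}$ by $H^2(\cO)\hookrightarrow L^\infty(\cO)$ and $H^1(\mathbb{R}_+)\hookrightarrow L^\infty(\mathbb{R}_+)$, giving \eqref{Vif} after applying it to the $W_\beta$. For (3), since $\varphi\equiv0$ on $\partial\cO$, the restriction of $\{V\}_\e$ to $\partial\cO$ is $(t,x)\mapsto V(t,x,0)$, whence $\|\{V\}_\e\|_{H^m(\partial\cO)}=\|V(t,\cdot,0)\|_{H^m(\partial\cO)}\lesssim\|V(t)\|_{H^{m+1}(\cO;H^s_q(\mathbb{R}_+))}\lesssim\langle t\rangle^{-\gamma}$, using the trace $H^{m+1}(\cO)\to H^m(\partial\cO)$ and the continuous evaluation $H^s_q(\mathbb{R}_+)\ni f\mapsto f(0)$ ($s\ge1$). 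In all cases the decay $\langle t\rangle^{-\gamma}$ passes straight through from $V\in C^k_\gamma$, since the $c_\beta$ and the change of variables are time-independent.

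The main obstacle is the composition calculus: one must check that, however $\partial_t$, $Z_0,\dots,Z_5$ and $\sqrt{\e}\,\partial_{\mathbf n}$ are iterated, (i) no negative power of $\e$ ever appears — which rests on the identity $\sqrt{\e}\,\partial_{\mathbf n}(\varphi/\sqrt{\e})=O(1)$ and on reading $Z_0=\varphi\partial_{\mathbf n}$ as $z\partial_z$ on the fast scale — and (ii) the conormal order does not inflate, which is precisely what \eqref{cm} and \eqref{Z1}--\eqref{Z3} ensure. Once that structural expansion is in hand, the remaining ingredients — a Jacobian change of variables, one-dimensional Sobolev embeddings, and the trace theorem — are routine.
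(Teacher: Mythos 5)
Your proposal is correct and follows essentially the same route as the paper: the same composition identities for $\sqrt{\e}\,\partial_{\mathbf n}$, $Z_0$ and $Z_1,\dots,Z_5$ acting on $\{V\}_\e$, then the $\e^{1/4}$ rescaling bound for \eqref{V2}, the Sobolev embeddings $H^2(\cO)\hookrightarrow L^\infty(\cO)$, $H^1(\mathbb{R}_+)\hookrightarrow L^\infty(\mathbb{R}_+)$ for \eqref{Vif}, and the trace theorem for \eqref{Vb}. The only difference is that you prove the scaling estimate directly via boundary-normal coordinates and a one-dimensional Sobolev bound, whereas the paper simply invokes Lemma~3 of \cite{iftimie}, which encodes exactly that computation.
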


\begin{proof} We first observe that
\begin{gather*}
\sqrt{\e}\partial_\mathbf{n}\{V\}_{\e}=\sqrt{\e}\{\partial_\mathbf{n}V\}_\e +\{\partial_zV\}_\e ,\\
Z^0\{V\}_{\e}=\{Z^0V\}_\e +\{z\partial_zV\}_\e \ \text{ and } Z^j\{V\}_{\e}=\{Z^jV\}_{\e}\ \text{ for }\ 1\leq j\leq 5.
\end{gather*}
We can take the normal derivatives $p_2$ times, take the tangential derivatives $p_1$ times and take the time derivatives $j$ times and use
  \cite[Lemma 3]{iftimie}
 to get (\ref{V2}).
For (\ref{Vif}), we use Sobolev imbedding $H^1(\mathbb{R}_+)\hookrightarrow L^{\infty}(\mathbb{R}_+)$ for variable $z$ and $H^2(\cO)\hookrightarrow L^{\infty}(\cO) $ for variable $x$. For (\ref{Vb}), we use the trace theorem to get $H^{m+1}(\cO)\hookrightarrow H^m(\partial\cO).$
\end{proof}

Let us now turn to the justification of the consistence of the approximate solutions constructed in (\ref{wc1}-\ref{wc3}) with the system
(\ref{eqa2}-\ref{eqa5}).

\smallskip

\no $\bullet$ \underline{Consistency of \eqref{eqa2}. Definition  and estimate of $F$.}\vspace{0.2cm}

By \eqref{wc1}-\eqref{wc3}, \eqref{u0}, \eqref{BF->E}, \eqref{u11}-\eqref{w2}, \eqref{u21},  \eqref{f2}, \eqref{u22},
\eqref{pi3}, \eqref{w3}, \eqref{u31}-\eqref{f3}, \eqref{u3}, \eqref{p4}, \eqref{w4}, \eqref{l4}-\eqref{def-f4}, \eqref{u4} and \eqref{pi5}, we find that
$u^{\e}_a$ satisfies \eqref{eqa2div} with
\beq\label{defF}
\begin{split}
\hspace{1cm}F:=&-\{n\partial_z\pi^5\}_\e +\sqrt{\e}\Big\{v^1\cdot\nabla (u^4+\nabla\phi^4+v^4+w^4\mathbf{n})\\
&+(u^2+\nabla\phi^2+v^2+w^2\mathbf{n})\cdot\nabla (u^3+\nabla\phi^3+v^3+w^3\mathbf{n})\\
&-w^2\partial_z (v^4+w^4\mathbf{n})+(u^3+\nabla\phi^3+v^3+w^3\mathbf{n})\cdot \nabla (u^2+\nabla\phi^2+v^2+w^2\mathbf{n})\\
&-w^3\partial_z (v^3+w^3\mathbf{n})+(u^4+\nabla\phi^4+v^4+w^4\mathbf{n})\cdot \nabla v^1-w^4\partial_z (v^2+w^2\mathbf{n})\\
& -\Delta(u^3+\nabla\phi^3+v^3+w^3\mathbf{n})+2n\cdot\nabla\partial_z (v^4+w^4\mathbf{n})-\Delta\varphi\partial_z (v^4+w^4\mathbf{n})\Big\}_{\e}\\
& +\e\Big\{(u^2+\nabla\phi^2+v^2+w^2\mathbf{n})\cdot\nabla (u^4+\nabla\phi^4+v^4+w^4\mathbf{n})\\
&+(u^3+\nabla\phi^3+v^3+w^3\mathbf{n})\cdot\nabla (u^3+\nabla\phi^3+v^3+w^3\mathbf{n})-w^3\partial_z (v^4+w^4\mathbf{n})\\
&+(u^4+\nabla\phi^4+v^4+w^4\mathbf{n})\cdot \nabla (u^2+\nabla\phi^2+v^2+w^2\mathbf{n})-w^4\partial_z (v^3+w^3\mathbf{n})\\
&-\Delta (u^4+\nabla\phi^4+v^4+w^4\mathbf{n})\Big\}_\e \\
&+\e^{\frac{3}{2}}\Big\{(u^3+\nabla\phi^3+v^3+w^3\mathbf{n})\cdot\nabla (u^4+\nabla\phi^4+v^4+w^4\mathbf{n})\\
&+(u^4+\nabla\phi^4+v^4+w^4\mathbf{n})\cdot\nabla (u^3+\nabla\phi^3+v^3+w^3\mathbf{n})-w^4\partial_z (v^4+w^4\mathbf{n})\Big\}_\e\\
& +\e^2\Big\{(u^4+\nabla\phi^4+v^4+w^4\mathbf{n})\cdot\nabla (u^4+\nabla\phi^4+v^4+w^4\mathbf{n})\Big\}_\e  .
\end{split} \eeq
By  the constructions of $u^i,\phi^i,v^i,w^i$ and the definition of $\gamma_i,k_i,p_i,s_i,q_i,$ we have $u^i+\nabla \phi^i\in C^{k}_{\gamma}(\mathbb{R}_+;H^p(\cO))$ and $v^i+w^i\mathbf{n}\in C^{k}_{\gamma}(\mathbb{R}_+;H^p(\cO;H^s_q(\mathbb{R}_+))).$ Then (\ref{FH1}) and \eqref{FH2} for the part of $F$ is a direct consequence of Lemma \ref{UV} and Lemma \ref{V}.

\smallskip

\no $\bullet$ \underline{Consistency  of \eqref{eqa2div}. Definition and estimate of $H$.}\vspace{0.2cm}

By \eqref{divu0}, \eqref{phi2}, \eqref{u21}, (\ref{w3}), \eqref{phi3}, \eqref{u31}, (\ref{w4}), \eqref{phi4} and \eqref{u41}, we find that $u^{\e}_a$ satisfies \eqref{eqa2div} with
\begin{gather} \label{defH}
H:=  \{\dive (v^4+w^4\mathbf{n})\}_\e .
\end{gather}

By construction $\dive ( v^4+w^4\mathbf{n})\in C^{k}_{\gamma}(\mathbb{R}_+;H^{p-1}(\cO;H^s_q(\mathbb{R}_+)))$, so Lemma \ref{V} immediately leads to the estimates, (\ref{FH1}), \eqref{FH2} and (\ref{HG}) for the part of $H$.

\smallskip

\no $\bullet$ \underline{Consistency  of \eqref{eqa4}. Definition  and estimate of $G$.}\vspace{0.2cm}

By (\ref{g1}), (\ref{u11}), (\ref{g2}), (\ref{u22}), (\ref{g3}), (\ref{u3}), (\ref{g4} ) and (\ref{u4}), $u^{\e}_a$ satisfies \eqref{eqa4} with
\begin{gather}  \label{defG}
G:=\CN(u^4+\nabla\phi^4+v^4+ w^4\mathbf{n})|_{z=0} .
\end{gather}
By construction, $u^4+\nabla \phi^4\in C^{k_3}_{\gamma_3}(\mathbb{R}_+;H^{p_3-1}(\cO))$ and $v^4+w^4\mathbf{n}\in C^{k}_{\gamma}(\mathbb{R}_+;H^p(\cO;H^s_q(\mathbb{R}_+)))$. By definition of $\gamma_i,k_i,p_i,s_i,q_i,$ we find that $G\in C^{k}_{\gamma}(\mathbb{R}_+;H^{p-1}(\cO))$, which is exactly (\ref{HG}) for the part of $G$.

\smallskip

\no $\bullet$ \underline{Consistency  of \eqref{eqa3} and \eqref{eqa5}. Definition  and estimate of $R_0.$}\vspace{0.2cm}

 By \eqref{u0n}, \eqref{u00}, \eqref{u11}, \eqref{w2}, \eqref{phi2}, \eqref{u21}, \eqref{u22}, \eqref{w3}, \eqref{phi3}, \eqref{u31}, \eqref{u3}, \eqref{w4}, \eqref{phi4}, \eqref{u41} and \eqref{u4}, \eqref{eqa3} and \eqref{eqa5} are satisfied with
 \begin{equation}\label{defua0}
 R_0=-\e^{\frac{1}{2}}v^3_0-(v^4_0+\nabla\phi^4_0+w^4_0\mathbf{n}),
 \end{equation}
and \eqref{R0}) is a direct consequences of Lemma \ref{V}.

 \subsection{ Verification of \eqref{Ra}-\eqref{uja}}
Let us verify \eqref{Ra} and \eqref{ua0} first. Since  $u^0$ is smooth and has compact support in $t$,
\begin{eqnarray}\label{vf1}
\|u^0\|_{W^{1,\infty}(\cO)}+\|\nabla u^0\|_{m-1,\infty}+\|\nabla^2 u^0\|_{m-1,\infty}\lesssim \chi_{[0,T]}(t).
\end{eqnarray}
By construction, $v^j\in C^{k}_{\gamma}(\mathbb{R}_+;H^p(\cO;H^s_q(\mathbb{R}_+))),1\leq j\leq 4.$ Then it follows from (\ref{Vif}) of Lemma \ref{V} that, for $1\leq j\leq 4,m\leq p-3$,
\begin{eqnarray}\label{vf2}
\sqrt{\e}\|\{v^j\}_{\e}\|_{W^{1,\infty}}+\sqrt{\e}\|\nabla\{v^j\}_{\e}\|_{m-1,\infty}+\e\|\nabla^2\{v^j\}_{\e}\|_{m-1,\infty}\lesssim \t^{-\gamma}.
\end{eqnarray}
The same inequality holds for $w^j\mathbf{n}$ with $2\leq j\leq 4,$ since they also belong to the space $C^{k}_{\gamma}(\mathbb{R}_+;H^p(\cO;H^s_q(\mathbb{R}_+))).$ For $u^j,$ $2\leq j\leq4$, it belongs to $C^k(\mathbb{R}_+;H^p(\cO))$ and is supported in $[0,T]$. Hence Sobolev imbedding Theorem ensures that, for $2\leq j\leq 4,m\leq p-3,$
\begin{eqnarray}\label{vf3}
\|u^j\|_{W^{1,\infty}(\cO)}+\|\nabla u^j\|_{m-1,\infty}+\|\nabla^2 u^j\|_{m-1,\infty}\lesssim \chi_{[0,T]}(t).
\end{eqnarray}
For $\nabla \phi^j,$ $2\leq j\leq 4$, it belongs to $C^k_{\gamma}(\mathbb{R}_+;H^{p}(\cO))$. Then it follows form
 Sobolev imbedding Theorem that, for $2\leq j\leq 4,m\leq p-3,$
\begin{eqnarray}\label{vf4}
\|\nabla\phi^j\|_{W^{1,\infty}(\cO)}+\|\nabla^2\phi^j\|_{m-1,\infty}+\|\nabla^3\phi^j\|_{m-1,\infty}\lesssim \t^{-\gamma}.
\end{eqnarray}
Combine (\ref{vf1})-(\ref{vf4}), we have verified \eqref{Ra} and \eqref{ua0}.

\smallskip

Let us move on to \eqref{uja}.
Since  $u^0$ is smooth and $u^j\in C^{k}(\mathbb{R}_+;H^p(\cO))$ for $2\leq j\leq 4$, and they both supported in $[0,T]$, one has
\begin{eqnarray*}
\|u^0\|_{H^1(\cO)}+\|u^j\|_{H^1(\cO)}\leq \chi_{[0,T]}(t).
\end{eqnarray*}
For $\nabla \phi^j \in C^k_{\gamma}(\mathbb{R}_+;H^{p}(\cO)),$ $2\leq j\leq 4,$
\begin{eqnarray*}
\|\nabla\phi^j\|_{H^1(\cO)}\lesssim \t^{-\gamma}.
\end{eqnarray*}
For $v^j\in C^{k}_{\gamma}(\mathbb{R}_+;H^p(\cO;H^s_q(\mathbb{R}_+))),$ $1\leq j\leq 4$, it follows from Lemma \ref{V} that
\begin{eqnarray*}
\sqrt{\e}\|\{v^j\}_\e\|_{H^1(\cO)}\lesssim\e^{\frac{1}{4}}\t^{-\gamma}.
\end{eqnarray*}
The same estimates holds for $w^j\mathbf{n}.$
By gathering the above estimates, we find that
\begin{eqnarray*}
\|u^{\e}_a(t,\cdot)\|_{H^1(\cO)}\lesssim \chi_{[0,T]}(t)+\e\t^{-\gamma}+\e^{\frac{1}{4}}\t^{-\gamma}.
\end{eqnarray*}
As a result, it comes out
\begin{equation*}
\|u^{\e}_a({T}/{\e},\cdot)\|_{H^1(\cO)}\lesssim \e^{\gamma+\frac{1}{4}}.
\end{equation*}
Since $\gamma>1$, \eqref{uja} holds true.

\section{Estimates of the remainder R}
\label{sec-Remainder}

The goal of this section is to establish the  \textit{a priori} estimate
\eqref{APRIORI-R} for the remainder term $R$  defined by \eqref{DEF-R}.
We also introduce the remainder pressure term $ \pi$ such that
 $p^{\e} =  p^{\e}_a +\e^2 \pi$.
Then in view of \eqref{NSA}, \eqref{eqa} and \eqref{DEF-R}, we write
\begin{subequations} \label{R}
\begin{gather}
 \label{R1}
\partial_t R-\e\Delta R+u^{\e}\cdot\nabla R+R\cdot\nabla u^{\e}_a+\nabla\pi=-F\quad \mbox{ and } \quad    \dive R=-H
\quad  \mbox{ in }\ \R_+\times\cO, \\  \label{R2}
R\cdot \mathbf{n}=0 \quad \mbox{ and } \quad
\CN(R)=-G\quad \mbox{ on }\ \R_+\times\partial\cO,\\  \label{R3}
R|_{t=0}=R_0 \quad \mbox{ in }\cO.
\end{gather}
\end{subequations}
These equations are satisfied up to the time  $T^\e$  introduced in Section \ref{Sec-RE}.
At the end of this section, once
the  \textit{a priori} estimate \eqref{APRIORI-R} in hands, we will deduce
  that $T^\e \geq \frac{T}{\e}$.

We will start with a
$L^2$ estimate in Subsection \ref{ldeux}, then we will turn to tangential derivatives estimates in Subsection \ref{tangen}.
We will also need to handle the estimate of one normal derivative, and for that, we introduce
an appropriate substitute to the vorticity, see \eqref{defeta}, which is in the spirit of  \cite{masmoudi}.
We will see in Subsection \ref{sisi-eta} that this quantity, as the vorticity, allows to estimate one normal derivative.
The advantage of this quantity over the vorticity is that
its time evolution is easier to be investigated; this will be done in Subsection \ref{time-eta}.
The estimate of the terms involving the pressure are quite difficult and are therefore postponed to
Subsection  \ref{sub-pressure}.
An estimate of $\|R\|_{1,\infty}$  will be obtained in Subsection
\ref{lipR}. The end of  the proof of \eqref{APRIORI-R} will be given in Subsection
\ref{Sec-RE}.

\subsection{$L^2$ estimates} \label{ldeux}
From now on, we simplify $\|\cdot\|_{L^2(\cO)}$ as $\|\cdot\|$.
\begin{proposition}\label{S5prop1}
{\sl There exist a constant $C>0$, such that the remainder $R$ satisfies
\begin{equation} \label{el2}
\|R(t)\|^2+\e\int_0^t\|\nabla R\|^2dt'\leq C\e^{-\frac{1}{4}}\quad\mbox{ for }\ 0\leq t\leq T^\e .
\end{equation}}
\end{proposition}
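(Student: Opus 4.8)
The estimate \eqref{el2} is a weighted energy estimate for the linear(-ized) remainder system \eqref{R}. The plan is to test the equation \eqref{R1} against $R$, integrate by parts over $\cO$, and control the resulting terms using the estimates \eqref{FH1}, \eqref{HG}, \eqref{R0} on the source terms $F$, $H$, $G$, $R_0$, and the bound \eqref{Ra} on $\nabla u^{\e}_a$. First I would compute $\frac12\frac{d}{dt}\|R\|^2$. The transport term $\int_{\cO}(u^{\e}\cdot\nabla R)\cdot R\,dx$ is handled by integration by parts: it equals $-\frac12\int_{\cO}(\dive u^{\e})|R|^2\,dx + \frac12\int_{\partial\cO}(u^{\e}\cdot\mathbf{n})|R|^2\,d\sigma$; the boundary term vanishes since $u^{\e}\cdot\mathbf{n}=0$ on $\partial\cO$, and the first term is controlled because $\dive u^{\e}=\sigma^0+\e^2 H$ with $\sigma^0$ smooth compactly supported in time and $\e^2 H$ controlled by \eqref{FH1}. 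For the diffusion term, $-\e\int_{\cO}\Delta R\cdot R\,dx = \e\|\nabla R\|^2 - \e\int_{\partial\cO}(\partial_{\mathbf{n}}R)\cdot R\,d\sigma$; the boundary integral must be rewritten using the Navier condition $\CN(R)=-G$, i.e.\ $(D(R)\mathbf{n})_{\tan}=-MR_{\tan}-G$, together with $R\cdot\mathbf{n}=0$, which expresses $(\partial_{\mathbf{n}}R)_{\tan}$ (and the normal component, via $\dive R=-H$ and the curvature of $\partial\cO$) in terms of lower-order quantities, producing a boundary term bounded by $C(\|R\|_{H^{1/2}(\partial\cO)}^2 + \|G\|_{L^2(\partial\cO)}^2 + \|H\|^2_{\cdots})$, which after a trace inequality and Young's inequality is absorbed into $\frac{\e}{2}\|\nabla R\|^2 + C\e\|R\|^2 + (\text{source terms})$.

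Next I would treat the remaining terms on the right: $\int_{\cO}(R\cdot\nabla u^{\e}_a)\cdot R\,dx$ is bounded by $\|\nabla u^{\e}_a\|_{L^\infty}\|R\|^2 \lesssim \t^{-\gamma}\|R\|^2$ using \eqref{Ra}; the pressure term $\int_{\cO}\nabla\pi\cdot R\,dx = -\int_{\cO}\pi\,\dive R\,dx + \int_{\partial\cO}\pi(R\cdot\mathbf{n})\,d\sigma = \int_{\cO}\pi H\,dx$, which requires an estimate on $\|\pi\|$ — this is where one invokes (or a simple version of) the pressure analysis deferred to Subsection~\ref{sub-pressure}, or alternatively one can at this $L^2$ level bound $\|\pi\|$ by elliptic regularity for the pressure Poisson problem $\Delta\pi = \dots$ with the relevant Neumann data, giving a bound of the form $\|\pi\| \lesssim \|F\| + \|R\|_{H^1} + \text{l.o.t.}$ and then using $H$ small. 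Finally the source term $-\int_{\cO}F\cdot R\,dx \leq \|F\|\,\|R\| \lesssim \e^{1/4}\t^{-\gamma}\|R\|$ by \eqref{FH1}. Collecting everything yields a differential inequality of the shape
\begin{equation*}
\frac{d}{dt}\|R\|^2 + \e\|\nabla R\|^2 \lesssim (\t^{-\gamma} + \e)\|R\|^2 + \e^{1/4}\t^{-\gamma}\|R\| + \e^{-1/4}\t^{-2\gamma},
\end{equation*}
where the last term comes from squaring the $\e^2$-weighted source contributions after dividing through (the factors of $\e$ are tracked so that the worst source term, coming from $G$ via the boundary term, scales like $\e^{-1/4}\t^{-2\gamma}$ once one recalls $R = \e^{-2}(u^\e - u^\e_a)$ and the $\e^2 G$ normalization; the precise bookkeeping uses \eqref{HG} and \eqref{R0}).

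Then I would apply Grönwall's inequality. Since $\gamma>1$, the coefficient $\int_0^{T^\e}(\t^{-\gamma}+\e)\,dt'$ is bounded: $\int_0^\infty \t^{-\gamma}\,dt' < \infty$ and $\e\cdot T^\e \leq \e\cdot\frac{T}{\e} = T$, so the Grönwall factor $\exp\big(C\int_0^t(\t^{-\gamma}+\e)\,dt'\big)$ is bounded uniformly in $\e$ and in $t\leq T^\e$. Integrating the source terms: $\int_0^\infty \e^{1/4}\t^{-\gamma}\,dt' \lesssim \e^{1/4}$ (which, combined with the $\|R\|$ factor via Young, contributes $\e^{1/2}$ to $\|R\|^2$) and $\int_0^\infty \e^{-1/4}\t^{-2\gamma}\,dt' \lesssim \e^{-1/4}$; together with the initial data $\|R_0\|^2 \lesssim \e^{1/2}$ from \eqref{R0}, the dominant term is $\e^{-1/4}$, giving $\|R(t)\|^2 + \e\int_0^t\|\nabla R\|^2\,dt' \lesssim \e^{-1/4}$ for all $t\leq T^\e$, which is \eqref{el2}.

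\textbf{Main obstacle.} The principal difficulty is the boundary term $\e\int_{\partial\cO}(\partial_{\mathbf{n}}R)\cdot R\,d\sigma$ arising from the viscous term: one must use the Navier boundary condition $\CN(R)=-G$ carefully to convert $\partial_{\mathbf{n}}R$ into tangential derivatives plus curvature terms plus the controlled data $G$, and ensure that no uncontrolled trace of $\nabla R$ survives — the standard trick (as in \cite{iftimie, CMS}) is to write $(D(R)\mathbf{n})_{\tan} = \frac12(\partial_{\mathbf{n}}R)_{\tan} + (\text{tangential derivatives of }R) + (\text{terms with }\nabla\mathbf{n})$, so that $(\partial_{\mathbf{n}}R)_{\tan}$ is expressed via $-2MR_{\tan}-2G$ plus harmless terms; the normal component $(\partial_{\mathbf{n}}R)\cdot\mathbf{n}$ is recovered from $\dive R = -H$ and $R\cdot\mathbf{n}=0$. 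A secondary technical point is obtaining a usable $L^2$-level bound on the remainder pressure $\pi$; at this stage a crude elliptic estimate suffices, with the sharp analysis postponed to Subsection~\ref{sub-pressure}. Throughout, the key structural facts that make the argument close are the time-integrability $\int_0^\infty\t^{-\gamma}\,dt'<\infty$ (from $\gamma>1$) and the cap $T^\e \leq T/\e$ which tames the $\e\|\nabla R\|^2$-generated growth.
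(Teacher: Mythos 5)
Your overall architecture (energy identity, Navier boundary term rewritten via $\CN(R)=-G$, $R\cdot\mathbf{n}=0$ and $\dive R=-H$, Gr\"onwall using $\gamma>1$ and $T^\e\le T/\e$) is the same as the paper's, but you test \eqref{R1} with $R$ itself, which forces you to estimate the remainder pressure $\pi$ in $L^2$, and this is where the proposal has a genuine gap. The paper avoids the pressure entirely at this stage: it writes $R=\mathbb{P}R+(I-\mathbb{P})R$ with the Leray projection, controls the gradient part by elliptic regularity for the Neumann problem, $\|(I-\mathbb{P})R\|_{H^1}\lesssim\|H\|\lesssim\e^{1/4}\t^{-\gamma}$ (see \eqref{PH2}), and runs the energy estimate on $\mathbb{P}R$, so that $\int_{\cO}\nabla\pi\cdot\mathbb{P}R=0$ identically; the price is the extra term $\int_{\cO}(u^\e\cdot\nabla(I-\mathbb{P})R)\cdot\mathbb{P}R$, handled with Korn's inequality and the interpolation $\|R\|_{L^4}\lesssim\|R\|^{1/4}\|\nabla R\|^{3/4}$ as in \eqref{R24}. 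Your substitute, a ``crude elliptic estimate'' $\|\pi\|\lesssim\|F\|+\|R\|_{H^1}+\mathrm{l.o.t.}$, is not available: since $u^\e=u^\e_a+\e^2R$, the pressure Neumann problem of Subsection \ref{sub-pressure} contains the quadratic source $\e^2\dive(R\cdot\nabla R)$ and the boundary datum $\e\Delta R\cdot\mathbf{n}$, neither of which is bounded by $\|R\|_{H^1}$ in 3D at the pure $L^2$ stage; controlling them is precisely the content of Lemma \ref{pi1} and Proposition \ref{pi-propi}, which are proved only for $m\ge1$, use the conormal calculus and $\|R\|_{L^\infty}$, $\|\nabla R\|_{L^\infty}$, and are therefore not a ``simple version'' one can invoke here without circularity.

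Even the benign part of your pressure step does not close at the claimed rate. Bounding $\int_{\cO}\pi H$ by $\|\nabla\pi\|\,\|H\|$ with $\|H\|\lesssim\e^{1/4}\t^{-\gamma}$ and with $\|\nabla\pi\|$ containing a contribution $\|u^\e_a\|_{L^\infty}\|\nabla R\|\sim\t^{-\gamma}\|\nabla R\|$ (from $\dive(u^\e\cdot\nabla R)$ before any rewriting) produces $\e^{1/4}\t^{-2\gamma}\|\nabla R\|$, which can only be absorbed as $\lambda\e\|\nabla R\|^2+C_\lambda\e^{-1/2}\t^{-4\gamma}$, and the time integral of the latter is of order $\e^{-1/2}$, already worse than the target $\e^{-1/4}$; the paper's rewriting $\dive(u^\e\cdot\nabla R)=\dive(R\cdot\nabla u^\e-Hu^\e-\sigma^0R)$ trades $\nabla R$ for $R$ but reintroduces $\e^2R\cdot\nabla R$, i.e.\ the same uncontrolled quadratic term. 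Your bookkeeping is also off on a secondary point: by \eqref{HG}, $G$ is $O(\t^{-\gamma})$ with no negative power of $\e$ (the $\e^2$ normalization is already divided out in $\CN(R)=-G$), so the viscous boundary term contributes only $\e\t^{-2\gamma}$ after Young, as in \eqref{R26}, not $\e^{-1/4}\t^{-2\gamma}$. In short, either adopt the Leray-projection device (which is exactly what makes the $L^2$ step pressure-free) or supply a genuine $L^2$-level pressure estimate; as written, the pressure step is the missing piece of the argument.
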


\begin{proof}
Let $\mathbb{P}$ the Leray projection operator to the divergence free vector field,  we decompose
$R$ into  $R=\mathbb{P}R+\nabla \phi.$
Hence $\phi $ satisfies
$\Delta\phi=\dive R=-H$ in $\cO$ and
 $\partial_\mathbf{n}\phi=R\cdot \mathbf{n}=0$  on $\partial\cO$.
By elliptic regularity and (\ref{FH1}), one has
\begin{equation}\label{PH2}
\|(I-\mathbb{P})R\|_{H^1(\cO)}\lesssim \|H\|_{L^2(\cO)}\lesssim\e^{\frac{1}{4}}\t^{-\gamma}.
\end{equation}
Next we estimate $\mathbb{P}R$. Indeed by taking $L^2$ inner product of \eqref{R1} with  $\mathbb{P}R,$ we find
\beq \label{PHC}
\begin{split}
 \frac{1}{2}\f{d}{dt}\|\mathbb{P}R(t)\|^2-\e\int_{\cO}\Delta R\cdot \mathbb{P}R
+&\int_{\cO} (u^{\e}\cdot\nabla R)\cdot \mathbb{P}R\\
+&\int_{\cO}(R\cdot\nabla u^{\e}_a )\cdot \mathbb{P}R+\int_{\cO}\nabla \pi\cdot \mathbb{P}R
=-\int_{\cO}F\cdot\mathbb{P}R.
\end{split}
\eeq
Let us now estimate each term of \eqref{PHC}, from the right to the left.
\begin{itemize}
\item
Since $F$ satisfies (\ref{FH1}), we have
\begin{equation}\label{R21}
\vert \int_{\cO} F\cdot \mathbb{P}R \vert \lesssim \|F\|\|\mathbb{P}R\|\lesssim \e^{\frac{1}{4}}\t^{-\gamma}\bigl(\|\mathbb{P}R\|^2+1\bigr).
\end{equation}
\item While in view of \eqref{R2}, we get, by an integration by parts, that
 $$\int_{\cO}\nabla \pi\cdot \mathbb{P}R=0.$$

\item Let us now move to the term before in \eqref{PHC}.
We first deduce from  \eqref{PH2} that
\begin{equation}\label{PR}
\|R\|\lesssim\|\mathbb{P}R\|+\e^{\frac{1}{4}}\t^{-\gamma} ,
\end{equation}
which together  with \eqref{Ra} ensures that
\begin{equation}\label{R22}
\vert \int_{\cO}(R\cdot\nabla u^{\e}_a)\cdot \mathbb{P}R \vert  \lesssim \|\nabla u^{\e}_a\|_{L^{\infty}(\cO)}\|R\|\|\mathbb{P}R\|\lesssim \t^{-\gamma}\bigl(\|\mathbb{P}R\|^2+\e^{\frac{1}{2}}\t^{-2\gamma}\bigr).
\end{equation}
\item To deal with the third term in \eqref{PHC}, we start with using  again  the Helmholtz-Leray decomposition to deduce that
\begin{equation} \label{LAST}
\int_{\cO}(u^{\e}\cdot\nabla R)\cdot \mathbb{P}R=\int_{\cO}(u^{\e}\cdot\nabla \mathbb{P}R)\cdot \mathbb{P}R+\int_{\cO}(u^{\e}\cdot\nabla(I- \mathbb{P})R)\cdot \mathbb{P}R .
\end{equation}
Thanks to  \eqref{NSA2}, \eqref{NSA3}, and $\sigma^0 $ is supported on $[ 0,T]$,
we get, by using integration by parts,
that
\begin{equation}\label{R23}
\vert \int_{\cO}(u^{\e}\cdot\nabla \mathbb{P}R)\cdot \mathbb{P}R \vert
\lesssim
\t^{-\gamma}  \|\mathbb{P}R\|^2.
\end{equation}
Whereas  to deal with the last term in \eqref{LAST}, we first use the decomposition \eqref{DEF-R}
to obtain
\begin{equation*}
\vert \int_{\cO}(u^{\e}\cdot\nabla(I- \mathbb{P})R)\cdot \mathbb{P}R \vert \lesssim\|\nabla(I-\mathbb{P})R\|(\|u^{\e}_a\|_{L^{\infty}}\|\mathbb{P}R\|+\e^2\|R\|_{L^4}^2) .
\end{equation*}
Observing from  Korn's inequality and (\ref{PR}) that
\begin{equation}\label{RH1}
\| R\|_{H^1}
 \lesssim\|D(R)\|+\|\mathbb{P}R\|+\e^{\frac{1}{4}}\t^{-\gamma}.
\end{equation}
 Then recalling that
 $\|R\|_{L^4}\lesssim \|R\|^{\frac{1}{4}}\|\nabla R\|^{\frac{3}{4}},$
and
 using again (\ref{PR}), we find
\begin{eqnarray*}
\|R\|_{L^4}^2&\lesssim&(\|\mathbb{P}R\|+\e^{\frac{1}{4}}\t^{-\gamma})^{\frac{1}{2}} (\|D(R)\|+\|\mathbb{P}R\|+\e^{\frac{1}{4}}\t^{-\gamma})^{\frac{3}{2}}\\
&\lesssim& \lambda \|D(R)\|^2+C_{\lambda}(\|\mathbb{P}R\|^2+\e^{\frac{1}{2}}\t^{-2\gamma}),
\end{eqnarray*}
for a small constant $\lambda>0$,  where in the last step, we used Young's inequality.

As a consequence, we deduce from   \eqref{PH2} and \eqref{Ra} that
\begin{eqnarray}
\nonumber\vert \int_{\cO} (u^{\e}\cdot\nabla(I- \mathbb{P})R)\cdot \mathbb{P}R \vert
\!\!\!&\lesssim&\!\!\!\e^{\frac{1}{4}}\t^{-\gamma}(\t^{-\gamma}\|\mathbb{P}R\|+\e^2\lambda \|D(R)\|^2+\e^2C_{\lambda}(\|\mathbb{P}R\|^2+\e^{\frac{1}{2}}\t^{-2\gamma}))\\ \label{R24}
\!\!\!&\lesssim&\!\!\!\e^2\lambda\|D(R)\|^2+C_{\lambda}\e^{\frac{1}{4}}\t^{-\gamma}(\|\mathbb{P}R\|^2+1).
\end{eqnarray}
\item  For the second term of the energy equality, \eqref{PHC}, we start with the following integration by parts:
\begin{equation*}
-\int_{\cO}\Delta R\cdot \mathbb{P}R=2\int_{\cO}D(R)\cdot D(\mathbb{P}R)+2\int_{\partial\cO}(D(R)\cdot \mathbf{n})_{\tan}\cdot \mathbb{P}R .
\end{equation*}
Then, on the one hand, it follows from \eqref{PH2} that
\beq \label{R25}
\begin{split}
2\int_{\cO}D(R)\cdot D(\mathbb{P}R)=&2\|D(R)\|^2-2\int_{\cO}D(R)\cdot D((I-\mathbb{P})R)\\
\geq&\|D(R)\|^2-C\|D((I-\mathbb{P})R)\|^2\\
\geq&\|D(R)\|^2-C\e^{\frac{1}{4}}\t^{-\gamma},
\end{split}
\eeq
and, on the other hand, by using  boundary condition $\CN(R)=G$ on $\partial\cO$, one has
\begin{eqnarray*}
\nonumber\int_{\partial\cO}(D(R)\cdot \mathbf{n})_{\tan}\cdot \mathbb{P}R&=&\int_{\partial\cO}\bigl(G-(MR)_{\tan}\bigr)\cdot\mathbb{P}R\\
\nonumber&=&\int_{\cO}\dive\left(n(G-(MR)_{\tan})\cdot\mathbb{P}R\right),
\end{eqnarray*}
so that thanks to (\ref{PR}), (\ref{RH1}) and (\ref{HG}), for $\la>0,$ we get,  by applying Young's inequality, that
\begin{eqnarray}\label{R26}
 \vert  \int_{\partial\cO}(D(R)\cdot \mathbf{n})_{\tan}\cdot \mathbb{P}R  \vert \\
&\lesssim&\lambda\|D(R)\|^2+C_{\lambda}(\|\mathbb{P}R\|^2+\t^{-2\gamma}) .
\end{eqnarray}

\end{itemize}

By inserting the estimates, (\ref{R21}), (\ref{R22}), (\ref{R23}), (\ref{R24}) and (\ref{R26}), into \eqref{PHC}, we arrive at
\begin{equation*}
\frac{1}{2}\f{d}{dt}\|\mathbb{P}R(t)\|^2+\e\|D(R)\|^2\leq C\e\lambda\|D(R)\|^2+C_{\lambda}(\e+\t^{-\gamma})\|\mathbb{P}R\|^2+\e^{\frac{1}{4}}\t^{-\gamma}.
\end{equation*}
Choosing $\lambda$ small enough such that $C\lambda<\frac{1}{2}$ and note that (\ref{R0}) implies
\begin{equation}
\|\mathbb{P}R_0\|\leq\|R_0\|\lesssim \e^{-\frac{1}{4}},
\end{equation}
then we use Gronwall inequality to find that
\begin{equation*}
\|\mathbb{P}R(t)\|^2+\e\int_0^t\|D(R)\|^2dt'\leq C\e^{-\frac{1}{4}}\quad\mbox{ for }0\leq t\leq T^\e .
\end{equation*}
Together with (\ref{PH2}) and \eqref{RH1}, we thus conclude the proof of \eqref{el2}.
\end{proof}

\subsection{Tangential derivatives estimates} \label{tangen}

We now estimate the tangential derivatives of the remainder. Recall that the tangential derivatives $Z^{\alpha}$ are defined in \eqref{defZ} of Subsection \ref{sec-appr} and the conormal Sobolev  norm $\|\cdot\|_m$ is defined in \eqref{scn}. Let us start by estimating $\nabla R$ on the boundary.

\begin{lemma}\label{REDF}
{\sl Let $m\geq1. $ There holds
\begin{equation}\label{Rn}
\|\nabla R\|_{H^{m-1}(\partial\cO)}\lesssim \|R\|_{H^{m}(\partial\cO)}+\t^{-\gamma}.
\end{equation}
}
\end{lemma}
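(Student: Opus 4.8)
The estimate \eqref{Rn} is a boundary trace estimate: it says that on $\partial\cO$ the full gradient $\nabla R$ is controlled by the tangential $H^m(\partial\cO)$-norm of $R$, up to a decaying remainder. The plan is to decompose $\nabla R$ at the boundary into its tangential part and its normal part, estimate each separately, and use the boundary conditions \eqref{R2} satisfied by $R$ to convert the normal derivative into tangential data plus the forcing term $G$.

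First I would handle the tangential components. For any vector field $w$ on $\partial\cO$, the tangential derivatives $\partial_\tau w$ on the surface are, by definition of the surface $H^m$-norm, bounded by $\|w\|_{H^m(\partial\cO)}$; applied with $w=R$ this gives $\|\partial_\tau R\|_{H^{m-1}(\partial\cO)}\lesssim\|R\|_{H^m(\partial\cO)}$, which covers all derivatives of $R$ in directions tangent to $\partial\cO$. It remains to control $\partial_{\mathbf n}R$ on $\partial\cO$. Here I would split $\partial_{\mathbf n}R=(\partial_{\mathbf n}R\cdot\mathbf n)\mathbf n+(\partial_{\mathbf n}R)_{\tan}$. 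For the scalar normal--normal part, differentiate the impermeability condition $R\cdot\mathbf n=0$ from \eqref{R2} along the boundary and use $\dive R=-H$ from \eqref{R1}: writing the divergence in a boundary-adapted frame, $\partial_{\mathbf n}R\cdot\mathbf n$ equals $-H$ minus the tangential divergence of $R_{\tan}$ plus curvature terms (involving $\nabla\mathbf n$) acting on $R$, all of which are bounded by $\|R\|_{H^m(\partial\cO)}+\|H\|_{H^{m-1}(\partial\cO)}$; by the estimate \eqref{HG} on $H$ (with the trace theorem, or directly the stated bound on $\|H\|_{H^m(\partial\cO)}$), the $H$-contribution is $\lesssim\t^{-\gamma}$.

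For the tangential part $(\partial_{\mathbf n}R)_{\tan}$ I would use the Navier boundary condition $\CN(R)=-G$ from \eqref{R2}. By the definition \eqref{def-dep} of $\CN$, we have $(D(R)\mathbf n+MR)_{\tan}=-G$ on $\partial\cO$, and $D(R)\mathbf n$ contains $\tfrac12\partial_{\mathbf n}R$ together with terms of the form $\tfrac12\nabla(R\cdot\mathbf n)$ and terms quadratic in $\nabla\mathbf n$ times $R$; since $R\cdot\mathbf n=0$ on $\partial\cO$, the gradient $\nabla(R\cdot\mathbf n)$ restricted to tangential directions vanishes and its normal component is again expressible through $R_{\tan}$ and curvature. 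Hence $(\partial_{\mathbf n}R)_{\tan}$ is a linear combination of $-2G$, $-2(MR)_{\tan}$, and curvature terms $\sim(\nabla\mathbf n)R$, all controlled in $H^{m-1}(\partial\cO)$ by $\|R\|_{H^m(\partial\cO)}$ (for the $MR$ and curvature terms, using that $M$ and $\mathbf n$ are smooth so multiplication is bounded on $H^{m-1}(\partial\cO)$) plus $\|G\|_{H^{m-1}(\partial\cO)}\lesssim\t^{-\gamma}$ by \eqref{HG}. Combining the tangential-derivative bound, the normal--normal bound, and the normal--tangential bound yields \eqref{Rn}.

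The only mildly delicate point — and the place I would be most careful — is the bookkeeping of the change of frame near the boundary: expressing $\dive R$ and $D(R)\mathbf n$ in terms of $\partial_{\mathbf n}R$, surface derivatives of $R$, and the second fundamental form, and checking that each ``error'' term has at most $m-1$ derivatives on $R$ so that it is absorbed into $\|R\|_{H^m(\partial\cO)}$. Since $\partial\cO$ is smooth this is routine, and no genuine obstacle arises; the decay $\t^{-\gamma}$ comes entirely from the source terms $G$ and $H$ via \eqref{HG}.
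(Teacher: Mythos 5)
Your proposal is correct and follows essentially the same route as the paper: the tangential part of $\partial_{\mathbf n}R$ is recovered from the Navier condition $\CN(R)=-G$ together with $R\cdot\mathbf n=0$ (the paper writes this as $(\partial_{\mathbf n}R-\nabla\mathbf n\cdot R+2MR)_{\tan}=-2G$ on $\partial\cO$), and the normal component from $\dive R=-H$ written as $\partial_{\mathbf n}R\cdot\mathbf n+\sum_j c_j Z_j R=-H$, with \eqref{HG} supplying the $\t^{-\gamma}$ decay from $G$ and $H$. The frame bookkeeping you flag is exactly the routine step the paper carries out, so no gap remains.
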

\begin{proof} Indeed
we only need to estimate $\|\partial_\mathbf{n} R\|_{H^{m-1}(\partial\cO)}$. On the one hand, we deduce from the
 boundary conditions: $\CN(R)=-G,$ $R\cdot \mathbf{n}=0$ on $\partial\cO,$
that  $$(\partial_\mathbf{n} R-\nabla \mathbf{n}\cdot R+2MR)_{\tan}=-2G\quad\mbox{ on} \ \partial\cO,$$ from which, and (\ref{HG}),
we infer
\begin{equation}\label{Rn1}
\|(\partial_\mathbf{n} R)_{\tan}\|_{H^{m-1}(\partial\cO)}\lesssim\|R\|_{H^m(\partial\cO)}+\|G\|_{H^{m-1}(\partial\cO)}\lesssim \|R\|_{H^m(\partial\cO)}+\t^{-\gamma}.
\end{equation}
On the other hand, $\dive R=-H$ gives us
\begin{equation}\label{div}
\partial_\mathbf{n} R\cdot \mathbf{n} +\sum_jc_jZ_jR=-H,
\end{equation}
for some smooth functions $c_j$, which depends only on vector field $w^j.$ Thus, by (\ref{HG}),
\begin{equation}\label{Rn2}
\|\partial_\mathbf{n} R\cdot \mathbf{n}\|_{H^{m-1}(\partial\cO)}\lesssim\|R\|_{H^m(\partial\cO)}+\|H\|_{H^{m-1}(\partial\cO)}\lesssim \|R\|_{H^m(\partial\cO)}+\t^{-\gamma}.
\end{equation}
Combining the estimate (\ref{Rn1}) with (\ref{Rn2}), we have proved the part of (\ref{Rn}) regarding
  $\|\partial_\mathbf{n} R\|_{H^{m-1}(\partial\cO)}$.
The other part of the estimate is straightforward.
\end{proof}
\begin{proposition}\label{S5prop3}
{\sl Let $1\leq m\leq p-3$ be an integer. Then there exists a constant $C_1>0$ such that for any $t\in [0,T^\e ]$,
\beq \label{Re}
\begin{split}
\f{d}{dt}\|R(t)\|_m^2+&C_1\e\|\nabla R\|^2_{m}\lesssim \e\|\nabla R\|^2_{m-1}+\sum_{|\alpha|\leq m}|\int_{\cO}Z^{\alpha}\nabla\pi\cdot  Z^{\alpha}R|+\e^{\frac{1}{4}}\t^{-\gamma}\\
&+\|R\|_{m}^2(\e+\t^{-\gamma})+\e^2(\|R\|_{1,\infty}\|\nabla R\|_{m-1}\|R\|_{m}+\|\nabla R\|_{L^{\infty}}\|R\|^2_m).
\end{split}\eeq
}
\end{proposition}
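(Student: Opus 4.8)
The plan is to apply the conormal operator $Z^{\alpha}$ with $|\alpha|\le m$ to the momentum equation \eqref{R1}, take the $L^2(\cO)$ inner product with $Z^{\alpha}R$, sum over $\alpha$, and control every resulting term. First I would write
$$ \partial_t Z^{\alpha}R - \e\Delta Z^{\alpha}R + u^{\e}\cdot\nabla Z^{\alpha}R + Z^{\alpha}R\cdot\nabla u^{\e}_a + \nabla Z^{\alpha}\pi = -Z^{\alpha}F + \mathcal{C}_{\alpha}, $$
where the commutator $\mathcal{C}_{\alpha}$ collects $[\Delta,Z^{\alpha}]R$ (use \eqref{Z3}, which expresses it through $\nabla^2 Z^{\beta}R$ and $\nabla Z^{\gamma}R$ with $|\beta|,|\gamma|\le m-1$, hence absorbed into $\e\|\nabla R\|_{m-1}^2$ after an integration by parts in the $\nabla^2 Z^\beta$ term, using the boundary conditions), $[u^{\e}\cdot\nabla,Z^{\alpha}]R$, and $[Z^{\alpha},\cdot\nabla u^{\e}_a]R$. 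The term $-\int_{\cO} Z^{\alpha}F\cdot Z^{\alpha}R$ is bounded by $\e^{1/4}\t^{-\gamma}(\|R\|_m^2+1)$ via \eqref{FH1}; the term $\int_{\cO}Z^{\alpha}R\cdot\nabla u^{\e}_a\cdot Z^{\alpha}R$ is bounded by $\|\nabla u^{\e}_a\|_{L^\infty}\|R\|_m^2\lesssim\t^{-\gamma}\|R\|_m^2$ by \eqref{Ra}. The pressure term $\int_{\cO}\nabla Z^{\alpha}\pi\cdot Z^{\alpha}R$ is kept as is on the right-hand side, to be handled in the dedicated Subsection \ref{sub-pressure}.

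Next I would treat the viscous term. Writing $-\e\int_{\cO}\Delta Z^{\alpha}R\cdot Z^{\alpha}R = 2\e\|D(Z^{\alpha}R)\|^2 + 2\e\int_{\partial\cO}(D(Z^{\alpha}R)\mathbf n)_{\tan}\cdot Z^{\alpha}R$, I use Korn's inequality to get the coercive term $C_1\e\|\nabla R\|_m^2$ (up to lower-order $\e\|\nabla R\|_{m-1}^2$ coming from commuting $Z^\alpha$ through $D$), and I control the boundary term by transforming it into a bulk integral as in the proof of Proposition \ref{S5prop1}: using $\CN(R)=-G$, $R\cdot\mathbf n=0$ and Lemma \ref{REDF} together with \eqref{HG}, plus a trace/interpolation argument $\|Z^{\alpha}R\|_{L^2(\partial\cO)}\lesssim\|R\|_m^{1/2}\|R\|_{m,1}^{1/2}$ absorbed by Young into $\lambda\e\|\nabla R\|_m^2 + C_\lambda(\|R\|_m^2+\t^{-2\gamma})$. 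The transport term splits via the Leray projector exactly as in \eqref{LAST}: the divergence-free piece $\int(u^{\e}\cdot\nabla Z^{\alpha}\mathbb{P}R)\cdot Z^{\alpha}R$ integrates by parts using $\dive u^{\e}=\sigma^0$ supported on $[0,T]$ to yield $\lesssim\t^{-\gamma}\|R\|_m^2$, and the commutator with the transport operator contributes $\|u^{\e}\|_{1,\infty}\|\nabla R\|_{m-1}\|R\|_m$-type terms; here one uses \eqref{DEF-R}, $u^{\e}=u^{\e}_a+\e^2R$, so that $\|u^{\e}\|_{1,\infty}\lesssim\t^{-\gamma}+\e^2\|R\|_{1,\infty}$ by \eqref{Ra}, producing the $\e^2\|R\|_{1,\infty}\|\nabla R\|_{m-1}\|R\|_m$ term, and the top-order convection of $Z^{\alpha}R$ by $\e^2R$ gives $\e^2\|\nabla R\|_{L^\infty}\|R\|_m^2$.

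The main obstacle I expect is bookkeeping the commutators $[u^{\e}\cdot\nabla,Z^{\alpha}]R$ at top order: the genuinely dangerous contribution is the one where all $m$ conormal derivatives fall on $R$ and one (slow) derivative on $u^{\e}$, and one must check it does not cost a full normal derivative of $R$ in $L^2$ but only $\|\nabla R\|_{m-1}$ or $\|R\|_m$ with acceptable weights. This is where the choice of the vector fields $\mathfrak{W}$ matters, in particular the commutator identities \eqref{Z1}--\eqref{cm} showing that brackets of the $Z_j$ are again tangential, so that no normal derivative is created by commutation; the normal component of $u^{\e}$ near $\partial\cO$ is itself $O(\varphi)$, which tames the one potentially singular term. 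Collecting all estimates, choosing $\lambda$ small to absorb $\lambda\e\|\nabla R\|_m^2$ into the coercive term, and renaming constants yields \eqref{Re}.
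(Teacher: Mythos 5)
Your overall architecture (apply $Z^\alpha$, test with $Z^\alpha R$, treat the viscous term by Korn plus the boundary manipulation with $\CN(R)=-G$, Lemma \ref{REDF} and the trace inequality, bound $Z^\alpha F$ by \eqref{FH1} and $Z^\alpha(R\cdot\nabla u^\e_a)$ by \eqref{Ra}, keep the pressure integral) is the same as the paper's; the Leray-projection detour you invoke for the transport term is unnecessary here, since with the test function $Z^\alpha R$ one integrates by parts directly using $\dive u^\e=\sigma^0$ and $u^\e\cdot\mathbf n=0$, as in \eqref{ZR2}.

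However, there is a genuine gap in your treatment of the commutator $[u^\e\cdot\nabla,Z^\alpha]R$. You bound it by $\|u^\e\|_{1,\infty}\|\nabla R\|_{m-1}\|R\|_m$ with $\|u^\e\|_{1,\infty}\lesssim \t^{-\gamma}+\e^2\|R\|_{1,\infty}$, which leaves the contribution $\t^{-\gamma}\|\nabla R\|_{m-1}\|R\|_m$. This term is \emph{not} majorized by the right-hand side of \eqref{Re}: the only $\|\nabla R\|_{m-1}^2$ allowed there carries a factor $\e$, and Young's inequality would force $\t^{-\gamma}\|\nabla R\|_{m-1}\|R\|_m\le\lambda\e\|\nabla R\|_{m-1}^2+C_\lambda\e^{-1}\t^{-2\gamma}\|R\|_m^2$, whose $\e^{-1}$ destroys the later bootstrap (in Proposition \ref{S5prop12} one only has $\e\int_0^t\|\nabla R\|_{m-1}^2\lesssim\e^{-1/2}$ and $\sup\|R\|_m\lesssim\e^{-1/4}$, so an $\e$-free product of these is of size $\e^{-1}$, exceeding the target $\e^{-1/2}$). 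Your closing remark that "the normal component of $u^\e$ is $O(\varphi)$" addresses only why no extra normal derivative of $R$ is created, not this loss. The paper's resolution is a finer splitting $u^\e=u^0+(u^\e_a-u^0)+\e^2R$: for $u^0$ one uses \eqref{u0Z}, i.e. $u^0\cdot\nabla=\sum_j c_jZ_j+u^0_\flat Z_0$ is itself a combination of the conormal fields, so $[u^0\cdot\nabla,Z^\alpha]$ is a purely tangential operator of order $m$ and costs only $\chi_{[0,T]}(t)\|R\|_m^2$ (see \eqref{psg1}), with no $\|\nabla R\|_{m-1}$ at all; for the difference one uses the crucial $\sqrt\e$-smallness in conormal $L^\infty$ from \eqref{ua0}, $\|u^\e_a-u^0\|_{m,\infty}\lesssim\sqrt\e\,\t^{-\gamma}$, so that $\sqrt\e\,\t^{-\gamma}\|\nabla R\|_{m-1}\|R\|_m\le\e\|\nabla R\|_{m-1}^2+\t^{-2\gamma}\|R\|_m^2$ (see \eqref{psg2}); the $\e^2R$ piece then gives exactly the terms you list. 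Without this splitting (or an equivalent argument expressing the whole transport operator in the conormal frame with coefficients controlled in $W^{m,\infty}_{\rm co}$, which your bound via $\|u^\e\|_{1,\infty}$ does not provide), the inequality \eqref{Re} as stated cannot be reached.
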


\begin{proof}
Let $1\leq \ell\leq p-3$ be an integer and $\alpha$ be a multi-index  with $|\alpha|=\ell$.
By applying $Z^{\alpha}$ to \eqref{R1}, we obtain
\beno
\begin{split}
\partial_tZ^{\alpha}R-\e\Delta Z^{\alpha}R+&u^{\e}\cdot\nabla Z^{\alpha}R+Z^{\alpha}(R\cdot\nabla u^{\e}_a)+Z^{\alpha}\nabla \pi\\
&\qquad=Z^{\alpha}F-\e[\Delta,Z^{\alpha}]R+[u^{\e}\cdot\nabla,Z^{\alpha}]R,
\end{split}\eeno
Taking $L^2$ inner product of the above equation with $ Z^{\alpha}R$ gives rise to
\beq \label{ZR}
\begin{split}
&\frac{1}{2}\f{d}{dt}\|Z^{\alpha}R(t)\|^2
-
\e \int_{\cO}\Delta Z^{\alpha}R\cdot Z^{\alpha}R
+\int_{\cO}(u^{\e}\cdot\nabla  Z^{\alpha}R)\cdot Z^{\alpha}R+\int_{\cO}Z^{\alpha}\nabla\pi\cdot  Z^{\alpha}R\\
&=
- \int_{\cO} Z^{\alpha}(R\cdot\nabla u^{\e}_a)  \cdot  Z^{\alpha}R
+ \int_{\cO} Z^{\alpha}F  \cdot  Z^{\alpha}R
 - \e \int_{\cO}  [\Delta,Z^{\alpha}]R  \cdot  Z^{\alpha}R
\\
&\quad+\int_{\cO}  [u^{\e}\cdot\nabla,Z^{\alpha}]R \cdot  Z^{\alpha}R.
\end{split}\eeq

In what follows, we shall handle term by term above in \eqref{ZR}.

\begin{itemize}

\item We start with estimating the second term in \eqref{ZR}, which relies on the following lemma:
\begin{lemma}\label{J1} Let $1\leq |\alpha|\leq m$. There exist constants $C_1,C>0$ such that
\begin{equation}
\label{ZR1}
-\int_{\cO}\Delta Z^{\alpha}R\cdot Z^{\alpha}R\geq C_1\|\nabla Z^\alpha  R\|^2-C\|R\|_m^2-C\t^{-2\gamma}.
\end{equation}
\end{lemma}

We postpone its proof to the end of this subsection.

\item For the third term of (\ref{ZR}),  since $\dive u^{\e}=\sigma^0 $ in $\cO,$  $u^\e \cdot \mathbf{n} = 0$ on $\partial \cO$, and
$\sigma^0 $ is supported on $[ 0,T],$
 we get, by  using integration by parts, that
\begin{equation}\label{ZR2}
\vert \int_{\cO}(u^{\e}\cdot\nabla  Z^{\alpha}R)\cdot Z^{\alpha}R \vert  \lesssim \t^{-\gamma}\|R\|_m^2.
\end{equation}
 .

\item
By using the Leibniz formula and   \eqref{Ra}, we find
\beq \label{ZR3}
  \begin{split}
\vert \int_{\cO} Z^{\alpha}(R\cdot \nabla u^{\e}_a)\cdot Z^{\alpha}R \vert \lesssim& \sum_{\alpha_1+\alpha_2=\alpha}|\int_{\cO}Z^{\alpha_1}R\cdot Z^{\alpha_2}\nabla u^{\e}_a\cdot Z^{\alpha}R|\\
\lesssim& \t^{-\gamma}\|R\|^2_m .
\end{split} \eeq

 \item (\ref{FH1}) ensures that
\begin{equation}\label{ZR4}
\vert\int_{\cO}Z^{\alpha}F\cdot Z^{\alpha}R \vert
\lesssim \e^{\frac{1}{4}}\t^{-\gamma}\bigl(\|R\|_m^2+1\bigr).
\end{equation}

\item Thanks to \eqref{Z3} and $Z^\alpha(R\cdot \bn)=0$ on $\p\cO,$ we get, by using integration by parts, that
\beno
\begin{split}
\vert\int_{\cO}\e[\Delta,Z^{\alpha}]R\cdot Z^{\alpha}R\vert\lesssim&|\int_{\cO}\sum_{|\beta|,|\gamma|\leq m-1}\e(c_{\beta}\nabla^2Z^{\beta}R+c_{\gamma}\nabla Z^{\gamma}R)\cdot Z^{\alpha}R|\\
\lesssim&\e(\|\nabla R\|_m+\|R\|_m )\|\nabla R\|_{m-1}+\e\|\nabla R\|_{H^{m-1}(\partial\cO)}\|R\|_{H^{m}(\partial\cO)}.
\end{split} \eeno
Whereas due to trace Theorem (see (87) of \cite{masmoudi} for instance) that
\beq\label{trace}
\|R\|_{H^m(\partial\cO)}^2
\lesssim \|R\|_m^2+\|R\|_m\|\nabla R\|_m,
\eeq
and Lemma \ref{REDF}, for any $\la>0,$ we infer
\beq \label{NZ}
\begin{split}
\|\nabla R\|_{H^{m-1}(\partial\cO)}\|R\|_{H^{m}(\partial\cO)} \lesssim&\|R\|_{H^m(\partial\cO)}^2+\t^{-2\gamma}\\
\lesssim&\|R\|_m^2+\|R\|_m\|\nabla R\|_m+\t^{-2\gamma}\\
\lesssim&\lambda\|\nabla R\|_m^2+C_{\lambda}\|R\|^2_m+\t^{-2\gamma}.
\end{split} \eeq

As a result, it comes out
\begin{eqnarray}\label{ZR5}
\hspace{1cm}
\e\vert\int_{\cO}[\Delta,Z^{\alpha}]R\cdot Z^{\alpha}R\vert \leq \lambda\e\|\nabla R\|_m^2+C_{\lambda}\e(\|\nabla R\|_{m-1}^2+\|R\|_m^2+\t^{-2\gamma}),
\end{eqnarray}

\item
For the last term, we use the decomposition \eqref{DEF-R} to get
\beno
\begin{split}
\int_{\cO}[u^{\e}\cdot\nabla,Z^{\alpha}]R\cdot  Z^{\alpha}R =&\int_{\cO}[u^0\cdot\nabla,Z^{\alpha}]R\cdot Z^{\alpha}R\\
&+\int_{\cO} [(u^{\e}_a-u^0)\cdot\nabla,Z^{\alpha}]R\cdot  Z^{\alpha}R+ \e^2 \int_{\cO}[R\cdot\nabla, Z^{\alpha}]R\cdot Z^{\alpha} R.
\end{split} \eeno
We write
\begin{equation}
\label{u0Z}u^0\cdot \nabla=\sum_j c_jZ_j+(u^0\cdot \mathbf{n})\partial_\mathbf{n}=\sum_j c_jZ_j+u^0_{\flat}Z_0,
\end{equation}
for some smooth functions $c_j.$

Thanks to \eqref{cm}, we can easily show by induction that $[Z_j,Z^{\alpha}],0\leq j\leq 5,$ is a tangential derivative of order $m$. Note that $u^0$ is supported in $[0,T]$, we have
\begin{equation} \label{psg1}
\vert \int_{\cO}[u^0\cdot\nabla,Z^{\alpha}]R\cdot Z^{\alpha}R \vert \lesssim \chi_{[0,T]}(t)\|R\|_{m}^2.
\end{equation}
While applying  the Leibniz formula yields
\begin{eqnarray*}
[(u^{\e}_a-u^0)\cdot\nabla,Z^{\alpha}]R&=&\sum_{\alpha_1+\alpha_2=\alpha,\alpha_1\neq0}c_{\alpha_1} Z^{\alpha_1}(u^{\e}_a-u^0)Z^{\alpha_2}\nabla R+(u^{\e}_a-u^0)[\nabla,Z^{\alpha}]R,
\end{eqnarray*}
for some smooth functions $c_{\alpha_1}$ depended on the vector field $\mathfrak{W}.$

It follows from \eqref{ua0} that
\begin{equation*}
\|[(u^{\e}_a-u^0)\cdot\nabla,Z^{\alpha}]R\|\lesssim\sqrt{\e}\t^{-\gamma}\|\nabla R\|_{m-1},
\end{equation*}
which implies
\begin{equation} \label{psg2}
\vert \int_{\cO} [(u^{\e}_a-u^0)\cdot\nabla,Z^{\alpha}]R\cdot  Z^{\alpha}R \vert \lesssim\sqrt{\e}\t^{-\gamma}\|\nabla R\|_{m-1}\|R\|_{m}\lesssim \e\|\nabla R\|_{m-1}^2+\t^{-2\gamma}\|R\|_m^2.
\end{equation}
Applying  the Leibniz formula once again gives
\begin{equation*}
[R\cdot\nabla, Z^{\alpha}]R=\sum_{\alpha_1+\alpha_2=\alpha,\alpha_1\neq 0}c_{\alpha_1}Z^{\alpha_1}R\cdot Z^{\alpha_2}\nabla R+R[\nabla,Z^{\alpha}]R.
\end{equation*}
Yet it follows from  generalized Sobolev-Gagliardo-Nirenberg-Morse inequality that
\begin{equation*}
\|[R\cdot\nabla, Z^{\alpha}]R\|
\lesssim\|R\|_{1,\infty}\|\nabla R\|_{m-1}+\|\nabla R\|_{L^{\infty}}\|R\|_m,
\end{equation*}
so that we infer
\begin{equation}\label{psg3}
\vert \int_{\cO}[ R\cdot\nabla,Z^{\alpha}]R\cdot  Z^{\alpha}R \vert \lesssim \|R\|_{1,\infty}\|\nabla R\|_{m-1}\|R\|_{m}+\|\nabla R\|_{L^{\infty}}\|R\|^2_m.
\end{equation}
Combining \eqref{psg1}, \eqref{psg2} with \eqref{psg3},
we arrive at
\beq  \label{ZR6}
\begin{split}
\vert  \int_{\cO}[u^{\e}\cdot\nabla,Z^{\alpha}]R\cdot  Z^{\alpha}R \vert  \lesssim  & \e\|\nabla R\|_{m-1}^2+\t^{-2\gamma}\|R\|_m^2
\\  &+ \e^2(\|R\|_{1,\infty}\|\nabla R\|_{m-1}\|R\|_{m}+\|\nabla R\|_{L^{\infty}}\|R\|^2_m ).
\end{split} \eeq

\end{itemize}

By  inserting the estimates, \eqref{ZR1}, \eqref{ZR2}, \eqref{ZR3}, \eqref{ZR4},  \eqref{ZR5}
and  \eqref{ZR6},
 into (\ref{ZR}),  and  then by summing  up \eqref{el2} with the resulting inequality over all the multi-indices
$\alpha$    with $1\leq |\alpha|\leq m$, finally choosing $\lambda$ to be sufficiently small, we arrive to  (\ref{Re}).
\end{proof}

Let us now present the proof of  Lemma \ref{J1}.

\begin{proof}[Proof of  Lemma \ref{J1} ]
 We first get, by using integration by parts, that
\begin{equation}\label{D1}
-\int_{\cO}\Delta Z^{\alpha}R\cdot Z^{\alpha}R=2\int_{\cO}|D(Z^{\alpha}R)|^2+2\int_{\partial\cO}(D(Z^{\alpha}R)\cdot \mathbf{n})_{\tan}\cdot Z^{\alpha}R.
\end{equation}
It follows from  Korn's inequality that
\begin{equation}\label{D2}
\|D(Z^{\alpha}R)\|^2\geq C_1\|\nabla Z^{\alpha}R\|^2-C_2\|Z^{\alpha}R\|^2.
\end{equation}
As $\CN(R)=G$ on $\partial\cO$, we have
\begin{equation*}
(D(Z^{\alpha}R)\cdot \mathbf{n})_{\tan}=-(MZ^{\alpha}R)_{\tan}+Z^{\alpha}G+[\CN,Z^{\alpha}]R,
\end{equation*}
so that there holds
\beq \label{D3}
  \begin{split}
\int_{\partial\cO}(D(Z^{\alpha}R)\cdot \mathbf{n})_{\tan}\cdot Z^{\alpha}R =&\int_{\partial\cO}(Z^{\alpha}G-(MZ^{\alpha}R)_{\tan}) \cdot Z^{\alpha} R\\
& +\int_{\partial\cO} [\CN,Z^{\alpha}]R \cdot Z^{\alpha}R.
\end{split} \eeq
We are going to estimate each term of the right hand side of \eqref{D3}.

On the one hand, by virtue of (\ref{HG}) and   for any  $\lambda>0,$  we get, by
applying Young's inequality, that
\begin{eqnarray*}\label{D4}
\hspace{1cm}\bigl|\int_{\partial\cO}(Z^{\alpha}G-(MZ^{\alpha}R)_{\tan})\cdot Z^{\alpha}R\bigr|&=
&\bigl|\int_{\cO}\dive\bigl(n(Z^{\alpha}G-(MZ^{\alpha}R)_{\tan})\cdot Z^{\alpha}R\bigr)\bigr|\\
\nonumber&\lesssim &\|Z^{\alpha}G\|_{H^1}\|Z^{\alpha}R\|+(\|Z^{\alpha}G\|+\|Z^{\alpha}R\|)\|Z^{\alpha}R\|_{H^1}\\
\nonumber&\lesssim &\lambda \|\nabla Z^{\alpha}R\|^2+C_{\lambda}(\|Z^{\alpha}G\|_{H^1}^2+\|Z^{\alpha}R\|^2)\\
\nonumber&\lesssim &\lambda \|\nabla Z^{\alpha}R\|^2+C_{\lambda}(\t^{-2\gamma}+\|Z^{\alpha}R\|^2).
\end{eqnarray*}

On other hand, we deduce from \eqref{NZ} that
\begin{eqnarray*}
\bigl|\int_{\partial\cO}[N,Z^{\alpha}]R\cdot Z^{\alpha}R\bigr|&\lesssim&\|\nabla R\|_{H^{m-1}(\partial\cO)}\|R\|_{H^{m}(\partial\cO)}\\
\nonumber&\leq &\lambda\|\nabla R\|_m^2+C_{\lambda}\bigl(\|R\|^2_m+\t^{-2\gamma}\bigr).
\end{eqnarray*}

By substituting the above inequalities into \eqref{D3}, we achieve
\beq \label{Dm1}
\bigl|\int_{\partial\cO}(D(Z^{\alpha}R)\cdot \mathbf{n})_{\tan}\cdot Z^{\alpha}R\bigr|\leq 2\lambda\|\nabla R\|_m^2+2C_{\lambda}\bigl(\|R\|^2_m+\t^{-2\gamma}\bigr).
\eeq

By inserting (\ref{D2}) and (\ref{Dm1}) into \eqref{D1} and choosing  $\lambda$ to be sufficiently small, we arrive at
\eqref{ZR1}.
\end{proof}

\subsection{An appropriate substitute to the vorticity}
\label{sisi-eta}

We observe that the right hand side of (\ref{Re}) involves
 $\|\nabla R\|_{m-1}$ and $ \|\sqrt{\e}\nabla R\|_{\infty}$, so that we need to estimate at least one normal derivative of $R$.
 We define
\begin{equation}
\label{defeta}
\eta :=\sqrt{\e}(\CN(R)+G)\chi(x),
\end{equation}
where $\chi$ is a cut-off function defined in Section 2. From the definition of $\eta$, we know that $\eta=0$ on the boundary $\partial \cO$.
Observe that this property is not satisfied by the vorticity $\curl R$; this is indeed the reason why we would  rather use $\eta$ following \cite{masmoudi} than $\curl R$.

\begin{lemma}\label{EQUIV}
{\sl Let $m\geq1.$
The following equivalences hold true:
\begin{align}\label{eta}
\|\eta\|_{m-1}+\|R\|_{m}+\sqrt{\e}\t^{-\gamma}&\approx \|\sqrt{\e}\nabla R\|_{m-1}+\|R\|_{m}+\sqrt{\e}\t^{-\gamma},\\
\label{etaL}\|\eta\|_{L^{\infty}}+\|R\|_{1,\infty}+\sqrt{\e}\t^{-\gamma}&\approx \|\sqrt{\e}\nabla R\|_{\infty}+\|R\|_{1,\infty}+\sqrt{\e}\t^{-\gamma}.
\end{align}}
\end{lemma}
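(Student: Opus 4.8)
The plan is to prove Lemma~\ref{EQUIV} by showing that controlling one tangential-conormal normal derivative of $R$ is equivalent, modulo fully tangential quantities and the harmless error $\sqrt{\e}\t^{-\gamma}$, to controlling the modified vorticity $\eta$. The starting point is the algebraic identity relating $\partial_\mathbf{n} R$ to $\CN(R)$, $\dive R$ and tangential derivatives of $R$. Indeed, from the definitions \eqref{def-dep}, writing $\partial_\mathbf{n} R = (\partial_\mathbf{n} R)_{\tan} + (\partial_\mathbf{n} R\cdot\mathbf{n})\mathbf{n}$, one has on the one hand the tangential part
\[
(\partial_\mathbf{n} R)_{\tan} = 2\CN(R) - (\nabla\mathbf{n}\cdot R)_{\tan} - 2(MR)_{\tan} + (\text{tangential derivatives of }R),
\]
using that $D(R)\mathbf{n} = \tfrac12\partial_\mathbf{n} R + \tfrac12(\nabla R)^\top\mathbf{n}$ and that $(\nabla R)^\top\mathbf{n}$ only sees derivatives of $R$ along directions orthogonal to $\mathbf{n}$ plus lower-order terms in $R$ itself; and on the other hand, from $\dive R = -H$ together with the identity $\dive R = \partial_\mathbf{n} R\cdot\mathbf{n} + \sum_j c_j Z_j R$ already used in \eqref{div}, the normal part
\[
\partial_\mathbf{n} R\cdot\mathbf{n} = -H - \sum_j c_j Z_j R.
\]
Combining, we get $\sqrt{\e}\,\partial_\mathbf{n} R = \eta\,\chi^{-1}_{\text{loc}} + \sqrt{\e}(\text{tangential derivatives of }R) + \sqrt{\e}(\text{zeroth order in }R) - \sqrt{\e}H\mathbf{n}$, on the region where $\chi=1$, i.e.\ near $\partial\cO$; away from the boundary $R$ and all its derivatives are controlled by fully tangential conormal norms anyway since the vector fields $Z_j$ span all directions in the interior.

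The next step is to run this estimate at the conormal level. Applying $Z^\alpha$ with $|\alpha|\le m-1$ and commuting (here is where the commutator structure \eqref{cm} and \eqref{Z1} enters, so that $[Z^\alpha,\partial_\mathbf{n}]$ and $[Z^\alpha,\chi]$ produce only tangential derivatives of order $\le m-1$ of $R$), one obtains pointwise in $L^2$, for every such $\alpha$,
\[
\|\sqrt{\e}\,Z^\alpha\partial_\mathbf{n} R\| \lesssim \|\eta\|_{m-1} + \sqrt{\e}\|R\|_m + \sqrt{\e}\,\|H\|_{H^{m-1}(\partial\cO)} + (\text{interior tangential terms}),
\]
and the term $\sqrt{\e}\|H\|_{H^{m-1}(\partial\cO)}$ is absorbed by $\sqrt{\e}\t^{-\gamma}$ thanks to \eqref{HG}; since $\|\sqrt{\e}\nabla R\|_{m-1}$ is equivalent to $\|\sqrt{\e}\partial_\mathbf{n} R\|_{m-1}$ plus $\|\sqrt{\e}\,(\text{tangential part of }\nabla R)\|_{m-1}$, and the latter is $\lesssim \sqrt{\e}\|R\|_m$, this gives the ``$\lesssim$'' direction of \eqref{eta}. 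The reverse direction is the same identity read the other way: $\eta = \sqrt{\e}(\CN(R)+G)\chi$ and $\CN(R)$ is a combination of $\partial_\mathbf{n} R$, tangential derivatives of $R$, and $R$ itself, so $\|\eta\|_{m-1}\lesssim \sqrt{\e}\|\nabla R\|_{m-1} + \sqrt{\e}\|R\|_m + \sqrt{\e}\|G\|_{H^{m-1}} \lesssim \sqrt{\e}\|\nabla R\|_{m-1} + \sqrt{\e}\|R\|_m + \sqrt{\e}\t^{-\gamma}$, again using \eqref{HG}. Both inequalities together, with $\|R\|_m + \sqrt{\e}\t^{-\gamma}$ carried along on both sides, give \eqref{eta}.

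For \eqref{etaL} the argument is verbatim the same but with $L^\infty(\cO)$ in place of the conormal $L^2$ norms and with $|\alpha|\le 1$ only, so that the one tangential derivative commutators are again controlled by $\|R\|_{1,\infty}$, and the boundary terms $\|G\|_{L^\infty(\partial\cO)}$, $\|H\|_{L^\infty(\partial\cO)}$ are bounded by $\t^{-\gamma}$ via the $L^\infty$-type consequence of \eqref{HG} (or directly \eqref{FH2} for $H$), whence they are absorbed into $\sqrt{\e}\t^{-\gamma}$ after multiplication by $\sqrt{\e}$. I expect no genuine obstacle here; the only point requiring a little care is bookkeeping the cut-off $\chi$ and the localization near $\partial\cO$ versus the interior — one must check that on $\{\chi\equiv 1\}$ the identity above holds with the honest $\eta$, while on $\{\chi\not\equiv 1\}$, i.e.\ strictly inside $\cO$, the vector fields $Z_0,\dots,Z_5$ already span $\mathbb{R}^3$ (since $w^0=\varphi\mathbf{n}$ is transverse there and $w^1,\dots,w^3$ span the tangent directions), so $\|\nabla R\|$ on that region is dominated by $\|R\|_m$, and a partition-of-unity splitting closes the estimate. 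This is where I would be most careful but it is routine rather than hard.
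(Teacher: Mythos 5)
Your argument is correct and follows essentially the same route as the paper: one direction directly from the definition of $\eta$ and the bound \eqref{HG} on $G$, and the converse by splitting $\partial_\mathbf{n}R$ into its tangential part near the boundary (recovered from $\CN(R)$, hence from $\eta$, modulo tangential derivatives and zero-order terms), its tangential part away from the boundary (where the conormal fields control $\nabla R$), and its normal component via $\dive R=-H$ as in \eqref{div}. The only small bookkeeping correction is that the $H$-contribution to $\partial_\mathbf{n}R\cdot\mathbf{n}$ should be estimated in the interior conormal norm through \eqref{FH1} (or \eqref{FH2} for the $L^\infty$ version), not through the boundary norm $\|H\|_{H^{m-1}(\partial\cO)}$; the resulting bound is still absorbed into $\sqrt{\e}\t^{-\gamma}$, so the conclusion is unaffected.
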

\begin{proof}
 Let us focus on the proof of \eqref{eta}.
We first deduce from the definitions  \eqref{def-dep} and \eqref{defeta}, and the estimate of $G$ in \eqref{HG} that
\beno
\|\eta\|_{m-1} \lesssim \|\sqrt{\e} \nabla R\|_{m-1} + \|R\|_m +\sqrt{\e} \t^{-\gamma},
\eeno
which implies
\beq \label{violon1}
\|\eta\|_{m-1}+\|R\|_{m}+\sqrt{\e}\t^{-\gamma}\lesssim \|\sqrt{\e}\nabla R\|_{m-1}+\|R\|_{m}+\sqrt{\e}\t^{-\gamma}.
\eeq

To prove the other side of the inequality \eqref{violon1}, we introduce
\beq \label{DefP}
\Pi f :=f_{\mbox{tan}}.
\eeq
Then notice that
$$ D(R)n=\f12\left(\p_\bn R+\na R\cdot n\right)_{\mbox{tan}} \andf \left(\na R_jn_j\right)_{\mbox{tan}}=(\na R_j)_{\mbox{tan}} n_j,
$$
we have
\begin{align*}
\sqrt{\e}\|\chi\Pi\partial_\mathbf{n}R\|_{m-1} &\lesssim \|\eta\|_{m-1}+\|R\|_m +\sqrt{\e}\t^{-\gamma} .
\end{align*}

While by definitions of $\chi$ and of the norm $\|\cdot\|_{m}$, one has
\begin{equation*} \label{violon3}
\|(1-\chi)\Pi\partial_\mathbf{n} R\|_{m-1} \lesssim \|R\|_m.
\end{equation*}
And it follows from (\ref{div}) and (\ref{FH1}) that
\begin{equation*}  \label{violon4}
\|\partial_\mathbf{n} R\cdot \mathbf{n}\|_{m-1}\lesssim
\|R\|_m+\e^{\frac{1}{4}}\t^{-\gamma}.
\end{equation*}
As a consequence, we obtain
\beq \label{violon2}
\begin{split}
\|\p_\bn R\|_{m-1}\lesssim & \|\chi\Pi\partial_\mathbf{n}R\|_{m-1}+\|(1-\chi)\Pi\partial_\mathbf{n} R\|_{m-1}
+\|\partial_\mathbf{n} R\cdot \mathbf{n}\|_{m-1}\\
\lesssim &\|\eta\|_{m-1}+\|R\|_m +\sqrt{\e}\t^{-\gamma}.
\end{split}
\eeq
\eqref{violon2} shows that the other side of the inequality \eqref{violon1} holds,
which leads to \eqref{eta}.
The equivalence \eqref{etaL} can be proved  along the same line.
 \end{proof}

By virtue of (\ref{eta}) and (\ref{etaL}), we can rewrite (\ref{Re}) as
\beq \label{Re2}
\begin{split}
\f{d}{dt}\|R(t)\|_m^2+C_1\e\|\nabla R\|_m^2 \lesssim&\e\|\nabla R\|_{m-1}^2+\sum_{|\alpha|\leq m}|\int_{\cO}Z^{\alpha}\nabla \pi\cdot Z^{\alpha}R|+\e^{\frac{1}{4}}\t^{-\gamma}\\
&+(\|\eta\|_{m-1}^2+\|R\|_m^2)\bigl(\e+\t^{-\gamma}+\e^2(\|\eta\|_{L^{\infty}}^2+\|R\|_{1,\infty}^2)\bigr).
\end{split} \eeq

\subsection{Time evolution of the auxiliary quantity}
\label{time-eta}

Let us now estimate the time evolution of $\|\eta(t)\|_{m-1}$, which appears in the right hand side of \eqref{Re2}.
\begin{proposition}
\label{prop-etae}
{\sl Let $1\leq m\leq p-3$. Then there exist a constant $C_1>0$ such that for any $t\in [0,T^\e ]$,
\beq\label{etae}
  \begin{split}
  \f{d}{dt}\|\eta&(t)\|_{m-1}^2+C_1\e\|\nabla \eta\|_{m-1}^2\\
 \lesssim &
\e\|\nabla \eta\|_{m-2}^2+\e^{\frac{1}{4}}\t^{-\gamma}+
\sum_{|\beta|\leq m-1}\sqrt{\e}\bigl|\int_{\cO}Z^{\beta}\left(\chi \CN(\nabla\pi)\right)\cdot Z^{\beta}\eta\bigr|\\
&+\bigl(\e+\t^{-\gamma}+\e^2(\|\eta\|_{L^{\infty}}^2+\|R\|_{1,\infty}^2)\bigr)\left(\|\eta\|_{m-1}^2+\|R\|_m^2\right).
\end{split} \eeq
Here the term $\e\|\nabla \eta\|_{m-2}^2$ does not appear on the right-hand side of \eqref{etae} when $m=1.$
}
\end{proposition}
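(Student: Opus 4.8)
The plan is to derive an evolution equation for $\eta$ by applying the operator $v \mapsto \sqrt{\e}\,(\CN(v)+G)\chi$ (or rather its linearization near $R$) to the equation \eqref{R1} for $R$, and then to run a conormal energy estimate entirely parallel to the proof of Proposition \ref{S5prop3}. First I would compute $\partial_t \eta - \e\Delta\eta + u^\e\cdot\nabla\eta$ in terms of $R$ and lower-order data. Since $\eta = \sqrt{\e}(D(R)\bn + MR)_{\tan}\chi + \sqrt{\e}G\chi$, differentiating in time and using \eqref{R1} produces, schematically, a transport-diffusion equation
\begin{equation*}
\partial_t\eta - \e\Delta\eta + u^\e\cdot\nabla\eta = \sqrt{\e}\,\chi\,\CN(-\nabla\pi) + \sqrt{\e}\,\chi\,\CN(-F) + \sqrt{\e}\,\chi\,\partial_t G + (\text{commutator terms}),
\end{equation*}
where the commutator terms collect: the commutators of $\CN(\cdot)\chi$ with $\partial_t - \e\Delta + u^\e\cdot\nabla$ (these involve at most one normal derivative of $R$, hence are controlled by $\|\nabla R\|_{m-1}$ and by $\eta$ itself after using Lemma \ref{EQUIV}), the terms coming from $R\cdot\nabla u^\e_a$ after applying $\CN(\cdot)\chi$ (controlled using \eqref{Ra}), and the contribution of $-F$ (controlled using \eqref{FH1}). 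The key structural point, as in \cite{masmoudi}, is that $\eta$ vanishes on $\partial\cO$, so the boundary terms arising in the conormal integrations by parts below will be harmless.

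Next I would apply $Z^\beta$ for $|\beta| = \ell \leq m-1$ to this equation, take the $L^2$ inner product with $Z^\beta\eta$, and estimate term by term exactly as in \eqref{ZR}. The Laplacian term gives, after integration by parts, $2\e\|D(Z^\beta\eta)\|^2$ plus a boundary term which now \emph{vanishes} because $\eta|_{\partial\cO}=0$ (this is the advantage over the vorticity and the reason the right-hand side of \eqref{etae} has no boundary contribution analogous to \eqref{Dm1}); Korn's inequality then yields the good term $C_1\e\|\nabla Z^\beta\eta\|^2$ up to $C\e\|\eta\|_{m-1}^2$. The transport term $\int (u^\e\cdot\nabla Z^\beta\eta)\cdot Z^\beta\eta$ is handled by integration by parts using $\dive u^\e = \sigma^0$ supported in $[0,T]$ and $u^\e\cdot\bn = 0$, giving $\lesssim \t^{-\gamma}\|\eta\|_{m-1}^2$. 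The commutator $\e[\Delta,Z^\beta]\eta$ is treated via \eqref{Z3}, producing $\e\|\nabla\eta\|_{m-2}^2$ (absent when $m=1$) and $\e\|\eta\|_{m-1}^2$ after Young; here, crucially, since $\eta$ vanishes on the boundary there is no trace term to absorb, unlike in \eqref{ZR5}. The commutator $[u^\e\cdot\nabla, Z^\beta]\eta$ is split as in the proof of Proposition \ref{S5prop3} into $u^0$, $u^\e_a - u^0$, and $\e^2 R$ pieces: the first is $\lesssim \chi_{[0,T]}\|\eta\|_{m-1}^2$, the second is $\lesssim \sqrt{\e}\t^{-\gamma}\|\nabla\eta\|_{m-2}\|\eta\|_{m-1}$, and the third, using the Sobolev–Gagliardo–Nirenberg–Morse inequality and then \eqref{etaL}, contributes $\e^2(\|\eta\|_{L^\infty}^2 + \|R\|_{1,\infty}^2)(\|\eta\|_{m-1}^2 + \|R\|_m^2)$. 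The source terms $\sqrt{\e}\,\chi\,\CN(-F)$, $\sqrt{\e}\,\chi\,\partial_t G$, and the lower-order commutator contributions from $\CN(\cdot)$ applied to \eqref{R1} all reduce, via \eqref{FH1}, \eqref{HG}, \eqref{Ra} and Lemma \ref{EQUIV}, to $\e^{1/4}\t^{-\gamma}$ and $(\e + \t^{-\gamma})(\|\eta\|_{m-1}^2 + \|R\|_m^2)$, matching the claimed right-hand side; the only term left explicit is the pressure term $\sqrt{\e}\int Z^\beta(\chi\CN(\nabla\pi))\cdot Z^\beta\eta$, to be dealt with in Subsection \ref{sub-pressure}.

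The main obstacle I anticipate is the bookkeeping of the commutator of $\CN(\cdot)\chi$ with the full operator $\partial_t - \e\Delta + u^\e\cdot\nabla$: one must verify carefully that every term produced either (i) contains at most one normal derivative of $R$ — so it can be re-expressed through $\eta$ and $\|R\|_m$ via Lemma \ref{EQUIV} with no net loss — or (ii) lands on the boundary but is killed by the cutoff $\chi$ away from where it matters, or (iii) is a genuine lower-order term estimated by the a priori bounds \eqref{Ra}, \eqref{ua0}. In particular the term $\e\Delta(\CN(R)\chi)$ generates, via $[\Delta, \CN(\cdot)\chi]$, expressions with two derivatives falling partly on $R$ and partly on geometric coefficients; these must be organized so that the genuinely second-order-in-$R$ part is exactly $\chi\CN(\e\Delta R) = \chi\CN(\partial_t R + u^\e\cdot\nabla R + R\cdot\nabla u^\e_a + \nabla\pi + F)$, closing the loop, while the remainder is first order. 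A secondary technical point is keeping track of the distinction between $m=1$ (no $\e\|\nabla\eta\|_{m-2}^2$ term, and $\|\eta\|_0 = \|\eta\|$) and $m \geq 2$; the estimate and its proof go through uniformly once this is noted. Once \eqref{etae} is established it combines with \eqref{Re2} and the pressure estimates of Subsection \ref{sub-pressure} to close the full a priori bound.
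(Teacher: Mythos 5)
Your plan is essentially the paper's proof: you derive the same transport--diffusion equation for $\eta$ from \eqref{R1}, apply $Z^{\beta}$, exploit $Z^{\beta}\eta=0$ on $\partial\cO$ together with Korn's inequality to produce the dissipation $C_1\e\|\nabla\eta\|_{m-1}^2$ with no boundary contribution, split the transport commutator into the $u^0$, $u^{\e}_a-u^0$ and $\e^2R$ pieces, keep the pressure term explicit, and estimate all source terms with \eqref{FH1}, \eqref{HG}, \eqref{Ra}, \eqref{ua0} and Lemma \ref{EQUIV}, exactly as in the paper.

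Two spots in your sketch are too optimistic as written, though both are repaired by the very device you already invoke (integration by parts using $Z^{\beta}\eta=0$ on $\partial\cO$, then Young's inequality and absorption by $\lambda\e\|\nabla\eta\|_{m-1}^2$). First, the commutator $[\e\Delta,\chi\CN]R$ is \emph{not} first order in $R$, contrary to your organization ``second-order part $=\chi\CN(\e\Delta R)$, remainder first order'': it contains genuinely second-order terms such as $\nabla\chi:\nabla\CN(R)$ and $\chi\Pi\bigl(\nabla D(R):\nabla \bn\bigr)$, which cannot be re-expressed through $\eta$ and $\|R\|_m$ alone; with the prefactor $\e^{3/2}$ one integrates these by parts against $Z^{\beta}\eta$ (no boundary term) to get $\e^{3/2}\|\nabla R\|_{m-1}\bigl(\|\nabla\eta\|_{m-1}+\|\eta\|_{m-1}\bigr)$, which is then absorbed; this is the paper's estimate \eqref{I62e}. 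Second, for the $\e^2[R\cdot\nabla,Z^{\beta}]\eta$ piece you propose a direct Sobolev--Gagliardo--Nirenberg--Morse bound, but applied directly it produces $\|\nabla\eta\|_{L^{\infty}}$, for which no estimate is available; one must first integrate by parts to move the gradient off the factors $Z^{\beta_2}\nabla\eta$ (again no boundary term since $Z^{\beta}\eta$ vanishes on $\partial\cO$) and only then apply the product inequality, as done for \eqref{I84}. With these two corrections your argument closes and coincides with the paper's proof.
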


\begin{proof}
In view of (\ref{R}), $\eta$ satisfies
\beno
\begin{split}
\partial_t \eta-\e\Delta\eta+u^{\e}\cdot\nabla\eta=&-\sqrt{\e}\chi\CN(F+\nabla\pi+R\cdot\nabla u^{\e}_a)+\sqrt{\e}\left(\partial_t-\e\Delta+u^{\e}\cdot\nabla\right)(\chi G)\hspace{0.4cm}\\
&-\e^{\f32}[\Delta,\chi \CN]R+\sqrt{\e}[u^0\cdot\nabla, \chi \CN]R+\sqrt{\e}[(u^{\e}-u^0)\cdot\nabla,\chi \CN]R.
\end{split} \eeno
Applying $Z^{\beta}$ with $|\beta|=m-1$ to the above equation yields
  \begin{eqnarray*}
\hspace{1cm}\partial_tZ^{\beta}\eta-\e\Delta Z^{\beta}\eta+u^{\e}\cdot\nabla Z^{\beta}\eta\!\!\!&=&\!\!\!-\sqrt{\e}Z^{\beta}\left(\chi\CN(F+\nabla\pi+R\cdot\nabla u^{\e}_a)\right)\\
\nonumber &&\!\!\!+\sqrt{\e}Z^{\beta}(\partial_t-\e\Delta+u^{\e}\cdot\nabla)(\chi G)-\e^{\f32}[\Delta,Z^{\beta}(\chi\CN)]R\\
\nonumber &&\!\!\!+\sqrt{\e}[u^0\cdot\nabla,Z^{\beta}(\chi \CN)]R+\sqrt{\e}[(u^{\e}-u^0)\cdot\nabla,Z^{\beta}(\chi \CN)]R.
\end{eqnarray*}
Note that $\eta=0$ on $\p\cO$ and $Z^{\beta}$ is tangential derivative, we have  $Z^{\beta}\eta=0$ on $\p\cO$.
Then we get, by taking $L^2$ inner product of the above equation with $Z^{\beta}\eta,$ that
\begin{equation}\label{etaa}
\frac{1}{2}\f{d}{dt}\|Z^{\beta}\eta(t)\|^2+2\e\|D(Z^{\beta} \eta)\|^2\lesssim \sum_{i=1}^8 |I_i|,
\end{equation}
where
\begin{eqnarray*}
&\displaystyle I_1:=\int_{\cO}u^{\e}\cdot\nabla Z^{\beta}\eta\cdot Z^{\beta}\eta, \quad
&I_2:=\sqrt{\e}\int_{\cO}Z^{\beta}(\chi \CN(F))\cdot Z^{\beta}\eta,\\
&\displaystyle I_3:=\sqrt{\e}\int_{\cO}Z^{\beta}(\chi \CN(\nabla\pi))\cdot Z^{\beta}\eta, \quad
&I_4:=\sqrt{\e}\int_{\cO}Z^{\beta}\left(\chi\CN(R\cdot\nabla u^{\e}_a)\right)\cdot Z^{\beta}\eta,\\
&\displaystyle I_5:=\sqrt{\e}\int_{\cO}(\partial_t-\e\Delta+u^{\e}\cdot\nabla)(Z^{\beta}(\chi G))\cdot Z^{\beta}\eta,\quad
&I_6:=\e^{\f32}\int_{\cO}[\Delta,Z^{\beta}(\chi \CN)]R\cdot Z^{\beta}\eta,\\
&\displaystyle I_7:=\sqrt{\e}\int_{\cO}[u^0\cdot\nabla,Z^{\beta}(\chi \CN)]R\cdot Z^{\beta}\eta,\quad
&I_8:=\sqrt{\e}\int_{\cO}[(u^{\e}-u^0)\cdot\nabla,Z^{\beta}(\chi \CN)]R\cdot Z^{\beta}\eta.\\
\end{eqnarray*}

First, regarding the second term in the left hand side of \eqref{etaa}, we observe from Korn's inequality  that
\begin{equation}
  \label{KORN}
\|D(Z^{\beta}\eta)\|\geq C_1\|\nabla Z^{\beta} \eta\|-C_2\|Z^{\beta}\eta\|.
\end{equation}

Let us now handle term by term in the right-hand side of \eqref{etaa}.

\smallskip

\no $\bullet$ \underline{Estimate of $I_1.$}\vspace{0.2cm}

Since $u^\e$ satisfies \eqref{NSA2}, \eqref{NSA3}, and $\sigma^{\e}=\sigma^{0}$ is supported in $[0,T]$, by using integration by parts, we find
\begin{equation}
\label{I1e}
\vert I_1 \vert \lesssim\t^{-\gamma}\|\eta\|^2_{m-1}.
\end{equation}

\no $\bullet$ \underline{Estimate of $I_2.$}\vspace{0.2cm}

 By virtue of (\ref{FH1}), we get, by applying the Cauchy-Schwarz inequality, that
\begin{equation}
\label{I2e} \vert I_2 \vert\lesssim \e^{\frac{1}{4}}\t^{-\gamma}\bigl(1+\|\eta\|_{m-1}^2\bigr).
\end{equation}

\smallskip

\no $\bullet$ \underline{Estimate of $I_3.$}\vspace{0.2cm}

 We simply estimate $I_3$ by the third term on the right-hand side of
\eqref{etae}. We remark that we do not try to get rid of the pressure at this step. Indeed this  delicate issue will be postponed to Subsection
\ref{sub-pressure}.

\smallskip

\no $\bullet$ \underline{Estimate of $I_4.$}\vspace{0.2cm}

 Recalling \eqref{DefP} and in view of \eqref{def-dep}, we write
\begin{equation*}
\sqrt{\e}\chi\CN(R\cdot\nabla u^{\e}_a)=\sqrt{\e}\chi\Pi\Bigl(\frac{1}{2}(\partial_\mathbf{n} (R\cdot\nabla u^{\e}_a)+\nabla (R\cdot\nabla u^{\e}_a)\cdot \mathbf{n})+M(R\cdot\nabla u_{a}^{\e})\Bigr).
\end{equation*}
Since $M$ is a smooth matrix-valued function and $m\leq p-3$, we get,
by applying  Leibniz formula and \eqref{Ra}, that
\begin{equation*}
\|\sqrt{\e}\chi\CN(R\cdot\nabla u^{\e}_a)\|_{m-1}\lesssim \sqrt{\e}\t^{-\gamma}\|\nabla R\|_{m-1}+\t^{-\gamma}\|R\|_{m-1},
\end{equation*}
which together with (\ref{eta}) ensures that
\begin{equation}
\label{I4e}\vert I_4 \vert\lesssim \t^{-\gamma}\bigl(\|R\|_{m}^2+\|\eta\|_{m-1}^2\bigr)+\sqrt{\e}\t^{-3\gamma}.
\end{equation}

\no $\bullet$ \underline{Estimate of $I_5.$}\vspace{0.2cm}

We split it into two terms
\beno
\begin{split}
I_5=I_{51}+I_{52} \with
&I_{51} :=\sqrt{\e}\int_{\cO}(\partial_t-\e\Delta+u^{\e}_a\cdot\nabla)\left(Z^{\beta}(\chi G)\right)\cdot Z^{\beta}\eta ,\\
&I_{52} :=\e^{\f52}\int_{\cO} R\cdot\nabla (Z^{\beta}(\chi G))\cdot  Z^{\beta}\eta.
\end{split}
\eeno
Thanks to (\ref{HG}) and (\ref{Ra}), $\chi$ is a smooth function, for $m\leq p-3$, we infer
\beno
 \begin{split}
\vert I_{51} \vert \lesssim & \sqrt{\e}\bigl(\|\partial_t G\|_{m-1}+\e\|\nabla^2G\|_{m-1}+\|u^{\e}_a\|_{L^{\infty}(\cO)}\|\nabla G\|_{m-1}\bigr)\|\eta\|_{m-1}  \\
\lesssim&\sqrt{\e}\t^{-\gamma}\|\eta\|_{m-1},
\end{split} \eeno
and
\begin{gather*}
\label{I52e}|I_{52}| \lesssim \e^{\frac{5}{2}}\|R\|\|\nabla G\|_{m-1}\|\eta\|_{m-1}\lesssim \e^{\frac{5}{2}}\t^{-\gamma}\|R\|_{m}\|\eta\|_{m-1},
\end{gather*}
so that we achieve
\beq \label{I51e}
\vert I_{51}\lesssim \sqrt{\e}\t^{-\gamma}\bigl(1+\e^2\|R\|_{m}\bigr)\|\eta\|_{m-1}.
\eeq

\no $\bullet$ \underline{Estimate of $I_6.$}\vspace{0.2cm}

In view of  \eqref{R2}, we decompose
 $I_6=I_{61}+I_{62}$ with
 \begin{equation*}
I_{61} := \e
\int_{\cO}[\Delta,Z^{\beta}](\eta-\sqrt{\e}\chi G)\cdot Z^{\beta}\eta \   \text{ and } \
I_{62} :=\e^{\f32}\int_{\cO} Z^{\beta}[\Delta, \chi \CN]R\cdot Z^{\beta}\eta.
\end{equation*}
On the one hand, thanks to (\ref{Z3}), we write
 \begin{eqnarray*}
\int_{\cO}[\e\Delta,Z^{\beta}](\eta-\sqrt{\e}\chi G)\cdot Z^{\beta}\eta&=&\e\int_{\cO}\sum_{|\beta_1|,|\beta_2|<m-1}\bigl(c_{\beta_1}\nabla^2 Z^{\beta_1}\eta+c_{\beta_2}\nabla Z^{\beta_2}\eta\bigr)\cdot Z^{\beta}\eta\\
&&-\e^{\f32}\int_{\cO}[\Delta,Z^{\beta}](\chi G)\cdot Z^{\beta}\eta ,
\end{eqnarray*}
where $c_{\beta_1},c_{\beta_2}$ are smooth functions depend only on vector field $\mathfrak{W}$.
Due to $Z^{\beta}\eta=0$ on $\p\cO,$ by using integration by parts and $\eqref{HG}$, we infer
\beq \label{I61e}
  \begin{split}
\vert I_{61}\vert\lesssim& \e\|\nabla \eta\|_{m-1}\|\nabla\eta\|_{m-2}+\e\|\nabla\eta\|_{m-2}\|\eta\|_{m-1}+\e^{\frac{3}{2}}\t^{-\gamma}\|\eta\|_{m-1}\\
\leq&\lambda\e\|\nabla\eta\|_{m-1}^2+C_{\lambda}\e(\|\nabla\eta\|_{m-2}^2+\|\eta\|_{m-1}^2)+C\e^{\frac{3}{2}}\t^{-\gamma},
\end{split} \eeq
where $\lambda>0$ is a small constant.

On the other hand, we write
  \begin{eqnarray*}
[\Delta, \chi \CN]R&=&\Delta\chi\CN(R)+2\sqrt{\e}\nabla\chi:\nabla\CN(R)\\
&&+\chi(\Delta\Pi)\bigl(D(R)\cdot \mathbf{n}+MR\bigr)+2\sqrt{\e}\chi(\nabla\Pi):\nabla\bigl(D(R)\cdot \mathbf{n}+MR\bigr)\\
&&+\chi\Pi \bigl(D(R)\cdot \Delta \bn+2\nabla D(R):\nabla \mathbf{n}+(\Delta M) R+2\nabla M:\nabla R\bigr).
\end{eqnarray*}
Corresponding to  the second and the forth term above, we  use integration by parts in $I_{62}.$ Then  we  deduce from (\ref{eta})  that
\beq\label{I62e}
  \begin{split}
\vert I_{62} \vert \lesssim& \e^{\frac{3}{2}}\|\nabla R\|_{m-1}\bigl(\|\nabla\eta\|_{m-1}+\|\eta\|_{m-1}\bigr)\\
\leq &\lambda\e\|\nabla\eta\|_{m-1}^2+C_{\lambda}\e\bigl(\|\eta\|_{m-1}^2+\|R\|_{m}^2\bigr)+C_{\lambda}\e^2\t^{-2\gamma}.
\end{split} \eeq

\no $\bullet$ \underline{Estimate of $I_7.$}\vspace{0.2cm}

 We write
\begin{equation*}
[u^0\cdot\nabla , Z^{\beta}(\chi \CN)]R=[u^0\cdot\nabla, Z^{\beta}](\chi\CN(R))+Z^{\beta}[u^0\cdot\nabla,\chi \CN]R.
\end{equation*}
It follows from \eqref{u0Z} that $u^0\cdot \nabla $ is a tangential derivative. So that thanks to the  observation \eqref{cm},
 we find that $[u^0\cdot \nabla,Z^{\beta}]$ is an operator of linear combination of  tangential derivatives of order $m-1.$
Then due to the  fact that $u^0$ is supported in $[0,T]$, we infer
\begin{equation*}
\sqrt{\e}\|[u^0\cdot\nabla , Z^{\beta}(\chi \CN)]R\|\lesssim \sqrt{\e} \chi_{[0,T]}(t)\|\nabla R\|_{m-1}\lesssim \t^{-\gamma}\|\nabla R\|_{m-1},
\end{equation*}
which together with \eqref{eta} implies
\begin{equation} \label{I7e}
\vert I_7 \vert\lesssim \t^{-\gamma}\bigl(\|R\|_m^2+\|\eta\|_{m-1}^2\bigr)+\e\t^{-3\gamma}.
\end{equation}

\no $\bullet$ \underline{Estimate of $I_8.$}\vspace{0.2cm}

We first decompose  $I_8$ as
$$I_8=\sum_{i=1}^6 I_{8i},$$
 with
  \begin{eqnarray*}
&\displaystyle I_{81}:=\int_{\cO}[(u^{\e}_a-u^0)\cdot\nabla,Z^{\beta}]\eta\cdot Z^{\beta}\eta,
&I_{82}:=-\sqrt{\e}\int_{\cO}[(u^{\e}_a-u^0)\cdot\nabla,Z^{\beta}](\chi G)\cdot Z^{\beta}\eta\\
&\displaystyle I_{83}:=\sqrt{\e}\int_{\cO}Z^{\beta}[(u^{\e}_a-u^0)\cdot\nabla,\chi \CN]R\cdot Z^{\beta}\eta
&I_{84}:=\e^2\int_{\cO}[ R\cdot\nabla,Z^{\beta}]\eta\cdot Z^{\beta}\eta\\
&\displaystyle I_{85}:=-\e^{\f52}\int_{\cO}[ R\cdot\nabla,Z^{\beta}](\chi G)\cdot Z^{\beta}\eta
&I_{86}:=\e^{\f52}\int_{\cO} Z^{\beta}[R\cdot\nabla,\chi \CN]R\cdot Z^{\beta}\eta
\end{eqnarray*}

Next we deal with all the terms above.

\begin{itemize}

\item \textit{Estimate of $I_{81}$}.
In view of (\ref{Z1}), we write
\begin{equation*}
[(u^{\e}_a-u^0)\cdot\nabla,Z^{\beta}]\eta=\sum_{\beta_1+\beta_2=\beta,\beta_1\neq 0}c_{\beta_1}Z^{\beta_1}(u^{\e}_a-u^0)\cdot Z^{\beta_2}\nabla \eta+(u^{\e}_a-u^0)\cdot[\nabla,Z^{\beta}]\eta,
\end{equation*}
where $c_{\beta_1},c_{\beta_2}$ are smooth functions depend only on vector filed $\mathfrak{W}$. Then thanks to \eqref{ua0}, we infer
\begin{equation}
\label{I81}
\vert I_{81} \vert \lesssim \sqrt{\e}\t^{-\gamma}\|\nabla\eta\|_{m-2}\|\eta\|_{m-1}\lesssim\e\|\nabla\eta\|_{m-2}^2+\t^{-2\gamma}\|\eta\|_{m-1}^2.
\end{equation}
\item  \textit{Estimate of $I_{82}$}.
It follows from  (\ref{HG}) and (\ref{ua0}) that
\begin{equation}
\label{I82}
\vert I_{82} \vert \lesssim\sqrt{\e}\|u^{\e}_a-u^0\|_{m-1,\infty}\|G\|_{H^{m-1}}\|\eta\|_{m-1}\lesssim\e^{\frac{3}{4}}\t^{-2\gamma}\|\eta\|_{m-1}.
\end{equation}
 \item \textit{Estimate of $I_{83}$}.
In view of \eqref{def-dep}, we write
  \begin{eqnarray*}
[(u^{\e}_a-u^0)\cdot\nabla,\chi \CN]R=[(u^{\e}_a-u^0)\cdot\nabla,\chi \Pi]\bigl(D(R)\cdot \mathbf{n}+MR\bigr)\hspace{2cm}\\
+\chi \Pi\big((u^{\e}_a-u^0)\cdot\nabla(D(R)\cdot \mathbf{n})-D((u^{\e}_a-u^0)\cdot\nabla R)\cdot \mathbf{n})+((u^{\e}_a-u^0)\cdot\nabla M)R\big).
\end{eqnarray*}
Notice that  the second order derivatives of $R$ vanish on  the right hand side above, we deduce that
\begin{eqnarray*}
\vert I_{83} \vert \lesssim \sqrt{\e}\bigl(\|u^{\e}_a-u^0\|_{m-1,\infty}+\|\nabla(u^{\e}_a-u^0)\|_{m-1,\infty}\bigr)\|\nabla R\|_{m-1}\|\eta\|_{m-1},
\end{eqnarray*}
which together with (\ref{ua0}) and (\ref{eta}) ensures that
\begin{eqnarray}
\label{I83} \vert I_{83} \vert\lesssim \sqrt{\e}\t^{-\gamma}\|\nabla R\|_{m-1}\|\eta\|_{m-1}\lesssim\t^{-\gamma}(\|\eta\|_{m-1}^2+\|R\|_{m}^2)+\e\t^{-3\gamma}.
\end{eqnarray}
\item \textit{Estimate of $I_{84}$}.
In view of \eqref{Z1}, we write
  \begin{eqnarray*}
I_{84}=\e^2\int_{\cO}\Bigl(\sum_{\beta_1+\beta_2=\beta,\beta_1\neq 0}c_{\beta_1}Z^{\beta_1}R\cdot Z^{\beta_2}\nabla\eta+R\cdot[\nabla,Z^{\beta}]\eta\Bigr)\cdot Z^{\beta}\eta.
\end{eqnarray*}
We remark that if we use  directly use the generalized Sobolev-Gagliardo-Nirenberg-Morse inequality above,  there  appears the term, $\|\nabla\eta\|_{L^{\infty}},$ which we do not have the estimate. To overcome this difficulty, we use  integrations by parts to transfer the $\nabla$ on terms like $Z^{\beta_2}\nabla\eta$ into other terms. Notice that $Z^{\beta}\eta=0$ on $\partial\cO$, no boundary term appears
during this process. Then by applying the generalized Sobolev-Gagliardo-Nirenberg-Morse inequality, we find
\beno
\begin{split}
\vert I_{84} \vert \lesssim& \e^2\bigl(\|ZR\|_{L^{\infty}}\|\eta\|_{m-2}+\|R\|_{m-1}\|\eta\|_{L^{\infty}}\bigr)\|\nabla\eta\|_{m-1}\\
&+\e^2\bigl(\|\nabla R\|_{L^{\infty}}\|\eta\|_{m-1}+\|\nabla R\|_{m-1}\|\eta\|_{L^{\infty}}\bigr)\|\eta\|_{m-1}\\
&+\e^2\|R\|_{L^{\infty}}\|\nabla\eta\|_{m-1}\|\eta\|_{m-1},
\end{split} \eeno
from which, (\ref{eta}) and (\ref{etaL}), we infer
\begin{gather}
\label{I84}|I_{84}|\leq \lambda \e\|\nabla\eta\|_{m-1}^2+ C_{\lambda}\e(\|\eta\|_{m-1}^2+\|R\|_{m}^2)(\e\|\eta\|_{L^{\infty}}^2+\e\|R\|_{1,\infty}^2+1)+C\e^2\t^{-\gamma}.
\end{gather}
\item \textit{Estimate of $I_{85}$}.
Along the same line to the estimate of $I_{84},$ we write
  \begin{eqnarray*}
[R\cdot\nabla,Z^{\beta}](\chi G)=\sum_{\beta_1+\beta_2=\beta,\beta_1\neq 0}c_{\beta_1}Z^{\beta_1}R\cdot Z^{\beta_2}\nabla(\chi G)\cdot Z^{\beta}\eta+R\cdot[\nabla,Z^{\beta}](\chi G)\cdot Z^{\beta}\eta,
\end{eqnarray*}
 from which, \eqref{HG} and $m\leq p-3,$ we infer
\begin{equation}\label{I85}
\vert I_{85} \vert \lesssim \e^{\frac{5}{2}}\t^{-\gamma}\|R\|_{m-1}\|\eta\|_{m-1}.
\end{equation}
\item \textit{Estimate of $I_{86}$}.
In view of  \eqref{def-dep}, we write
\beno
\begin{split}
[R\cdot\nabla,\chi \CN]R=&[R\cdot\nabla,\chi \Pi](D(R)\cdot \mathbf{n}+MR)\\
&+\chi \Pi\big(R\cdot\nabla (D(R)\cdot \mathbf{n})-D(R\cdot\nabla R)\cdot \mathbf{n}+(R\cdot\nabla M)R\big).
\end{split}\eeno
Notice that  the second order derivatives of $R$ vanish on the right-hand side above, we deduce from the generalized Sobolev-Gagliardo-Nirenberg-Morse inequality, that
\begin{eqnarray*}
\e^{\f52}\|[R\cdot\nabla,\chi \CN]R\|_{m-1}\!\!\!&\lesssim&\!\!\!\e^{\frac{5}{2}}\bigl(\|\nabla R\|_{L^{\infty}}+\|R\|_{L^{\infty}}\bigr)
\bigl(\|\nabla R\|_{m-1}+\|R\|_{m-1}\bigr),
\end{eqnarray*}
which together with (\ref{eta}) and (\ref{etaL}) ensures that
\begin{gather}
\label{I86} |I_{86}| \lesssim \e\bigl(\|\eta\|_{m-1}^2+\|R\|_{m}^2\bigr)\bigl(1+\e\|\eta\|_{L^{\infty}}^2+\e\|R\|_{1,\infty}^2\bigr)+\e^2\t^{-\gamma}.
\end{gather}
\end{itemize}

By summing up the estimates, (\ref{I81}-\ref{I86}), we arrive at
\beq \label{I8Es}
\begin{split}
|I_8|\leq  &\la \e \|\na\eta\|_{m-1}^2+ C\bigl(\e\|\nabla \eta\|_{m-2}^2+\e^{\frac{1}{4}}\t^{-\gamma}\bigr)\\
&+C_\la\bigl(\e+\t^{-\gamma}+\e^2(\|\eta\|_{L^{\infty}}^2+\|R\|_{1,\infty}^2)\bigr)\left(\|\eta\|_{m-1}^2+\|R\|_m^2\right).
\end{split}
\eeq

By inserting the estimates,
\eqref{I1e}, \eqref{I2e}, \eqref{I4e}, \eqref{I51e}, \eqref{I52e},  \eqref{I61e}, \eqref{I62e}, \eqref{I7e} and \eqref{I8Es},
 into (\ref{etaa})
 and summing  over the resulting inequalities with the multi-indices  $\alpha$ with $|\alpha|\leq m,$ and finally  choosing  $\lambda$
to be sufficiently small, we obtain
 (\ref{etae}). This ends the proof of Proposition \ref{prop-etae}.
\end{proof}

By summing  up (\ref{Re2}) and (\ref{etae}), we achieve
\beq\label{name-me}
\begin{split}
\f{d}{dt}\bigl(&\|R(t)\|_m^2+\|\eta(t)\|_{m-1}^2\bigr)+\e\bigl(\|\nabla R\|_m^2+\|\na\eta\|_{m-1}^2\bigr)\\
\lesssim &\e\bigl(\|\nabla R\|_{m-1}^2+\|\na\eta\|_{m-2}^2\bigr)+\e^{\frac{1}{4}}\t^{-\gamma}\\
&+\bigl(\e+\t^{-\gamma}+\e^2(\|\eta\|_{L^{\infty}}^2+\|R\|_{1,\infty}^2)\bigr)\bigl(\|\eta\|_{m-1}^2+\|R\|_m^2\bigr)\\
&+\sum_{|\alpha|\leq m}\bigl|\int_{\cO}Z^{\alpha}\nabla\pi\cdot  Z^{\alpha}R\bigr|
+\sqrt{\e}\sum_{|\beta|\leq m-1}\bigl|\int_{\cO}Z^{\beta}\chi \CN(\nabla\pi)\cdot Z^{\beta}\eta\bigr|.
\end{split} \eeq
To estimate the two integrals in \eqref{name-me},  we will have to deal with the pressure estimates in the coming subsection.

\subsection{Estimate of the pressure term} \label{sub-pressure}

In view of (\ref{R}), the pressure $\pi$ satisfies
\begin{equation*}
\begin{cases}
\Delta \pi=-\dive F-\dive (u^{\e}\cdot\nabla R+R\cdot\nabla u^{\e}_a)+\partial_t H-\e\Delta H \quad\mbox{ in }\cO, \\
\partial_\mathbf{n} \pi=-F\cdot \mathbf{n}-(u^{\e}\cdot\nabla R+R\cdot\nabla u^{\e}_a)\cdot \mathbf{n}+\e\Delta R\cdot \mathbf{n} \quad\mbox{ on }\partial \cO.
\end{cases}
\end{equation*}

We start the estimate of $\na \pi$ by the following  toy model:

\begin{lemma}\label{pi1}
{\sl Let $\pi_1$ be determined by
\beq\label{pi1p}
\begin{cases}
\Delta\pi_1=-\dive F \quad\mbox{ in }\cO,\\
\partial_\mathbf{n} \pi_1=-F\cdot \mathbf{n}\quad\mbox{ on }\partial \cO,
\end{cases}
\eeq
Then for any  non-negative integer $\ell,$ one has
\begin{eqnarray} \label{pi1-esti}
\|\nabla\pi_1\|_\ell\lesssim\|F\|_\ell.
\end{eqnarray}}
\end{lemma}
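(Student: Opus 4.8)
The plan is to estimate $\nabla\pi_1$ via the standard conormal energy method adapted to the Neumann problem \eqref{pi1p}, exploiting the structural cancellation that $-\dive F$ in the interior is matched by $-F\cdot\mathbf{n}$ on the boundary. Multiplying the first equation of \eqref{pi1p} by $\pi_1$ and integrating by parts over $\cO$, the boundary contribution $\int_{\partial\cO}(\partial_\mathbf{n}\pi_1)\pi_1$ cancels against part of $\int_\cO \dive F\,\pi_1$ after a second integration by parts, leaving
$$\int_\cO |\nabla\pi_1|^2\,dx = \int_\cO F\cdot\nabla\pi_1\,dx,$$
so that by Cauchy--Schwarz $\|\nabla\pi_1\|\lesssim\|F\|$, which is \eqref{pi1-esti} for $\ell=0$ (the mean-zero normalisation of $\pi_1$ being irrelevant since only $\nabla\pi_1$ appears).

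For $\ell\geq 1$ I would proceed by applying the conormal operators $Z^\alpha$, with $|\alpha|=\ell$, to both equations in \eqref{pi1p}. The key point is that the $Z_j$ are tangential to $\partial\cO$, so $Z^\alpha$ commutes with the trace and maps the Neumann boundary condition $\partial_\mathbf{n}\pi_1=-F\cdot\mathbf{n}$ into $\partial_\mathbf{n} Z^\alpha\pi_1 = -Z^\alpha(F\cdot\mathbf{n}) + [\,Z^\alpha,\partial_\mathbf{n}\,]\pi_1$ on $\partial\cO$, while in the interior $\Delta Z^\alpha\pi_1 = -Z^\alpha\dive F + [\Delta,Z^\alpha]\pi_1$, with the commutator $[\Delta,Z^\alpha]$ given by \eqref{Z3} in terms of lower-order conormal derivatives (at most second order in $\nabla$, at most $\ell-1$ in $Z$). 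Repeating the energy argument above for $Z^\alpha\pi_1$ and again using the interior/boundary matching, one is led to
$$\|\nabla Z^\alpha\pi_1\|^2 \lesssim \|Z^\alpha F\|\,\|\nabla Z^\alpha\pi_1\| + \big(\text{terms involving }[\Delta,Z^\alpha]\pi_1\text{ and the boundary commutator}\big).$$
The commutator terms are absorbed by induction on $\ell$: $[\Delta,Z^\alpha]\pi_1$ is controlled by $\|\nabla^2 Z^\beta\pi_1\|+\|\nabla Z^\gamma\pi_1\|$ with $|\beta|,|\gamma|\le \ell-1$, and the second-derivative factor is in turn bounded by $\|\nabla\pi_1\|_{\ell-1}$ using interior elliptic regularity for \eqref{pi1p} (or, equivalently, by rewriting $\Delta\pi_1$ from the equation and splitting normal/tangential second derivatives as in \eqref{div}). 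The boundary commutator $[Z^\alpha,\partial_\mathbf{n}]\pi_1$, by \eqref{cm}, is again a combination of tangential derivatives, hence controlled by $\|\nabla\pi_1\|_{\ell-1}$ on $\partial\cO$ and then by the trace inequality \eqref{trace} together with an absorption of $\lambda\|\nabla Z^\alpha\pi_1\|^2$ into the left-hand side for small $\lambda$.

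The main obstacle I anticipate is the bookkeeping of the second-order normal derivatives of $\pi_1$ generated by the commutator $[\Delta,Z^\alpha]$: since $Z^\alpha$ only controls tangential regularity, one must genuinely invoke the ellipticity of $\Delta$ to trade one normal derivative back, which is why the argument is inductive rather than a single energy estimate. Concretely, the cleanest route is to prove the stronger statement $\|\nabla\pi_1\|_\ell + \|\nabla^2\pi_1\|_{\ell-1}\lesssim\|F\|_\ell$ by induction on $\ell$, using at each step: (i) the conormal energy identity for $Z^\alpha\pi_1$ with the interior/boundary cancellation; (ii) the decomposition $\Delta = \partial_\mathbf{n}^2 + (\text{tangential second-order part})$ near $\partial\cO$ to recover $\|\partial_\mathbf{n}^2\pi_1\|_{\ell-1}$ from $\|\Delta\pi_1\|_{\ell-1}=\|\dive F\|_{\ell-1}$ and $\|\nabla\pi_1\|_\ell$; and (iii) the trace estimate \eqref{trace} plus Young's inequality to close the boundary terms. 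Everything else is routine Leibniz-rule estimation, and the smoothness of the vector fields $\mathfrak W$ and of $\mathbf n$ guarantees the commutator coefficients are bounded.
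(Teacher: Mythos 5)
Your $\ell=0$ step and the general strategy (induction on $\ell$, conormal energy identity for $Z^\alpha\pi_1$, commutators \eqref{Z3} and \eqref{cm}, trace inequality \eqref{trace} with absorption) match the paper's proof. However, your plan for closing the induction contains a genuine gap. You propose to prove the stronger statement $\|\nabla\pi_1\|_\ell+\|\nabla^2\pi_1\|_{\ell-1}\lesssim\|F\|_\ell$, recovering $\|\partial_\mathbf{n}^2\pi_1\|_{\ell-1}$ from $\|\Delta\pi_1\|_{\ell-1}=\|\dive F\|_{\ell-1}$ (or from ``interior elliptic regularity''). This does not work: $\dive F$ contains the normal derivative of $F$, which is \emph{not} controlled by the conormal norm $\|F\|_\ell$ appearing on the right of \eqref{pi1-esti} (already at $\ell=1$, $F\in H^1_{\rm co}$ does not give $\dive F\in L^2$, so no $H^2$ bound for $\pi_1$ is available). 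The paper never estimates two normal derivatives of $\pi_1$. Instead it splits into two cases: if $Z^\alpha$ contains $Z_0$, then $Z^\alpha\pi_1$ vanishes on $\partial\cO$ and one integrates the commutator term by parts with no boundary contribution; if $Z^\alpha$ contains no $Z_0$, one uses the geometric identity $\mathbf{n}\cdot\nabla w^{k}\cdot\mathbf{n}=0$ near $\partial\cO$ to see that $[\Delta,Z^\alpha]\pi_1$ involves at most one normal derivative of $\pi_1$, hence $\|[\Delta,Z^\alpha]\pi_1\|\lesssim\|\nabla\pi_1\|_m$. This structural observation is what replaces the elliptic-regularity step you invoke.

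A second, related point is the treatment of the data terms. You assert that ``the interior/boundary matching'' again yields a bound $\|Z^\alpha F\|\,\|\nabla Z^\alpha\pi_1\|$, but at order $\ell\ge 1$ this matching is precisely what has to be proved: the boundary term $\int_{\partial\cO}Z^{\alpha}(F\cdot\mathbf{n})\,Z^{\alpha}\pi_1$ cannot be estimated through the trace inequality by $\|F\|_\ell$ (that would require normal derivatives of $F$), and the lemma is later applied with $F$ replaced by quadratic terms in $R$ whose boundary traces are out of control. The paper handles this by a specific integration-by-parts computation (using that the $w^i$ are divergence free and tangent to $\partial\cO$, and again splitting according to whether $Z^\alpha$ contains $Z_0$) in which the boundary term produced from $\int_{\cO}Z^{\alpha}\dive F\,Z^{\alpha}\pi_1$ exactly cancels $-\int_{\partial\cO}Z^{\alpha}(F\cdot\mathbf{n})Z^{\alpha}\pi_1$, leaving only interior terms bounded by $\|\nabla\pi_1\|_m\|F\|_m$. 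Your proposal does not supply this mechanism, and combined with the flawed $\|\nabla^2\pi_1\|_{\ell-1}$ step the induction as you describe it would not close with only $\|F\|_\ell$ on the right-hand side.
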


\begin{proof}
We proceed by induction on $\ell$.
By taking $L^2$ inner product of the first equation of \eqref{pi1p} and using integrations by parts, we find
\begin{equation*}
\|\nabla\pi_1\|^2 = -\int_{\cO}F\cdot\nabla \pi_1 ,
\end{equation*}
which implies
$\|\nabla\pi_1\|\leq\|F\|$; therefore \eqref{pi1-esti} holds for  $\ell=0$.

Next we assume that \eqref{pi1-esti} holds for  $\ell=m-1$
with $\ell\geq 1$. We are going to prove that  \eqref{pi1-esti} holds for  $\ell=m.$
Indeed by applying $Z^\alpha$ with $|\alpha|=m$ to \eqref{pi1p}, we get
\beno
-\D Z^\alpha\pi_1-[Z^\alpha, \D]\pi_1=Z^\alpha\dive F.
\eeno
By taking $L^2$ inner product of the above equation with $Z^\alpha \pi_1$ and using integration
by parts,  we obtain
\begin{equation} \label{decompi}
\|\nabla Z^{\alpha}\pi_1\|^2
=\sum_{i=1}^4 J_i,
\end{equation}
where
  \begin{eqnarray*}
J_1 := \int_{\partial \cO}[\partial_\mathbf{n} ,Z^{\alpha}]\pi_1 Z^{\alpha}\pi_1 ,\quad
J_2 := -\int_{\cO}[\Delta,Z^{\alpha}]\pi_1 Z^{\alpha}\pi_1 ,\\
J_3 := -\int_{\partial\cO}Z^{\alpha}(F\cdot \mathbf{n})Z^{\alpha}\pi_1 , \quad
J_4 := \int_{\cO}Z^{\alpha}\dive F Z^{\alpha}\pi_1.
\end{eqnarray*}

Let us now handle term by term the quantities above.

\smallskip

\no $\bullet$ \underline{Estimate of $J_1.$}\vspace{0.2cm}

Notice that  $Z_0=0$ on $\p\cO,$  so that if $Z^{\alpha}$ contains the tangential vector field $Z_0$, $J_1=0$.
Without loss of generality,  we may assume that $Z^{\alpha}$ is composed of $Z_i$ with $1\leq i\leq 5$.
We write, $Z^{\alpha}=Z_{i_1}Z^{\alpha_1}$, $|\alpha_1|=\ell-1$, then $$[\partial_\mathbf{n},Z^{\alpha}]=[\partial_\mathbf{n},Z_{i_1}]Z^{\alpha_1}+Z_{i_1}[\partial_\mathbf{n},Z^{\alpha_1}]. $$
 As presented in Subsection
 \ref{sec-appr},
the vector fields, $[\partial_\mathbf{n},Z_{i_1}],$ are also tangential derivatives. By induction,
$[\partial_\mathbf{n},Z^{\alpha}]$ is a tangential derivative  operator of order $m$. By trace inequality, \eqref{trace}, we infer
\begin{equation}
\label{I1}
\vert J_1 \vert \lesssim\|\pi_1\|_{H^m(\partial\cO)}^2\lesssim\|\pi_1\|_m^2+\|\pi_1\|_m\|\nabla\pi_1\|_m\lesssim\|\nabla\pi_1\|_{m-1}\|\nabla\pi_1\|_m.
\end{equation}

\no $\bullet$ \underline{Estimate of $J_2.$}\vspace{0.2cm}

To deal with the commutator, we  use \eqref{Z3} to write
\begin{eqnarray}\label{pi2z0}
\int_{\cO} [\Delta,Z^{\alpha}]\pi_1Z^{\alpha}\pi_1= \sum_{|\alpha_1|,|\alpha_2|\leq m-1}\int_{\cO}\bigl(c_{\alpha_1}\nabla^2Z^{\alpha_1}\pi_1+c_{\alpha_2}\nabla Z^{\alpha_2}\pi_1\bigr)Z^{\alpha}\pi_1,
\end{eqnarray}
where $c_{\alpha_1},c_{\alpha_2}$ are some smooth functions. Yet we do not want the second order normal derivative of $\pi_1$ to appear in
\eqref{pi2z0}. The idea is to  use integration by parts. The cost is that  boundary terms like $\int_{\partial\cO}\bn\cdot c_{\alpha_{1}}\cdot\nabla Z^{\alpha_1}\pi_2Z^{\alpha}\pi_2$ will appear. In general,  we can not guarantee that $\bn\cdot c_{\alpha_1}\cdot\nabla$ is a tangential derivative. One attempt is to use the boundary condition, $\partial_{\mathbf{n}}\pi_1=-F\cdot \mathbf{n},$ and then
the boundary terms will be bounded by $\|F\cdot \mathbf{n}\|_{H^m(\partial\cO)}.$ 
Although  Lemma \ref{V} gives $\|F\|_{H^m(\partial\cO)}\lesssim\t^{-\gamma},$ so that $\|F\cdot \mathbf{n}\|_{H^m(\partial\cO)}$
 will  gives rise to an appropriate estimate of $\pi_1.$ But when we apply similar estimate to deal with $\pi_3$,
 term like $\|R\cdot\nabla R\|_{H^m(\p\cO)}$ will appear, which is out of control.

To overcome the above mentioned difficulty, we distinguish the terms in \eqref{pi2z0} into two cases.

\begin{itemize}
\item If $Z^{\alpha}$ contains a field $Z_0$, then $Z^{\alpha}=0$ on $\p\cO.$ In this case, we use integration by parts to get

    \begin{eqnarray*}
    |J_2|\lesssim \|\nabla\pi_2\|_{m-1}\|\nabla\pi_2\|_m.
    \end{eqnarray*}

\item If $Z^{\alpha}$ does not contain any $Z_0$, we write
\begin{eqnarray*}
Z^{\alpha}=Z_{k_1}Z_{k_2}\cdots Z_{k_m} \with Z_{k_i}=w^{k_i}\cdot\nabla,\quad k_i\in \{1,2,3,4,5\},1\leq i\leq m,
\end{eqnarray*}
for $w^{k_i}$ given in  Subsection \ref{sec-appr}.

As a convention, let $Z^{\alpha_0}=Z^{\beta_{m+1}}$ be the identity operators, we denote
\begin{eqnarray*}
Z^{\alpha_i}:=Z_{k_1}\cdots Z_{k_i} \quad\text{ and }\quad Z^{\beta_i}:=Z_{k_i}\cdots Z_{k_m} \with \quad1\leq i\leq m.
\end{eqnarray*}
Then by \eqref{Z2}, we write
\begin{eqnarray*}
[\Delta,Z^{\alpha}]\pi_1&=&\sum_{i=1}^mZ^{\alpha_{i-1}}[\Delta,Z_{k_i}]Z^{\beta_{i+1}}\pi_1\\
\nonumber&=&\sum_{i=1}^m Z^{\alpha_{i-1}}\bigl(\Delta w^{k_i}\cdot\nabla Z^{\beta_{i+1}}\pi_1+2\nabla w^{k_i}:\nabla^2Z^{\beta_{i+1}}\pi_1\bigr).
\end{eqnarray*}
Notice that for $k_i\neq 0, w^{k_i}\cdot \mathbf{n}=0$ in $\mathcal{V}_{\delta_0/2}$, $|\mathbf{n}|=1$ in $\mathcal{V}_{\delta_0}$ and $\nabla n$ is symmetric, we have
\begin{eqnarray*}
\bn\cdot\nabla w^{k_i}\cdot \mathbf{n}=-\mathbf{n}\cdot\nabla \mathbf{n}\cdot w^{k_i}=-w^{k_i}\cdot \nabla \mathbf{n}\cdot \mathbf{n}=0,\quad \text{ in }\mathcal{V}_{\delta_0/2}.
\end{eqnarray*}
So that $\nabla w^{k_i}:\nabla^2$ contains at most one normal derivative and this implies
\begin{eqnarray*}
\|[\Delta,Z^{\alpha}]\pi_1\|\lesssim \|\nabla\pi_1\|_{m}.
\end{eqnarray*}
As a result, it comes out
\begin{eqnarray} \label{I2}
|J_2|\lesssim\|\nabla\pi_1\|_{m-1}\|\nabla\pi_1\|_m.
\end{eqnarray}
\end{itemize}

\no $\bullet$ \underline{Estimate of $J_3 + J_4$}\vspace{0.2cm}

Again we distinguish to the following two cases:

  \begin{itemize}

  \item
  If $Z^{\alpha}$ contains $Z_0$, then $Z^{\alpha}=0$ on $\p\cO.$ In this case $J_3=0.$ For $J_4,$ we use integration by parts to get
  \begin{eqnarray*}
J_4&=&\sum_{|\alpha_1|\leq m}\int_{\cO}c_{\alpha_1}\cdot\nabla Z^{\alpha_1} F Z^{\alpha}\pi_1\\
&=&\sum_{|\alpha_1|\leq m}\int_{\cO}Z^{\alpha_1} F(\dive c_{\alpha_1}Z^{\alpha}\pi_1+c_{\alpha_1}\cdot\nabla Z^{\alpha}\pi_1),
\end{eqnarray*}
from which, we infer
  \begin{eqnarray*}
\vert J_4 \vert
&\lesssim&\|\nabla \pi_1\|_m\|F\|_m.
\end{eqnarray*}

\item If $Z^{\alpha}$ does not contain  $Z_0$, notice that for $1\leq i\leq 5,$  $Z_i=w^i\cdot \nabla$ and $w^i\cdot \mathbf{n}=0$ in $\mathcal{V}_{\delta_0/2}$ and $\dive w^i=0,$ we get, by using integration by parts, that
  \begin{eqnarray*}
\int_{\cO}Z^{\alpha}\dive F Z^{\alpha}\pi_1&=&(-1)^m\int_{\cO}\dive FZ^{2\alpha}\pi_1\\
&=&(-1)^m\int_{\partial\cO}F\cdot \mathbf{n} Z^{2\alpha}\pi_1-(-1)^m\int_{\cO}F\cdot\nabla Z^{2\alpha}\pi_1\\
&=&\int_{\partial\cO}Z^{\alpha}(F\cdot \mathbf{n})Z^{\alpha}\pi_1+\sum_{|\alpha_1|\leq m}\int_{\cO}c_{\alpha_1}F\cdot Z^{\alpha_1}\nabla Z^{\alpha}\pi_1\\
&=&\int_{\partial\cO}Z^{\alpha}(F\cdot \mathbf{n})Z^{\alpha}\pi_1+ \sum_{|\alpha_1|\leq m}\int_{\cO}\nabla Z^{\alpha}\pi_1\cdot Z^{\alpha_1}(c_{\alpha_1}F),
\end{eqnarray*}
where $c_{\alpha_1}$ are some smooth functions depend only on the vector field in $\mathfrak{W}$.

As a consequence, we obtain
  \begin{eqnarray*}
J_3+J_4= \sum_{|\alpha_1|\leq m}\int_{\cO}\nabla Z^{\alpha}\pi_1\cdot Z^{\alpha_1}(c_{\alpha_1}F),
\end{eqnarray*}
which implies
\begin{equation}
\label{I34}
\vert J_3 + J_4 \vert \lesssim     \|\nabla \pi_1\|_m\|F\|_m.
\end{equation}
\end{itemize}

In view of \eqref{decompi}, by summarizing the estimates, \eqref{I1}, \eqref{I2} and  \eqref{I34}, we  conclude
 the proof of \eqref{pi1-esti}.
\end{proof}

\begin{proposition}
\label{pi-propi}
{\sl For $1\leq m\leq p-4$ and for $t\in [0,T^\e ]$, we have
\begin{equation}\label{pi}
\|\nabla\pi\|_m\lesssim\e^{\frac{1}{4}}\t^{-\gamma}+ \e\|\nabla R\|_m+\t^{-\gamma}\|R\|_{m}
+\e^2\bigl(\|R\|_{L^{\infty}}\|\nabla R\|_m+\|R\|_m\|\nabla R\|_{L^{\infty}}\bigr).
\end{equation}}
\end{proposition}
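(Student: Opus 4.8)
The plan is to split the pressure into three pieces corresponding to the three types of source terms in the elliptic problem for $\pi$, namely $\pi = \pi_1 + \pi_2 + \pi_3$, where $\pi_1$ absorbs the $F$ terms (both the divergence source $-\dive F$ and the Neumann datum $-F\cdot\bn$), $\pi_2$ absorbs the divergence-constraint terms $\partial_t H - \e\Delta H$ (together with the $\e\Delta R\cdot\bn$ Neumann contribution, rewritten via $\dive R = -H$), and $\pi_3$ absorbs the convective/stretching terms $-\dive(u^\e\cdot\nabla R + R\cdot\nabla u^\e_a)$ with Neumann datum $-(u^\e\cdot\nabla R + R\cdot\nabla u^\e_a)\cdot\bn$. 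Each of these solves a Neumann problem of the same structural type as the toy model \eqref{pi1p}, so Lemma \ref{pi1} (and minor variants of its proof accommodating a nonzero divergence source that is itself a divergence) applies.

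First I would treat $\pi_1$: directly by Lemma \ref{pi1}, $\|\nabla\pi_1\|_m \lesssim \|F\|_m$, and then \eqref{FH1} (used with $p_1 = m \leq p-3$, $p_2 = 0$) gives $\|F\|_m \lesssim \e^{1/4}\t^{-\gamma}$, which is the first term on the right-hand side of \eqref{pi}. Next, for $\pi_2$, one rewrites the $\e\Delta R\cdot\bn$ boundary term: since $\dive R = -H$ we have $\e\Delta R \cdot \bn$ controlled through $H$ after an integration by parts in the energy identity, or more simply one absorbs the whole contribution of $H$ by the same induction-on-$\ell$ scheme as in Lemma \ref{pi1}, using \eqref{FH1} and \eqref{HG} to bound $\|\partial_t H\|_m + \e\|\nabla^2 H\|_m + \|H\|_{H^m(\partial\cO)} \lesssim \e^{1/4}\t^{-\gamma}$; this again contributes only an $\e^{1/4}\t^{-\gamma}$ term. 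The genuinely new work is $\pi_3$.

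For $\pi_3$ I would run the same induction on $\ell$ as in the proof of Lemma \ref{pi1}, with $G^\e := u^\e\cdot\nabla R + R\cdot\nabla u^\e_a$ in place of $F$, splitting the boundary commutator term $J_1$, the interior commutator term $J_2$, and the combined $J_3 + J_4$ exactly as there; the crucial point, already emphasized in the proof of Lemma \ref{pi1}, is to handle the terms where $Z^\alpha$ does not contain $Z_0$ by integrating by parts twice and exploiting $\dive w^i = 0$, $w^i\cdot\bn = 0$ in $\mathcal{V}_{\delta_0/2}$, so that no uncontrollable boundary trace like $\|R\cdot\nabla R\|_{H^m(\partial\cO)}$ ever appears — instead one only ever needs $\|G^\e\|_m$ in the interior. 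It then remains to estimate $\|G^\e\|_m = \|u^\e\cdot\nabla R + R\cdot\nabla u^\e_a\|_m$. Using the decomposition \eqref{DEF-R}, $u^\e = u^\e_a + \e^2 R$, and writing $u^\e_a\cdot\nabla$ and $u^0\cdot\nabla$ in terms of the tangential fields via \eqref{u0Z} as in Subsection \ref{tangen}, the pieces involving $u^\e_a$ and $u^0$ are bounded by \eqref{Ra} and \eqref{ua0} by $\e\|\nabla R\|_m + \t^{-\gamma}\|R\|_m$ (the $\e\|\nabla R\|_m$ coming from the $\sqrt\e$-gain in \eqref{ua0} times $\sqrt\e$, or absorbed through the viscous term; more precisely the purely $u^0\cdot\nabla$ part is a tangential derivative supported on $[0,T]$ hence $\lesssim \t^{-\gamma}\|R\|_m$, and the normal part $u^0_\flat Z_0 R$ is a conormal derivative), while $R\cdot\nabla u^\e_a$ contributes $\t^{-\gamma}\|R\|_m$ by Leibniz and \eqref{Ra}, and the quadratic term $\e^2 R\cdot\nabla R$ is bounded by the generalized Sobolev-Gagliardo-Nirenberg-Morse inequality by $\e^2(\|R\|_{L^\infty}\|\nabla R\|_m + \|R\|_m\|\nabla R\|_{L^\infty})$. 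Collecting these yields \eqref{pi}. The main obstacle is exactly the $\pi_3$ boundary-term bookkeeping: making sure the double integration by parts in $J_2$, $J_3+J_4$ never forces a trace of the quadratic term $R\cdot\nabla R$ on $\partial\cO$ (which would be fatal), and this is precisely why the argument is organized around whether or not $Z^\alpha$ contains $Z_0$, mirroring the toy model.
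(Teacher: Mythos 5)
Your decomposition and your treatment of the $F$ and $\partial_t H$ pieces match the paper, but there are two genuine gaps, and they concern precisely the two places where the estimate \eqref{pi} is delicate. First, the convective piece: you propose to bound $\|\nabla\pi_3\|_m$ by $\|u^{\e}\cdot\nabla R+R\cdot\nabla u^{\e}_a\|_m$ and claim the $u^0$ (and $u^\e_a$) part is $\lesssim \t^{-\gamma}\|R\|_m$ because $u^0\cdot\nabla=\sum_j c_jZ_j+u^0_{\flat}Z_0$ is conormal. But applying $Z^{\alpha}$ with $|\alpha|=m$ to a first-order conormal derivative of $R$ produces $m+1$ conormal derivatives, so what you actually get is $\chi_{[0,T]}\|R\|_{m+1}\approx\chi_{[0,T]}\bigl(\|R\|_m+\|\nabla R\|_m\bigr)$, i.e.\ an $O(1)$ coefficient in front of $\|\nabla R\|_m$ rather than the $\e$ in \eqref{pi}. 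Unlike in the tangential energy estimates of Subsection \ref{tangen}, there is no antisymmetry/integration-by-parts structure here to kill the top-order transport term, and an $O(1)\|\nabla R\|_m$ in $\|\nabla\pi\|_m$ is fatal in Corollary \ref{pre-Reta}: absorbing $\|\nabla R\|_m\|R\|_m$ into $\lambda\e\|\nabla R\|_m^2$ costs a factor $\e^{-1}\|R\|_m^2$, and Gronwall then produces $\exp(CT/\e)$, destroying \eqref{N}. The paper's proof avoids this by an algebraic rewrite \emph{before} invoking Lemma \ref{pi1}: using $\dive u^{\e}=\sigma^0$ and $\dive R=-H$ one has $\dive(u^{\e}\cdot\nabla R)=\dive\bigl(R\cdot\nabla u^{\e}-Hu^{\e}-\sigma^0 R\bigr)$, and on the boundary $(u^{\e}\cdot\nabla R)\cdot\bn=(u^{\e}\cdot\nabla\bn)\cdot R$ (from $u^{\e}\cdot\bn=R\cdot\bn=0$ and the symmetry of $\nabla\bn$), so that no gradient ever falls on $R$ except in the quadratic term $\e^2R\cdot\nabla R$; only then does Lemma \ref{pi1} give $\t^{-\gamma}\|R\|_m+\e^{\frac14}\t^{-\gamma}+\e^2(\dots)$. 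This rewrite is the key step and is absent from your proposal.

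Second, the Neumann datum $\e\Delta R\cdot\bn$: it is not "controlled through $H$". Writing $\Delta R=-\nabla H-\curl\curl R$, its normal trace contains $(\curl\curl R)\cdot\bn$, which involves tangential derivatives of $\nabla R$ on $\partial\cO$ and is in no way bounded by the data $H$. The paper isolates this in a separate piece $\pi_4$, observes $\Delta(\pi_4+\e H)=0$ with $\partial_\bn(\pi_4+\e H)=-\e\Delta R\cdot\bn+\e\partial_\bn H$, and uses the harmonic-lifting estimate together with the Masmoudi--Rousset-type bound $\|\Delta R\cdot\bn\|_{H^{m-\frac12}(\partial\cO)}\lesssim\|\nabla R\|_m$ (Proposition 19 of \cite{masmoudi}); this is exactly where the term $\e\|\nabla R\|_m$ in \eqref{pi} comes from. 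Your claim that this contribution is only $\e^{\frac14}\t^{-\gamma}$ is unjustified, and your alternative attribution of the $\e\|\nabla R\|_m$ term to the $u^\e_a-u^0$ part (via \eqref{ua0}, which in fact only yields $\sqrt{\e}\,\t^{-\gamma}\|\nabla R\|_m$ and is anyway unnecessary once the paper's rewrite is used) does not repair either gap. To fix the proof you should incorporate both the divergence/boundary rewrite for $\pi_3$ and the $\pi_4$ harmonic-extension argument with the trace estimate for $\Delta R\cdot\bn$.
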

\begin{proof}
We  first decompose $\pi$ into four terms $\pi=\pi_1+\pi_2+\pi_3+\pi_4,$ where $\pi_1$, $\pi_2$, $\pi_3$ and $\pi_4$
 are determined respectively by \eqref{pi1p} and
\beq\label{pi2p}
\begin{cases}
\Delta\pi_2=\partial_t H \quad\mbox{ in }\cO,\\
\partial_\mathbf{n} \pi_2=0\quad\mbox{ on }\partial \cO,
\end{cases}
\eeq
\beq\label{pi3p}
\begin{cases}
\Delta\pi_3=-\dive(u^{\e}\cdot\nabla R+R\cdot\nabla u^{\e}_a) \quad\mbox{ in }\cO,\\
\partial_\mathbf{n} \pi_3= - (u^{\e}\cdot\nabla R+R\cdot\nabla u^{\e}_a)\cdot \mathbf{n}\quad\mbox{ on }\partial \cO,
\end{cases}
\eeq
and
\beq\label{pi4p}
\begin{cases}
\Delta\pi_4=-\e\Delta H \quad\mbox{ in }\cO,\\
\partial_\mathbf{n} \pi_4=\e\Delta R\cdot \mathbf{n}\quad\mbox{ on }\partial \cO.
\end{cases}
\eeq
\no $\bullet$ \underline{The estimate of $\na\pi_1.$}\vspace{0.2cm}

The estimate $\nabla \pi_1$ relies on Lemma \ref{pi1}. Indeed we deduce from Lemma \ref{pi1} and \eqref{FH1} that
\beq \label{pi1qw}
\|\nabla\pi_1\|_\ell
\lesssim\e^{\frac{1}{4}}\t^{-\gamma}\quad \mbox{for}\ \ 0\leq \ell\leq p-3.
\eeq

\no $\bullet$ \underline{The estimate of $\na\pi_2.$}\vspace{0.2cm}

We claim that  for $0\leq \ell\leq p-3,$
  \begin{eqnarray} \label{ferra}
\|\nabla\pi_2\|_\ell
\lesssim\e^{\frac{1}{4}}\t^{-\gamma}.
\end{eqnarray}

Without losing generality, we may assume that  $\int_{\cO}\pi_2=0$.  Again we proceed by induction on $\ell$.
Indeed by taking $L^2$ inner product of the $\eqref{pi2p}$ with $\pi_2$ and then
using integrations by parts and  the Poincar\'{e} inequality, we find
\begin{eqnarray*}
\|\nabla\pi_2\|^2=-\int_{\cO}(\Delta\pi_2)\pi_2=-\int_{\cO}(\partial_tH)\pi_2\lesssim\|\partial_t H\|\|\nabla\pi_2\|,
\end{eqnarray*}
which together with \eqref{FH1} yields  \eqref{ferra} for $\ell=0$.

Next let us assume that \eqref{ferra} holds for $\ell\leq m-1\leq p-4$,
we are going to prove that  \eqref{ferra} holds for $\ell=m.$  In order to do it, we apply $Z^\alpha$ with $|\alpha| \leq m$ to \eqref{pi2p}
and then taking $L^2$ inner product of the resulting equation with $Z^\alpha\pi_2$ and using integration
by parts, we obtain
\beq \label{pi2d}
  \begin{split}
\|\nabla Z^{\alpha}\pi_2\|^2=&\int_{\partial\cO}(\partial_{\mathbf{n}}Z^{\alpha}\pi_2) Z^{\alpha}\pi_2-\int_{\cO}(\Delta Z^{\alpha}\pi_2) Z^{\alpha}\pi_2\\
=&\int_{\partial\cO}[\partial_{\mathbf{n}},Z^{\alpha}]\pi_2 Z^{\alpha}\pi_2-\int_{\cO}(Z^{\alpha}\partial_t H)Z^{\alpha}\pi_2
+\int_{\cO} [\Delta,Z^{\alpha}]\pi_2Z^{\alpha}\pi_2,
\end{split} \eeq
where we used  $\partial_\mathbf{n} \pi_2=0$ on $\p\cO,$  so that
 $Z^{\alpha} \partial_\mathbf{n} \pi_2=0$ on $\p\cO.$

As  the estimate of $J_1$ in the proof of Lemma \ref{pi1}, if $Z^{\alpha}$ contains $Z_0$,
 the first term of the right hand side of \eqref{pi2d} disappear. Otherwise, $[\partial_n,Z^\alpha]$ is a tangential differential
 operator of order $m.$ Then we get, by applying the trace inequality \eqref{trace}, that
\begin{equation}
\label{pi21}\vert\int_{\partial\cO}[\partial_{\mathbf{n}},Z^{\alpha}]\pi_2 Z^{\alpha}\pi_2 \vert \lesssim\|\pi_2\|_{H^m(\partial\cO)}^2\lesssim\|\pi_2\|_m^2+\|\pi_1\|_m\|\nabla\pi_2\|_m\lesssim\|\nabla\pi_2\|_{m-1}\|\nabla\pi_2\|_m.
\end{equation}
While it follows from \eqref{FH1} that
\begin{eqnarray}\label{pi22}
|\int_{\cO}(Z^{\alpha}\partial_t H)Z^{\alpha}\pi_2|\lesssim \e^{\frac{1}{4}}\t^{-\gamma}\|\pi_2\|_{m}.
\end{eqnarray}

For the last term in the right hand side of \eqref{pi2d}, we deduce along the same line to  that of  $J_2$ in the proof of Lemma \ref{pi1} that
\begin{equation}\label{pi23}
|\int_{\cO} [\Delta,Z^{\alpha}]\pi_2Z^{\alpha}\pi_2|\lesssim \|\nabla\pi_2\|_{m-1}\|\pi_2\|_{m}.
\end{equation}


On the other hand, it follows from the boundary condition $\partial_{\mathbf{n}}\pi_2=0$ that
\begin{eqnarray*}
\|\nabla\pi_2\|_{H^{m-1}(\cO)}\approx\|\pi_2\|_{H^m(\cO)}.
\end{eqnarray*}
Then by inserting the estimates (\ref{pi21}),\eqref{pi22} and \eqref{pi23} into \eqref{pi2d} and then summing up the
resulting inequalities for $|\alpha|\leq m,$ we obtain
\beno
\|\nabla \pi_2\|_m^2\leq C\bigl(\|\nabla\pi_2\|_{m-1}\|\na\pi_2\|_{m}+ \e^{\frac{1}{4}}\t^{-\gamma}\|\na\pi_2\|_{m-1}\bigr),
\eeno
which together with the inductive assumption ensures \eqref{ferra} for $\ell=m.$ This proves \eqref{ferra}.

\smallskip
\no $\bullet$ \underline{The estimate of $\na\pi_3.$}\vspace{0.2cm}

Due to $\dive u^{\e}=\sigma^0 $ and $\dive R=-H,$ we write
\beno
\dive(u^{\e}\cdot\nabla R)=\dive\bigl(R\cdot\na u^\e-H u^\e-\sigma^0 R\bigr).
\eeno
While due to $u^{\e}\cdot \mathbf{n}=R\cdot \mathbf{n}=0$ on $\p\cO$ and $\nabla \mathbf{n}$ is symmetric,
one has
 $$(u^{\e}\cdot\nabla R)\cdot \mathbf{n}=- (u^{\e}\cdot\nabla \mathbf{n})\cdot R
=-(R\cdot\nabla \mathbf{n})\cdot u^{\e}=(u^\e\cdot\nabla \mathbf{n})\cdot R.$$
In view of \eqref{pi3p}, $\pi_3$ verifies
\begin{equation*}
\begin{cases}
\Delta\pi_3
=-\dive\bigl(R\cdot\nabla (u^{\e}+u^{\e}_a)-\sigma^0  R-Hu^\e\bigr)\quad\mbox{ in }\cO,\\
\partial_\mathbf{n}\pi_3=-R\cdot\nabla (u^{\e}+u^{\e}_a)\cdot \mathbf{n}\quad\mbox{ on }\partial\cO .
\end{cases}
\end{equation*}
From Lemma \ref{pi1} and the generalized Sobolev-Gagliardo-Nirenberg-Morse inequality, we infer that
\beno
  \begin{split}
\|\nabla\pi_3\|_m \lesssim&\|R\cdot\nabla (u^{\e}+u^{\e}_a)-\sigma^0  R-Hu^\e\|_m\\
\lesssim&\|R\|_m\|\nabla u^{\e}_a\|_{m,\infty}+ \e^2\bigl(\|R\|_{L^{\infty}}\|\nabla R\|_m+\|R\|_m\|\nabla R\|_{L^{\infty}}\bigr)\\
&+\|\sigma^0 \|_{m,\infty}\|R\|_m+\|H\|_{m}\|u^{\e}_a\|_{m,\infty}+\e^2\|H\|_{m,\infty}\|R\|_m,
\end{split} \eeno
which together \eqref{FH1}, \eqref{FH2}, \eqref{Ra}, and the fact that $\sigma^0$ is smooth and supported in $[0,T],$
ensures that for  $m\leq p-3$,
 \beq
\label{pi3-esti}
\|\nabla\pi_3\|_m \lesssim\t^{-\gamma}\|R\|_m+\e^{\frac{1}{4}}\t^{-\gamma}+
\e^2\bigl(\|R\|_{L^{\infty}}\|\nabla R\|_m+\|R\|_m\|\nabla R\|_{L^{\infty}}\bigr).
\eeq

\no $\bullet$ \underline{The estimate of $\na\pi_4.$}\vspace{0.2cm}

In view of \eqref{pi4p}, we write
\begin{equation*}
\Delta (\pi_4+\e H)=0 \mbox{ in }\cO\   \text{ and } \
\partial_\mathbf{n}(\pi_4+\e H)=-\e\Delta R\cdot \mathbf{n}+\e\partial_\mathbf{n} H \mbox{ on }\partial\cO,
\end{equation*}
from which, we deduce that  for $m\geq 1$
  \begin{eqnarray*}
\|\nabla(\pi_4+\e H)\|_m\lesssim\e\|\Delta R\cdot \mathbf{n}-\partial_\mathbf{n} H\|_{H^{m-\frac{1}{2}}(\partial\cO)}.
\end{eqnarray*}
yet it follows from $(\ref{FH1})$ and trace theorem that for $m\leq p-4,$
\begin{gather*}
\e\|\Delta H\|_m\lesssim \e^{\frac{1}{4}}\t^{-\gamma},\\
\e\|\partial_{\mathbf{n}}H\|_{H^{m-\frac{1}{2}}(\cO)}\lesssim \e\|\nabla^2 H\|_{m}\lesssim\e^{\frac{1}{4}}\t^{-\gamma}.
\end{gather*}
As a result, it comes out
  \begin{eqnarray*}
\|\nabla\pi_4\|_m\lesssim\e\|\Delta R\cdot \mathbf{n}\|_{H^{m-\frac{1}{2}}(\partial\cO)}+\e^{\frac{1}{4}}\t^{-\gamma}.
\end{eqnarray*}
The term $\|\Delta R\cdot \mathbf{n}\|_{H^{m-\frac{1}{2}}(\partial\cO)}$ above can be handled  exactly
 as that in Proposition 19 of \cite{masmoudi} so that
\begin{eqnarray*}
\|\Delta R\cdot \mathbf{n}\|_{H^{m-\frac{1}{2}}(\partial\cO)}\lesssim \|\nabla R\|_m,
\end{eqnarray*}
Then we obtain for $1\leq m\leq p-4,$
  \begin{eqnarray}
\label{pi4-esti}
\|\nabla\pi_4\|_m \lesssim \e\|\nabla R\|_{m}+\e^{\frac{1}{4}}\t^{-\gamma}.
\end{eqnarray}

By summarizing the estimates \eqref{pi1qw},  \eqref{ferra}, \eqref{pi3-esti} and \eqref{pi4-esti},  we arrive at (\ref{pi}).
This completes the proof of Proposition
 \ref{pi-propi}.
\end{proof}

With Proposition \ref{pi-propi},
we now turn to the
estimate of the two integrals involving the pressure term in  \eqref{name-me}.
\begin{corollary}\label{pre-Reta}
{\sl Let $2\leq m\leq p-4.$ Then for $\alpha,\beta$ satisfying $|\alpha|\leq m, |\beta|\leq m-1,$ and any $\lambda>0$
there exists $C_\lambda$ so that
\beq\label{pre1} \begin{split}
\vert  \int_{\cO}Z^{\alpha}\nabla\pi\cdot  Z^{\alpha}R  \vert\leq &\lambda\e\|\nabla R\|_m^2+C\e^{\frac{1}{4}}\t^{-\gamma}\\
&+C_{\lambda}\bigl(\e+\t^{-\gamma}+\e^2(\|\eta\|_{L^{\infty}}^2+\|R\|_{1,\infty}^2)\bigr)\|R\|_m^2,
\end{split} \eeq and
\beq \label{pre2}\begin{split}
\sqrt{\e} \vert \int_{\cO}Z^{\beta}\chi \CN(\nabla\pi)\cdot Z^{\beta}\eta  \vert & \leq \lambda\e\|\nabla\eta\|_{m-1}^2+C_{\lambda}
\bigl( \e^{\frac{1}{2}}\t^{-2\gamma}+\e^4\t^{-2\gamma}\|R\|^2_{L^{\infty}}\bigr)\\
&+C_{\lambda}\bigl(\e+\t^{-2\gamma}+\e^3(\|\eta\|^2_{L^{\infty}}+\|R\|^2_{1,\infty})\bigr)(\|R\|_m^2+\|\eta\|_{m-1}^2).
\end{split} \eeq
 }
\end{corollary}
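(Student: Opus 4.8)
The plan is to deduce Corollary \ref{pre-Reta} from the pressure estimate \eqref{pi} of Proposition \ref{pi-propi} together with the equivalences \eqref{eta}, \eqref{etaL} of Lemma \ref{EQUIV}, handling the two inequalities \eqref{pre1} and \eqref{pre2} in turn.

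\medskip

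\noindent\textit{Proof of \eqref{pre1}.} First I would bound $\vert\int_{\cO}Z^{\alpha}\nabla\pi\cdot Z^{\alpha}R\vert$ by $\|\nabla\pi\|_m\|R\|_m$ via Cauchy--Schwarz and the definition \eqref{scn} of $\|\cdot\|_m$. Then I substitute the bound \eqref{pi} for $\|\nabla\pi\|_m$, which is legitimate since $2\le m\le p-4$. This gives four contributions:
\begin{itemize}
\item $\e^{\frac14}\t^{-\gamma}\|R\|_m$, which by Young's inequality is $\lesssim \e^{\frac14}\t^{-\gamma}(1+\|R\|_m^2)$;
\item $\e\|\nabla R\|_m\|R\|_m$, which by Young's inequality with parameter $\lambda$ is $\le \lambda\e\|\nabla R\|_m^2+C_\lambda\e\|R\|_m^2$;
\item $\t^{-\gamma}\|R\|_m^2$, which is already of the desired form;
\item $\e^2\bigl(\|R\|_{L^\infty}\|\nabla R\|_m+\|R\|_m\|\nabla R\|_{L^\infty}\bigr)\|R\|_m$. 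For the first piece I would use \eqref{etaL} to replace $\sqrt{\e}\|\nabla R\|_\infty$ by (up to lower order) $\|\eta\|_{L^\infty}+\|R\|_{1,\infty}$, hence $\e^2\|R\|_{L^\infty}\|\nabla R\|_m\|R\|_m\le \lambda\e\|\nabla R\|_m^2+C_\lambda\e^3\|R\|_{L^\infty}^2\|R\|_m^2$, and $\|R\|_{L^\infty}\le\|R\|_{1,\infty}$; the second piece $\e^2\|\nabla R\|_{L^\infty}\|R\|_m^2$ is, using \eqref{etaL} again and $\e^2\le\e$, bounded by $C\bigl(\e+\e^2(\|\eta\|_{L^\infty}^2+\|R\|_{1,\infty}^2)\bigr)\|R\|_m^2$ after one more Young's inequality on the $\sqrt{\e}\t^{-\gamma}$ cross term.
\end{itemize}
Collecting these and absorbing $\e^{\frac14}\t^{-\gamma}\|R\|_m^2$ into the $\t^{-\gamma}\|R\|_m^2$ term (since $\e^{1/4}\le 1$) yields \eqref{pre1}.

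\medskip

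\noindent\textit{Proof of \eqref{pre2}.} Here I would unfold $\chi\CN(\nabla\pi)$ using \eqref{def-dep}: it equals $\chi\Pi\bigl(\tfrac12(\partial_\bn\nabla\pi+\nabla\nabla\pi\cdot\bn)+M\nabla\pi\bigr)$, which involves at most first derivatives of $\nabla\pi$, i.e.\ second derivatives of $\pi$. The key difficulty, analogous to the handling of $I_3$ deferred from Subsection \ref{time-eta}, is that $\CN(\nabla\pi)$ contains a normal derivative $\partial_\bn\nabla\pi$, so a naive estimate would cost $\|\nabla^2\pi\|$ which is not directly controlled by \eqref{pi}. I expect this to be the main obstacle. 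The remedy is to integrate by parts in $z$ (equivalently, transfer the extra normal derivative onto $Z^\beta\eta$ or onto the smooth coefficients) exploiting that $Z^\beta\eta=0$ on $\partial\cO$, so that no boundary term is produced, and that only \emph{tangential} derivatives of $\nabla\pi$ together with \emph{one} normal derivative survive after one extra use of the equation $\Delta\pi = \dots$ to trade the remaining $\partial_\bn^2\pi$ for tangential derivatives plus a right-hand side. After this manipulation the worst term is of the form $\sqrt{\e}\,\|\nabla\pi\|_m\,\|\nabla\eta\|_{m-1}$ (the $\nabla\eta$ coming from the integration by parts), plus lower-order terms $\sqrt{\e}\,\|\nabla\pi\|_m\,\|\eta\|_{m-1}$.

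\medskip

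\noindent Then I substitute \eqref{pi} into these, getting, for the leading term, $\sqrt{\e}\|\nabla\eta\|_{m-1}$ times $\bigl(\e^{1/4}\t^{-\gamma}+\e\|\nabla R\|_m+\t^{-\gamma}\|R\|_m+\e^2(\|R\|_{L^\infty}\|\nabla R\|_m+\|R\|_m\|\nabla R\|_{L^\infty})\bigr)$. Each product is split by Young's inequality with a small parameter $\lambda$, absorbing the $\e\|\nabla\eta\|_{m-1}^2$ part into the left-hand side of \eqref{etae}/\eqref{name-me} and keeping the quadratic remainders: $\sqrt{\e}\|\nabla\eta\|_{m-1}\cdot\e^{1/4}\t^{-\gamma}\le\lambda\e\|\nabla\eta\|_{m-1}^2+C_\lambda\e^{1/2}\t^{-2\gamma}$; the cross term with $\e\|\nabla R\|_m$ produces $\lambda\e\|\nabla\eta\|_{m-1}^2+C_\lambda\e^3\|\nabla R\|_m^2$ which, via \eqref{eta}, is $\lesssim\e\bigl(\|\eta\|_{m-1}^2+\|R\|_m^2+\t^{-2\gamma}\bigr)$ up to the factor $\e^2\le1$; the term with $\t^{-\gamma}\|R\|_m$ gives $\lambda\e\|\nabla\eta\|_{m-1}^2+C_\lambda\e\t^{-2\gamma}\|R\|_m^2$ hence $\le C_\lambda\t^{-2\gamma}\|R\|_m^2$ after using $\e\le1$; and the $\e^2$ cubic term, after inserting \eqref{etaL} to eliminate $\|\nabla R\|_\infty$ and $\|\nabla R\|_m$ in favour of $\|\eta\|_{L^\infty}+\|R\|_{1,\infty}$ and $\|\eta\|_{m-1}+\|R\|_m$, contributes $\lambda\e\|\nabla\eta\|_{m-1}^2+C_\lambda\bigl(\e^3(\|\eta\|_{L^\infty}^2+\|R\|_{1,\infty}^2)+\e^4\t^{-2\gamma}\|R\|_{L^\infty}^2\bigr)(\|R\|_m^2+\|\eta\|_{m-1}^2)+C_\lambda\e^4\t^{-2\gamma}\|R\|_{L^\infty}^2$. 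The lower-order $\sqrt{\e}\|\nabla\pi\|_m\|\eta\|_{m-1}$ term is estimated the same way but with $\|\eta\|_{m-1}$ in place of $\|\nabla\eta\|_{m-1}$, and is absorbed directly into the $(\|R\|_m^2+\|\eta\|_{m-1}^2)$ terms with no need for $\lambda$. Collecting everything, choosing $\lambda$ as small as needed, and noting $\e^{1/2}\t^{-2\gamma}$ dominates $\e\t^{-2\gamma}$-type leftovers, we obtain \eqref{pre2}, which finishes the proof of Corollary \ref{pre-Reta}.
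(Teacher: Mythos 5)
Your treatment of \eqref{pre1} is essentially the paper's own argument (Cauchy--Schwarz, Proposition \ref{pi-propi}, Young's inequality, then \eqref{etaL} to remove $\|\nabla R\|_{L^\infty}$), and it is correct. The problem is in \eqref{pre2}. The idea of integrating by parts using $Z^\beta\eta=0$ on $\partial\cO$ is indeed the key step, but your derivative bookkeeping breaks the proof: since $|\beta|\le m-1$, moving the single extra derivative contained in $\CN(\nabla\pi)$ onto $Z^\beta\eta$ leaves at most $m-1$ tangential derivatives on $\nabla\pi$, so the correct outcome is $\sqrt{\e}\,\vert\int_{\cO}Z^{\beta}\chi\CN(\nabla\pi)\cdot Z^{\beta}\eta\vert\le\lambda\e\|\nabla\eta\|_{m-1}^2+C_{\lambda}\|\nabla\pi\|_{m-1}^2$, after which one applies \eqref{pi} with $m-1$ in place of $m$ (legitimate because $m\ge 2$ and $m-1\le p-4$). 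At that order, $\e\|\nabla R\|_{m-1}=\sqrt{\e}\cdot\sqrt{\e}\|\nabla R\|_{m-1}$ is converted by \eqref{eta} into $\sqrt{\e}\bigl(\|\eta\|_{m-1}+\|R\|_m+\sqrt{\e}\t^{-\gamma}\bigr)$, and squaring produces exactly the terms $\e(\|R\|_m^2+\|\eta\|_{m-1}^2)$, $\e^{\frac12}\t^{-2\gamma}$, $\e^{3}(\|\eta\|_{L^\infty}^2+\|R\|_{1,\infty}^2)(\cdots)$ and $\e^{4}\t^{-2\gamma}\|R\|_{L^\infty}^2$ appearing in \eqref{pre2}. (There is also no need for your detour through the Poisson equation to trade $\partial_{\bn}^2\pi$: one integration by parts on the full gradient structure suffices.)

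Your version instead ends up with $\sqrt{\e}\|\nabla\pi\|_{m}\|\nabla\eta\|_{m-1}$, i.e.\ one tangential derivative too many on $\nabla\pi$. After Young's inequality this leaves $C_\lambda\e^{2}\|\nabla R\|_m^2$ (your exponent $\e^3$ is a further small slip) and products like $\e^2\|R\|_{L^\infty}\|\nabla R\|_m$, and at this point you invoke \eqref{eta}/\eqref{etaL} to replace $\|\nabla R\|_m$ by $\|\eta\|_{m-1}+\|R\|_m$. Lemma \ref{EQUIV} does not allow this: it only controls $\sqrt{\e}\|\nabla R\|_{m-1}$ (and $\sqrt{\e}\|\nabla R\|_{L^\infty}$), one order lower; controlling $\sqrt{\e}\|\nabla R\|_{m}$ would require $\|\eta\|_{m}$ and $\|R\|_{m+1}$, which are not part of the scheme, and the leftover $\e^{2}\|\nabla R\|_m^2$ cannot be absorbed into the only dissipative term $\lambda\e\|\nabla\eta\|_{m-1}^2$ permitted on the right of \eqref{pre2}. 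So as written the argument does not yield \eqref{pre2}; it is repaired precisely by stopping at $\|\nabla\pi\|_{m-1}$, as the paper does.
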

\begin{proof}
Thanks to (\ref{pi}), for any $\lambda>0,$  we get, by applying  Young's inequality, that
\begin{eqnarray*}
\vert  \int_{\cO}Z^{\alpha}\nabla\pi\cdot  Z^{\alpha}R  \vert &\leq & \lambda\e\|\nabla R\|_m^2+\e^{\frac{1}{4}}\t^{-\gamma}+C_{\lambda}\left(\e+\t^{-\gamma}\right)\|R\|_{m}^2\\
\nonumber&&+\left(C_{\lambda}\e^3\|R\|_{L^{\infty}}^2+\e^2\|\nabla R\|_{L^{\infty}}\right)\|R\|_m^2,
\end{eqnarray*}
which together with \eqref{etaL} ensures \eqref{pre1}.

On the other hand, due to $\eta=0$ on $\p\cO,$
 by using integration by parts and Young's inequality, we find that for any $\lambda>0,$
  \begin{eqnarray}  \label{star}
\sqrt{\e} \vert \int_{\cO}Z^{\beta}\chi \CN(\nabla\pi)\cdot Z^{\beta}\eta \vert  
 \leq \lambda\e\|\nabla\eta\|_{m-1}^2+C_{\lambda}\|\nabla\pi\|_{m-1}^2 ,
\end{eqnarray}
Yet it follows from (\ref{pi}), (\ref{eta}) and (\ref{etaL}) that
  \begin{eqnarray*}
\|\nabla\pi\|_{m-1}^2
&\lesssim& \e^{\frac{1}{2}}\t^{-2\gamma}+\e^4\t^{-2\gamma}\|R\|^2_{L^{\infty}}\\ \nonumber&&+\bigl(\e+\t^{-2\gamma}+\e^3(\|\eta\|^2_{L^{\infty}}+\|R\|^2_{1,\infty})\bigr)(\|R\|_m^2+\|\eta\|_{m-1}^2) .
\end{eqnarray*}
Substituting  the above estimate  into  \eqref{star} leads to \eqref{pre2}.
\end{proof}

By inserting the estimates  \eqref{pre1} and \eqref{pre2} into \eqref{name-me} and choosing $\lambda$ to be
sufficiently small, we deduce that
 for $2\leq m\leq p-4$ and for $t\in [0,T^\e ]$,
\beq \label{Reta}
  \begin{split}
\f{d}{dt}\bigl(&\|R(t)\|_m^2+\|\eta(t)\|_{m-1}^2)+\e\bigl(\|\nabla R\|_m^2+\|\na\eta\|_{m-1}^2\bigr)\\
&\lesssim \e\bigl(\|\nabla R\|_{m-1}^2+\|\na\eta\|_{m-2}^2\bigr)+\e^{\frac{1}{4}}\t^{-\gamma}+\e^4\t^{-2\gamma}\|R\|^2_{L^{\infty}}\\
&\quad+\bigl(\e+\t^{-\gamma}+\e^2(\|\eta\|_{L^{\infty}}^2+\|R\|_{1,\infty}^2)\bigr)(\|\eta\|_{m-1}^2+\|R\|_m^2).
\end{split} \eeq

In order to close the  estimate of \eqref{Reta},  we still need the estimate of $\|R\|_{1,\infty}$ and $\|\eta\|_{L^{\infty}}$,
which will be the content of the next section.


\subsection{Estimate of $\|R\|_{1,\infty}$ and $\|\eta\|_{L^{\infty}}$}
\label{lipR}

\begin{proposition}\label{p1}
{\sl  Let $m>3$ be an integer. Then one has
  \beq \label{p1ad}
\e\|{R}(t)\|^2_{1,\infty}\leq C\bigl(
\| {R}(t)\|^2_m +\|\eta(t)\|_{m-1}^2+\e \t^{-2\gamma}\bigr).
\eeq}
\end{proposition}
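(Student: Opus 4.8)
The plan is to control the conormal $W^{1,\infty}$ norm of $R$ by the conormal Sobolev norms $\|R\|_m$ and $\|\eta\|_{m-1}$ through an anisotropic Sobolev embedding adapted to the boundary layer scaling. The starting observation is that $\|R\|_{1,\infty}= \sum_{|\alpha|\le 1}\|Z^\alpha R\|_{L^\infty}$, so it suffices to bound $\|R\|_{L^\infty}$ and $\|Z_jR\|_{L^\infty}$ for $0\le j\le 5$. For the purely tangential directions, the commutator structure \eqref{cm} guarantees that $Z_jR$ ($1\le j\le 5$) and $Z_0R$ are themselves controlled in $H^{m-1}_{\rm co}$ by $\|R\|_m$, so the whole question reduces to an $L^\infty$ estimate of a conormal function in terms of conormal $H^m$ norms plus \emph{one} normal derivative.

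First I would recall the standard anisotropic embedding used in this circle of ideas (see \cite{masmoudi}): for a scalar function $f$ on $\cO$ one has, in a collar neighbourhood of $\partial\cO$ foliated by the level sets of $\varphi$,
\begin{equation*}
\|f\|_{L^\infty(\cO)}^2 \lesssim \|f\|_{m_0}\bigl(\|f\|_{m_0}+\|\partial_\mathbf{n} f\|_{m_0}\bigr)
\end{equation*}
for any $m_0>\tfrac{3}{2}+1$, say $m_0=m-1$ with $m>3$; the point is that one tangential derivative in the $x'$ variables and half a normal derivative in $\varphi$ together give $L^\infty$ by one-dimensional Sobolev embedding in $z=\varphi/\sqrt\e$ combined with a two-dimensional tangential embedding, and the extra normal derivative is what the conormal framework is missing. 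Applying this to $f=R$ and $f=Z_jR$ gives
\begin{equation*}
\|R\|_{1,\infty}^2 \lesssim \|R\|_m\bigl(\|R\|_m+\|\partial_\mathbf{n} R\|_m\bigr).
\end{equation*}
Now multiply by $\e$ and invoke the equivalence \eqref{eta} of Lemma \ref{EQUIV}: $\sqrt\e\|\partial_\mathbf{n} R\|_{m-1}$ is comparable to $\|\eta\|_{m-1}+\|R\|_m+\sqrt\e\t^{-\gamma}$, so $\e\|\partial_\mathbf{n} R\|_m^2\lesssim \|\eta\|_m^2+\|R\|_{m+1}^2+\e\t^{-2\gamma}$. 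This last step is slightly delicate because the embedding wants $\|\partial_\mathbf{n} R\|_m$ while \eqref{eta} is stated at level $m-1$; I would therefore run the embedding at one order lower, i.e. bound $\|R\|_{1,\infty}$ using $\|R\|_{m}$ and $\|\partial_\mathbf{n} R\|_{m-1}$ (legitimate since $m-1\ge 3 > \tfrac52$ when $m>3$, so the tangential $2$D embedding plus the $1$D normal embedding still close), which lines up exactly with the index in \eqref{eta}. Combining, $\e\|R\|_{1,\infty}^2\lesssim \e^{1/2}\|R\|_m\bigl(\|\eta\|_{m-1}+\|R\|_m+\sqrt\e\t^{-\gamma}\bigr)$, and then Young's inequality absorbs everything into $\|R\|_m^2+\|\eta\|_{m-1}^2+\e\t^{-2\gamma}$, which is \eqref{p1ad}.

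The main obstacle I anticipate is the careful bookkeeping at the boundary: the anisotropic $L^\infty$ estimate must be applied to $Z_jR$, and the commutators $[Z_j,\partial_\mathbf{n}]$, $[Z_i,Z_j]$ must be shown (using \eqref{cm}) to preserve the conormal regularity so that $\|\partial_\mathbf{n}(Z_jR)\|_{m-1}$ really is controlled by $\|\partial_\mathbf{n} R\|_m$ and lower-order tangential norms; near the support of $1-\chi$ one also has to handle $R$ in the interior where the normal variable is not singular, but there the usual $H^{m_0}\hookrightarrow L^\infty$ with $m_0>3/2+3/2$ suffices and loses no powers of $\e$. The rest — the one-dimensional trace/embedding in $z$, the two-dimensional tangential embedding, and the final Young's inequality — is routine. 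I would cite the relevant lemma of \cite{masmoudi} (the analogue of their Proposition/Corollary giving $\|f\|_{L^\infty}^2\lesssim\|f\|_{m_0}(\|f\|_{m_0}+\|\partial_\mathbf{n} f\|_{m_0})$) rather than reprove it, exactly as the paper does for the trace inequality \eqref{trace} and the estimate of $\|\Delta R\cdot\mathbf n\|_{H^{m-1/2}(\partial\cO)}$.
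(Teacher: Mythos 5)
Your proposal is correct and is essentially the paper's proof: the paper likewise applies the anisotropic embedding of Proposition 20 of \cite{masmoudi}, in the form $\|f\|_{L^\infty}^2\lesssim \|\partial_\mathbf{n} f\|_{m_0}\|f\|_{m_0}+\|f\|_{m_0}^2$ valid for $m_0>1$, to $R$ and then to $ZR$, and converts $\sqrt{\e}\,\|\partial_\mathbf{n} R\|_{m-1}$ through the equivalence \eqref{eta} plus Young's inequality, needing $m\geq m_0+2$, i.e. $m>3$. The only discrepancy is your stated threshold $m_0>\tfrac52$, which is more restrictive than the cited $m_0>1$ and would be slightly too demanding once you drop to level $m-2$ when treating $Z_jR$ with $m=4$; with the actual hypothesis of the cited lemma your bookkeeping closes exactly as in the paper.
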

\begin{proof}
We first deduce from  Proposition 20 of
\cite{masmoudi} that for $m_0>1$,
  \begin{eqnarray*}
\|{R}(t)\|^2_{L^{\infty}} \leq C\bigl(\|\partial_\mathbf{n}{R}(t)\|_{m_0}\|{R}(t)\|_{m_0}+\|{R}(t)\|_{m_0}^2\bigr),
\end{eqnarray*}
which together with (\ref{eta}) implies
\beq \label{S5eq86}
\begin{split}
\e\|{R}(t)\|^2_{L^{\infty}}\leq & C\bigl(\e\|\partial_\mathbf{n}{R}(t)\|_{m_0}
\|{R}(t)\|_{m_0}+\e\|{R}(t)\|_{m_0}^2\bigr)\\
 \leq & C\bigl( \|\eta(t)\|_{m-1}^2+\| {R}(t)\|^2_m+\e \t^{-2\gamma}\bigr)\quad\mbox{if}\  m \geq m_0+1.
\end{split}
\eeq
Along the same line, we can prove similar  estimate for $\|ZR\|_{L^{\infty}}$ if $m \geq m_0+2.$
\end{proof}

In order to estimate $\|\eta\|_{L^{\infty}},$ we introduce
  \begin{equation}\label{teta}
\tilde{\eta} :=\sqrt{\e}\nabla \wedge R .
\end{equation}
\begin{lemma}\label{equi-eta}
{\sl  Let $\eta$ and $\tilde{\eta} $ be determined respectively by \eqref{defeta} and \eqref{teta}. Then one has
 \begin{equation*}
\|\eta\|_{L^{\infty}}+\|R\|_{1,\infty}+\sqrt{\e}\t^{-\gamma}\approx\|\tilde{\eta}\|_{L^{\infty}}+\|R\|_{1,\infty}+\sqrt{\e}\t^{-\gamma}.
\end{equation*}}
\end{lemma}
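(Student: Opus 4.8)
The plan is to prove the equivalence in Lemma \ref{equi-eta} by showing that $\eta$ and $\tilde\eta$ differ, modulo the boundary cutoff and lower order terms, by quantities already controlled by $\|R\|_{1,\infty}+\sqrt\e\t^{-\gamma}$. First I would recall that $\CN(R)=(D(R)\mathbf n+MR)_{\tan}$, so that
\beno
\eta=\sqrt\e\bigl(D(R)\mathbf n+MR\bigr)_{\tan}\chi(x)+\sqrt\e\, G\,\chi(x),
\eeno
and by \eqref{HG} the last addend is $O(\sqrt\e\t^{-\gamma})$ in $L^\infty$. The smooth zeroth order term $\sqrt\e(MR)_{\tan}\chi$ is bounded by $\sqrt\e\|R\|_{L^\infty}\lesssim\sqrt\e\|R\|_{1,\infty}$, and hence may be absorbed into the right-hand side. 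So it suffices to compare $\sqrt\e\chi\,(D(R)\mathbf n)_{\tan}$ with $\tilde\eta=\sqrt\e\,\nabla\wedge R$ up to acceptable errors.

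The key algebraic point, which I would make explicit near the boundary where $\chi$ is supported and $|\mathbf n|=1$, is the classical identity relating the tangential part of $D(R)\mathbf n$ to the tangential part of the vorticity: for a vector field with $R\cdot\mathbf n=0$ on $\partial\cO$ one has, in $\mathcal V_{\delta_0}$,
\beno
(D(R)\mathbf n)_{\tan}=\tfrac12(\nabla\wedge R)\wedge \mathbf n\Big|_{\tan}+\ell(R),
\eeno
where $\ell(R)$ is a zeroth order term, linear in $R$ with smooth coefficients built from $\nabla\mathbf n$ (this uses that $\nabla\mathbf n$ is symmetric, as exploited repeatedly in Subsection \ref{sec-appr}). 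Multiplying by $\sqrt\e\chi$, this gives $\sqrt\e\chi(D(R)\mathbf n)_{\tan}=\tfrac12\chi\,\tilde\eta\wedge\mathbf n|_{\tan}+O(\sqrt\e\|R\|_{L^\infty})$ pointwise, whence $\|\eta\|_{L^\infty}\lesssim\|\tilde\eta\|_{L^\infty}+\|R\|_{1,\infty}+\sqrt\e\t^{-\gamma}$. For the reverse inequality I would use, as in the proof of Lemma \ref{EQUIV}, that $\tilde\eta=\sqrt\e\,\nabla\wedge R$ is a combination of $\sqrt\e\,\partial_{\mathbf n}R$ and tangential derivatives $\sqrt\e\, Z_j R$; the latter are $O(\sqrt\e\|R\|_{1,\infty})$, and the normal component $\sqrt\e\,\partial_{\mathbf n}R$ is controlled through its decomposition into $\sqrt\e\,\Pi\partial_{\mathbf n}R$ and $\sqrt\e\,\partial_{\mathbf n}R\cdot\mathbf n$ — exactly the split used in \eqref{violon2} — where $\sqrt\e\,\Pi\partial_{\mathbf n}R$ on the support of $\chi$ is, by the same identity, bounded by $\|\eta\|_{L^\infty}+\|R\|_{1,\infty}+\sqrt\e\t^{-\gamma}$, on the complement of $\{\chi=1\}$ it is bounded by $\|R\|_{1,\infty}$, and $\sqrt\e\,\partial_{\mathbf n}R\cdot\mathbf n=-\sqrt\e\sum_j c_jZ_jR-\sqrt\e H$ is $O(\sqrt\e\|R\|_{1,\infty}+\e^{3/4}\t^{-\gamma})$ by \eqref{div} and \eqref{FH1}.

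Putting the two directions together yields the stated equivalence. The main subtlety — rather than obstacle — is bookkeeping the boundary cutoff: the identity linking $(D(R)\mathbf n)_{\tan}$ and the vorticity is only valid near $\partial\cO$, so one must localize via $\chi$ and treat the transition region $\{0<\chi<1\}$, but there $1-\chi$ and $\nabla\chi$ are smooth and compactly supported away from the boundary, so every term that arises is a genuine interior term bounded by $\|R\|_{1,\infty}$; no derivative of $R$ beyond first order is produced, exactly as in the proofs of Lemma \ref{EQUIV} and Lemma \ref{pi1}. Once this is observed the proof is a direct, essentially pointwise, computation, so I would simply remark that it follows along the same lines as Lemma \ref{EQUIV}.
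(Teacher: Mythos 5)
Your proof is correct and takes essentially the same route as the paper's: both directions rest on the same vector identity relating $\CN(R)$, $\partial_{\mathbf n}R$ and $\nabla\wedge R$, together with the divergence relation \eqref{div}, the bounds on $G,H$, and the cutoff bookkeeping, the only organisational difference being that the paper gets the bound of $\|\tilde\eta\|_{L^\infty}$ in one line by citing \eqref{etaL} where you re-derive the relevant half of that equivalence. Two small corrections that do not affect the outcome: away from $\partial\cO$ the remainder in your identity for $(D(R)\mathbf n)_{\tan}$ also contains the tangential gradient of $R\cdot\mathbf n$ (a first-order conormal term, still absorbed by $\|R\|_{1,\infty}$, not a purely zeroth-order one), and the $L^\infty$ control of $H$ should invoke \eqref{FH2} rather than the $L^2$ bound \eqref{FH1}, giving the error $\sqrt{\e}\t^{-\gamma}$ as in the paper.
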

\begin{proof}
On the one hand, it follows from (\ref{etaL}) that
  \begin{equation*}
\|\tilde{\eta}\|_{L^{\infty}}\lesssim\sqrt{\e}\|\nabla R\|_{L^{\infty}}\lesssim \|\eta\|_{L^{\infty}}+\|R\|_{1,\infty}+\sqrt{\e}\t^{-\gamma},
\end{equation*}
which implies
\beq \label{teta1}
\|\tilde{\eta}\|_{L^{\infty}}+\|R\|_{1,\infty}+\sqrt{\e}\t^{-\gamma} \lesssim \|\eta\|_{L^{\infty}}+\|R\|_{1,\infty}+\sqrt{\e}\t^{-\gamma}.
\eeq

On the other hand,
due to
$n\wedge(\nabla\wedge R)=\nabla R\cdot \mathbf{n}-\partial_\mathbf{n} R,$ we have
  \begin{equation*}
\sqrt{\e}\|\partial_\mathbf{n} R\|_{L^{\infty}}\lesssim \|\tilde{\eta}\|_{L^{\infty}}+\sqrt{\e}\|ZR\|_{L^{\infty}}+\sqrt{\e}\|\partial_\mathbf{n} R\cdot \mathbf{n}\|_{L^{\infty}}.
\end{equation*}
Yet it follows from (\ref{div}) and (\ref{FH2}) that
 $$\|\partial_\mathbf{n} R\cdot \mathbf{n}\|_{L^{\infty}}\lesssim \|ZR\|_{L^{\infty}}+\t^{-\gamma},$$
 so that
  \begin{equation*}
\sqrt{\e}\|\partial_\mathbf{n} R\|_{L^{\infty}}\lesssim \|\tilde{\eta}\|_{L^{\infty}}+\|R\|_{1,\infty}+\sqrt{\e}\t^{-\gamma}.
\end{equation*}
This together with \eqref{defeta} shows that the other side of the inequality \eqref{teta1} holds.  This
 concludes the proof of Lemma \ref{equi-eta}.
\end{proof}
Now let us set
  \begin{equation} \label{fraknm}
\frak{N}_m(t) :=\|R(t)\|_m^2+\|\eta(t)\|_{m-1}^2+\e\|\tilde{\eta}(t)\|_{L^{\infty}}^2.
\end{equation}
Note that (\ref{R0}) implies
\begin{equation}
\|R_0\|_{m}\lesssim \e^{-\frac{1}{4}},\|\nabla R_0\|_{m-1}\lesssim \e^{-\frac{3}{4}}, \|\nabla^2R_0\|_{m-2}\lesssim \e^{-\frac{5}{4}}.
\end{equation}
Hence
\begin{equation}\label{td0}
\|\eta_0\|_{m-1}\lesssim\sqrt{\e}\|\nabla R_0\|_{m-1}\lesssim \e^{-\frac{1}{4}} \text{ and }
\|\tilde{\eta}_0\|_{L^{\infty}(\cO)}\lesssim \sqrt{\e}\|\nabla R_0\|_{H^1(\cO)}\lesssim \e^{-\frac{3}{4}}.
\end{equation}
Therefore
\begin{equation}
\frak{N}_m(0)\lesssim\e^{-\frac{1}{2}}.
\end{equation}
\begin{proposition}\label{S5prop12}
{\sl Let $\frak{N}_m(t) $ be determined by \eqref{fraknm}. Then there exist constant $\e_0, C$ so that
 for $\e\leq \e_0,$ $4\leq m\leq p-4$
  \begin{equation}\label{N}
\frak{N}_m(t)+\e\int_0^t\bigl(\|\nabla R\|_m^2+\|\eta\|_{m-1}^2\bigr)\,ds\leq C\e^{-\frac{1}{2}}\quad\mbox{for}\ \ t\leq T^\e.
\end{equation}}
\end{proposition}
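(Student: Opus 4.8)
\textbf{Proof strategy for Proposition \ref{S5prop12}.}
The plan is to run a Gronwall-type argument on the energy functional $\frak{N}_m(t)$ defined in \eqref{fraknm}, combining the conormal energy inequality \eqref{Reta} with the $L^\infty$-estimates of Section \ref{lipR}, and then iterating on $m$ to eliminate the lower-order dissipation terms on the right-hand side. The key point is a bootstrap: we will assume \emph{a priori} that on the time interval $[0,T^\e]$ we have $\e^2(\|\eta(t)\|_{L^\infty}^2+\|R(t)\|_{1,\infty}^2)\leq 1$, say, and that $\frak{N}_m(t)\leq 2C\e^{-1/2}$; this must then be recovered with a strictly better constant so that the set of times where it holds is open and closed, hence all of $[0,T^\e]$. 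Under that assumption the coefficient $\bigl(\e+\t^{-\gamma}+\e^2(\|\eta\|_{L^\infty}^2+\|R\|_{1,\infty}^2)\bigr)$ multiplying $\|\eta\|_{m-1}^2+\|R\|_m^2$ in \eqref{Reta} is $\lesssim \e+\t^{-\gamma}$, which is integrable in time since $\gamma>1$, so Gronwall's lemma is applicable; the forcing terms $\e^{1/4}\t^{-\gamma}$ and $\e^4\t^{-2\gamma}\|R\|_{L^\infty}^2\lesssim \e^4\t^{-2\gamma}\cdot\e^{-1}\frak{N}_m\lesssim \e^{2}\t^{-2\gamma}$ (using \eqref{S5eq86}) integrate to something $\lesssim \e^{1/4}$, dominated by the initial data bound $\frak{N}_m(0)\lesssim\e^{-1/2}$.

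First I would handle the troublesome lower-order dissipative terms $\e(\|\nabla R\|_{m-1}^2+\|\nabla\eta\|_{m-2}^2)$ appearing on the right of \eqref{Reta}. These are absorbed by running the estimate downward in $m$: one writes \eqref{Reta} for $m$, then adds a small multiple of \eqref{Reta} for $m-1$, and so on down to the base cases $m=1$ (for $\eta$, where the $\nabla\eta$ lower-order term is absent by the last sentence of Proposition \ref{prop-etae}) and the $L^2$ estimate \eqref{el2} of Proposition \ref{S5prop1} together with \eqref{Re2} at $m=1$; choosing the combination coefficients small enough, the $\e\|\nabla R\|_{m-1}^2$ terms on the right are swallowed by the $\e\|\nabla R\|_m^2$ dissipation produced one level up. This produces a single closed differential inequality of the form
\beq\label{plan-closed}
\f{d}{dt}\widetilde{\frak{N}}_m(t)+c\e\bigl(\|\nabla R\|_m^2+\|\nabla\eta\|_{m-1}^2\bigr)\lesssim \bigl(\e+\t^{-\gamma}\bigr)\widetilde{\frak{N}}_m(t)+\e^{\frac14}\t^{-\gamma},
\eeq
where $\widetilde{\frak{N}}_m$ is equivalent to $\|R\|_m^2+\|\eta\|_{m-1}^2$ (up to the harmless additive $\e\t^{-2\gamma}$), still under the bootstrap assumption. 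Next I would incorporate the $L^\infty$ control: Proposition \ref{p1} gives $\e\|R\|_{1,\infty}^2\lesssim \|R\|_m^2+\|\eta\|_{m-1}^2+\e\t^{-2\gamma}$, and the quantity $\e\|\tilde\eta\|_{L^\infty}^2$ entering $\frak{N}_m$ is controlled via Lemma \ref{equi-eta} together with a maximum-principle / parabolic $L^\infty$ estimate for the equation satisfied by $\tilde\eta=\sqrt\e\,\nabla\wedge R$ — this is exactly the step modeled on \cite{masmoudi}, where one uses that $\tilde\eta$ solves a convection-diffusion equation whose forcing is controlled by $\sqrt\e\|\nabla R\|_m$-type norms already bounded by the conormal energy, so that $\e\|\tilde\eta(t)\|_{L^\infty}^2\lesssim \e\|\tilde\eta_0\|_{L^\infty}^2 + \int_0^t(\text{controlled terms})$. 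Feeding \eqref{S5eq86} and this $\tilde\eta$-bound back shows $\e\|\eta\|_{L^\infty}^2+\e\|R\|_{1,\infty}^2\lesssim \frak{N}_m(t)$, so the bootstrap hypothesis $\e^2(\|\eta\|_{L^\infty}^2+\|R\|_{1,\infty}^2)\lesssim \e\frak{N}_m(t)\lesssim \e^{1/2}\to 0$ is recovered (for $\e\leq\e_0$) with room to spare.

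Then I would close the argument by Gronwall: from \eqref{plan-closed}, since $\int_0^\infty(\e+\t^{-\gamma})\,dt = \e\cdot\frac{T}{\e}+\int_0^\infty\t^{-\gamma}\,dt$ — wait, one must be careful that $\e\cdot t$ integrated up to $T/\e$ is $O(T)=O(1)$, hence harmless — and $\int_0^\infty \e^{1/4}\t^{-\gamma}\,dt\lesssim \e^{1/4}$, we get $\widetilde{\frak{N}}_m(t)\leq e^{C}\bigl(\widetilde{\frak{N}}_m(0)+C\e^{1/4}\bigr)\lesssim \e^{-1/2}$, and integrating the dissipation term in \eqref{plan-closed} gives $\e\int_0^t(\|\nabla R\|_m^2+\|\nabla\eta\|_{m-1}^2)\,ds\lesssim \e^{-1/2}$, which yields \eqref{N}. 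The continuation argument then upgrades $T^\e$ to $T/\e$: the bound \eqref{N} at $m=4$ combined with \eqref{S5eq86} and \eqref{p1ad} shows $\e^2\sup_{[0,T^\e]}\|R\|_{H^1}^2\lesssim \e^{2}\cdot\e^{-1/2}=\e^{3/2}$, i.e. $\|u^\e(t)\|_{H^1}$ stays bounded, so the local solution of Section \ref{Sec-RE} does not blow up before $T/\e$; this is precisely \eqref{APRIORI-R}. The main obstacle, and the part requiring the most care, is the $L^\infty$-estimate for $\tilde\eta$ and the self-consistent closure of the bootstrap: one has to verify that every term in the $\tilde\eta$-equation that is not obviously small is either dissipated or controlled by $\sqrt\e$ times a conormal norm already in $\frak{N}_m$, and that the $\e$-powers line up so that the nonlinear feedback terms $\e^2\|R\|_{1,\infty}\|\nabla R\|_{m-1}\|R\|_m$ etc.\ in \eqref{Reta} are genuinely subcritical given only $\frak{N}_m(0)\lesssim \e^{-1/2}$; this counting is delicate because $R$ is large (of size $\e^{-1/4}$ in $L^2$) and the margin, though present, is only a fractional power of $\e$.
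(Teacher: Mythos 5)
Your proposal follows essentially the same route as the paper's proof: an $L^\infty$ bound on $\tilde\eta=\sqrt{\e}\,\nabla\wedge R$ via the maximum principle for its transport-diffusion equation (estimating the forcing through \eqref{FH2}, \eqref{Ra} and the equivalences \eqref{eta}, \eqref{etaL}), combination with the integrated form of \eqref{Reta}, an induction over $m$ anchored at Propositions \ref{S5prop3}, \ref{prop-etae} and the $L^2$ estimate of Proposition \ref{S5prop1} to remove the lower-order dissipation, and a nonlinear Gronwall closure; the only real difference is that the paper disposes of the quadratic term $\e^2\frak{N}_m^2$ by a direct comparison argument for the integral inequality, producing the explicit factor $\left(1-C^2\e^{3/2}t\right)^{-1}$ and the smallness condition $\e\leq(2TC^2)^{-2/3}$, whereas you run an equivalent bootstrap/continuity argument, which works just as well since under the bootstrap $\e^2(\|\eta\|_{L^\infty}^2+\|R\|_{1,\infty}^2)\lesssim\e(\frak{N}_m+\e)\lesssim\e^{1/2}$. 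One small caveat on your absorption scheme for $\e\|\nabla R\|_{m-1}^2$: adding a \emph{small} multiple of the level-$(m-1)$ inequality to the level-$m$ one does not suffice, and the lower-order term is not swallowed by the level-$m$ dissipation directly, because the constant in front of $\e\|\nabla R\|_{m-1}^2$ on the right need not be smaller than the dissipation constant $C_1$; one must instead weight the inequalities geometrically with weights decreasing toward the higher order, or, as the paper does, bound the integrated lower-order dissipation $\e\|\nabla R\|_{L^2_t(H^{m-1}_{\rm co})}^2$ by the already-established level-$(m-1)$ integrated estimate and iterate upward from $m=1$.
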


\begin{proof}
In view of (\ref{R}), $\tilde{\eta}$ satisfies
  \begin{equation*}
\partial_t \tilde{\eta}-\e\Delta \tilde{\eta}+u^{\e}\cdot\nabla \tilde{\eta}+\sqrt{\e}\nabla u^{\e}\wedge\nabla R+\sqrt{\e} \nabla \wedge(R\cdot\nabla u^{\e}_a)=\sqrt{\e}\nabla \wedge F.
\end{equation*}
Maximum principle for the transport-diffusion equation ensures that
\begin{equation}\label{N1a}
\|\tilde{\eta}(t)\|_{L^{\infty}}\leq \|\tilde{\eta}_0\|_{L^{\infty}}+\sqrt{\e}\int_0^t\bigl(\|\nabla \wedge F\|_{L^{\infty}}+\|\nabla u^{\e}\wedge\nabla R\|_{L^{\infty}}
+\|\nabla \wedge(R\cdot\nabla u^{\e}_a)\|_{L^{\infty}}\bigr)\,ds.
\end{equation}
Applying (\ref{FH2}) gives
 $$\sqrt{\e}\|\nabla \wedge F(s)\|_{L^{\infty}}\lesssim \s^{-\gamma}.$$
While it follows from \eqref{Ra} that
\begin{equation*}
\sqrt{\e} \|\nabla u^{\e}\wedge\nabla R(s)\|_{L^{\infty}}\lesssim \sqrt{\e}\s^{-\gamma}\|\nabla R(s)\|_{L^{\infty}}+\e^{\frac{5}{2}}\|\nabla R(s)\|_{L^{\infty}}^2.
\end{equation*}
Notice that
 $$\sqrt{\e}\nabla\wedge(R\cdot\nabla u^{\e}_a)
=\sqrt{\e} \left(\p_i R\cdot\nabla (u^{\e}_a)^j-\p_jR\cdot\nabla (u^{\e}_a)^i\right)_{3\times 3}+ R\cdot\nabla(\sqrt{\e}\nabla\wedge u^{\e}_a),$$
we infer
  \begin{equation*}
\sqrt{\e}\| \nabla \wedge(R\cdot\nabla u^{\e}_a)(s)\|_{L^{\infty}}\lesssim \s^{-\gamma}
\bigl(\sqrt{\e}\|\na R(s)\|_{L^{\infty}}+\|R(s)\|_{L^{\infty}}).
\end{equation*}
By inserting the above estimates into \eqref{N1a} and then using
 (\ref{eta}), (\ref{etaL}) and \eqref{td0}, we achieve
  \begin{eqnarray*}
\nonumber\|\tilde{\eta}(t)\|_{L^{\infty}}\!\!\!&\lesssim&\!\!\!\e^{-\frac{3}{4}}+\int_0^t
\Bigl(\s^{-\gamma}+\sqrt{\e}\bigl(\s^{-\gamma}\|\nabla R\|_{L^{\infty}}+\e^2\|\nabla R\|_{L^{\infty}}^2\bigr)+\s^{-\gamma}\bigl(\|\tilde{\eta}\|_{L^{\infty}}+\|R\|_{L^{\infty}}\bigr)\Bigr)\,ds\\
&\lesssim&\!\!\!\e^{-\frac{3}{4}}+\int_0^t \s^{-\gamma}\bigl(\|\tilde{\eta}\|_{L^{\infty}}+\|R\|_{1,\infty}+\e^{\frac{3}{2}}(\|\tilde{\eta}\|^2_{L^{\infty}}+\|R\|^2_{1,\infty})\bigr)\,ds,
\end{eqnarray*}
from which, \eqref{p1ad} and \eqref{fraknm}, we deduce
  \begin{equation*}
\e\|\tilde{\eta}(t)\|_{L^{\infty}}^2\lesssim \e^{-\frac{1}{2}}+\int_0^t\s^{-\gamma}\bigl(\frak{N}_m+\e^2\frak{N}_m^2\bigr)\,ds.
\end{equation*}
For any $t\leq T^\e,$ by integrating (\ref{Reta}) over $[0,t]$ and then summing up the resulting inequality
with the above inequality, we obtain for $2\leq m\leq p-4$ that
\beq \label{fraknm1}
\begin{split}
\frak{N}_m(t)+\e\bigl(\|\nabla R\|_{L^2_t(H^m_{\rm co})}^2+\|\na \eta\|_{L^2_t(H_{\rm co}^{m-1})}^2\bigr)
&\leq
C\Bigl(\e\bigl(\|\nabla R\|_{L^2_t(H^{m-1}_{\rm co})}^2+\|\na \eta\|_{L^2_t(H_{\rm co}^{m-2})}^2\bigr)\\
&+\e^{-\frac{1}{2}}+\int_0^t\bigl((\e+\s^{-\gamma})\frak{N}_m+\e^2 \frak{N}_m^2\bigr)\,ds\Bigr).
\end{split}
\eeq
While thanks to Propositions \ref{S5prop3} and \ref{prop-etae}, we get, by a similar derivation of \eqref{fraknm1}, that
\beno
\begin{split}
\frak{N}_1(t)+\e\bigl(\|\nabla R\|_{L^2_t(H^1_{\rm co})}^2+\|\na \eta\|_{L^2_t(L^2)}^2\bigr)
\leq
C\Bigl(&\e^{-\frac{1}{2}}+\e\|\nabla R\|_{L^2_t(L^2)}^2\\
&+\int_0^t\bigl((\e+\s^{-\gamma})\frak{N}_m+\e^2 \frak{N}_m^2\bigr)\,ds\Bigr),
\end{split}
\eeno
which together with Proposition \ref{S5prop1} ensures that
\beq \label{fraknm2}
\begin{split}
\frak{N}_1(t)+\e\bigl(\|\nabla R\|_{L^2_t(H^1_{\rm co})}^2+\|\na \eta\|_{L^2_t(L^2)}^2\bigr)
\leq
C\Bigl(&\e^{-\frac{1}{2}}+\int_0^t\bigl((\e+\s^{-\gamma})\frak{N}_m+\e^2 \frak{N}_m^2\bigr)\,ds\Bigr).
\end{split}
\eeq

By virtute of \eqref{fraknm1} and \eqref{fraknm2}, we get by an inductive argument that
\begin{equation*}
\frak{N}_m(t)+\e\bigl(\|\nabla R\|_{L^2_t(H^1_{\rm co})}^2+\|\na \eta\|_{L^2_t(L^2)}^2\bigr)
\leq C\Bigl(\e^{-\frac{1}{2}}+\int_0^t\bigl((\e+\s^{-\gamma})\frak{N}_m+\e^2 \frak{N}_m^2\bigr)\,ds\Bigr),
\end{equation*}
from which and a comparison argument, we infer
\beq \label{fraknm3}
\begin{split}
\frak{N}_m(t)+&\e\bigl(\|\nabla R\|_{L^2_t(H^1_{\rm co})}^2+\|\na \eta\|_{L^2_t(L^2)}^2\bigr)\\
\leq &C\e^{-\f12}\left(1-C^2\e^{\f32}t\right)^{-1}\exp\Bigl(C\int_0^t(\e+\s^{-\gamma})\,ds\Bigr)\quad\mbox{for}\ \ t\leq T^\e\leq \f{T}\e.
\end{split}
\eeq
In particular, if we take $\e$ to be so small that $\e\leq (2TC^2)^{-\f23},$  we deduce from \eqref{fraknm3} that
\beno
\frak{N}_m(t)+\e\bigl(\|\nabla R\|_{L^2_t(H^1_{\rm co})}^2+\|\na \eta\|_{L^2_t(L^2)}^2\bigr)\leq Ce^{CT}\e^{-\f12},
\eeno
which yields \eqref{N}. This completes the proof of Proposition \ref{S5prop12}.
\end{proof}

\subsection{End of the proof of \eqref{APRIORI-R}}
\label{enddp}
For our purpose, we can take $(\gamma,k,p,s,q)=(2,2,8,4,4)$ in Section \ref{sec-approx} and $m=4$. By an iteration argument,
 we find that $(\gamma_1,k_1,p_1,s_1,q_1)=(107,166,178,252,107)$ and  $u_0$ and $u_*$  belongs to $H^{177}(\cO)$ are sufficient.

Then for any $t\in (0,T^{\e})$, we deduce from \eqref{eta} that
\begin{eqnarray*}
\e^2\|R(t)\|_{H^1(\cO)}\lesssim\e^{\frac{3}{2}}\bigl(\|R(t)\|_1+\|\eta(t)\|+\sqrt{\e}\t^{-\gamma}\bigr),
\end{eqnarray*}
from which, \eqref{fraknm} and \eqref{N}, we infer
\begin{eqnarray*}
\e^2\|R(t)\|_{H^1(\cO)}\lesssim \e^{\frac{3}{2}}\frak{N}_4^{\frac{1}{2}}(t)+\e^2\leq C \e^{\frac{5}{4}}.
\end{eqnarray*}
This concludes the proof of \eqref{APRIORI-R}.

\section{Proof of Theorem \ref{t-lag}}
\label{sec-proof-lagp}

This section is devoted to the proof of Theorem \ref{t-lag}.
The scheme of the proof of Theorem \ref{t-lag} is very similar to that of
Theorem \ref{th} with some simplifications due to the facts that the statement of Theorem \ref{t-lag}  only promises approximate controllability (see \cite[Remark 3]{GH2}), and for one positive time before the imparted time, which can be chosen arbitrarily small  (recall Remark  \ref{assert}).
 Therefore there is no need of the well-prepared dissipation of the boundary layers  as we did in
Section \ref{serre-chevalier} in the course of proving Theorem \ref{th}.
 Again we make use of a rapid and violent control  so that the behavior of the system will follow from the one of  its  inviscid counterpart.
 Let us therefore recall a few ingredients used in \cite{GH2} to tackle the inviscid case. We recall the notation for the flow map already used in the statement of Theorem \ref{t-lag}:  with a vector field $u$ depending on $t$ in $[0,T]$ and on the space variable $x$,
we associate, when it makes sense (below we will only need flow maps in some cases where the classical Cauchy-Lipschitz theorem applies),
 the flow map $\phi^u$ such that
$ \partial_t\phi^u(t,s,x)=u(t,\phi^u(t,s,x)) $ for any $t,s$ in $[0,T]$ and for any $x$ in $\Omega$,
and  $ \phi^u (s,s,x)=x$ for any $s$ in $[0,T]$ and for any $x$ in $\Omega$.
First thanks to  a construction due to  Krygin \cite{krygin}, given $\gamma_0$ and $\gamma_1$ two Jordan surfaces  included in $\Omega$ such that
$\gamma_{0} $ and $ \gamma_{1} $ are isotopic in $ \Omega$ and surrounding the same volume, there exists a volume-preserving diffeotopy $h \in C^\infty([0,1] \times \Omega;\Omega)$ such that $\partial_{t} h$ is compactly supported in $(0,1) \times \Omega$, $h(0,\gamma_{0})=\gamma_0$ and $h(1,\gamma_{0})=\gamma_1$.
Then the smooth vector field $X(t,x):= \partial_{t} h (t,h^{-1}(x))$
is compactly supported in $(0,1) \times \Omega$ and satisfies
for all $t\in [0,1]$,  $\phi^X(t,0,\gamma_0)\subset \Omega$,
$\phi^{X}(1,0,\gamma_{0}) = \gamma_{1}$ and
 $\dive X =0 \ \text{ in } \ (0,1) \times \Omega$.
Then, thanks to \cite[Proposition 2.2]{GH2}, for any $\nu>0$ and $k\in \N,$ there exists $\theta^0 \in C^{\infty}_0((0,1) \times \overline{\Omega};\R)$ such that
\begin{eqnarray}\label{S6eq2}
\begin{cases}
\forall\ t\in [0,1],\ \Delta_x \theta^0 =0 \ \text{ in } \ \Omega, \\
\dfrac{\partial \theta^0 }{\partial n}=0 \ \text{ on } \ [0,1] \times (\partial \Omega\setminus\Sigma), \\
\forall\ t\in [0,1], \ \phi^{\nabla \theta^0 }(t,0,\gamma_0) \subset \Omega, \\
\| \phi^{\nabla\theta^0 }(1,0,\gamma_0)-\gamma_1\|_{C^k(\Bbb{S}^2)} \leq \nu,
\end{cases}
\end{eqnarray}
up to a reparameterization.

With these ingredients of the inviscid case in hands, let us now start the proof of Theorem \ref{t-lag}. It is split into two parts, depending on the regularity of the initial data.

 \begin{proof}[Proof of the first part of Theorem \ref{t-lag}. Case where  $u_0$ is  in $C^{k,\alpha}({\Omega};\R^{3})$]
 \ \par \

 We first consider the case where $u_0$ is in $C^{k,\alpha}({\Omega};\R^{3})$, with $\alpha \in (0,1)$ and $k \in \N \setminus \{0\}$,  and satisfies $\dive u_{0}=0 $ in $ \Omega$ and $u_{0} \cdot n =0 $ on $ \partial \Omega$.
 One also assumes that
$T_0>0$,  $\gamma_0$ and $\gamma_1$ two Jordan surfaces  included in $\Omega$ such that
$\gamma_{0} $ and $ \gamma_{1} $ are isotopic in $ \Omega$ and surrounding the same volume, 
are given.

We first  use the scaling transformation \eqref{S2scaling} to transform
 our original problem \eqref{NS} to \eqref{NSA}.
Then we consider the same expansion as in the proof of Theorem \ref{th}, that is, \eqref{DEF-R}, with $u^\e_a$ being given by
\eqref{previou} and $u^0 := \frak{e}(\nabla \theta^0)$, where  $\theta^0$ is given by \eqref{S6eq2} and
  $\frak{e}$ is a  linear continuous extension operator from $ C^{k,\beta}(\overline{\Om}; \R^3)\to C_0^{k,\beta}(\cO; \R^3).$
 Of course, $u^0$ thus constructed verifies Lemma \ref{lmu0} except \eqref{flush},
which is unnecessary here.

Let us first focus on proving \eqref{eq:approx} for $k=0$, while maintaining the condition \eqref{eq:exact2}.
It follows from \eqref{S5eq86} and \eqref{N} that
\beq \label{S6eq3}
\begin{split}
\e^2\int_0^1\|{R}(t)\|_{L^{\infty}(\cO)}\,dt
 \leq & C\e^{\f32}\int_0^1\bigl( \|\eta(t)\|_{m-1}+\| {R}(t)\|_m+\e \bigr)\,dt\quad\mbox{if}\  m>2\\
 \leq & C\e\bigl(\|{\e}^{\f12}\eta\|_{L^2((0,1); H^{m-1}_{\rm co})}+\e^{\f12}\|R\|_{L^\infty((0,1); H^{m-1}_{\rm co})}+\e\bigr)\\
 \leq &C\sqrt{\e}.
\end{split}
\eeq
We remark that the choice of $1$ is quite arbitrary but the fact that we consider here times of order $O(1)$,
  not of order $O(1/\e)$ as in the proof of  \eqref{DEF-R},  makes the use of the well-prepared dissipation of the boundary layers  unnecessary here.

With thus obtained $u^\e,$ we define $u$ via \eqref{DFD} and we denote by $p$ the corresponding pressure.
 Then  $(u,p)$ is in $L^{\infty}(0,T;C^{k,\alpha}(\Omega;\R^{4}))$ and satisfies (\ref{NS}) on $[0,\e]$.
We
 denote by $\phi^{u}(t,s,x)$ and $\phi^{u^0}(t,s,x)$ the flow maps associated with $u$ and $u^0$ respectively.
Then  in view of \eqref{DFD} and \eqref{DEF-R},  we write
\beno
\begin{split}
\p_t\bigl(\phi^{u}(t,s,x)-\phi^{u^0}(t/\e,s,x)\bigr)
=&\f1\e\bigl(u^\e(t/\e,\phi^{u}(t,s,x))-u^0(t/\e,\phi^{u^0}(t/\e,s,x))\bigr)\\
=&\f1\e\bigl(u^0(t/\e,\phi^{u}(t,s,x))-u^0(t/\e,\phi^{u^0}(t/\e,s,x))\bigr)\\
&+\f1\e\frak{R}^\e(t/\e,\phi^{u}(t,s,x)) \with
\frak{R}^\e:=u^\e_a-u^0 +{\e}^2R,
\end{split}
\eeno
from which, we get, by applying Gronwall's inequality, that
\beno
\label{STT}
\bigl\|\phi^{u}(t,s,\cdot)-\phi^{u^0}(t/\e,s,\cdot)\bigr\|_{L^\infty(\cO)}\leq
{\e}^{-1}\|\frak{R}^\e(t/\e)\|_{L^1((s,t); L^\infty(\cO))}
\exp\Bigl(\f1\e\int_s^t \|\na u^0(t')\|_{L^\infty(\cO)}\,dt'\Bigr).
\eeno
On the other hand, it follows from \eqref{ua0} and \eqref{S6eq3} that
\beno
\begin{split}
&\|u^a_\e-u^0\|_{L^\infty((0,\e)\times\cO)}\leq C\e^{\f12},\quad \f1\e\int_0^\e \|\nabla u^0(t')\|_{L^\infty(\cO)}\,dt'\leq
\|\na u^0\|_{L^\infty((0,1)\times\cO)} ,\\
&\e \|\frak{R}^\e(t/\e)\|_{L^1((0,\e);L^\infty(\cO))}=\e^2 \|\frak{R}^\e\|_{L^1((0,1);L^\infty(\cO))}\leq C\sqrt{\e},
\end{split}
\eeno
so that for any $t,s\in [0,\e],$ there holds
\beq\label{S6eq5}
\bigl\|\phi^{u}(t,s,\cdot)-\phi^{u^0}(t/\e,s,\cdot)\bigr\|_{L^\infty(\cO)}\leq C\sqrt{\e}.
\eeq
Then \eqref{S6eq2} together with \eqref{S6eq5} ensures that
\beq \label{S6eq6}
\begin{split}
\bigl\|\phi^{u}(\e,0,\gamma_0)-\gamma_1\bigr\|_{L^\infty(\Bbb{S}^2)}\leq
&\bigl\|\phi^{u}(\e,0,\cdot)-\phi^{\na\theta^0}(1,0,\cdot)\bigr\|_{L^\infty(\Omega)}\\
&+\bigl\|\phi^{\na\theta^0}(1,0,\gamma_0)-\gamma_1\bigr\|_{L^\infty(\Bbb{S}^2)}\leq C\bigl(\sqrt{\e}+\nu\bigr).
\end{split}
\eeq
This entails  \eqref{eq:exact2} and \eqref{eq:approx} for $k=0$, with the time  $T:= \e \in (0,T_0)$, by  appropriate choices of $\nu$ and $ \e$.
Now to prove \eqref{eq:approx} for $k> 0$ it is sufficient to use the counterpart of \eqref{STT} for higher order derivatives, see for instance \cite[Equation (23)]{koch}.
This estimate  is performed in a compact set $K$ such that an open neighborhood of  $\cup_{t\in [0, \varepsilon]} \, \phi^u(t,0,\gamma_0)$ is contained in $K$ and such that
 $K$ is included in $\Omega$, the existence of such a compact set is  granted by  the condition \eqref{eq:exact2}.
  The higher order estimates of the velocity field on $K$ are deduced, by Sobolev embedding, from the estimate of $\|R(t)\|_m $ in Proposition
\ref{S5prop12}, since on $K$,  $\|R(t)\|_m $  is equivalent to the usual  Sobolev norm  of order $m$, by the very definition of the
 the Sobolev conormal spaces in \eqref{scn}. The details are left to the reader.

 This completes the proof of the first part of
Theorem \ref{t-lag}.

\end{proof}


 \begin{proof}[Proof of the second part of Theorem \ref{t-lag}. Case where  $u_0$ is in $H^1({\Omega};\R^{3})$]
  \ \par \

 Let us now tackle the case where the initial data  $u_0$ is only in $H^1({\Omega};\R^{3})$, with still the compatibility conditions: $\dive u_{0}=0 $ in $ \Omega$ and $u_{0} \cdot n =0 $ on $ \partial \Omega$.
In this case we first use  the regularization result  of Theorem \ref{S}, or more precisely of Theorem \ref{thS} in the Appendix \ref{appa}.
More precisely, for  $\nu>0$, which will be chosen small enough later on,
 we consider  $u$ to be the unique  solution in
  $u\in C([0,\nu];H^1(\Omega))\cap L^2([0,\nu];H^2(\Omega))$
  of \eqref{SAEQ0}  on $[0,\nu]$  with initial data $u_0$.
  In particular,
 for any $s_0\in (2,3),$  we deduce from interpolation inequality and \eqref{SAeq-2} that
  \beno
 \begin{split}
 \bigl\| t^{\f{s_0}2-1}u\bigr\|_{L^2((0,\nu); H^{s_0}(\Om))}\leq  &C\bigl\| t^{\f{1}2}u\bigr\|_{L^2((0,\nu); H^3(\Om))}^{s_0-2} \, \|u\|_{L^2((0,\nu); H^2(\Om))}^{3-s_0}\leq C(\|u_0\|_{H^1}),
 \end{split}
 \eeno
 from which and Sobolev imbedding theorem, we infer that for any $s_0\in (5/2, 3)$,
 \beno
 \begin{split}
 \|\na u\|_{L^1((0,\nu); L^\infty(\Omega))}\leq &C\bigl\| t^{\f{s_0}2-1}u\bigr\|_{L^2((0,\nu); H^{s_0}(\Om))}\bigl\| t^{1-\f{s_0}2}\bigr\|_{L^2(0,\nu)}
 \leq C(\|u_0\|_{H^1})\nu^{\f{3-s_0}2} .
 \end{split}
 \eeno
 Consequently, according to the classical Cauchy-Lipschitz theorem, the vector field  $u$ generates a unique flow map $\phi^u(t,s,x)$ on $[0,\nu].$ Furthermore, for any $t,s\in [0,\nu],$ there holds
 \beq \label{S6eq4}
 \begin{split}
 \bigl\|\phi^u(t,s,x)-x\bigr\|_{L^\infty(\Om)}\leq &\int_{0}^\nu \|u(t,\cdot)\|_{L^\infty(\Omega)}\,dt
 \\  \leq &
 \bigl\|t^{\f{1}2}u\bigr\|_{L^\infty((0,\nu); H^2(\Om))}\int_{0}^\nu t^{-\f12}\,dt\leq  C(\|u_0\|_{H^1})\sqrt{\nu}.
 \end{split}
 \eeq
In particular, this entails that
for any $t\in [0,\nu]$,
$\phi^u(t,0,\gamma_0)\subset \Omega$ and that
the Jordan surface $\gamma_\ast := \phi^u(\nu,0,\gamma_0)$ satisfies
 \beq \label{S6eq444}
 \begin{split}
 \bigl\| \gamma_\ast -  \gamma_0\bigr\|_{L^\infty( \Bbb{S}^2)} \leq  C(\|u_0\|_{H^1})\sqrt{\nu}.
 \end{split}
 \eeq
Moreover it follows from  \eqref{SAeq-2} that $u_\ast  := u( \nu,\cdot)$ belongs to $ H^\infty(\Om)$.
 Thus we can use the first part of Theorem \ref{t-lag}, in particular the estimate \eqref{S6eq6} on the time interval
  $[\nu, \nu+ \varepsilon]$, so that there exists an extension of $u$, which we still denote by $u$, to the time interval   $[\nu, \nu+ \varepsilon]$ such that
  $u$ is in $C([0,\nu+ \varepsilon];H^1(\Omega))$ and in $L^2([0,\nu+ \varepsilon];H^2(\Omega))$  and generates a flow $\phi^{u}$ such that
for any $t\in [\nu,\nu+\e]$,
$\phi^u(t, \e, \gamma_0 )\subset \Omega$, such that
  \beq \label{S6eq666}
\begin{split}
\bigl\|\phi^{u}(\nu+ \varepsilon ,\nu,\gamma_0)- \gamma_1\bigr\|_{L^\infty(\Bbb{S}^2)} \leq C\sqrt{\e}.
\end{split}
\eeq
Furthermore, $\phi^{u}(\nu+ \varepsilon ,\nu,\cdot)$ is Lipschitz. Thus combining these three last properties with
\eqref{S6eq444},
and choosing $\e$ and $\nu$ small enough,
 we arrive at
\begin{equation*}
\begin{split}
\| \phi^u(\nu+ \varepsilon ,0,\gamma_0)-\gamma_1 \|_{ L^\infty(\Bbb{S}^2)}\leq &\| \phi^u(\nu+ \varepsilon ,\nu,\gamma_*)-\phi^u(\nu+ \varepsilon ,\nu,\gamma_0) \|_{ L^\infty(\Bbb{S}^2)}\\
&+\| \phi^u(\nu+ \varepsilon ,\nu,\gamma_0)-\gamma_1 \|_{ L^\infty(\Bbb{S}^2)}\\
 \leq &C(\|u_0\|_{H^1})\bigl(\sqrt{\e}+\sqrt{\nu}\bigr),
 \end{split}
\end{equation*}
while maintaining the condition that for any $t\in [0,\nu+ \varepsilon]$,
$\phi^u(t,0,\gamma_0)\subset \Omega$.

This completes the proof of the second part of
Theorem \ref{t-lag}.
\end{proof}

\appendix

\renewcommand{\theequation}{\thesection.\arabic{equation}}
\setcounter{equation}{0}

\section{On the regularization of the uncontrolled strong solutions to the Navier-Stokes equations with Navier boundary conditions}
\label{appa}

In this appendix we prove the following  regularization result  of the uncontrolled strong solutions to the Navier-Stokes equations with Navier boundary conditions on the whole boundary $\partial \Omega$, that is to the following system:
\beq \label{SAEQ0}
\begin{cases}
\partial_t u+u\cdot \nabla u- \Delta u +\nabla p=0, \\
  \dive  u=0 \quad \mbox{ in }\Omega,\\
u\cdot \mathbf{n}=0 \quad \mbox{ and }   \quad\CN(u)=0\quad\mbox{ on }\partial\Omega, \\
u=u_0\quad\mbox{ at } t=0 .
\end{cases}
\eeq
\begin{theorem}\label{thS}
{\sl
 Let $T>0$,  $p\geq 1$ and $u_0$   in $H^1(\Omega)$, divergence free and tangent to $\partial \Omega$.
 Then there are $T_1$ in $(0,T)$ and
a continuous function $C_{T_1,p}$ with $C_{T_1,p}(0)=0$,
such that the unique strong solution $u\in C([0,T_1];H^1(\Omega))\cap L^2([0,T_1];H^2(\Omega))$ to \eqref{SAEQ0}
 satisfies
\beq \label{SAeq-2}
\sum_{0\leq j \leq \f{p}2}\bigl\|t^{\f{p-1}2}\p_t^ju\bigr\|_{L^\infty_{T_1}(H^{p-2j}(\Om))}
+\sum_{0\leq j \leq \f{p+1}2}\bigl\|t^{\f{p-1}2}\p_t^ju\bigr\|_{L^2_{T_1}(H^{p+1-2j}(\Om))}\leq C_{p,T_1}(\|u_0\|_{H^1(\Om)}).
\eeq}
\end{theorem}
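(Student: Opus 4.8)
The plan is to prove Theorem~\ref{thS} by a bootstrap argument starting from the known local well-posedness of strong $H^1$ solutions and then propagating the parabolic smoothing effect with the weights $t^{(p-1)/2}$ that absorb the singularity of the higher-order norms near $t=0$. First I would recall that for $u_0\in H^1(\Omega)$, divergence free and tangent to $\partial\Omega$, there exists $T_1\in(0,T)$ and a unique strong solution $u\in C([0,T_1];H^1(\Omega))\cap L^2([0,T_1];H^2(\Omega))$ of \eqref{SAEQ0}; this is the classical part of Theorem~\ref{S}, and the local existence time $T_1$ depends only on $\|u_0\|_{H^1(\Omega)}$. I would also record the basic energy estimates: $\|u\|_{L^\infty_{T_1}(L^2)}+\|u\|_{L^2_{T_1}(H^1)}\lesssim \|u_0\|_{L^2}$ and, shrinking $T_1$ if necessary, $\|u\|_{L^\infty_{T_1}(H^1)}+\|u\|_{L^2_{T_1}(H^2)}\lesssim \|u_0\|_{H^1}$, with the right-hand sides controlled by a continuous function of $\|u_0\|_{H^1}$ vanishing at $0$ (the latter comes from the quadratic structure of the nonlinearity and Gronwall).

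The core of the argument is an induction on $p$. Assume \eqref{SAeq-2} holds for all exponents $\le p-1$; I would then differentiate the equation $\ell$ times in time and test against $t^{p-1}\partial_t^{\ell}u$ (or rather against an appropriate weighted multiple), handling the boundary terms via the Navier condition $\CN(u)=0$ exactly as in the $L^2$ and conormal estimates of Section~\ref{sec-Remainder}: the key point is that integration by parts of the Laplacian produces $2\int_{\partial\Omega}(D(\partial_t^\ell u)\mathbf{n})_{\tan}\cdot\partial_t^\ell u = -2\int_{\partial\Omega}(M\partial_t^\ell u)_{\tan}\cdot\partial_t^\ell u$, which is lower order, plus Korn's inequality to recover the full gradient. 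The nonlinear term $\partial_t^\ell(u\cdot\nabla u)$ is split by Leibniz; each product is estimated by Sobolev embeddings and interpolation, and the factor $t^{p-1}$ together with the inductive control of lower-weight norms makes every term integrable up to $t=0$. The time derivative $\partial_t(t^{p-1}\|\partial_t^\ell u\|^2)$ generates an extra term $(p-1)t^{p-2}\|\partial_t^\ell u\|^2$ which is precisely where the inductive hypothesis at weight $p-1$ is consumed. Running Gronwall then yields the $L^\infty_t$ bound on $t^{(p-1)/2}\partial_t^\ell u$ in $L^2$ and the $L^2_t$ bound in $H^1$ for the top time derivative; elliptic regularity for the Stokes system with Navier boundary conditions (using $\Delta u=\partial_t u+u\cdot\nabla u+\nabla p$, $\dive u=0$, $u\cdot\mathbf{n}=0$, $\CN(u)=0$) then converts these into the stated spatial norms $H^{p-2j}$ and $H^{p+1-2j}$ for the intermediate values of $j$, by descending from the highest time derivative.

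The step I expect to be the main obstacle is the careful bookkeeping of the weights and the Stokes elliptic lift: one must check that applying elliptic regularity to trade two time derivatives for two space derivatives never loses the $t^{(p-1)/2}$ weight and never requires more regularity of the pressure than is available, since the pressure satisfies a Neumann problem $\Delta p = -\dive(u\cdot\nabla u)$, $\partial_{\mathbf n}p=(\Delta u - u\cdot\nabla u)\cdot\mathbf{n}$ on $\partial\Omega$, whose boundary data again involves the very quantities being estimated — this circularity is resolved exactly as in Proposition~\ref{pi-propi}, by treating the boundary term $\|\Delta u\cdot\mathbf{n}\|_{H^{s-1/2}(\partial\Omega)}\lesssim \|\nabla u\|$ in conormal-type norms. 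A secondary delicate point is the initialization of the induction at $p=1$, which is just the strong $H^1$ theory, and at $p=2$, where one genuinely uses the parabolic gain: testing $\partial_t u$ against $\partial_t u$ and using $\|u\|_{L^2_{T_1}(H^2)}<\infty$ gives $\|t^{1/2}\partial_t u\|_{L^\infty_{T_1}(L^2)}+\|t^{1/2}\partial_t u\|_{L^2_{T_1}(H^1)}<\infty$, hence $\|t^{1/2}u\|_{L^\infty_{T_1}(H^2)}+\|t^{1/2}u\|_{L^2_{T_1}(H^3)}<\infty$ by Stokes regularity. Throughout, I would keep track that all constants are continuous functions of $\|u_0\|_{H^1}$ vanishing at the origin, which follows because at every stage the estimates are at worst quadratic in the solution and the bootstrap is run on the fixed interval $[0,T_1]$ with $T_1=T_1(\|u_0\|_{H^1})$.
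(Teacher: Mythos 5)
Your proposal is correct and follows essentially the same route as the paper's proof in Appendix \ref{appa}: induction on $p$, weighted-in-time energy estimates on time derivatives of $u$ with the Navier boundary terms handled by integration by parts plus Korn's inequality, the extra term produced by differentiating the weight $t^{p-1}$ absorbed by the inductive hypothesis, Gronwall, and then the Cattabriga--Solonnikov estimate for the Stokes problem with Navier conditions (Lemma \ref{SAlem2}, with data $f=-\partial_t u-u\cdot\nabla u$) to trade time derivatives for space derivatives, descending from the top time derivative. The only cosmetic differences are that the paper phrases the boundary term via the curl identity of Lemma \ref{SAlem1} rather than the strain tensor directly, and invokes the packaged Stokes estimate so that no separate Neumann analysis of the pressure is needed.
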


As recalled in Section \ref{serre-chevalier}
The goal of this section is to present the proof of Theorem \ref{S}, namely, the local-in-time existence and uniqueness
 of  strong solutions with $H^1$ initial data  is classical. The interest of Theorem \ref{thS} is to detail the  regularization in time of this strong solution near the time zero.
In particular it implies the  part  of Theorem \ref{S}  regarding the regularization.

\begin{proof}  
We will proceed by induction on $p$. We start with recalling how to prove the case $p=1$, by proving first  a $L^2(\Om)$ energy estimate and then a $H^1(\Om)$ energy estimate.

\smallskip

\noindent $\bullet$ \underline{$L^2(\Om)$ energy estimate}\vspace{0.2cm}

Indeed, we first get, by taking $L^2(\Om)$ inner product of the $u$ equation
in \eqref{SAEQ0} with $u,$ that
\beq \label{SAeq2}
\f12\f{d}{dt}\|u(t)\|_{L^2(\Om)}^2+\left(u\cdot\na u | u\right)_{L^2(\Om)}-\left(\D u | u\right)_{L^2(\Om)}+\left(\na p | u\right)_{L^2(\Om)}=0.
\eeq
Here and in all that follows, we always denote $(f | g)_{L^2(\Om)}:=\int_{\Om} f g\,dx.$

Due to $\dive u=0$ and $u\cdot \bn|_{\p\Om}=0,$ we have
\beno
\left(u\cdot\na u | u\right)_{L^2(\Om)}=0=\left(\na p | u\right)_{L^2(\Om)}.
\eeno
Whereas it follows from Stokes formula that
\beno
\begin{split}
-\left(\D u | u\right)_{L^2(\Om)}
=&\int_{\p\Om}\left[(\na\times u)\times u\right]\cdot \bn\,dS+\int_{\Om}|\na\times u|^2\,dx.
\end{split}
\eeno
By inserting the above equalities into \eqref{SAeq2}, we obtain
\beq \label{SAeq3}
\f12\f{d}{dt}\|u(t)\|_{L^2(\Om)}^2+\|\na\times u\|_{L^2(\Om)}^2=\int_{\p\Om}\left[u\times(\na\times u)\right]\cdot \bn\,dS.
\eeq
 Let us denote by $M_{\rm w}$ the shape operator associated with $\Omega$.
Recall that, since $\Omega$ is smooth, the shape operator $M_{\rm w}$ is smooth and for any $x\in\p\Om,$
it defines a self-adjoint operator with values in the tangent space $T_x$.  Then we have the following result, see \cite{BdV,GK}.
\begin{lemma} \label{SAlem1}
{\sl For any smooth divergence free vector field $u$
satisfying $u\cdot \bn=0$ on $\p\Om,$ we have
\beq \left[D(u)\bn+M_{\rm w}u\right]_{\mbox{tan}}=\f12(\na\times u)\times \bn.
\label{SAeq1}
\eeq}
\end{lemma}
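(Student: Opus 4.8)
The plan is to verify the identity \eqref{SAeq1} by a direct pointwise computation at an arbitrary boundary point, reducing to linear algebra on the tangent plane. First I would fix $x\in\p\Om$ and choose a smooth orthonormal frame $(\tau_1,\tau_2,\bn)$ in a neighborhood of $x$, with $\tau_1,\tau_2$ spanning the tangent space $T_x$ at $x$; it is convenient to pick geodesic-type tangential coordinates so that the tangential derivatives of $\bn$ at $x$ are governed exactly by the shape operator, namely $\p_{\tau_i}\bn = M_{\rm w}\tau_i$ at $x$ (this is the standard Weingarten relation, the sign depending on the orientation convention for $\bn$, here the outward normal). The computation then amounts to expressing both sides of \eqref{SAeq1} in this frame.

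For the left-hand side, $D(u)\bn$ has tangential part $\frac12\bigl(\p_\bn u + \sum_j (\na u_j)\,n_j\bigr)_{\tan}$, and the term $(\na u_j)n_j$, when projected tangentially, equals $\bigl(\na(u\cdot\bn) - u_j\na n_j\bigr)_{\tan}$; using $u\cdot\bn\equiv 0$ on $\p\Om$ so that its tangential derivative vanishes, this produces precisely $-(M_{\rm w}u)_{\tan}$ up to the chosen sign, which is what cancels (or combines with) the $M_{\rm w}u$ term, leaving the tangential part of $D(u)\bn+M_{\rm w}u$ expressed purely in terms of $\p_\bn u$ and tangential derivatives of the tangential components of $u$. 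For the right-hand side, I would expand $(\na\times u)\times\bn$ in the frame: writing $\omega=\na\times u$, the vector $\omega\times\bn$ is tangential, and its components against $\tau_1,\tau_2$ are $\omega\cdot(\bn\times\tau_i)$, i.e.\ combinations of the off-diagonal and normal-tangential entries of $\na u$. Matching the two expansions entry by entry, and using once more the divergence-free condition $\dive u=0$ together with $u\cdot\bn=0$ on $\p\Om$ to trade the normal derivative of the normal component for tangential derivatives, gives the claimed equality with the factor $\frac12$. I would cite \cite{BdV,GK} for this identity rather than reproduce every index manipulation, presenting only the two key reductions (the $u\cdot\bn=0$ reduction giving the $M_{\rm w}$ term, and the frame expansion of $\omega\times\bn$).

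The main obstacle is bookkeeping of signs and of which derivatives of $\bn$ survive at the base point: one must be careful that $\bn$ is only defined on $\p\Om$ (or via the extension by $-\na\varphi$), that its tangential derivatives at $x$ are exactly $M_{\rm w}$ acting on the tangent vector (with the correct sign for the outward normal), and that the extension chosen does not contribute spurious normal-derivative terms. A clean way to sidestep some of this is to evaluate both sides tested against an arbitrary tangent vector $\tau\in T_x$ with $\tau\cdot\bn=0$ extended so that $(\tau\cdot\na)\bn$ is controlled, and to use the symmetry of $\na\bn$ (equivalently self-adjointness of $M_{\rm w}$) noted in the main text. Since \eqref{SAeq1} is a standard boundary identity already available in the cited references, the proof here should be short: state the frame, perform the two reductions, and conclude, leaving the remaining routine verification to the reader.
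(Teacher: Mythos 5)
Your computation is sound, but note that the paper itself offers no proof of Lemma \ref{SAlem1}: it simply states the identity and refers to \cite{BdV,GK}, so your proposal actually does more than the source. The route you sketch is the standard one and it closes correctly; a slightly cleaner variant avoids local frames altogether by working with the global extension $\bn=-\na\varphi$ (so that $\na\bn$ is symmetric) and the two pointwise identities
\begin{equation*}
2D(u)\bn=(\bn\cdot\na)u+\na(u\cdot\bn)-(u\cdot\na)\bn,
\qquad
(\na\times u)\times\bn=(\bn\cdot\na)u-\na(u\cdot\bn)+(u\cdot\na)\bn,
\end{equation*}
whose difference is $2\na(u\cdot\bn)-2(u\cdot\na)\bn$; taking tangential parts on $\p\Om$, the first term drops because $u\cdot\bn\equiv0$ there, and the second is $2M_{\rm w}u$, which is exactly \eqref{SAeq1}. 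Two small corrections to your write-up: the hypothesis $\dive u=0$ is never used (the identity holds for any vector field tangent to the boundary, as the display above shows), so your appeal to it ``to trade the normal derivative of the normal component for tangential derivatives'' is spurious — no such trade occurs, since the only term killed on the boundary is the tangential gradient of $u\cdot\bn$; and the sign bookkeeping you defer is precisely the convention $M_{\rm w}\tau=(\tau\cdot\na)\bn$ for the outward normal, which is worth stating explicitly rather than leaving ``up to the chosen sign.'' With those points fixed, your argument is a complete and acceptable substitute for the citation.
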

Yet due to $\CN(u)|_{\p\Om}=0,$ we deduce from Lemma \ref{SAlem1} that
\beq \label{SAeq7}
\begin{split}
\left[u \times (\na\times u)\right]\cdot \bn \bigr|_{\p\Om}=& u\cdot \left[(\na\times u)\times \bn\right] \bigr|_{\p\Om}\\
=&2\left[(M_{\rm w}-M)u\right]_{\mbox{tan}}\cdot u\bigr|_{\p\Om}\\
=&2\left[(M_{\rm w}-M)u\right]\cdot u\bigr|_{\p\Om},
\end{split}
\eeq
where we used $u\cdot \bn \bigr|_{\p\Om}=0$ in the last step. Then by applying Stokes
formula and Young's inequality, we find    that for any $\la>0,$ there exists $C_\la$ so that
\beq \label{SAeq19}
\begin{split}
\bigl|\int_{\p\Om}\left[(\na\times u)\times u\right]\cdot \bn\,dS\bigr|=&2\bigl|\int_{\Om}\dive \left[\bigl((M_{\rm w}-M)u\cdot u\bigr)\bn\right]\,dx\bigr|\\
\leq & \la \|\na u\|_{L^2(\Om)}^2+C_\la \|u\|_{L^2(\Om)}^2,
\end{split}
\eeq
On the other hand, due to $\dive u=0$ in $\Om$ and $u\cdot \bn|_{\p\Om}=0,$ we deduce from
Korn's type inequality (see \cite{DL} for instance) that there exists a positive constant $C_\Om$ so that
\beq \label{SAeq4}
\|\na\times u\|_{L^2(\Om)}^2\geq \f1{C_\Om} \|u\|_{H^1(\Om)}^2-\|u\|_{L^2(\Om)}^2.
\eeq
By inserting the estimates, \eqref{SAeq19} and \eqref{SAeq4}, into \eqref{SAeq3} and taking $\la=\f1{2C_\Om}$ in the resulting inequality,
we achieve
\beq \label{SAeq12}
\f{d}{dt}\|u(t)\|_{L^2(\Om)}^2+\f1{C_\Om} \|u\|_{H^1(\Om)}^2\leq C \|u\|_{L^2(\Om)}^2.
\eeq
Applying Gronwall's inequality gives rise to
\beq \label{SAeq5}
\|u\|_{L^\infty_t(L^2(\Om))}^2+\f1{C_\Om} \|u\|_{L^2_t(H^1(\Om))}^2 \leq \|u_0\|_{L^2(\Om)}^2 e^{Ct}.
\eeq

\noindent $\bullet$ \underline{$H^1(\Om)$ energy estimate}\vspace{0.2cm}

By taking $L^2(\Om)$ inner product of the $u$ equation of \eqref{SAEQ0} with $\p_tu,$ we get
\beq \label{SAeq6}
\|\p_tu\|_{L^2(\Om)}^2-\left(\D u | \p_t u\right)_{L^2(\Om)}+\left( \na p | \p_t u\right)_{L^2(\Om)}=-\left( u\cdot\na u | \p_t u\right)_{L^2(\Om)}.
\eeq
Notice that $\p_tu\cdot \bn|_{\p\Om}=0,$ by applying Stokes formula and along the same line to the proof of \eqref{SAeq7}, we obtain
\beno
\begin{split}
-\left(\D u | \p_t u\right)_{L^2(\Om)}=&\int_{\p\Om}[(\na\times u)\times \p_tu]\cdot \bn\,dS+\int_{\Om}(\na\times u) \cdot (\na\times \p_tu)\,dx\\
=&2\int_{\p\Om} \p_tu(M-M_{\rm w})u\,dS+\f12\f{d}{dt}\int_{\Om}|\na\times u|^2\,dx,
\end{split}
\eeno
which together with the facts:  $M$ is a symmetric matrix and $M_{\rm w}$ is a self-adjoint operator on $T_x,$ ensures that
\beno
-\left(\D u | \p_t u\right)_{L^2(\Om)}=\f{d}{dt}\Bigl(\int_{\p\Om} u(M-M_{\rm w})u\,dS+\f12\int_{\Om}|\na\times u|^2\,dx\Bigr).
\eeno
Again due to $\p_tu\cdot \bn|_{\p\Om}=0,$ one has
\beno
\left( \na p | \p_t u\right)_{L^2(\Om)}=0.
\eeno
By inserting the above equalities into \eqref{SAeq6}, we achieve
\beno
\begin{split}
\f{d}{dt}\Bigl(\int_{\p\Om} u(M-M_{\rm w})u\,dS+&\f12\int_{\Om}|\na\times u|^2\,dx\Bigr)+\|\p_tu\|_{L^2(\Om)}^2
=-\left( u\cdot\na u | \p_t u\right)_{L^2(\Om)}\\
\leq & \|u\|_{L^6(\Om)}\|\na u\|_{L^3(\Om)}\|\p_tu\|_{L^2(\Om)}\\
\leq &C\|u\|_{H^1(\Om)}\|\na u\|_{L^2(\Om)}^{\f12}\|\na u\|_{H^1(\Om)}^{\f12}\|\p_tu\|_{L^2(\Om)}.
\end{split}
\eeno
Applying Young's inequality yields
\beq \label{SAeq8}
\begin{split}
\f{d}{dt}\Bigl(\int_{\p\Om} u(M-M_{\rm w})u\,dS+&\f12\int_{\Om}|\na\times u|^2\,dx\Bigr)+\f34\|\p_tu\|_{L^2(\Om)}^2\\
\leq&  C_\la\bigl(1+\|u\|_{H^1(\Om)}^4\bigr)\|\na u\|_{L^2(\Om)}^2+\la \|\na^2 u\|_{L^2(\Om)}^2.
\end{split}
\eeq
Whereas in view of \eqref{SAEQ0}, we write
\beq \label{SAeq9}
\begin{cases}
- \Delta u +\nabla p=-\partial_t u-u\cdot \nabla u \\
  \dive  u=0 \quad \mbox{ in }\Omega,\\
u\cdot \mathbf{n}=0 \quad \mbox{ and }   \quad\CN(u)=0\quad\mbox{ on }\partial\Omega.
\end{cases}
\eeq

The following type of Cattabriga-Solonnikov estimate can be proved along the same line to that
of Theorem 2.2 in \cite{Seregin}.
\begin{lemma} \label{SAlem2}
{\sl Let $k$ be a non-negative integer and $\Om$ be a bounded domain with sufficiently smooth boundary.
Let $f\in H^k(\Om)$ and $ g\in H^{k+1}(\Om)$ with $\int_\Om g\,dx=0.$ Then the non-homogeneous Stokes problem:
\beno
\begin{cases}
- \Delta u +\nabla p=f  \\
  \dive  u=g \quad \mbox{ in }\Omega,\\
u\cdot \mathbf{n}=0 \quad \mbox{ and }   \quad\CN(u)=0\quad\mbox{ on }\partial\Omega,
\end{cases}
\eeno
has a unique solution $(u,p)$ so that
\beq \label{SAeq-1}
\|\na^2u\|_{H^k(\Om)}+\|\na p\|_{H^k(\Om)}\leq C\bigl(\|f\|_{H^k(\Om)}+\|\na g\|_{H^k(\Om)}\bigr).
\eeq}
\end{lemma}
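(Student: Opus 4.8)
The plan is to prove the estimate \eqref{SAeq-1} by induction on $k$, the case $k=0$ carrying the analytic substance. For the base case I would first remove the inhomogeneous divergence: since $\int_\Om g\,dx=0$, the Neumann problem $\D\phi=g$ in $\Om$, $\p_\bn\phi=0$ on $\p\Om$ has a solution with $\|\phi\|_{H^{k+3}(\Om)}\lesssim\|g\|_{H^{k+1}(\Om)}$, so that $w:=\na\phi$ obeys $\dive w=g$, $w\cdot\bn=0$ on $\p\Om$, and $v:=u-w$ is divergence free, satisfies $v\cdot\bn=0$ on $\p\Om$, solves $-\D v+\na p=\tilde f$ with $\tilde f:=f+\D w$ and $\|\tilde f\|_{H^k(\Om)}\lesssim\|f\|_{H^k(\Om)}+\|g\|_{H^{k+1}(\Om)}$, and obeys the inhomogeneous Navier condition $\CN(v)=-\CN(w)=:h$ on $\p\Om$ with $\|h\|_{H^{k+1/2}(\p\Om)}\lesssim\|g\|_{H^{k+1}(\Om)}$ by the trace theorem. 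Since $\|\na g\|_{H^k(\Om)}$ is equivalent to $\|g\|_{H^{k+1}(\Om)}$ on zero–mean functions by the Poincar\'e–Wirtinger inequality, this reduction does not lose the norm claimed in \eqref{SAeq-1}; it thus suffices to treat the divergence free Stokes system with source $\tilde f$ and Navier boundary data $h$, the passage from $h=0$ to general $h$ being handled by the same argument.

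For well-posedness and the $k=0$ regularity of that reduced system I would test the momentum equation against divergence free fields tangent to $\p\Om$: using $-\D v=-2\dive D(v)$ and $(D(v)\bn)_{\mbox{tan}}=h-(Mv)_{\mbox{tan}}$ on $\p\Om$ (equivalently, Lemma \ref{SAlem1}), one obtains the weak formulation $2(D(v)|D(\varphi))_{L^2(\Om)}+2\int_{\p\Om}Mv\cdot\varphi\,dS=(\tilde f|\varphi)_{L^2(\Om)}+2\int_{\p\Om}h\cdot\varphi\,dS$, which is coercive modulo a compact perturbation by Korn's inequality (as in \eqref{SAeq4}) together with the trace bound $\|v\|_{L^2(\p\Om)}^2\le\la\|v\|_{H^1(\Om)}^2+C_\la\|v\|_{L^2(\Om)}^2$; existence, and uniqueness up to an additive constant for $p$, then follow from the Fredholm alternative and the triviality of the kernel of the associated operator under the standing assumptions, cf. \cite{Seregin}. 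The estimate $\|\na^2 v\|_{L^2(\Om)}+\|\na p\|_{L^2(\Om)}\lesssim\|\tilde f\|_{L^2(\Om)}+\|h\|_{H^{1/2}(\p\Om)}$ is then the classical Cattabriga–Solonnikov mechanism: interior regularity is the standard local Stokes estimate, while near $\p\Om$ one flattens the boundary, controls tangential derivatives by difference quotients (the Navier operator being of Neumann type in the tangential directions and verifying the Agmon–Douglis–Nirenberg complementing condition, cf. \cite{Seregin}), and then recovers the normal second derivatives of $v$ and all of $\na p$ algebraically from the momentum and divergence equations; the lower order terms so produced are absorbed using the well-posedness estimate. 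Undoing the reduction yields \eqref{SAeq-1} for $k=0$.

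For the induction step, assuming \eqref{SAeq-1} holds at order $k-1$, I would apply tangential derivatives $Z^\al$ with $|\al|=k$ (in the spirit of the conormal fields of Subsection \ref{sec-appr}, or simply local tangential translations) to the reduced system: $Z^\al v$ solves a Stokes system with source $Z^\al\tilde f+[\D,Z^\al]v-[\na,Z^\al]p$, divergence $[\dive,Z^\al]v$, and Navier data $Z^\al h+[\CN,Z^\al]v$, all commutator terms being of order at most $k+1$ in $v$ and $k$ in $p$ and hence controlled by $\|\na^2 v\|_{H^{k-1}(\Om)}+\|\na p\|_{H^{k-1}(\Om)}$, which the inductive hypothesis bounds by the right-hand side of \eqref{SAeq-1}. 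Two book-keeping points must be handled: the localizing cut-offs destroy the zero-mean property of the divergence source, which I would repair by subtracting its mean times a fixed normalized corrector; and — this I expect to be the main obstacle — the boundary commutator $[\CN,Z^\al]v$ must be estimated in the correct fractional trace norm on $\p\Om$ and then absorbed after choosing the absorption parameter $\la$ small, which is exactly the estimate that makes the argument run ``along the same lines as Theorem 2.2 of \cite{Seregin}'' rather than a bare citation. Summing over $|\al|=k$ and absorbing the top-order commutator contributions completes the induction and hence the proof of \eqref{SAeq-1}.
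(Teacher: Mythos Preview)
Your proposal is correct and is precisely the kind of argument the paper is gesturing at: the paper does not actually prove Lemma \ref{SAlem2} but simply states that it ``can be proved along the same line to that of Theorem 2.2 in \cite{Seregin}'', and your outline --- reduction to the divergence-free case via a Neumann potential, weak formulation with Korn and the trace absorption, ADN/Cattabriga--Solonnikov boundary regularity by tangential difference quotients and algebraic recovery of normal derivatives, then induction on $k$ with commutator control --- is exactly that proof fleshed out. One small caveat: in your Fredholm step you invoke ``triviality of the kernel under the standing assumptions'', but the paper only assumes $M$ symmetric, not sign-definite, so the kernel analysis (rigid motions tangent to $\p\Om$ annihilated by the boundary form) should be stated carefully; in the paper's setting this is moot since the lemma is only used as an \emph{a priori} estimate on an already-existing Navier--Stokes solution.
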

Then it follows from Lemma \ref{SAlem2} and \eqref{SAeq9} that
\beno
\begin{split}
\|\na^2 u\|_{L^2(\Om)}\leq & C\bigl(\|\p_tu\|_{L^2(\Om)}+\|u\cdot\na u\|_{L^2(\Om)}\bigr)\\
\leq &C\bigl(\|\p_tu\|_{L^2(\Om)}+\|u\|_{H^1(\Om)}\|\na u\|_{L^2(\Om)}^{\f12}\|\na u\|_{H^1(\Om)}^{\f12}\bigr),
\end{split}
\eeno
from which, we infer
\beq \label{SAeq10}
\|\na u\|_{H^1(\Om)}\leq C\bigl(\|\p_tu\|_{L^2(\Om)}+(1+\|u\|_{H^1(\Om)}^2)\|\na u\|_{L^2(\Om)}\bigr).
\eeq
By substituting \eqref{SAeq10} into \eqref{SAeq8} and then taking $\la=\f1{4C},$ we achieve
\beq \label{SAeq11}
\begin{split}
\f{d}{dt}\Bigl(\int_{\p\Om} u(M-M_{\rm w})u\,dS+\f12\int_{\Om}|\na\times u|^2\,dx\Bigr)+&\f12\|\p_tu\|_{L^2(\Om)}^2\\
\leq & C\bigl(1+\|u\|_{H^1(\Om)}^4\bigr)\|\na u\|_{L^2(\Om)}^2.
\end{split}
\eeq
While it follows from trace inequality \eqref{trace} that
\beno
\begin{split}
\bigl|\int_{\p\Om} u(M-M_{\rm w})u\,dS\bigr|\leq  C\|u\|_{L^2(\p\Om)}^2\leq & C\bigl(\|u\|_{L^2(\Om)}^2+\|u\|_{L^2(\Om)}\|\na u\|_{L^2(\Om)}\bigr)\\
\leq & \f1{4C_\Om}\|u\|_{H^1(\Om)}^2+C\|u\|_{L^2(\Om)}^{2},
\end{split}
\eeno
so that in view of \eqref{SAeq4}, there exists a large enough constant $K$ which satisfies
\beq \label{SAeq13}
E_1(u):= K\|u\|_{L^2(\Om)}^{2}+\int_{\p\Om} u(M-M_{\rm w})u\,dS+\f12\int_{\Om}|\na\times u|^2\,dx\geq \f1{4C_\Om}\|u\|_{H^1(\Om)}^2.
\eeq
Then we get, by summing up $K\times$\eqref{SAeq12} and \eqref{SAeq11}, that
\beq \label{SAeq14}
\f{d}{dt}E_1(u)+\f12\|\p_tu\|_{L^2(\Om)}^2\leq CE_1(u)\bigl(1+E_1^2(u)\bigr),
\eeq
from which, we deduce by a comparison argument that there exists a positive time $T_1$ and
a continuous function $C_{T_1,p}$ with $C_{T_1,p}(0)=0$,
such that
 \eqref{SAeq-2}  holds true for $p=1.$

\smallskip

\noindent $\bullet$ \underline{Higher energy estimates}\vspace{0.2cm}

Inductively, we assume that \eqref{SAeq-2} holds for $p\leq \ell-1,$ we are going to show that
\eqref{SAeq-2} holds for $p=\ell.$ Without loss of generality, we may assume that $\ell$ is an even integer. The odd integer
case can be proved along the same line.
Indeed we first get, by applying $\p_t^{\ell /2}$ to \eqref{SAEQ0}, that
\beq \label{SAeq17}
\begin{cases}
\partial_t^{1+\f\ell2} u+\p_t^{\f\ell2}(u\cdot \nabla u)- \Delta \p_t^{\f\ell2}u +\nabla \p_t^{\f\ell2}p=0, \\
  \dive  \p_t^{\f\ell2}u=0 \quad \mbox{ in } (0,T_1)\times \Omega,\\
\p_t^{\f\ell2}u\cdot \mathbf{n}=0 \quad \mbox{ and }   \quad\CN(\p_t^{\f\ell2}u)=0\quad\mbox{ on }\ (0,T_1)\times\partial\Omega,
\end{cases}
\eeq
from which, we get, by a similar derivation of \eqref{SAeq3} that
\beq \label{SAeq18}
\begin{split}
\f12\f{d}{dt}\bigl(&t^{\ell-1}\|\p_t^{\f\ell2}u(t)\|_{L^2(\Om)}^2\bigr)+t^{\ell-1}\|\na\times \p_t^{\f\ell2}u\|_{L^2(\Om)}^2
=\f{\ell-1}2t^{\ell-2}\|\p_t^{\f\ell2}u\|_{L^2(\Om)}^2\\
&\qquad+t^{\ell-1}\int_{\p\Om}\bigl[\p_t^{\f\ell2}u\times(\na\times \p_t^{\f\ell2}u)\bigr]\cdot \bn\,dS
-t^{\ell-1}\bigl(\p_t^{\f\ell2}(u\cdot\na u) | \p_t^{\f\ell2}u\bigr)_{L^2(\Om)}.
\end{split}
\eeq
Similar to \eqref{SAeq19}, we have
\beno
t^{\ell-1}\bigl|\int_{\p\Om}\bigl[\p_t^{\f\ell2}u\times(\na\times \p_t^{\f\ell2}u)\bigr]\cdot \bn\,dS\bigr|
\leq  \la \bigl\|t^{\f{\ell-1}2}\na \p^{\f\ell2}_tu\bigr\|_{L^2(\Om)}^2+C_\la \bigl\|t^{\f{\ell-1}2}\p_t^{\f\ell2}u\bigr\|_{L^2(\Om)}^2.
\eeno
While due to $u\cdot \bn|_{\p\Om}=0$ and $\dive u=0,$ we get, by using integration by parts, that
\beno
\begin{split}
\bigl(\p_t^{\f\ell2}(u\cdot\na u) | \p_t^{\f\ell2}u\bigr)_{L^2(\Om)}=&\bigl(\p_t^{\f\ell2}( u\cdot\na u)-u\cdot\na \p_t^{\f\ell2}u | \p^{\f\ell2}_tu \bigr)_{L^2(\Om)}\\
=&-\sum_{\substack{\ell_1+\ell_2=\f{\ell}2\\ \ell_1\geq 1}}C_{\f{\ell}2}^{\ell_1}\bigl(\p_t^{\ell_1}u\otimes \p_t^{\ell_2}u | \na \p^{\f\ell2}_tu \bigr)_{L^2(\Om)},
\end{split}
\eeno
from which, we infer
\beno
\begin{split}
t^{\ell-1}\bigl|\bigl(\p_t^{\f\ell2}(u\cdot\na u) | \p_t^{\f\ell2}u\bigr)_{L^2(\Om)}\bigr|
\lesssim &\sum_{\substack{\ell_1+\ell_2=\f{\ell}2\\ \ell_1\geq 1}}
t^{\ell-1}\|\p_t^{\ell_1}u\|_{L^3(\Om)}\|\p_t^{\ell_2}u\|_{L^6(\Om)}\|\na \p^{\f\ell2}_tu\|_{L^2(\Om)}\\
\lesssim &\sum_{\substack{\ell_1+\ell_2=\f{\ell}2\\ \ell_1\geq 1}}
t^{\ell-1}\|\p_t^{\ell_1}u\|_{L^2(\Om)}^{\f12}\|\p_t^{\ell_1}u\|_{H^1(\Om)}^{\f12}\|\p_t^{\ell_2}u\|_{H^1(\Om)}\|\na \p^{\f\ell2}_tu\|_{L^2(\Om)}\\
\leq &\la  \bigl\|t^{\f{\ell-1}2} \p_t^{\f\ell2}u\bigr\|_{H^1(\Om)}^2+C_\la \|u\|_{H^1(\Om)}^{4}\bigl\|t^{\f{\ell-1}2} \p^{\f\ell2}_tu\bigr\|_{L^2(\Om)}^2\\
&+C_\la\sum_{\substack{\ell_1+\ell_2=\f{\ell}2\\ 1\leq \ell_1\leq \f{\ell}2-1}}
\bigl\|t^{\ell_1-\f12}\p_t^{\ell_1}u\bigr\|_{H^1(\Om)}^2\bigl\||t^{\ell_2}\p_t^{\ell_2}u\bigr\|_{H^1(\Om)}^2.
\end{split}
\eeno
By substituting the above estimates into \eqref{SAeq18} and using Korn's type inequality \eqref{SAeq4}, we find
\beno
\begin{split}
\f12\f{d}{dt}&\bigl\|t^{\f{\ell-1}2}\p_t^{\f\ell2}u(t)\bigr\|_{L^2(\Om)}^2+\f{1}{C_\Om}\bigl\|t^{\f{\ell-1}2} \p_t^{\f\ell2}u\bigr\|_{H^1(\Om)}^2
\\
\leq &\f{\ell-1}2\bigl\|t^{\f\ell2-1}\p_t^{\f\ell2}u\bigr\|_{L^2(\Om)}^2+C_\la\bigl(1+ \|u\|_{H^1(\Om)}^{4}\bigr)\bigl\|t^{\f{\ell-1}2} \p^{\f\ell2}_tu\bigr\|_{L^2(\Om)}^2\\
&+2\la  \bigl\|t^{\f{\ell-1}2} \p_t^{\f\ell2}u\bigr\|_{H^1(\Om)}^2+ C_\la\sum_{\substack{\ell_1+\ell_2=\f{\ell}2\\ 1\leq \ell_1\leq \f{\ell}2-1}}
\bigl\|t^{\ell_1-\f12}\p_t^{\ell_1}u\bigr\|_{H^1(\Om)}^2\bigl\||t^{\ell_2}\p_t^{\ell_2}u\bigr\|_{H^1(\Om)}^2.
\end{split}
\eeno
By taking $\la=\f1{4C_\Om}$ in the above inequality and then
applying Gronwall's inequality to the resulting inequality, we achieve
\beno
\begin{split}
\bigl\|t^{\f{\ell-1}2}&\p_t^{\f\ell2}u\bigr\|_{L^\infty_t(L^2(\Om))}^2+\f{1}{C_\Om}\bigl\|t^{\f{\ell-1}2} \p_t^{\f\ell2}u\bigr\|_{L^2_t(H^1(\Om))}^2\leq C\exp\left(C\bigl(1+t\|u\|_{L^\infty_{t}(H^1(\Om))}^4\bigr)\right)\\
&\qquad\times  \Bigl(\bigl\|t^{\f\ell2-1}\p_t^{\f\ell2}u\bigr\|_{L^2_t(L^2(\Om))}^2+
\sum_{\substack{\ell_1+\ell_2=\f{\ell}2\\ 1\leq \ell_1\leq \f{\ell}2-1}}
\bigl\|t^{\ell_1-\f12}\p_t^{\ell_1}u\bigr\|_{L^2_t(H^1(\Om))}^2\bigl\||t^{\ell_2}\p_t^{\ell_2}u\bigr\|_{L^\infty_t(H^1(\Om))}^2\Bigr),
\end{split}
\eeno
from which and the inductive assumption, we deduce that
\beq \label{SAeq20}
\begin{split}
\bigl\|t^{\f{\ell-1}2}\p_t^{\f\ell2}u\bigr\|_{L^\infty_{T_1}(L^2(\Om))}^2+\f{1}{C_\Om}\bigl\|t^{\f{\ell-1}2} \p_t^{\f\ell2}u\bigr\|_{L^2_{T_1}(H^1(\Om))}^2\leq  C_{\ell, T_1}(\|u_0\|_{H^1(\Om)}).
\end{split}
\eeq

On the other hand, for any non-negative integer $j\leq \f\ell2-1,$ we infer from the inductive assumption that
\beno
\begin{split}
\bigl\|t^{\f{\ell-1}2}\p_t^{j}u\bigr\|_{L^\infty_{T_1}(H^{\ell-2j}(\Om))}
=&\bigl\|t^{\f{\ell-1}2}\na^2\p_t^{j}u\bigr\|_{L^\infty_{T_1}(H^{\ell-2-2j}(\Om))}
+\bigl\|t^{\f{\ell-1}2}\p_t^{j}u\bigr\|_{L^\infty_{T_1}(H^{\ell-1-2j}(\Om))}\\
\leq &\bigl\|t^{\f{\ell-1}2}\na^2\p_t^{j}u\bigr\|_{L^\infty_{T_1}(H^{\ell-2-2j}(\Om))}+ C_{\ell, T_1}(\|u_0\|_{H^1(\Om)}).
\end{split}
\eeno
Whereas in view of \eqref{SAEQ0}, we write
\beno
- \Delta \p_t^{j}u +\nabla \p_t^{j}p=-\partial_t^{j+1} u-\p_t^{j}(u\cdot \nabla u),
\eeno
from which, and Lemma \ref{SAlem2}, we infer
\beno
\begin{split}
\bigl\|t^{\f{\ell-1}2}\na^2\p_t^{j}u\bigr\|_{L^\infty_{T_1}(H^{\ell-2-2j}(\Om))}\lesssim  & \bigl\|t^{\f{\ell-1}2}\p_t^{j+1}u\bigr\|_{L^\infty_{T_1}(H^{\ell-2-2j}(\Om))}\\
&+
\bigl\|t^{\f{\ell-1}2}\p_t^{j}(u\cdot\na u)\bigr\|_{L^\infty_{T_1}(H^{\ell-2-2j}(\Om))}.
\end{split}
\eeno
As a result, it comes out
\beq \label{SAeq26}
\begin{split}
\bigl\|t^{\f{\ell-1}2}\p_t^{j}u\bigr\|_{L^\infty_{T_1}(H^{\ell-2j}(\Om))}
\leq & C_{\ell, T_1}(\|u_0\|_{H^1(\Om)})+\bigl\|t^{\f{\ell-1}2}\p_t^{j+1}u\bigr\|_{L^\infty_{T_1}(H^{\ell-2-2j}(\Om))}\\
&+
\bigl\|t^{\f{\ell-1}2}\p_t^{j}(u\cdot\na u)\bigr\|_{L^\infty_{T_1}(H^{\ell-2-2j}(\Om))}, \quad\forall\ j\leq\f\ell2-1.
\end{split}
\eeq
Yet it follows from Moser type inequality and the inductive assumption that
\beno
\begin{split}
\bigl\|t^{\f{\ell-1}2}\p_t^{j}\na(u\otimes u)\bigr\|_{L^\infty_{T_1}(H^{\ell-2-2j}(\Om))}
\lesssim &\sum_{j_1+j_2=j}\bigl\|t^{j_1+\f{1}2}\p_t^{j_1}u\bigr\|_{L^\infty_{T_1}(H^{2}(\Om))}\\
&\times \bigl\|t^{\f{\ell-2}2-j+j_2}\p_t^{j_2}u\bigr\|_{L^\infty_{T_1}(H^{\ell-2j-1}(\Om))}\leq  C_{\ell, T_1}(\|u_0\|_{H^1(\Om)}).
\end{split}
\eeno
Substituting the above estimates into \eqref{SAeq26} gives rise to
\beno
\begin{split}
\bigl\|t^{\f{\ell-1}2}\p_t^{j}u\bigr\|_{L^\infty_{T_1}(H^{\ell-2j}(\Om))}
\leq & C_{\ell, T_1}(\|u_0\|_{H^1(\Om)})+\bigl\|t^{\f{\ell-1}2}\p_t^{j+1}u\bigr\|_{L^\infty_{T_1}(H^{\ell-2-2j}(\Om))},
\end{split}
\eeno
from which, \eqref{SAeq20}, we deduce by an iterative argument that
\beq \label{SAeq27}
\sum_{0\leq j \leq \f{\ell}2}\bigl\|t^{\f{\ell-1}2}\p_t^ju\bigr\|_{L^\infty_{T_1}(H^{\ell-2j}(\Om))}
\leq C_{\ell,T_1}(\|u_0\|_{H^1(\Om)}).
\eeq

Exactly along the same line to the proof of \eqref{SAeq27},   for any non-negative integer $j\leq \f{\ell}2-1,$ we infer from the inductive assumption that
\beno
\begin{split}
\bigl\|t^{\f{\ell-1}2}\p_t^{j}u\bigr\|_{L^2_{T_1}(H^{\ell+1-2j}(\Om))}
\leq &\bigl\|t^{\f{\ell-1}2}\na^2\p_t^{j}u\bigr\|_{L^2_{T_1}(H^{\ell-1-2j}(\Om))}+ C_{\ell, T_1}(\|u_0\|_{H^1(\Om)}).
\end{split}
\eeno
While it follows from Lemma \ref{SAlem2} that
\beno
\begin{split}
\bigl\|t^{\f{\ell-1}2}\na^2\p_t^{j}u\bigr\|_{L^2_{T_1}(H^{\ell-1-2j}(\Om))}\lesssim  & \bigl\|t^{\f{\ell-1}2}\p_t^{j+1}u\bigr\|_{L^2_{T_1}(H^{\ell-1-2j}(\Om))}\\
&+
\bigl\|t^{\f{\ell-1}2}\p_t^{j}(u\otimes u)\bigr\|_{L^\infty_{T_1}(H^{\ell-2j}(\Om))}.
\end{split}
\eeno
Yet for any $j\leq \f\ell2-1,$ it follows from Moser type inequality and the inductive assumption that
\beno
\begin{split}
\bigl\|t^{\f{\ell-1}2}\p_t^{j}(u\otimes u)\bigr\|_{L^2_{T_1}(H^{\ell-2j}(\Om))}
\lesssim &\sum_{j_1+j_2=j}\bigl\|t^{j_1+\f{1}2}\p_t^{j_1}u\bigr\|_{L^\infty_{T_1}(H^{2}(\Om))}\\
&\times \bigl\|t^{\f{\ell-2}2-j+j_2}\p_t^{j_2}u\bigr\|_{L^2_{T_1}(H^{\ell-2j}(\Om))}\leq  C_{\ell, T_1}(\|u_0\|_{H^1(\Om)}).
\end{split}
\eeno
As a result, for any $j\leq \f{\ell-1}2,$ we arrive at
\beno
\begin{split}
\bigl\|t^{\f{\ell-1}2}\p_t^{j}u\bigr\|_{L^2_{T_1}(H^{\ell+1-2j}(\Om))}
\leq & C_{\ell, T_1}(\|u_0\|_{H^1(\Om)})+\bigl\|t^{\f{\ell-1}2}\p_t^{j+1}u\bigr\|_{L^2_{T_1}(H^{\ell-1-2j}(\Om))},
\end{split}
\eeno
from which, \eqref{SAeq20}, we deduce by an iterative argument that
\beq \label{SAeq28}
\sum_{0\leq j \leq \f{\ell}2}\bigl\|t^{\f{\ell-1}2}\p_t^ju\bigr\|_{L^2_{T_1}(H^{\ell+1-2j}(\Om))}
\leq C_{\ell,T_1}(\|u_0\|_{H^1(\Om)}).
\eeq

\eqref{SAeq27} along with \eqref{SAeq28} shows that \eqref{SAeq-2} holds for $p=\ell.$ This finishes
the proof of \eqref{SAeq-2} and therefore the proof of Theorem \ref{S}.
\end{proof}

\bigskip
\ \par
\noindent
{\bf Acknowledgements.}
F. Sueur is partially supported  by the Agence Nationale de la Recherche, Project IFSMACS, grant ANR-15-CE40-0010, Project SINGFLOWS, grant ANR-18-CE40-0027-01, and
Project BORDS, grant ANR-16-CE40-0027-01; and by the H2020-MSCA-ITN-2017 program, Project ConFlex, Grant ETN-765579.
P. Zhang is partially supported by NSF of China under Grants 11688101 and 11371347, Morningside Center of Mathematics of The Chinese Academy of Sciences and innovation grant from National Center for Mathematics and Interdisciplinary Sciences.
F. Sueur warmly thanks  Morningside center  of Mathematics, CAS, for its kind hospitality during his stays in May 2018 and October 2019.

\end{document}